\documentclass[a4paper,10pt]{amsart}
\usepackage{amssymb, amsmath,stmaryrd,svninfo,enumerate,enumitem,mathtools}
\usepackage[headings]{fullpage}
\usepackage[usenames]{color}
\usepackage[notref, notcite, color]{showkeys}
\usepackage[all]{xy}
\usepackage[pagebackref, colorlinks]{hyperref}
\usepackage{diagrams}
\usepackage[french,english]{babel}

\makeatletter
\newenvironment{abstracts}{%
  \ifx\maketitle\relax
    \ClassWarning{\@classname}{Abstract should precede
      \protect\maketitle\space in AMS document classes; reported}%
  \fi
  \global\setbox\abstractbox=\vtop \bgroup
    \normalfont\Small
    \list{}{\labelwidth\z@
      \leftmargin3pc \rightmargin\leftmargin
      \listparindent\normalparindent \itemindent\z@
      \parsep\z@ \@plus\p@
      
      \itemsep\medskipamount
    }%
}{%
  \endlist\egroup
  \ifx\@setabstract\relax \@setabstracta \fi
}

\newcommand{\abstractin}[1]{%
  \otherlanguage{#1}%
  \item[\hskip\labelsep\scshape\abstractname.]%
}
\makeatother


 \let\mathbb\mathbf

\DeclareFontFamily{OT1}{rsfs}{}
\DeclareFontShape{OT1}{rsfs}{n}{it}{<-> rsfs10}{}
\DeclareMathAlphabet{\mathscr}{OT1}{rsfs}{n}{it}

 \newcommand{\mybf}{\mathbf}
\newcommand{\A}{\mybf{A}}
\newcommand{\B}{\mybf{B}}
 \newcommand{\BB}{\mybf{B}}
 \newcommand{\CC}{\mybf{C}}
 \newcommand{\DD}{\mybf{D}}
\newcommand{\D}{\mybf{D}}
 \newcommand{\EE}{\mybf{E}}
\newcommand{\F}{\mybf{F}}
 \newcommand{\FF}{\mybf{F}}
\newcommand{\Q}{\mybf{Q}}
 \newcommand{\QQ}{\mybf{Q}}
\newcommand{\R}{\mybf{R}}
\newcommand{\TT}{\mathbf{T}}
 \newcommand{\ZZ}{\mybf{Z}}
\newcommand{\Z}{\mybf{Z}}
 \newcommand{\NN}{\mybf{N}}
 \newcommand{\cH}{\mathcal{H}}
 \newcommand{\cL}{\mathcal{L}}
 \newcommand{\cK}{\mathcal{K}}
 \newcommand{\cO}{\mathcal{O}}

\newcommand{\C}{\mathcal{C}}

 \newcommand{\vp}{\varphi}

 \newcommand{\Qp}{\QQ_p}
 \newcommand{\Zp}{\ZZ_p}
 \newcommand{\Qpi}{\QQ_{p, \infty}}
 
 \newcommand{\Qpnr}{\Qp^{\mathrm{nr}}}
 \newcommand{\Qpb}{\overline{\QQ}_p}

 \newcommand{\Dcris}{\DD_{\cris}}

 \newcommand{\Drig}{\DD_{\mathrm{rig}}}
 
 \newcommand{\Bmax}{\BB_{\max}}

 \renewcommand{\AA}{\mybf{A}}
 
 \newcommand{\Lt}{\widetilde{L}}
 \newcommand{\Rt}{\widetilde{R}}
 \newcommand{\At}{\widetilde{A}}
 \newcommand{\cOt}{\widetilde{\cO}}
\newcommand{\La}{\ifmmode\Lambda\else$\Lambda$\fi}
\newcommand{\Tun}{\ifmmode \mathbb{T}_{un}\else$\mathbb{T}_{un}$\ \fi}


 \newcommand{\cris}{\mathrm{cris}}
 \newcommand{\st}{\mathrm{st}}
 \renewcommand{\max}{\mathrm{max}}
 \newcommand{\dR}{\mathrm{dR}}
 \newcommand{\Iw}{\mathrm{Iw}}
 \newcommand{\rig}{\mathrm{rig}}
 
 \newcommand{\pst}{\mathrm{pst}}
 
 \newcommand{\nr}{\mathrm{nr}}


 \DeclareMathOperator{\Fil}{Fil}
 \DeclareMathOperator{\Tor}{Tor}
 \DeclareMathOperator{\Det}{Det}
 \DeclareMathOperator{\Gal}{Gal}
 \DeclareMathOperator{\id}{id}

\DeclareMathOperator{\Mat}{Mat}
 \DeclareMathOperator{\Hom}{Hom}
 \DeclareMathOperator{\Isom}{Isom}

 \DeclareMathOperator{\tw}{Tw}
 
 \DeclareMathOperator{\ord}{ord}
 \DeclareMathOperator{\GL}{GL}

\DeclareMathOperator{\Frac}{Frac}

\DeclareMathOperator{\Spf}{Spf}
\DeclareMathOperator{\Sp}{Sp}
\DeclareMathOperator{\Spec}{Spec}
\DeclareMathOperator{\MaxSpec}{MaxSpec}

\renewcommand{\d}{\Det}
\renewcommand{\u}{\mathbf{1}}

\newcommand{\htimes}{\mathop{\widehat\otimes}}

\renewcommand{\det}{\operatorname{det}}

\theoremstyle{plain}
  \newtheorem{theorem}{Theorem}[section]
  \newtheorem{proposition}[theorem]{Proposition}
  \newtheorem{lemma}[theorem]{Lemma}
  \newtheorem{corollary}[theorem]{Corollary}

\theoremstyle{definition}
  \newtheorem{definition}[theorem]{Definition}
  \newtheorem{example}[theorem]{Example}
  \newtheorem{remark}[theorem]{Remark}

%


 \renewcommand{\u}{\mathbf{1}}
 \renewcommand{\H}{\mathrm{H}}

\title{Wach modules, Regulator maps and $\varepsilon$-isomorphisms in families}

\begin{document}

\author{Rebecca Bellovin}
\address[Bellovin]{180 Queen's Gate \\ Imperial College London \\ SW7 2AZ}
\email{r.bellovin@imperial.ac.uk}
\urladdr{\url{http://wwwf.imperial.ac.uk/~rbellovi/}}

\author{Otmar Venjakob}
\address[Venjakob]{Mathematisches Institut \\ Universit\"at Heidelberg \\ Im Neuenheimer Feld 288 \\69120 Heidelberg, Germany}
\email{otmar@mathi.uni-heidelberg.de}
\urladdr{\url{http://www.mathi.uni-heidelberg.de/~otmar/}}

\thanks{The first author was partially supported by an NSF Mathematical Sciences Postdoctoral Research Fellowship, and the second author was supported by the DFG-Forschergruppe "Symmetrie, Geometrie und Arithmetik" .}

\subjclass[2010]{Primary: 11R23. 
Secondary: 11S40.
}

\begin{abstracts}
\abstractin{english}
We prove the ``local $\varepsilon$-isomorphism'' conjecture of Fukaya and Kato \cite{fukayakato06} for certain crystalline families of $G_{\Qp}$-representations. This conjecture can be regarded as a local analogue of the Iwasawa main conjecture for families.  Our work extends earlier work of Kato for rank-$1$ modules (cf. \cite{venjakob11}), of Benois and Berger for crystalline $G_{\Qp}$-representations with respect to the cyclotomic extension (cf. \cite{benoisberger08}), as well as of Loeffler, Venjakob, and Zerbes (cf. \cite{loeffler-venjakob-zerbes}) for crystalline $G_{\Qp}$-representations with respect to abelian $p$-adic Lie extensions of $\Qp$. Nakamura \cite{nak13, nak15} has also formulated a version of Kato's $\varepsilon$-conjecture for affinoid families of $(\varphi,\Gamma)$-modules over the Robba ring, and proved his conjecture in the rank-$1$ case.  He used this case to construct an $\varepsilon$-isomorphism for families of trianguline $(\varphi,\Gamma)$-modules, depending on a fixed triangulation. Our results imply that this $\varepsilon$-isomorphism is independent of the chosen triangulation for certain crystalline families.  The main ingredient of our proof consists of the construction of families of Wach modules generalizing work of Wach and Berger \cite{berger04} and following Kisin's approach to the construction of potentially semi-stable deformation rings \cite{kisin-bt}.
\end{abstracts}

\selectlanguage{english}
\maketitle
\tableofcontents

\section{Introduction}

The significance of (local) $\varepsilon$-factors \`{a} la Deligne and Tate, or more generally of the
(conjectural) $\varepsilon$-isomorphism suggested by Fukaya and Kato in \cite[\S 3]{fukayakato06}, is at
least twofold: First, they are important ingredients to obtain a precise functional equation
for $L$-functions or more generally for (conjectural) $\zeta$-isomorphism (loc.~ cit., \S 2) of
motives in the context of  Tamagawa number conjectures; second, they
are essential in interpolation formulae of (actual) $p$-adic $L$-functions and for the relation
between $\zeta$-isomorphisms and (conjectural) $p$-adic $L$-functions
as discussed in (loc.~cit., \S 4). Of course the two occurrences are closely related; for a survey
on these ideas see also \cite{ven-BSD}.

The $\varepsilon$-isomorphism conjecture asserts roughly speaking the existence of a canonical trivialization of the determinant of a cohomology complex associated to any  representation $M$ of the Galois group $G_{\Qp} = \Gal(\Qpb / \Qp)$ with coefficients in a $p$-adically complete local ring $R$. This trivialization is required to be compatible with a specific ``standard'' trivialization of the corresponding complex for  de Rham representations with coefficients in finite extensions of $\Qp$ obtained by specializing $M$ at ideals, or more generally at representations, of $R$. This standard trivialization contains information about the $\varepsilon$-factor of the corresponding Weil--Deligne representation, hence the terminology ``$\varepsilon$-isomorphism'', and the Bloch-Kato-exponential map.

The aim of this article is to prove, for any    crystalline family $M$ of  Galois representations of $G_{\Qp} = \Gal(\Qpb / \Qp)$ over a complete local noetherian $\Z_p$-algebra $R$ with finite residue field,  the existence of the $\varepsilon$-isomorphism  (see Theorem \ref{thm:conclusion})
 \[	\varepsilon_{\Lambda_{R}(G), \xi}(M):\Det_{\Lambda_{\Rt}(G)}(0) \xrightarrow{\sim}
    \left[ \Det_{\Lambda_{\Rt}(G)} \left(\Lambda_{\Rt}(G)\otimes_{\Lambda_R(G)} R\Gamma_{\Iw}(K_\infty, M)\right)\right]
    \left[ \Det_{\Lambda_{\Rt}(G)} \left(\Lambda_{\Rt}(G) \otimes_R M\right)\right]	\]
such that the specializations $\varepsilon_{\Lambda_R(G),\xi}(M)\otimes_R\cO_L$ with respect to any $\Zp$-algebra homomorphism $R\to\cO_L$ into the ring of integers $\cO_L$ of a finite extension $L$ of $\Qp$  agree with $\varepsilon_{\Lambda_{\cO_L}(G), \xi}(M{\otimes_R\cO_L })$, the $\varepsilon$-isomorphisms  established in \cite{loeffler-venjakob-zerbes}. In particular $ \varepsilon_{\Lambda_{R}(G), \xi}(M)$ satisfies the above mentioned compatibility with the standard trivializations for corresponding specializations (see Proposition \ref{prop:compat2}). For further properties we refer the reader to section $\ref{properties}.$

 Here $\xi = (\xi_n)_{n \ge 1}$ denotes a compatible system of $p$-power roots of unity generating $\Zp(1)$ as a $\Zp$-module,
$ \Lambda_{R}(G)$ is the Iwasawa algebra of $G=G(K_\infty/\Qp)$ for $K_\infty=K(\mu(p))$ with $K$ any (possibly infinite)
unramified extension $K$ of $\Qp$ and    $ R\Gamma_{\Iw}(K_\infty, M)$ denotes the complex calculating local Iwasawa cohomology of $M$ over $K_\infty$. Furthermore, for an associative ring $A$ with one, $\d_A$ denotes the determinant functor  \`{a} la Knudsen-Mumford (if $A$ is commutative) or \`{a} la Deligne/Fukaya-Kato (even for non-commutative $A$) (see Appendix \ref{determinants});  from a technical point of view we want to stress  that we use the realisation via line bundles whenever it is possible in order to take advantage of (stronger) arguments and higher flexibility from  commutative algebra.   The Iwasawa algebra  $\Lambda_{\Rt}(G)$ is defined in \eqref{f:tilde}. We are mainly interested in the case,
where $G\cong \mathbb{Z}_p^2\times\Delta$ for a finite group $\Delta$.  This specific instance of the local $\varepsilon$-isomorphism conjecture can be thought of as a ``local Iwasawa main conjecture'' for $M$ over the extension $\Qp^{\mathrm{ab}} / \Qp$; compare the discussion at the end of \S 2 in \cite{venjakob11}.

The key ingredient in the construction of the local $\varepsilon$-isomorphism and the technical heart of this paper is the construction of Wach modules $\NN(M)$ for formal families of Galois representations $M$.  The theory of $(\varphi,\Gamma)$-modules and several other aspects of $p$-adic Hodge theory had been generalized to formal and rigid-analytic families by work of Dee \cite{dee01}, Berger and Colmez \cite{berger-colmez}, Kedlaya and Liu~\cite{kedlaya-liu08}, and Bellovin \cite{bellovin13}.  In \textsection~\ref{families}, we give a theory of Wach modules for formal families of crystalline representations of $G_F$, following Kisin's strategy for the construction of potentially semi-stable deformation rings~\cite{kisin-pst-def-rings}.

We then compare Iwasawa cohomology to our Wach modules, extending the classical case.  With this in hand, we can define $\varepsilon$-isomorphisms for families of crystalline representations of $G_{\Qp}$ with coefficients in rings $R$ which are complete, local, noetherian $\Zp$-algebras which are Cohen--Macaulay, normal, and flat over $\Z_p$, with reduced generic fiber, and we show that the $\varepsilon$-isomorphisms we construct are compatible with base change and with the $\varepsilon$-isomorphisms constructed in~\cite{loeffler-venjakob-zerbes}.

With  this machinery at hand we can sharpen some work of Nakamura \cite{nak13}, who had given a different construction of $\varepsilon$-isomorphism even for not necessarily \'{e}tale, but trianguline   $(\varphi,\Gamma)$-modules (in families) over the Robba ring, but in which the final integrality statement is missing and which a priori depends on the choice of a triangulation. For crystalline families our results implies the independence of such a choice (Corollary \ref{cor:nakamura}). See section \ref{sec:nakamura} for a comparison with his result.

The advantage of providing $\varepsilon$-isomorphisms for families is as follows: First of all, Iwasawa main conjectures \cite{skinner-urban} or (Equivariant) Tamagawa Number Conjectures are nowadays often considered over (e.g.~Hida-) families.
Second, we hope that our work can be used to obtain results towards the construction of $\varepsilon$-isomorphisms attached to torsion representations: In   previous work~\cite{loeffler-venjakob-zerbes} by the second author together with David Loeffler and Sarah Zerbes  we constructed an ``$\varepsilon$-isomorphism'' for crystalline $p$-adic Galois representations.  Unfortunately, we were not able to address the following general question: let $T, T'$ be two torsion-free $\Zp$-representations of $G_{\Qp}$ such that $T \cong T'$ modulo $p^k$ for some $k$. Does it follow that the $\varepsilon$-isomorphisms for $T$ and $T'$ agree modulo $p^k$?  In this paper, we give a partial answer and  show that if $T$ and $T'$ both are crystalline with Hodge-Tate weights in $[a,b]$, then the desired congruence holds, if the framed deformation ring $R_{\cris}^{\square,[a,b]}$ is normal and Cohen-Macaulay. We do this by showing that the $\varepsilon$-isomorphisms for $T$ and $T'$ are both specializations of a ``universal'' isomorphism over $R_{\cris}^{\square,[a,b]}$; see section \ref{sec:deformation}.

We expect that it is possible to extend our constructions to the setting of affinoid families of Galois representations, using techniques similar to those of~\cite{wang-erickson15}.  However, we have not done so here.

We furthermore expect that it is possible to extend our constructions to formal families of crystabelline representations, that is, representations which become crystalline over a finite abelian extension of $\Qp$.  This would require one to check that the Wach modules of~\cite{berger-breuil} behave well integrally, and that one can build formal families of them.  One would also have to verify that the results of~\cite{loeffler-venjakob-zerbes} hold in this context.  With that in place, Hu and Paskunas have shown that certain crystabelline deformation rings are Cohen--Macaulay~\cite{hu-paskunas}.  This would allow us to extend our congruence results to representations which are not necessarily crystalline with Hodge--Tate weights in the Fontaine--Laffaille range.

We quickly outline the content of the various sections: After recalling the basic rings of $p$-adic Hodge theory in section \ref{section:rings}, we construct Wach modules $\NN(M)$ for formal families of Galois representations $M$ in section \ref{families}; the main result is summarized in Theorem \ref{thm:wach}. In section \ref{sec:fibre} we relate (the base change $\NN(M)^{\rig}$ of) the Wach module $\NN(M)$ to the $(\varphi,\Gamma)$-module $\Drig^{\dagger}(M^{\rig})$ attached to the associated family $M^\rig$ of Galois representations over the rigid analytic generic fiber $R^\rig$ of $R.$ In particular, we discuss the relation with $\D_{\cris}(M^{\rig})$, generalizing parts of \cite{berger04} to families.  This permits us to define $\Dcris(M)$ \emph{without} inverting $p$. In section \ref{sec:cohomology} we explain how to express Iwasawa cohomology of $M$ in terms of $\NN(M)$, generalizing to families a variant of Fontaine's isomorphism due to Loeffler and Zerbes in \cite{loefflerzerbes11}; see Propositions~\ref{prop:iwasawacohomology} and \ref{prop:iwasawacohomologyunramified}. Again following \cite{loefflerzerbes11}, we construct the regulator map attached to $M$ in section \ref{sec:regulator} and we study its specializations in Theorem \ref{thm:specialization}. Finally, in section \ref{sect:constructionG} we put everything together in order to construct an isomorphism $\Theta$ --- roughly speaking the determinant of the regulator map --- first over the total ring of fractions and then over the Iwasawa algebra with $p$ inverted.  The isomorphism $\Theta$ relates the Iwasawa cohomology of $M$ to $\D_{\cris}(M^{\rig}).$  Using Theorem~\ref{thm:Dcris-M} which relates the determinants of $\D_{\cris}(M)[1/p]$ and $M[1/p]$, we are able to define the $\varepsilon$-isomorphism (see Definition~\ref{def:epsilon}) over the Iwasawa algebra with $p$ inverted. By a specialization argument we show its integrality in Theorem~\ref{thm:conclusion}.  Finally, we briefly discuss the application to deformation rings in section \ref{sec:deformation} before relating our $\varepsilon$-isomorphism to that of Nakamura in section \ref{sec:nakamura}.

\section*{Acknowledgements}
This project grew out of natural questions arising  from the previous work \cite{loeffler-venjakob-zerbes}, and we are very grateful to David Loeffler and Sarah Zerbes for many suggestions.  We would also like to thank Toby Gee, Brandon Levin, James Newton, and Carl Wang Erickson for helpful discussions.

\section*{Notation and Conventions}
Let $p$ be an odd prime. For $H$ a $p$-adic analytic group, and $L$ a complete discretely valued extension of $\Qp$ with ring of integers $\cO$ and uniformiser $\varpi$, we write $\Lambda_{\cO}(H)$ and $\Lambda_L(H)=L\otimes_\cO \Lambda_\cO(H)$ for the Iwasawa algebras of $H$ with $\cO$ and $L$ coefficients, and $\cH_L(H)$ for the algebra of $L$-valued locally analytic distributions on $H$, which is the completion of $\Lambda_L(H)$ in a certain Fr\'echet topology. We shall only use these constructions in cases where $H$ is abelian and $p$-torsion-free, in which case all of these algebras are reduced commutative semi-local rings, and can be interpreted as algebras of functions on the $p$-adic analytic space parameterising characters of $H$.

  We shall also need the notation $\cK_{L}(H)$, signifying the total ring of quotients of $\cH_L(H)$, which is a finite direct product of fields.

  Let $\chi_{\mathrm{cyc}}:G_{K}\twoheadrightarrow\Gamma \to \Zp^\times$ be the $p$-cyclotomic character. If $K/\Qp$ is a finite extension, let $H_{K}\subset G_{K}$ be the kernel of $\chi_{\mathrm{cyc}}$ restricted to $G_K$.
If $K=\Qp$, we define $\Qpi :=(\overline\Qp)^{H_{\Qp}}= \Qp(\mu_{p^\infty})$ and $\Gamma = \Gal(\Qpi / \Qp)$.  We write $\gamma_{-1}$ for the unique element of $\Gamma$ such that $\chi(\gamma_{-1}) = -1$.

  If $M$ denotes any $G_{\Qp}$-module over $\Zp$ and $i$ is any integer, we write $M(i)$ for the $i$th Tate twist of $M.$

By   $R$ we shall denote a complete local noetherian $\Z_p$-algebra with maximal ideal $\mathfrak{m}_R.$ We write $ H_{\Qp}$ for the kernel of  $\chi_{cyc}.$ If $A$ is any associative ring, $M$ a (left) $A$-module and $Y$ a (right) $A$-module, the we denote the base change $Y\otimes_A M$ often just by $M_Y.$ If $\varphi$ denotes an injective ring homomorphism of $A$, which is now assumed to be commutative, we shall as usual set $\varphi^*M:=A\otimes_{A,\varphi} M$ where the left copy of $A$ is viewed as $A$-module via $\varphi.$ Assume, moreover, that $M$ itself bears a $\varphi$-linear action $\varphi_M.$ Then we obtain a canonical map $\varphi^*M\to M$ sending $a\otimes m$ to $a\varphi_M(m).$ In case the latter map is injective we shall sometimes identify $\varphi^*M$ with its image in $M$ without further indication.

We write $ \widehat{\Qp^\mathrm{nr}}$ for the completion of the maximal unramified extension $\Qp^\mathrm{nr}$ of $\Qp$ while the ring of integers are denoted by $ \widehat{\Zp^\mathrm{nr}}$ and $\Zp^\mathrm{nr} $, respectively. $F$ shall usually denote a finite {\it unramified} extension of $\Qp.$

\section{Rings of $p$-adic Hodge theory}\label{section:rings}

The rings (and their properties) we will use are primarily given in~\cite{berger02}, and we refer the reader there for a more detailed discussion.

Let $F$ be a finite unramified extension of $\Qp$ with ring of integers $\cO_F$ and residue field $k_F$.  Let $\widetilde\EE:=\varprojlim_{x\rightarrow x^p}\CC_p$, and let $\widetilde \EE^+$ be the subset of $x\in\widetilde\EE$ such that $x^{(0)}\in\mathcal{O}_{\CC_p}$.  If $x:=(x^{(i})$ and $y:=(y^{(i)})$ are two elements of $\widetilde\EE$, we define
\[	(x+y)^{(i)}:=\lim_{j\rightarrow\infty}(x^{(i+j)}+y^{(i+j)})^{p^j}\text{ and } (xy)^{(i)}:=x^{(i)}y^{(i)}	\]
Then $\widetilde\EE$ is an algebraically closed field of characteristic $p$.  There is a valuation $v_\EE$ defined by $v_\EE((x^{(i)}))=v_p(x^{(0)})$, and a Frobenius (given by raising to the $p$th power).  The valuation ring of $\widetilde\EE$ with respect to this valuation is $\widetilde\EE^+$, and $\widetilde\EE$ is complete with respect to $v_\EE$.

Let $\varepsilon:=(\varepsilon^{(0)},\varepsilon^{(1)},\varepsilon^{(2)}\ldots)\in \widetilde\EE^+$ be a choice of compatible $p$th power roots of unity with $\varepsilon^{(0)}=1$ and $\varepsilon^{(1)}\neq 1$.  There is a natural map $k_F(\!(\overline\pi)\!)\rightarrow \widetilde\EE$ given by sending $\overline\pi$ to $\varepsilon-1$; we denote its image by $\EE_F$, we denote by $\EE$ the separable closure of $\EE_F$ inside $\widetilde\EE$, and we denote by $\EE^+$ the valuation ring of $\EE$.

Let $\widetilde \A^+:=W(\widetilde\EE^+)$ and $\widetilde\A:=W(\widetilde\EE)$. There are two possible topologies on $\widetilde\A^+$ and $\widetilde\A$, the $p$-adic topology or the weak topology; they are complete for both.

The \emph{$p$-adic topology} is defined by putting the discrete topology on $W(\widetilde\EE)/p^nW(\widetilde\EE)$ for all $n$, and taking the projective limit topology on $\widetilde\A$; $\widetilde\A^+$ is given the subspace topology.  The \emph{weak topology} is defined by putting the valuation topology on $\widetilde\EE$ and giving $\widetilde\A$ the product topology; $\widetilde\A^+$ is again given the subspace topology.

Alternatively, the weak topology on $\widetilde\A$ is given by taking the sets
\[	U_{k,n}:=p^k\widetilde\A+[\widetilde p]^n\widetilde\A^+\text{ for }k,n\geq 0	\]
to be a basis of neighborhoods around $0$, where $\widetilde p\in\widetilde\EE^+$ is any fixed element with $\widetilde p^{(0)}=p$ (i.e., $\widetilde p$ is a system of compatible $p$-power roots of $p$).  The weak topology on $\widetilde\A^+$ is similarly generated by the sets $U_{k,n}\cap\widetilde\A^+ = p^k\widetilde\A^++[\widetilde p]^n\widetilde\A^+$.  This is the same as the $(p,[\widetilde p])$-adic topology on $\widetilde\A^+$.  Since $\widetilde p$ and $\overline\pi$ are both pseudo-uniformizers of $\widetilde\EE$, this is also the same as the $(p,\pi)$-adic and $(p,[\overline\pi])$-adic topologies.

Both rings carry continuous actions of Frobenius (for either topology).  However, the Galois action is continuous for the weak topology, but it is not continuous for the $p$-adic topology because the Galois actions on $\widetilde\EE^+$ and $\widetilde\EE$ are not discrete.

For $r>0$, we define
\[	\widetilde\A^{(0,r]}:=\left\{x=\sum_{k=0}^\infty p^k[x_k]\in\widetilde\A : \lim_{k\rightarrow\infty}(v_{\EE}(x_k)+k/r)=\infty\right\}	\]
We equip $\widetilde\A^{(0,r]}$ with the valuation $v^{(0,r]}(x):=\inf_{k\geq 0}\{v_{\EE}(x_k)+k/r\}$; this valuation makes it into a Banach algebra with valuation ring
\[	\widetilde\A^{\dagger,s(r)}:=\left\{x=\sum_{k=0}^\infty p^k[x_k]\in\widetilde\A : v_{\EE}(x_k)+\frac{ps(r)k}{p-1}\geq 0\text{ for all }k, \lim_{k\rightarrow\infty}(v_{\EE}(x_k)+\frac{ps(r)k}{p-1})=\infty\right\}	\]
where $s(r):=(p-1)/pr$.  We also define a valuation $v_s$ on $\widetilde\A^{\dagger,s}$ via $v_s(x):=\inf_{k\geq 0}\{v_\EE(x_k)+psk/(p-1)\}$.

We define $\widetilde\B^+:=\widetilde\A^+[1/p]$, $\widetilde\B:=\widetilde\A[1/p]$, and $\widetilde\B^{\dagger,s}:=\widetilde\A^{\dagger,s}[1/p]$.  Then we can extend $v_s$ to a valuation on $\widetilde\B^{\dagger,s}$.  In fact, if $s'\geq s$, then $v_{s'}$ extends to $\widetilde\B^{\dagger,s}$, and we define $\widetilde\B_{\rig}^{\dagger,s}$ to be the Fr\'echet completion of $\widetilde\B^{\dagger,s}$ with respect to $v_{s'}$ for all $s'\geq s$.

There are versions of all of these rings with no tilde; they are imperfect versions of the rings with tildes.

Let $\pi\in\widetilde\A$ denote $[\varepsilon]-1$, where $[\varepsilon]$ denotes the Teichm\"uller lift of $\varepsilon$.  Then there is a well-defined injective map $\mathcal{O}_F[\![X]\!][X^{-1}]\rightarrow \widetilde\A$ given by sending $X$ to $\pi$; we let $\A_F$ denote the $p$-adic completion of the image.  This is a Cohen ring for $\EE_F$.

Let $\B_F:=\A_F[1/p]$.  Then $\B_F$ is a field, and we let $\B$ be the completion of the maximal unramified extension of $\B_F$ inside $\widetilde\B$.  Further, we put $\A:=\B\cap \widetilde\A$, $\A^+:=\A\cap\widetilde\A^+$, $\B^{\dagger,s}:=\B\cap\widetilde\B^{\dagger,s}$, and $\A^{\dagger,s}:=\A\cap\widetilde\A^{\dagger,s}$.  Alternatively, $\A$ is the completion of the integral closure of the image of $\Z_p[\![X]\!][X^{-1}]$ in $\widetilde\A$.

Because extensions of $\EE_F$ correspond to unramified extensions of $\B_F$, we get a natural Galois action on $\A$ and $\A^+$.  For any extension $K/\Qp$, we may therefore define $\A_K:=\A^{H_K}$ and $\B_K:=\A_K[1/p]$.  When $K$ is unramified over $\Qp$, this agrees with our original definition of these rings.  

We also set $\B_K^{\dagger,s}:=(\B^{\dagger,s})^{H_K}$.  By~\cite[Proposition 7.5]{colmez08}, if $F'\subset K$ is the maximal unramified subfield of $K$ and $s\gg 0$ (and depending on $K$), then $\B_K^{\dagger,s}$ is isomorphic to the ring of bounded functions $f(T)\in F'[\![T]\!]$ converging on the annulus $0<v_p(T)\leq 1/s$.  We let $\B_{\rig,K}^{\dagger,s}$ be its Fr\'echet completion with respect to the (restrictions of) the valuations $v_s$; for $s\gg 0$, it is isomorphic to the ring of all functions $f(T)\in F'[\![T]\!]$ converging on the annulus $0<v_p(T)\leq 1/s$.  These rings are equipped with actions of $\Gamma_K$ and with Frobenius maps $\varphi:\B_K^{\dagger,s}\rightarrow\B_K^{\dagger,ps}$ and $\varphi:\B_{\rig,K}^{\dagger,s}\rightarrow\B_{\rig,K}^{\dagger,ps}$.

In fact, we can describe $\A_K$ more explicitly: If $F'\subset K$ is the maximal unramified subfield of $K$, then every element of $\A_K$ can be written uniquely in the form $\sum_{k\in\Z}a_k\pi_K^k$, where $\{a_k\}_{k\in\Z}\in\cO_{F'}$ is a sequence tending to $0$ as $k$ tends to $-\infty$ and $\pi_K$ is a (non-canonical) lift of a uniformizer of $\EE_K$~\cite[Proposition 7.1]{colmez08}.  The subring $\{\sum_{k\geq 0}a_k\pi_K^k\}$ is contained in $\A_K^+$; on the other hand, if $f=\sum_{k\in\Z}a_k\pi_K^k\in\A_K^+$, then $f\in\widetilde{\A}^+$ and therefore $a_k\equiv 0\pmod p$ for $k<0$ (since $\pi_K^{-1}\in\EE_K\smallsetminus\EE_K^+$). But $p\widetilde \A\cap \widetilde\A^+=p\widetilde\A^+$ (which can be seen by considering Witt vector expansions), so $\sum_{n<0}p^{-1}a_n\pi_L^n\in \A_L\cap \widetilde\A^+=\A_L^+$.  Iterating this argument, we have shown the following:

\begin{corollary}
Each element of $\A_L^+$ can be written uniquely in the form $\sum_{n\geq 0}a_n\pi_L^n$ for $a_n\in\mathcal{O}_{F'}$.
\end{corollary}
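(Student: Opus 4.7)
The proof plan is essentially to formalize the iteration that the paragraph preceding the corollary already sketches. The key input is the unique expansion $f = \sum_{k \in \Z} a_k \pi_L^k$ with $a_k \in \cO_{F'}$ and $a_k \to 0$ as $k \to -\infty$, available for every element $f \in \A_L$. My goal is to show that if $f \in \A_L^+$, then $a_k = 0$ for all $k < 0$; uniqueness is then inherited from the uniqueness of the expansion in $\A_L$, and the fact that the non-negative tails $\sum_{k\geq 0} a_k \pi_L^k$ genuinely lie in $\A_L^+$ is already established in the discussion.

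First, I would isolate the \emph{one-step reduction}, which is exactly the content of the preceding paragraph: if $f = \sum_{k \in \Z} a_k \pi_L^k \in \A_L^+$, then $f \in \widetilde{\A}^+$; reducing modulo $p$ gives an element of $\widetilde\EE^+$, while $\pi_L^{-1}$ projects to an element of $\EE_L \smallsetminus \EE_L^+$. Consequently each $a_k$ with $k < 0$ must satisfy $a_k \equiv 0 \pmod p$. Writing $a_k = p a_k'$ for $k < 0$, the element $g := \sum_{k<0} a_k' \pi_L^k$ lies in $\widetilde{\A}$ and is characterized by $p g = f - \sum_{k \geq 0} a_k \pi_L^k \in \widetilde\A^+$; using $p\widetilde\A \cap \widetilde\A^+ = p \widetilde\A^+$ (verified via Witt-vector coordinates) one concludes $g \in \widetilde\A^+$. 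Since also $g \in \A_L$, we obtain $g \in \A_L \cap \widetilde\A^+ = \A_L^+$.

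Now I would iterate this reduction. Starting from $f_0 := f \in \A_L^+$ with coefficients $a_k^{(0)} := a_k$, the one-step reduction produces an element $f_1 := \sum_{k<0} p^{-1} a_k^{(0)} \pi_L^k \in \A_L^+$ whose expansion has coefficients $a_k^{(1)} = p^{-1} a_k^{(0)}$ for $k < 0$ (and zero for $k \geq 0$). Applying the one-step reduction to $f_1$ gives $a_k^{(1)} \equiv 0 \pmod p$ for $k < 0$, so $a_k^{(0)} \in p^2 \cO_{F'}$. Inductively $a_k^{(0)} \in p^m \cO_{F'}$ for every $m \geq 1$ and every $k < 0$, and since $\cO_{F'}$ is $p$-adically separated we conclude $a_k = 0$ for all $k < 0$.

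The only potentially subtle point is the base case that $f \in \widetilde\A^+$ forces $a_k \equiv 0 \pmod p$ for $k < 0$; this uses that $\pi_L$ lifts a uniformizer of $\EE_L$ together with the fact that a Witt-vector expansion in $W(\widetilde\EE)$ lies in $W(\widetilde\EE^+)$ if and only if each Witt coordinate is in $\widetilde\EE^+$. Once this is granted, the rest is a clean descent. Combined with the already-noted inclusion $\{\sum_{k \geq 0} a_k \pi_L^k\} \subseteq \A_L^+$ and uniqueness of the $\A_L$-expansion, this yields the claimed unique representation of elements of $\A_L^+$.
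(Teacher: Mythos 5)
Your proposal is correct and follows essentially the same route as the paper: the paper's proof consists precisely of the one-step reduction in the paragraph preceding the corollary (negative coefficients are divisible by $p$ since $f\bmod p\in\widetilde\EE^+$ while $\pi_L^{-1}\notin\EE_L^+$, then $p\widetilde\A\cap\widetilde\A^+=p\widetilde\A^+$ puts the divided tail back in $\A_L^+$), followed by the remark ``iterating this argument,'' which you have simply written out in full, concluding via $p$-adic separatedness of $\cO_{F'}$.
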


We also get an action of Frobenius induced from that on $\widetilde\A$; however, because $\EE$ is imperfect, the action of Frobenius is no longer surjective.

If $R$ is a complete local noetherian $\Z_p$-algebra with maximal ideal $\mathfrak{m}_R$ and finite residue field, we define $R\htimes\A$, $R\htimes\A^+$, $R\htimes\A_{F}$, and $R\htimes\A_F^+$ to be the tensor products over $\Z_p$, completed with respect to $\mathfrak{m}_R$.  If $R$ is discrete, these completed tensor products become ordinary tensor products over $\Z_p$.

\begin{remark}
It would be more natural to define $R\htimes\A^+$ (resp. $R\htimes\A_F^+$) to be the tensor products over $\Zp$ completed $\mathfrak{m}_R\otimes \A^++R\otimes (p,\pi)$-adically.  In fact, completing $\mathfrak{m}_R$-adically and $\mathfrak{m}_R\otimes \A^++R\otimes (p,\pi)$-adically yield isomorphic rings.

Indeed, $p\in\mathfrak{m}_R$ and $\A^+/p^n$ is flat over $\Z/p^n$.  Thus, if
\[	(\Z/p^n)^{d_n'}\rightarrow (\Z/p^n)^{d_n}\rightarrow R/\mathfrak{m}_R^n\rightarrow 0	\]
is a finite presentation of $R/\mathfrak{m}_R^n$, then
\[	(\Z/p^n)^{d_n'}\otimes_{\Z/p^n}(\A^+/p^n)\rightarrow (\Z/p^n)^{d_n}\otimes_{\Z/p^n}(\A^+/p^n) \rightarrow (R/\mathfrak{m}_R^n)\otimes_{\Z/p^n}(\A^+/p^n)\rightarrow 0	\]
is exact.  Since $(R/\mathfrak{m}_R^n)\otimes_{\Z/p^n}(\A^+/p^n)$ has no $\pi$-torsion,
\[	(\Z/p^n)^{d_n'}\otimes_{\Z/p^n}(\A^+/(p^n,\pi^{n'}))\rightarrow (\Z/p^n)^{d_n}\otimes_{\Z/p^n}(\A^+/(p^n,\pi^{n'})) \rightarrow (R/\mathfrak{m}_R^n)\otimes_{\Z/p^n}(\A^+/(p^n,\pi^{n'}))\rightarrow 0	\]
is exact for all $n'\geq 1$.  Moreover, as $n'$ varies, the transition maps $\A^+/(p^n,\pi^{n'+1})\rightarrow \A^+/(p^n,\pi^{n'})$ are surjective.  The Mittag-Leffler criterion implies that
\[	(\Z/p^n)^{d_n'}\otimes_{\Z/p^n}(\A^+/p^n)\rightarrow (\Z/p^n)^{d_n}\otimes_{\Z/p^n}(\A^+/p^n) \rightarrow \varprojlim_{n'}(R/\mathfrak{m}_R^n)\otimes_{\Z/p^n}(\A^+/(p^n,\pi^{n'}))\rightarrow 0	\]
is exact, so the natural map $(R/p^n)\otimes_{\Z/p^n}(\A^+/p^n)\rightarrow \varprojlim_{n'}(R/\mathfrak{m}_R^n)\otimes_{\Z/p^n}(\A^+/(p^n,\pi^{n'}))$ is an isomorphism.
\end{remark}

This permits us to define $\D(M)$ and $\D^+(M)$ for $R$-linear representations of $G_{\Qp}$ as follows:
\[\D(M)=\left((R\htimes\A)\otimes_R M \right)^{H_{\Qp}} \mbox{ and } \D^+(M)=\left((R\htimes\A^+)\otimes_R M \right)^{H_{\Qp}}.\]
Note that there are canonical isomorphisms
\begin{equation}
\label{f:Dinverselimits} \D(M)\cong \varprojlim_n \D(M/\mathfrak{m}_R^nM) \mbox{ and } \D^+(M)\cong \varprojlim_n \D^+(M/\mathfrak{m}_R^nM).
\end{equation}

A consideration of Witt coordinates shows that the cokernel $C$ of the injection $\widetilde{\A}^+\hookrightarrow\widetilde{\A}$ has no $p$-torsion, and is therefore $\Z_p$-flat.  It follows that the cokernel of $\A^+\hookrightarrow\A$ is also $\Z_p$-flat.  As a consequence, the natural maps $R\htimes\widetilde{\A}^+\rightarrow R\htimes\widetilde{\A}$ and $R\htimes\A^+\rightarrow R\htimes\A$ are injective.

We record the following useful lemma:
\begin{lemma}\label{lemma:a-a+-mult}
Let $R$ be a discrete $\Z_p$-algebra, and let $I\subset R$ be an ideal.  Then
\[	I(R\otimes_{\Z_p}\A)\cap(R\otimes_{\Z_p}\A^+)=I(R\otimes_{\Z_p}\A^+)	\]
inside $R\otimes_{\Z_p}\A$.
\end{lemma}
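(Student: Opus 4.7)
The plan is to exploit the $\Z_p$-flatness of $C := \A/\A^+$ (recorded in the paragraph preceding the lemma) together with the $\Z_p$-flatness of $\A^+$ and $\A$. One inclusion is immediate from the injection $\A^+ \hookrightarrow \A$, so I focus on the reverse containment.

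For this, I would set up the short exact sequence
\[	0 \to \A^+ \to \A \to C \to 0	\]
and tensor it over $\Z_p$ with the exact sequence $0 \to I \to R \to R/I \to 0$, producing a commutative diagram whose rows are
\[	0 \to R\otimes_{\Z_p}\A^+ \to R\otimes_{\Z_p}\A \to R\otimes_{\Z_p} C \to 0	\]
and
\[	0 \to (R/I)\otimes_{\Z_p}\A^+ \to (R/I)\otimes_{\Z_p}\A \to (R/I)\otimes_{\Z_p} C \to 0	\]
both of which are short exact because $\Tor_1^{\Z_p}(-,C)=0$. Because $\A^+$ and $\A$ are $\Z_p$-flat, the kernel of the vertical map $R\otimes_{\Z_p}\A^+ \to (R/I)\otimes_{\Z_p}\A^+$ is exactly $I(R\otimes_{\Z_p}\A^+)$, and similarly with $\A^+$ replaced by $\A$.

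A short diagram chase then concludes: given $x \in I(R\otimes_{\Z_p}\A) \cap (R\otimes_{\Z_p}\A^+)$, its image in $(R/I)\otimes_{\Z_p}\A$ vanishes, so by the injectivity of $(R/I)\otimes_{\Z_p}\A^+ \hookrightarrow (R/I)\otimes_{\Z_p}\A$ its image in $(R/I)\otimes_{\Z_p}\A^+$ also vanishes, i.e.\ $x \in I(R\otimes_{\Z_p}\A^+)$. The only delicate point is this injectivity on the bottom row, which is precisely what the $\Z_p$-flatness of $C$ supplies; beyond that, the argument is standard homological bookkeeping and I do not anticipate any further obstacle.
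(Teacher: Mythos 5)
Your proof is correct, and it reaches the conclusion by a somewhat different route than the paper, although both arguments hinge on the same key input, namely the $\Z_p$-flatness of the cokernel $C=\A/\A^+$. The paper argues element-wise: it writes $f=\sum_i r_if_i$ with $r_i\in I$, observes that $\sum_i r_i\overline{f}_i=0$ in $R\otimes_{\Z_p}C$, and invokes the equational criterion for flatness (Matsumura, Theorem 7.6) for the $R$-flat module $R\otimes_{\Z_p}C$ to produce elements $\overline{f}'_j$ and coefficients $r'_{ij}\in R$ with $\overline{f}_i=\sum_j r'_{ij}\overline{f}'_j$ and $\sum_i r_ir'_{ij}=0$; lifting the $\overline{f}'_j$ and subtracting then exhibits $f$ as an element of $I(R\otimes_{\Z_p}\A^+)$. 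You instead use the vanishing $\Tor_1^{\Z_p}(R/I,C)=0$ to obtain injectivity of $(R/I)\otimes_{\Z_p}\A^+\to(R/I)\otimes_{\Z_p}\A$, reduce modulo $I$, and chase the diagram. These are the two standard proofs of the general fact that $IM\cap N=IN$ whenever $N\subseteq M$ has flat quotient; yours is arguably cleaner in that it avoids the explicit relation-lifting. One minor imprecision: the identification $\ker\bigl(R\otimes_{\Z_p}\A^+\to(R/I)\otimes_{\Z_p}\A^+\bigr)=I(R\otimes_{\Z_p}\A^+)$ follows from right-exactness of the tensor product alone and does not use the flatness of $\A^+$; the flatness that genuinely matters is that of $C$, exactly as you identify at the end.
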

\begin{proof}
Suppose that $f:=r_1f_1+\ldots+r_kf_k\in R\otimes_{\Z_p}\A^+$, where $r_i\in I$ and $f_i\in\A$, and let $\overline f_i$ denote the image of $f_i$ in the cokernel $R\otimes_{\Z_p}C$ of the map $R\otimes_{\Z_p}\A^+\rightarrow R\otimes_{\Z_p}\A$.  By assumption, $\sum_ir_i\overline f_i=0$, so there exist $\overline f_j'\in R\otimes_{\Z_p}C$ and $r_{ij}'\in R$ such that $\overline f_i=\sum_j r_{ij}'\overline f_j'$ for all $i$ and $\sum_ir_ir_{ij}'=0$ by~\cite[Theorem 7.6]{matsumura} (since $R\otimes_{\Z_p}C$ is $R$-flat).  We may choose $\widetilde f_j'\in\A$ lifting $\overline f_j'$; setting $f_i':=\sum_jr_{ij}'\widetilde f_j'$, we have $f_i-f_i'\in\A^+$ for all $i$ and $\sum_ir_i(f_i-f_i')=\sum_ir_if_i=f$ so we are done.
\end{proof}
\begin{remark}
This does not imply that if $f\in\A$ satisfies $rf\in\A^+$ then $f\in\A^+$, merely that there is some $f'\in\A^+$ such that $rf=rf'$.
\end{remark}

Furthermore, $\A/p$ is the separable closure of $\A_F/p=k(\!(\pi)\!)$, where $k$ is the residue field of $F$, and $\A^+/p$ is its ring of integers~\cite[\textsection 1.8(a)]{fontaine90}.

\begin{lemma}\label{lemma:a-a-k}
\begin{enumerate}
\item	Let $M$ be a $\Zp$-module annihilated by $p^n$ for some $n\geq 1$.  Then for any finite extension $K/F$, the natural map $R\otimes_{\Z_p}\A_K^+\rightarrow (R\otimes_{\Z_p}\A^+)^{H_K}$ is an isomorphism.
\item	If $R$ is a discrete $\Z/p^n$-algebra and $I\subset R$ is an ideal, then $(IR\otimes_{\Z_p}\A^+)^{H_K}=IR\otimes_{\Z_p}\A_K^+$.
\item	If $M$ is a $\Zp$-module annihilated by $p^n$ for some $n\geq 1$, then every element of $M\otimes_{\Z_p}\A^+$ is contained in $M\otimes_{\Z_p}\A_K^+$ for some finite extension $K/F$.
\end{enumerate}
\end{lemma}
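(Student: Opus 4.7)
My plan is to prove (3) first, deduce (1) from it by combining a $\Z_p$-flatness check with étale Galois descent, and obtain (2) as an immediate consequence. For (3), since $M$ is annihilated by $p^n$ I have $M\otimes_{\Z_p}\A^+=M\otimes_{\Z/p^n}(\A^+/p^n)$, so the task reduces to proving
\[	\A^+/p^n=\bigcup_{K'/F\text{ finite}}\A_{K'}^+/p^n.	\]
For $n=1$ this is immediate from the cited fact that $\A^+/p$ is the integer ring of the separable closure of $\EE_F=k_F(\!(\overline\pi)\!)$, which is visibly the filtered union of its finite separable subextensions (whose integer rings give the various $\A_{K'}^+/p$). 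For the step from $n-1$ to $n$ I use the $\Z_p$-flat short exact sequence $0\to\A^+/p^{n-1}\xrightarrow{\cdot p}\A^+/p^n\to\A^+/p\to 0$: given $x\in\A^+/p^n$, pick a finite $L_1/F$ so that the image of $x$ in $\A^+/p$ lies in $\A_{L_1}^+/p$, lift this image to some $\widetilde x\in\A_{L_1}^+/p^n$, write $x-\widetilde x=py$ with $y\in\A^+/p^{n-1}$, apply the inductive hypothesis to place $y$ in some $\A_{L_2}^+/p^{n-1}$, and conclude with $K'=L_1L_2$. A general element of $M\otimes\A^+$ is a finite sum, so it lies in $M\otimes\A_{K'}^+$ once $K'$ contains all finitely many fields produced this way.

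For (1) (which I read as applying to a coefficient ring $R$ that is itself a $\Z_p$-module killed by $p^n$, matching the displayed formula), injectivity of $R\otimes_{\Z_p}\A_K^+\to R\otimes_{\Z_p}\A^+$ reduces to the $\Z_p$-flatness of the quotient $\A^+/\A_K^+$: if $x\in\A^+$ satisfies $px\in\A_K^+\subset\A_K=\A^{H_K}$, then $p\sigma(x)=px$ for every $\sigma\in H_K$ forces $\sigma(x)=x$, so $x\in\A^+\cap\A_K=\A_K^+$. To identify the image with the $H_K$-invariants I combine (3) with étale Galois descent. Writing $\A^+/p^n=\varinjlim_L\A_L^+/p^n$ as a filtered colimit over finite Galois extensions $L/K$ (taking Galois closures if necessary), tensor products commute with this colimit, and the $H_K$-action on each $R\otimes_{\Z/p^n}(\A_L^+/p^n)$ factors through the finite quotient $\Gal(L/K)=H_K/H_L$, so $H_K$-invariants also commute with the colimit. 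It therefore suffices to establish, for each such $L$, the descent identity
\[	\bigl(R\otimes_{\Z/p^n}(\A_L^+/p^n)\bigr)^{\Gal(L/K)}=R\otimes_{\Z/p^n}(\A_K^+/p^n).	\]
The extension $\A_L^+/\A_K^+$ is finite étale — the residue field extension $\EE_L/\EE_K$ is separable since both embed into $\EE$, and $p$ is a uniformizer in both rings — and Galois with group $\Gal(L/K)$ and fixed ring $\A_K^+$ (using Fontaine's identification $\A^{H_K}=\A_K$ intersected with $\widetilde\A^+$); these properties persist modulo $p^n$, and classical étale descent $(N\otimes_{\A_K^+/p^n}\A_L^+/p^n)^{\Gal(L/K)}=N$ applied to $N=R\otimes_{\Z/p^n}(\A_K^+/p^n)$ gives exactly the required equality.

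Finally, (2) is the special case of (1) applied to the $\Z_p$-module $M=I\subset R$, which is killed by $p^n$ since $R$ is a $\Z/p^n$-algebra, together with the tautology $IR\otimes_{\Z_p}\A^+=I\otimes_{\Z_p}\A^+$. I expect the main technical obstacle to be the étale descent step: one must carefully pin down that $\A_L^+$ is indeed finite étale Galois over $\A_K^+$ with group $\Gal(L/K)$ (rather than merely a finite extension of complete DVRs, with the wild part of ramification perhaps clouding the picture), and verify that continuity of the $H_K$-action really does make invariants commute with the filtered colimit $\varinjlim_L\A_L^+/p^n$ so that the descent identity can be transported through the colimit.
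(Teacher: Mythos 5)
Your proof of (3) and your injectivity argument in (1) are fine (and your treatment of (3) via the union $\A^+/p^n=\bigcup_{K'}\A_{K'}^+/p^n$ is, if anything, cleaner than the paper's). The gap is in the surjectivity step of (1), precisely at the point you yourself flagged as the main risk: the extension $\A_K^+\to\A_L^+$ is \emph{not} finite \'etale in general. Your justification confuses $\A_K^+$ with $\A_K$. The ring $\A_K$ is a complete DVR with uniformizer $p$ and residue field $\EE_K$, and $\A_K\to\A_L$ is indeed \'etale since $\EE_L/\EE_K$ is separable. But $\A_K^+\cong\cO_{F'}[\![\pi_K]\!]$ is a two-dimensional local ring with maximal ideal $(p,\pi_K)$; its reduction mod $p$ is the valuation ring $\EE_K^+$, not the field $\EE_K$, and $\EE_K^+\to\EE_L^+$ is ramified --- in fact wildly ramified --- whenever $L_\infty/K_\infty$ is (by the field-of-norms correspondence the ramification of $\EE_L/\EE_K$ matches that of $L_\infty/K_\infty$, and since $K/F$ is an arbitrary finite extension this ramification is unavoidable). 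For a non-\'etale $G$-extension the descent identity $(N\otimes_AB)^G=N$ genuinely fails: take $A=\F_p[\![t]\!]\subset B=\F_p[\![s]\!]$ an Artin--Schreier extension with $t\in s^2(1+s\F_p[\![s]\!])$ and $N=A/t$; then $G$ acts trivially on $B/tB$, so $(N\otimes_AB)^G=B/tB$ has length $2$ while $N$ has length $1$. So the step you invoke is not available as stated.

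The identity you need, $\bigl(R\otimes_{\Z/p^n}(\A_L^+/p^n)\bigr)^{\Gal(L/K)}=R\otimes_{\Z/p^n}(\A_K^+/p^n)$, is nevertheless true, but it has to be proved by d\'evissage rather than by descent over $\A_K^+/p^n$. This is what the paper does: for $n=1$ choose an $\F_p$-basis of $R$, so that taking $H_K$-invariants commutes with $R\otimes_{\F_p}(-)$ and the claim reduces to $(\A^+/p)^{H_K}=(\EE^+)^{H_K}=\EE_K^+=\A_K^+/p$; for general $n$ apply the snake lemma to the map of short exact sequences induced by $0\to pM\to M\to M/p\to 0$ tensored with $\A_K^+$ resp.\ $\A^+$, using left-exactness of invariants on the bottom row. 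Replacing your \'etale-descent paragraph with this induction (your reduction to finite level $L$ via part (3) then becomes unnecessary) repairs the proof; part (2) then follows as the special case you describe.
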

\begin{proof}
\begin{enumerate}
\item	We proceed by induction on $n$.  If $n=1$, the result follows by choosing a basis for $R$ over $\F_p$ and using the fact that $\A^+/p$ is the ring of integers of a separable closure of $\A_F/p$.  So we may assume the result for $n-1$ and consider the commutative diagram
\[
\xymatrix{
0\ar[r] & pM\otimes_{\Z_p}\A_K^+\ar[r]\ar[d] & M\otimes_{\Z_p}\A_K^+\ar[r]\ar[d] & (M/p)\otimes_{\Z_p}\A_K^+\ar[r]\ar[d] & 0 \\
0\ar[r] & (pM\otimes_{\Z_p}\A^+)^{H_K}\ar[r] & (M\otimes_{\Z_p}\A^+)^{H_K}\ar[r] & ((M/p)\otimes_{\Z_p}\A^+)^{H_K}
}
\]
By the inductive hypothesis, the homomorphisms $pM\otimes_{\Zp}\A_K\rightarrow(pM\otimes_{\Z_p}\A^+)^{H_K}$ and $(M/p)\otimes_{\Z_p}\A_K^+\rightarrow((M/p)\otimes_{\Z_p}\A^+)^{H_K}$ are isomorphisms.  The snake lemma then implies that the map $M\otimes_{\Z_p}\A_K^+\rightarrow(M\otimes_{\Z_p}\A^+)^{H_K}$ is an isomorphism, as well.
%
\item	This is a special case of the first part of this lemma.
\item	We proceed by induction on $n$.  If $n=1$, then $M\otimes_{\Z_p}\A^+=\varinjlim_K M\otimes_{\F_p}(\A^+/p)^{H_K}$ and the result follows.  We now assume the result holds for $n-1$.  Consider $f\in M\otimes_{\Z_p}\A^+$, and let $\overline f$ denote its image in $(M/p)\otimes_{\Z_p}\A^+$.  By the inductive hypothesis, $\overline f\in (M/p)\otimes_{\Z_p}\A_K^+$ for some finite extension $K/F$, and $\overline f$ lifts to $\widetilde f\in M\otimes_{\Z_p}\A_K^+$.  Then $f-\widetilde f$ lies in the finite $(\Z/p^{n-1})\otimes_{\Z_p}\A^+$-module $pM\otimes_{\Z_p}\A^+$, so we see that $f-\widetilde f\in M\otimes_{\Z_p}\A_{K'}^+$ for some finite extension $K'/F$ and we are done.
\end{enumerate}
\end{proof}

\section{Families of Wach modules}\label{families}

We are interested in Galois representations with coefficients in rings which are quotients of power series rings over $\Zp$.  Taking the generic fiber of the formal scheme associated to such a ring yields a rigid analytic space, so we will also need to consider Galois representations in rigid analytic families.  Such families have been studied by a number of authors, including~\cite{berger-colmez}, \cite{kedlaya-liu08}, and \cite{bellovin13}.  We will use the sheaves of period rings given in~\cite{bellovin13}.

More precisely, we make the following definition:
 \begin{definition}
  Let $K/\Q_p$ be a finite extension.  If $R$ is a complete local noetherian $\Zp$-algebra with finite residue field and the profinite topology, a \emph{family of $G_{K}$-representations} over $R$ is a finite free $R$-module $M$ endowed with a continuous $R$-linear action of $G_{K}$.  If $X$ is a rigid analytic space over $\Qp$, a \emph{family of $G_{K}$-representations} over $X$ is a finite projective $\mathcal{O}_X$-module endowed with a continuous $\mathcal{O}_X$-linear action of $G_{K}$.
 \end{definition}

 Note that to a family $M$ of $G_{K}$-representations over $R$ we can naturally assign a family $M^\rig$ of $G_{K}$-representations over $X=\Spf(R)^{\rig}$; see Appendix \ref{sec:app1} for details.

For any rigid analytic space $X$ over $\Qp$, we further define the sheaves of rings $\mathscr{B}_{X,\max}^+$ and $\mathscr{B}_{X,\max}$ by setting $\mathscr{B}_{X,\max}^+(U):=\mathcal{O}_X(U)\widehat\otimes_{\Qp}\mathbf{B}_{\max}^+$ and $\mathscr{B}_{X,\max}(U):=\cup_{i\in\NN}t^{-i}\mathscr{B}_{X,\max}^+(U)$.  If $M$ is a family of $G_{\Qp}$-representations over $X$, we define a sheaf of abelian groups $\mathscr{D}_{\cris}(M)$ by setting $\mathscr{D}_{\cris}(M)(U):=\left(M(U)\widehat\otimes_{\mathcal{O}_X(U)}\mathscr{B}_{X,\max}(U)\right)^{G_{K}}$; we let $\mathbf{D}_{\cris}(M)$ denote $\mathscr{D}_{\cris}(M)(X)$.  We say that $M$ is \emph{crystalline} if $\mathscr{D}_{\cris}(M)$ is a vector bundle over $\mathcal{O}_X$ of the same rank as $M$ and the natural map $\mathscr{B}_{X,\max}\otimes_{\mathcal{O}_X}\mathscr{D}_{\cris}(M)\rightarrow \mathscr{B}_{X,\max}\otimes_{\mathcal{O}_X}M$ is an isomorphism.

 We now fix an interval $[a, b]$ of integers.

 \begin{proposition}\label{prop:crisfam} Suppose that $R[1/p]$ is reduced, and let $M$ be a {family of $G_{K}$-representations} over $R$. Then the
   following are equivalent:
  \begin{enumerate}
   \item For every homomorphism $R \to E'$, where $E' / E$ is a finite extension, the representation $E' \otimes_R M$ is crystalline with Hodge--Tate weights in $[a,b]$.
   \item For every homomorphism $R \to S$, where $S$ is a finite flat $\Z_p$-algebra, the representation $S \otimes_R M[1/p]$ is crystalline with Hodge--Tate weights in $[a, b]$.
  \end{enumerate}
 \end{proposition}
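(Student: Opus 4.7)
My plan is to prove (2)$\Rightarrow$(1) directly and (1)$\Rightarrow$(2) by a Zariski-closedness-plus-density argument on the rigid analytic generic fibre, following the strategy of~\cite{kisin-pst-def-rings}. For (2)$\Rightarrow$(1): any continuous $\Zp$-algebra homomorphism $R\to E'$ from the compact ring $R$ has bounded image and hence factors through the ring of integers $\cO_{E'}\subset E'$; applying~(2) with $S=\cO_{E'}$, which is finite flat over $\Zp$, gives that $\cO_{E'}\otimes_R M[1/p]=E'\otimes_R M$ is crystalline with Hodge--Tate weights in $[a,b]$.

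For (1)$\Rightarrow$(2), I would pass to the rigid analytic generic fibre $X:=\Spf(R)^{\rig}$ and the associated family $M^{\rig}$ (see Appendix~\ref{sec:app1}). The technical core is the assertion that the locus on $X$ where $M^{\rig}$ is crystalline with Hodge--Tate weights in $[a,b]$ is Zariski closed. Using the sheaf-theoretic $\mathscr{D}_{\cris}(M^{\rig})$ of~\cite{bellovin13}, which is coherent of rank at most $d:=\mathrm{rk}\,M$, this locus is cut out by the simultaneous vanishing of three coherent ideal sheaves: the appropriate Fitting ideal forcing $\mathscr{D}_{\cris}(M^{\rig})$ to have maximal rank $d$; the obstruction to the comparison map $\mathscr{B}_{X,\max}\otimes_{\cO_X}\mathscr{D}_{\cris}(M^{\rig})\to\mathscr{B}_{X,\max}\otimes_{\cO_X}M^{\rig}$ being an isomorphism; and the obstruction to the Hodge filtration (on $\mathscr{D}_{\cris}(M^{\rig})$, or equivalently $\mathscr{D}_{\mathrm{HT}}(M^{\rig})$) being supported in $[a,b]$. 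Let $\mathscr{I}\subset\cO_X$ be the resulting coherent ideal.

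Condition~(1) then says that $\mathscr{I}$ vanishes at every classical point of $X$ (which correspond precisely to the $\Zp$-algebra maps $R\to\cO_{E'}$). Since $R[1/p]$ is reduced and classical points are Zariski dense in the rigid generic fibre of a complete local noetherian $\Zp$-algebra, it follows that $\mathscr{I}=0$, i.e.\ $M^{\rig}$ is globally crystalline with Hodge--Tate weights in $[a,b]$. To deduce~(2), a homomorphism $R\to S$ to a finite flat $\Zp$-algebra induces a rigid analytic morphism $\Sp(S[1/p])\to X$ (possibly landing in a non-reduced ``fat point'' of $X$), and pulling back global crystallinity yields that $S\otimes_R M[1/p]=S[1/p]\otimes_{R[1/p]}M[1/p]$ is crystalline with Hodge--Tate weights in $[a,b]$. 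The main obstacle I anticipate is the rigorous verification of Zariski-closedness for the crystallinity-with-bounded-weights locus in this formal/rigid setting---specifically, that $\mathscr{D}_{\cris}$ is coherent and compatible with the relevant base changes, and that both the comparison isomorphism and the Hodge-filtration bound genuinely define closed subschemes. This is exactly the technical content carried by the period-sheaf machinery of~\cite{bellovin13} (and implicit in~\cite{berger-colmez}).
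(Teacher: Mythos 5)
Your proposal is correct and follows essentially the same route as the paper: the paper's proof of $(1)\Rightarrow(2)$ also passes to the affinoid pieces of $\Spf(R)^{\rig}$, invokes the Zariski-closedness of the crystalline-with-weights-in-$[a,b]$ locus (citing \cite[Theorem 1.1.3]{bellovin13} for exactly the closed subvariety you sketch constructing by hand), and concludes from reducedness that this locus is everything, while $(2)\Rightarrow(1)$ is dismissed as clear via the factorization through $\cO_{E'}$ that you spell out. The only difference is that you propose to re-derive the closedness via Fitting ideals rather than cite it, which you correctly flag as the technical content already supplied by the period-sheaf machinery of \cite{bellovin13}.
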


 (The same holds if ``crystalline'' is replaced by ``semistable'', although we shall not need this.)

 \begin{proof}
  The implication (2) $\Rightarrow$ (1) is clear.
  To show that $(1) \Rightarrow (2)$ we will use the results of \cite{bellovin13}. Suppose we are given a homomorphism $R \to S$ as in $(2)$. Let $R'$ be an affinoid algebra such that there is a natural flat homomorphism $R \to R'$ with dense image. By Theorem 1.1.3 of \emph{op.cit} we know that there is a unique closed rigid-analytic subvariety of $X' = \Sp (R')$ such that a homomorphism from $R'$ to an artinian $E$-algebra factors through this quotient if and only if the corresponding specialization of $M$ is crystalline with Hodge--Tate weights in $[a,b]$. By assumption, this subvariety contains every point of $X'$.  But $X'$ is reduced, so this subvariety is the whole of $X'$. Since the generic fiber $\Spf(R)^{\rig}$ can be exhausted by affinoid subspaces as above, we see that $(1) \Rightarrow (2)$ as required.
 \end{proof}

 We say $M$ is {\it crystalline with Hodge--Tate weights in $[a,b]$} if $R$ is flat over $\Z_p$ and the above equivalent conditions hold.  This is equivalent to the family of representations $M^{\rig}$ over the quasi-Stein space $\Spf(R)^{\rig}$ being crystalline with Hodge--Tate weights in $[a,b]$.  If $R$ is $\Z_p$-finite and flat and $K=F$ is unramified over $\Q_p$, there is an associated \emph{Wach module} $\NN(M)$.  We wish to extend this to the general case.

The problem is that we would like to have a good notion of Wach modules for families of crystalline representations with coefficients in power series rings.  But because the (integral) Wach module functor can fail to be exact, naive constructions do not seem to have good properties.  For example, if $R$ is a finite flat $\Z_p$-algebra and $M$ is an $R$-linear lattice in a crystalline representation of $G_{\Qp}$, there is an associated Wach module $\NN(M)$; it is a finite $R\otimes_{\Z_p}\A_{\Qp}^+$-module, but it is in general not projective over $R\otimes_{\Z_p}\A_{\Qp}^+$.  This makes it difficult to verify that $\NN(M)$ has good base change properties, or to show that $\NN(M)$ has good finiteness properties when $R$ is a power series ring.

We give a definition of Wach modules.  As we are primarily interested in positive representations (that is, Galois representations with non-positive Hodge--Tate weights, where the cyclotomic character has weight $1$), we restrict to that case.
\begin{definition}\label{def:Wach}
A \emph{Wach module} with weights in $[-h,0]$ and coefficients in a complete local noetherian $\Z_p$-algebra $R$ with finite residue field is a finite $R\htimes_{\Z_p}\A_{F}^+$-module $N$ equipped with an $R$-linear action of $\Gamma$ such that $\Gamma$ acts trivially on $N/\pi N$, and equipped with an $R$-linear Frobenius $\varphi:N\rightarrow N$ commuting with $\Gamma$ such that the cokernel $N/\varphi^\ast(N)$ is killed by $q^h$, where $q=\varphi(\pi)/\pi$.
\end{definition}
\begin{remark}
If $R=\Z_p$, this is weaker than the definition given in~\cite{berger04}, which requires $N$ to additionally be a free $\A_F^+$-module.
\end{remark}

We are interested in Wach modules because of their connection to crystalline representations.  More precisely, if $M$ is a $\Z_p$-lattice in a crystalline representation of $G_{F}$, there is an associated Wach module $\NN(M)$, which is an $\A_{F}^+$-lattice in the $(\varphi,\Gamma)$-module $\D(M)$ (which has $\A_{F}$-coefficients).

While $\D(M)$ behaves well with respect to coefficients, by the work of~\cite{dee01}, $\D^+(M)$ does not, nor does $\NN(M)$.  To get around this problem, we follow Kisin and construct a moduli space of $\A_{F}^+$-lattices in $\A_{F}$-modules.

Recall that for a $\Z_p$-finite $G_{K}$-representation $M$, the $(\varphi,\Gamma)$-module $\D(M)$ is defined by $\D(M)=(\A\otimes_{\Z_p}M)^{H_{K}}$.  By the work of Dee, if $M$ is a finite free rank-$d$ $R$-module for a finite local $\Z_p$-algebra $R$ and the Galois action is $R$-linear, then $\D(M)$ is finite projective of rank $d$ over $R\otimes_{\Z_p}\A_{K}$.  Moreover, if $R\rightarrow R'$ is a homomorphism of such rings, then $\D(R'\otimes_RM)=R'\otimes_R\D(M)$.

\begin{lemma}\label{lemma:coeffs-a-a+-i}
Let $A$ be an artinian local $\Z_p$-algebra, and let $M$ be a finite $A$-module equipped with a continuous $A$-linear action of $G_K$.  Then
\begin{enumerate}
\item	If $M'$ is a finite $A$-module (with trivial $G_K$-action), then $\D(M\otimes_AM')\cong\D(M)\otimes_AM'$.
\item	If $B$ is an $A$-algebra, then $(\AA\otimes_{\Z_p}M_B)^{H_K}\cong \D(M)\otimes_AB$.
\end{enumerate}
\end{lemma}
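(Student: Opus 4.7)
The plan is to reduce both statements to the fact that the functor $\D(-) = (\AA \otimes_{\Z_p} -)^{H_K}$ is exact on the category of finite continuous $A$-linear $G_K$-modules. Since $A$ is artinian local with finite residue field it is finite over $\Z_p$, so any finite $A$-module is a finite $\Z_p$-module annihilated by some fixed $p^n$. In this setting, Dee's extension~\cite{dee01} of Fontaine's classical equivalence of categories gives that $\D$ is an exact equivalence between finite continuous $A[G_K]$-modules and \'etale $(\varphi,\Gamma)$-modules over $A \otimes_{\Z_p} \A_K$; this is the input I would invoke at the outset, and it is the only non-formal ingredient in the argument.

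For part (1), I would first choose a finite $A$-module presentation $A^m \to A^n \to M' \to 0$ and tensor it with $M$ over $A$ to obtain an exact sequence
\[	M^m \to M^n \to M \otimes_A M' \to 0	\]
of finite continuous $A[G_K]$-modules (with trivial $G_K$-action on $M'$). Exactness of $\D$ then produces
\[	\D(M)^m \to \D(M)^n \to \D(M \otimes_A M') \to 0,	\]
while the right-exact functor $\D(M) \otimes_A (-)$ applied to the same presentation of $M'$ yields the identical first two terms with cokernel $\D(M) \otimes_A M'$. Comparing cokernels of the same map of $\D(M)$-modules gives the desired canonical isomorphism, manifestly functorial in $M'$.

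For part (2), the case where $B$ is finitely generated over $A$ is immediate from (1) applied with $M' = B$. For an arbitrary $A$-algebra $B$, I would write $B = \varinjlim_i B_i$ as the filtered colimit of its finitely generated $A$-submodules. Since $\AA \otimes_{\Z_p} (-)$ and $M \otimes_A (-)$ each commute with colimits, and since $H_K$-invariants commute with filtered colimits of discrete $p^n$-torsion $H_K$-modules (every invariant element stabilises at a finite stage because the $H_K$-orbit of any element of such a colimit is finite by continuity), one concludes
\[	(\AA \otimes_{\Z_p} M_B)^{H_K} \;=\; \varinjlim_i (\AA \otimes_{\Z_p} M_{B_i})^{H_K} \;=\; \varinjlim_i \D(M) \otimes_A B_i \;=\; \D(M) \otimes_A B,	\]
the middle equality being part (1). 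The main obstacle to watch is the right-exactness of $\D$ on surjections of continuous $A[G_K]$-modules; this is not formal from the definition (only left-exactness is) and genuinely rests on Fontaine--Dee. Once that input is granted, both parts are straightforward manipulations with finite presentations and filtered colimits.
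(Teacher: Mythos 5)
Your proposal is correct and follows essentially the same route as the paper: part (1) by taking a finite presentation of $M'$ and invoking the exactness of $\D(-)$ (which indeed rests on Fontaine--Dee), and part (2) by writing $B$ as a filtered colimit of $A$-finite submodules and commuting $H_K$-invariants and the tensor product with the colimit. No gaps.
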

\begin{proof}
\begin{enumerate}
\item	If $M'$ is free over $A$, this is clear.  Otherwise, we may choose a finite presentation of $M'$ and use the exactness of the functor $\D(-)$.
\item	We may write $B=\varinjlim_iB_i$, where the $B_i$ are $A$-finite submodules of $B$ and the transition maps are injections.  Then the functor of $H_K$-invariants commutes with the direct limit, and so does the tensor product with $\D(M)$, so this follows from the first part.
\end{enumerate}
\end{proof}
Thus, under these hypotheses we may refer to $\D(M_B)$ without any ambiguity.

We give a generalization of~\cite[Lemme III.3.2]{berger04}:
\begin{lemma}
Let $A$ be an artinian local $\Z_p$-algebra with finite residue field, let $B$ be an $A$-algebra, and let $M$ be an $A$-linear representation of $G_F$ of rank $d$.  If $N\subset \D(M_B)$ is a free $B\otimes_{\Z_p}\A_F^+$-module of rank $d$, stable by the actions of $\varphi$ and $\Gamma$, such that $(B\otimes_{\Z_p}\A_F)\otimes_{B\otimes_{\Z_p}\A_F^+}N=D(M_B)$, then for any ideal $I\subset B$,
\[	\left((B\otimes_{\Z_p}\A^+)\otimes_{B\otimes_{\Z_p}\A_F^+}N\right)\cap I\left((B\otimes_{\Z_p}\A^+)\otimes_{B}M_B\right)=I\left((B\otimes_{\Z_p}\A^+)\otimes_{B\otimes_{\Z_p}\A_F^+}N\right)	\]
\end{lemma}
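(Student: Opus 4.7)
The real content of this statement is the inclusion $\subseteq$; the reverse inclusion is routine once one knows that the image of $(B\otimes_{\Z_p}\A^+)\otimes_{B\otimes_{\Z_p}\A_F^+}N$ in $(B\otimes_{\Z_p}\A)\otimes_B M_B$ actually lies in $(B\otimes_{\Z_p}\A^+)\otimes_B M_B$. I will sketch the nontrivial direction, following the same matrix-calculation strategy Berger uses in \cite[Lemme III.3.2]{berger04} and then reducing the final step to Lemma~\ref{lemma:a-a+-mult}.

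The plan is to trivialize everything in sight by picking bases. Fix an $A$-basis $m_1,\ldots,m_d$ of $M$ (which gives a $B$-basis of $M_B$) and a $(B\otimes_{\Z_p}\A_F^+)$-basis $e_1,\ldots,e_d$ of $N$. The hypothesis $(B\otimes_{\Z_p}\A_F)\otimes_{B\otimes_{\Z_p}\A_F^+}N=\D(M_B)$, combined with the standard identification $(B\otimes_{\Z_p}\A)\otimes_{B\otimes_{\Z_p}\A_F}\D(M_B)=(B\otimes_{\Z_p}\A)\otimes_B M_B$ coming from Dee's version of the $(\varphi,\Gamma)$-module dictionary, yields an invertible change-of-basis matrix $X=(x_{ij})\in\GL_d(B\otimes_{\Z_p}\A)$ with $e_i=\sum_j x_{ij}m_j$.

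Now take $n$ in the LHS. Its membership in $(B\otimes_{\Z_p}\A^+)\otimes_{B\otimes_{\Z_p}\A_F^+}N$ lets me write $n=\sum_i z_i e_i$ with $z_i\in B\otimes_{\Z_p}\A^+$; its membership in $I((B\otimes_{\Z_p}\A^+)\otimes_B M_B)$ lets me write $n=\sum_j y_j m_j$ with $y_j\in I(B\otimes_{\Z_p}\A^+)$. Equating these expressions inside the common ambient $(B\otimes_{\Z_p}\A)\otimes_B M_B$ gives the row-vector identity $y=zX$, hence $z=yX^{-1}$. Each $z_i=\sum_j y_j(X^{-1})_{ji}$ is therefore a sum of elements of $I(B\otimes_{\Z_p}\A^+)\cdot (B\otimes_{\Z_p}\A)\subseteq I(B\otimes_{\Z_p}\A)$, so $z_i$ belongs to $(B\otimes_{\Z_p}\A^+)\cap I(B\otimes_{\Z_p}\A)$.

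Finally, I apply Lemma~\ref{lemma:a-a+-mult}: it identifies $(B\otimes_{\Z_p}\A^+)\cap I(B\otimes_{\Z_p}\A)$ with $I(B\otimes_{\Z_p}\A^+)$ whenever $B$ is a discrete $\Z_p$-algebra, so $z_i\in I(B\otimes_{\Z_p}\A^+)$ and $n=\sum z_i e_i\in I\cdot\bigl((B\otimes_{\Z_p}\A^+)\otimes_{B\otimes_{\Z_p}\A_F^+}N\bigr)$ as wanted. The one subtle point, and the main obstacle to be checked, is that the uncompleted tensor $B\otimes_{\Z_p}\A$ appearing in the statement is the correct object for the lemma to apply: this is where the assumption that $A$ is artinian gets used — $A$ is killed by some $p^n$, hence so is $B$, making $B$ a discrete $\Z/p^n$-algebra in the sense of Lemma~\ref{lemma:a-a+-mult}, and removing any completion issues in the formation of $B\otimes_{\Z_p}\A^+$ and $B\otimes_{\Z_p}\A$.
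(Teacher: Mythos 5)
Your proof is correct and follows essentially the same route as the paper's: write an element of the intersection in the basis $\{e_i\}$ of $N$ (which is also an $\A$-basis of $(B\otimes_{\Z_p}\A)\otimes_B M_B$), observe from the $I(B\otimes_{\Z_p}\A^+)\otimes M_B$ membership that the coefficients lie in $I(B\otimes_{\Z_p}\A)$ and from the $N$-side membership that they lie in $B\otimes_{\Z_p}\A^+$, then invoke Lemma~\ref{lemma:a-a+-mult} to put them in $I(B\otimes_{\Z_p}\A^+)$. You are somewhat more explicit than the paper in writing out the change-of-basis matrix and in flagging that the trivial direction secretly uses the containment $(B\otimes_{\Z_p}\A^+)\otimes_{B\otimes_{\Z_p}\A_F^+}N\subset(B\otimes_{\Z_p}\A^+)\otimes_B M_B$, which the paper does not comment on, but the core argument is identical.
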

\begin{proof}
The proof is essentially identical to the proof of~\cite[Lemme III.3.2]{berger04}.  A basis $\{n_i\}$ of $N$ is also a basis of $\D(M_B)$, and therefore of $(B\otimes_{\Z_p}\A)\otimes_{B\otimes_{\Z_p}\A_F}\D(M_B)=(B\otimes_{\Z_p}\A)\otimes_BM_B$.  If we write $x\in (\A^+\otimes_{\Z_F^+}N)\cap I(\A^+\otimes_{\Z_p}M_B)$ with respect to this basis as $\sum_i x_in_i$ we see that $x_i\in I\A$.  But $x_i\in\A^+$, as well, so $x_i\in I\A^+$, by Lemma~\ref{lemma:a-a+-mult}.
\end{proof}

\begin{definition}
Let $A$ be an artinian local $\Z_p$-algebra with finite residue field, and let $M$ be a finite free $A$-module of rank $d$ equipped with a continuous $A$-linear action of $G_F$.  An $A\otimes_{\Z_p}\A_{F}^+$-lattice in $\D(M)$ of height $\leq h$ is a finite projective $A\otimes_{\Z_p}\A_{F}^+$-submodule $\NN$ of rank $d$ which generates $\D(M)$ as an $A\otimes_{\Z_p}\A_{F}$-module, such that $\NN$ is $\varphi$- and $\Gamma$-stable, the cokernel of $\varphi^\ast\NN\rightarrow \NN$ is killed by $q^h$, and $\Gamma$ acts trivially on $\NN/\pi\NN$.

We define a functor on $A$-algebras by letting $L^{\leq h}_{M}(B)$ denote the set of $B\otimes_{\Z_p}\A_{F}^+$-lattices in $\D(M\otimes_AB)$ of height $\leq h$.
\end{definition}

\begin{proposition}
The functor $L^{\leq h}_M$ is representable by a projective $A$-scheme $\mathscr{L}^{\leq h}_M$.  If $A\rightarrow A'$ is a map of local artinian rings with finite residue field, then there is a canonical isomorphism $\mathscr{L}^{\leq h}_M\times_AA'\rightarrow \mathscr{L}^{\leq h}_{M_{A'}}$.  Moreover, $\mathscr{L}^{\leq h}_M$ is equipped with a canonical very ample line bundle.
\end{proposition}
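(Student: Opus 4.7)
The plan is to realize $L^{\leq h}_M$ as a closed subfunctor of an affine Grassmannian, mirroring Kisin's construction of moduli of $\varphi$-modules in \cite{kisin-pst-def-rings}. First I would fix a reference $A \otimes_{\Z_p} \A_F^+$-lattice $\NN_0 \subset \D(M)$: after an \'etale localization trivializing $\D(M)$, choose an $A \otimes_{\Z_p} \A_F$-basis and let $\NN_0$ be its $A \otimes_{\Z_p} \A_F^+$-span. I would then establish a uniform sandwich bound, showing that every $\NN \in L^{\leq h}_M(B)$ satisfies
\[
\pi^c (B \otimes_A \NN_0) \;\subseteq\; \NN \;\subseteq\; \pi^{-c}(B \otimes_A \NN_0)
\]
for an integer $c = c(h,d)$ depending only on $h$ and $d$. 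Since $A$ is artinian, one proceeds by d\'evissage, reducing to the case where $A$ is a field of characteristic $p$; in that case $A \otimes_{\Z_p} \A_F^+ \cong A \otimes_{k_F} k_F[\![\pi]\!]$ is (a base change of) a principal ideal domain, and combining the height inclusion $q^h \NN \subseteq \varphi^\ast \NN \subseteq \NN$ with the congruence $q \equiv \pi^{p-1} \pmod p$ and a slope-type estimate yields the required bound.

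With the sandwich in hand, the moduli problem reduces to a classical Grassmannian. The assignment $\NN \mapsto \NN/\pi^c \NN_0 \subseteq \pi^{-c}\NN_0/\pi^c \NN_0 =: Q_c$ embeds $L^{\leq h}_M$ as a subfunctor of the Grassmannian $\mathrm{Gr}(Q_c)$ of locally direct summands of the finite locally free $A$-module $Q_c$, which is a projective $A$-scheme carrying the standard very ample Pl\"ucker line bundle. The defining conditions of $L^{\leq h}_M$---that the submodule be an $A \otimes_{\Z_p} \A_F^+$-submodule, $\varphi$-stable with cokernel of $\varphi^\ast \NN \to \NN$ annihilated by $q^h$, $\Gamma$-stable, and with trivial $\Gamma$-action on $\NN/\pi\NN$---each translate into the vanishing of explicit morphisms of coherent sheaves on $\mathrm{Gr}(Q_c)$, hence cut out a closed subscheme $\mathscr{L}^{\leq h}_M$ that inherits the very ample line bundle. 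For $\Gamma$-stability it suffices by continuity to check stability for a topological generator of $\Gamma$.

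Compatibility with base change $A \to A'$ is then automatic: the construction of $\NN_0$, of $Q_c$, and of each closed condition all commute with $-\otimes_A A'$, and Lemma \ref{lemma:coeffs-a-a+-i} identifies $\D(M_{A'})$ with $A' \otimes_A \D(M)$, matching the two moduli descriptions.

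The main obstacle I expect is the uniform sandwich bound: while essentially elementary for $A$ a field of characteristic $p$, ensuring that the exponent $c$ depends only on $h$ and $d$---rather than on the particular lattice $\NN$---requires a careful slope-type estimate using the full strength of the height hypothesis together with the $\varphi$-structure. A secondary subtlety is confirming that ``$\NN$ is an $A \otimes_{\Z_p} \A_F^+$-submodule of $\pi^{-c}\NN_0$,'' as opposed to merely an $A$-submodule, is a closed condition on $\mathrm{Gr}(Q_c)$ that behaves well under base change.
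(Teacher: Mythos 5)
Your overall strategy coincides with the paper's: fix a reference lattice $\NN_0\subset\D(M)$, prove a uniform two-sided bound $\pi^{i}\NN_0\subseteq\NN\subseteq\pi^{-i}\NN_0$, and thereby realize $L^{\leq h}_M$ inside a (finite, hence projective, piece of the affine) Grassmannian, with the $\varphi$- and $\Gamma$-stability, height, and triviality-mod-$\pi$ requirements imposed as closed conditions; base change and the very ample line bundle then come for free. The point where you diverge, and where your argument has a gap, is the uniform bound. The paper does not d\'evisse to characteristic $p$: it argues directly over the artinian $A$, using that if $p^s=0$ in $A$ then $\pi^{(p-1)sh}\in q^h(A\otimes_{\Z_p}\A_F^+)$ (since $q\equiv\pi^{p-1}\bmod p$), and comparing the minimal $i$ with $B\htimes_A\NN_0\subseteq\pi^{-i}\NN$ to the corresponding quantity $ip$ obtained after applying $(1\otimes\varphi)\varphi^\ast$; the resulting chain of inclusions gives $ip\leq i+r+h(p-1)s$, where $\pi^{r}\NN_0\subseteq(1\otimes\varphi)\varphi^\ast\NN_0\subseteq\pi^{-r}\NN_0$, and symmetrically for the other containment. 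Two consequences for your write-up. First, the constant cannot depend only on $(h,d)$: it necessarily involves $s$ and $r$, i.e.\ it depends on $A$ and on $M$ itself; but only uniformity in the $A$-algebra $B$ and in the lattice $\NN$ is needed for representability over the fixed $A$, so this is a misstated goal rather than a real obstruction. Second, the d\'evissage you propose does not lift cleanly: knowing the bound for the image of $\NN$ in $\D(M_{B/p})$ only tells you that an element $x\in\NN$ can be written as $y+pz$ with $y\in\pi^{-c}\NN_0$ and $z\in\D(M_B)$ a priori unbounded, so each of the $s$ steps costs something, and making this loss precise essentially reproduces the paper's direct estimate. I would replace the d\'evissage by that direct argument; the rest of your proof then matches the paper's.
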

\begin{proof}
The proof is essentially identical to the proof of~\cite[Proposition 1.3]{kisin-pst-def-rings} and~\cite[Proposition 2.1.7]{kisin-bt}.  We begin by noting that since $p$ is torsion in $A$, $A\otimes_{\Z_p}\A_{F}=(A\otimes_{\Z_p}\cO_F)[\![\pi]\!][\pi^{-1}]$.  Choosing a basis for $\D(M)$ yields a distinguished projective rank-$d$ $A\otimes_{\Z_p}\A_{F}^+$-module $\NN_0\subset\D(M)$ which spans $\D(M)$, and every $A\otimes_{\Z_p}\A_{F}^+$-lattice $\NN$ satisfies $\pi^{-i}\NN_0\subset \NN\subset \pi^i\NN_0$ for some $i$.  Thus, all $B\otimes_{\Z_p}\A_{F}^+$-lattices are points of the affine Grassmannian for $\GL_d$ over $A$, which is an ind-projective scheme.

Furthermore, $\varphi$- and $\Gamma$-stability define closed conditions on the affine Grassmannian, as do the requirements that $\NN/\varphi^\ast\NN$ be killed by $q^h$ and that $\Gamma$ act trivially on $\NN/\pi\NN$.  Finally, the $\varphi$-stability of $A\otimes_{\Z_p}\A_{F}^+$-lattices implies that if $r$ is the least integer such that $\pi^r\NN_0\subset (1\otimes\varphi)\varphi^\ast(\NN_0)\subset \pi^{-r}\NN_0$, $s$ is the least integer such that $p^s=0$ in $A$, and $i=\max\{\frac{(p-1)sh+r}{p-1},\frac{r}{p-1}\}$, then $\pi^{-i}\NN_0\subset \NN\subset \pi^i\NN_0$ for every $A\otimes_{\Z_p}\A_{F}^+$-lattice $\NN$.

Indeed, let $B$ be an $A$-algebra, let $\NN$ be a $B\otimes_{\Z_p}\A_{F}^+$-lattice, and let $i$ be the smallest integer such that $B\htimes_A\NN_0\subset \pi^{-i}\NN$.  Then the smallest integer $j$ such that $(1\otimes\varphi)\varphi^\ast(B\htimes_A\NN_0)\subset \pi^{-j}(1\otimes\varphi)\varphi^\ast\NN$ is $ip$.  Moreover,
\[	(1\otimes\varphi)\varphi^\ast(B\htimes_A\NN_0)\subset \pi^{-r}(B\htimes_A\NN_0)\subset \pi^{-i-r}\NN = q^{-h}\pi^{-i-r}(q^h\NN)\subset \pi^{-i-r-h(p-1)s}(1\otimes\varphi)\varphi^\ast\NN	\]
since $\pi^{(p-1)s}\in q^h(A\otimes_{\Z_p}\A_F^+$.  Thus, $ip\leq i+r+h(p-1)s$, and so $i\leq \frac{r+h(p-1)s}{p-1}$.

Similarly, if $i$ is the smallest integer such that $\NN\subset\pi^{-i}(B\htimes_A\NN_0)$, then
\[	(1\otimes\varphi)\varphi^\ast(\NN)\subset\NN\subset\pi^{-i}(B\htimes_A\NN_0)\subset \pi^{-i-r}(1\otimes\varphi)\varphi^\ast(B\htimes_A\NN_0)	\]
so $ip\leq i+r$ and $i\leq\frac{r}{p-1}$.

  Therefore, $L_M^{\leq h}$ is representable by a closed subscheme of a Grassmannian, which is projective with a canonical very ample line bundle.
\end{proof}

Let $\mathscr{N}^{\leq h}(M)$ denote the universal $\mathcal{O}_{\mathscr{L}_M^{\leq h}}\otimes_{\Z_p}\A_{F}^+$-lattice on $\mathscr{L}_M^{\leq h}$.  It is a sheaf of coherent $\mathcal{O}_{\mathscr{L}_M^{\leq h}}\otimes_{\Z_p}\A_{F}^+$-modules, and on any open affine $\Spec A'\subset\mathscr{L}_M^{\leq h}$, it is an $A'\otimes_{\Z_p}\A_{F}^+$-lattice; in particular, it is equipped with actions of $\varphi$ and $\Gamma$.   We write the structure morphism $\Theta_A:\mathscr{L}_M^{\leq h}\rightarrow \Spec A$, and we write $\widetilde A:=\Gamma(\mathscr{L}_M^{\leq h},\Theta_{A*}\mathcal{O}_{\mathscr{L}_M^{\leq h}})$.

\begin{lemma}\label{lemma:wach-rep-artin}
There is a canonical isomorphism $M_{\widetilde A}\cong\left(\A\otimes_{\A_F^+}\Theta_{A\ast}\mathscr{N}^{\leq h}(M)\right)^{\varphi=1}$.
\end{lemma}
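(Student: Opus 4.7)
The plan is to construct a canonical map $\alpha:(\A\otimes_{\A_F^+}\widetilde N)^{\varphi=1}\to M_{\widetilde A}$, where $\widetilde N:=\Theta_{A*}\mathscr{N}^{\leq h}(M)$, and then to verify $\alpha$ is an isomorphism by a pointwise check on $\Spec \widetilde A$. The underlying idea is to globalize the classical identification of a positive Wach module with its associated Galois representation to the universal lattice on $\mathscr{L}_M^{\leq h}$.

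\textbf{Constructing $\alpha$.} By the moduli interpretation, $\mathscr{N}^{\leq h}(M)$ sits inside $\D(M)\otimes_A\cO_{\mathscr{L}_M^{\leq h}}$ as a sub-$\cO_{\mathscr{L}_M^{\leq h}}\otimes\A_F^+$-sheaf. Since $\D(M)$ is an $A\otimes\A_F$-module, one has $\A\otimes_{\A_F^+}\D(M)=\A\otimes_{\A_F}\D(M)=\A\otimes_{\Zp}M$, and tensoring up yields an inclusion of sheaves
\[
\A\otimes_{\A_F^+}\mathscr{N}^{\leq h}(M) \hookrightarrow (\A\otimes_{\Zp}M)\otimes_A\cO_{\mathscr{L}_M^{\leq h}}.
\]
Since $\varphi$ acts trivially on both $M$ and $\cO_{\mathscr{L}_M^{\leq h}}$, taking $\varphi=1$-invariants and using the classical identity $(\A\otimes M)^{\varphi=1}=M$ identifies the fixed points on the right with $M\otimes_A\cO_{\mathscr{L}_M^{\leq h}}$. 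Pushing forward along the projective morphism $\Theta_A$ and using the projection formula for the $A$-finite module $M$ produces an inclusion $\Theta_{A*}\bigl((\A\otimes_{\A_F^+}\mathscr{N}^{\leq h})^{\varphi=1}\bigr)\hookrightarrow M\otimes_A\widetilde A$, which composed with the natural map from $(\A\otimes_{\A_F^+}\widetilde N)^{\varphi=1}$ gives the desired $\alpha$.

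\textbf{Pointwise check.} To show $\alpha$ is an isomorphism it suffices to verify bijectivity at every closed point $\mathfrak{m}$ of $\Spec\widetilde A$, which corresponds to a $\widetilde A/\mathfrak{m}$-point of $\mathscr{L}_M^{\leq h}$, i.e.\ to a concrete height-$\leq h$ Wach module $N_{\widetilde A/\mathfrak{m}}$ for $M_{\widetilde A/\mathfrak{m}}$. The fiberwise statement $(\A\otimes_{\A_F^+}N_B)^{\varphi=1}=M_B$ (for $B$ an Artin local $\Zp$-algebra with residue field our given one) reduces to the integral comparison $\A\otimes_{\A_F^+}N_B=\A\otimes_{\Zp}M_B$ as sub-$\A$-modules of $\A\otimes_{\Zp}M_B$. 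This follows from the Berger-type inclusion lemma proved earlier in the section together with the equality $\D(M_B)=N_B\otimes_{\A_F^+}\A_F$, valid since $B\otimes\A_F=(B\otimes\A_F^+)[\pi^{-1}]$ when $p$ is nilpotent in $B$.

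\textbf{Main obstacle.} The most delicate point is the bookkeeping needed to commute the push-forward $\Theta_{A*}$ with both $\A\otimes_{\A_F^+}(-)$ and $(-)^{\varphi=1}$. Projectivity of $\mathscr{L}_M^{\leq h}$ over $\Spec A$ and coherence of $\mathscr{N}^{\leq h}(M)$ supply the required finiteness; a projection-formula argument together with flatness of $\A$ over $\A_F^+$ handles the first commutation (passing through a Mittag-Leffler argument as earlier in the section, since $\A$ is not of finite type over $\A_F^+$), and left-exactness of $(-)^{\varphi=1}$ handles the second.
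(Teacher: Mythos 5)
Your construction of the map is essentially the paper's, but the way you prove it is an isomorphism — the ``pointwise check on $\Spec\widetilde A$'' — has a genuine gap, and it is also unnecessary. Two problems with the reduction to closed points. First, the functor $(-)^{\varphi=1}=\ker(\varphi-1)$ is only left exact, so its formation does not commute with the base change $\widetilde A\to\widetilde A/\mathfrak{m}$; the fiber of your $\alpha$ at $\mathfrak{m}$ is a map $\left(\A\otimes_{\A_F^+}\widetilde N\right)^{\varphi=1}\otimes_{\widetilde A}\widetilde A/\mathfrak{m}\to M_{\widetilde A/\mathfrak{m}}$, which is not the fiberwise map $\left(\A\otimes_{\A_F^+}N_{\widetilde A/\mathfrak{m}}\right)^{\varphi=1}\to M_{\widetilde A/\mathfrak{m}}$ that you analyze, and you give no argument identifying the two. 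Second, even granting that identification, bijectivity after $\otimes_{\widetilde A}\widetilde A/\mathfrak{m}$ for all $\mathfrak{m}$ does not yield bijectivity of $\alpha$: the Nakayama-type argument needs the source to be a finitely generated $\widetilde A$-module (which is not known at this stage for $(\A\otimes_{\A_F^+}\widetilde N)^{\varphi=1}$), and injectivity never descends from fibers without additional flatness input.

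The fix is to run your ``fiberwise'' computation universally \emph{before} taking $\varphi$-invariants, which is what the paper does. By the very definition of the moduli functor, on any affine the universal lattice satisfies $\Theta_A^\ast\A_F\otimes_{\Theta_A^\ast\A_F^+}\mathscr{N}^{\leq h}(M)\cong\D(\mathcal{O}_{\mathscr{L}_M^{\leq h}}\otimes_AM)$ (a surjection of finite projective $B\otimes_{\Zp}\A_F$-modules of the same rank $d$ is an isomorphism), whence $\Theta_A^\ast\A\otimes_{\Theta_A^\ast\A_F^+}\mathscr{N}^{\leq h}(M)\xrightarrow{\sim}\Theta_A^\ast\A\otimes\Theta_A^\ast M$ by Fontaine/Dee. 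Pushing forward this \emph{isomorphism} via the projection formula gives $\A\otimes_{\A_F^+}\Theta_{A\ast}\mathscr{N}^{\leq h}(M)\xrightarrow{\sim}\A\otimes_{\Zp}M_{\widetilde A}$, and only then does one take $\varphi=1$ of both sides, using $(\A\otimes_{\Zp}M_{\widetilde A})^{\varphi=1}=M_{\widetilde A}$. No pointwise argument, and hence none of the commutation issues above, is needed.
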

\begin{proof}
By construction,
\[	\Theta_A^\ast\A_F\otimes_{\Theta_A^\ast\A_F^+}\mathscr{N}^{\leq h}(M)\cong \D(\mathcal{O}_{\mathscr{L}_M^{\leq h}}\otimes_AM)	\]
so
\[	\Theta_A^\ast\A\otimes_{\Theta_A^\ast\A_F^+}\mathscr{N}^{\leq h}(M)\xrightarrow{\sim} \Theta_A^\ast\A\otimes_{\mathcal{O}_{\mathscr{L}_M^{\leq h}}}\Theta_A^\ast M	\]
Pushing forward yields
\[	\A\otimes_{\A_F^+}\Theta_{A\ast}(\mathscr{N}^{\leq h}(M))\xrightarrow{\sim}\Theta_{A\ast}\left(\Theta_A^\ast\A\otimes_{\Theta_A^\ast\A_F^+}\mathscr{N}^{\leq h}(M)\right) \xrightarrow{\sim} \A\otimes_{\Zp}M_{\widetilde A}	\]
by the projection formula, and taking the $\varphi=1$ part on each side yields the desired result.
\end{proof}

\begin{lemma}\label{lemma:wach-loc-free}
There is a finite covering $\{U_i=\Spec A_i\}$ of $\mathscr{L}_M^{\leq h}$ by open affine subsets such that $\mathscr{N}^{\leq h}(M)(U_i)$ is a free $A_i\otimes_{\Z_p}\A_F^+$-module of rank $d$.
\end{lemma}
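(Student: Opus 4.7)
The plan is to reduce freeness of $\mathscr{N}^{\leq h}(M)$ over $\mathcal{O}\otimes_{\Z_p}\A_F^+$ to freeness of its reduction modulo $\pi$, and then to establish the latter pointwise by a semi-locality argument.

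First, I would carry out a Nakayama reduction. On any affine open $U=\Spec B\subset\mathscr{L}_M^{\leq h}$, the nilpotence of $p$ in $A$ gives $B\otimes_{\Z_p}\A_F^+\cong(B\otimes_{\Z_p}\cO_F)[\![\pi]\!]$, a $\pi$-adically complete ring in which $\pi$ lies in the Jacobson radical. Suppose $N:=\mathscr{N}^{\leq h}(M)(U)$ has the property that $N/\pi N$ is free of rank $d$ over $B\otimes_{\Z_p}\cO_F$. Lifting a basis of $N/\pi N$ to $n_1,\ldots,n_d\in N$, Nakayama applied to the finitely generated module $N$ yields a surjection $(B\otimes_{\Z_p}\A_F^+)^d\twoheadrightarrow N$. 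Since $N$ is projective, this sequence splits, so its kernel is a finitely generated projective direct summand of $(B\otimes_{\Z_p}\A_F^+)^d$; its rank is $d-d=0$ at every prime, forcing the kernel to vanish. Thus it suffices to cover $\mathscr{L}_M^{\leq h}$ by affine opens $U$ on which $\mathscr{N}^{\leq h}(M)(U)/\pi\mathscr{N}^{\leq h}(M)(U)$ is free of rank $d$ over $\mathcal{O}(U)\otimes_{\Z_p}\cO_F$.

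Next, I would argue pointwise. Fix a point $x\in\mathscr{L}_M^{\leq h}$, pick an affine neighborhood $\Spec B$, and let $\mathfrak{p}\subset B$ be the prime corresponding to $x$; set $\bar{N}:=N/\pi N$, a finitely generated projective $B\otimes_{\Z_p}\cO_F$-module of rank $d$. The localization $B_\mathfrak{p}\otimes_{\Z_p}\cO_F$ is a finite $B_\mathfrak{p}$-algebra, hence semi-local, and over a semi-local commutative ring any finitely generated projective module of constant rank is free. So $\bar{N}_\mathfrak{p}$ is free of rank $d$. Choose a basis, clear denominators to obtain $e_1,\ldots,e_d\in\bar{N}$, and consider the map $\phi\colon(B\otimes_{\Z_p}\cO_F)^d\to\bar{N}$ sending the standard basis to the $e_i$. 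By construction $\phi_\mathfrak{p}$ is an isomorphism. Since $B$ is Noetherian and both source and target of $\phi$ are finitely generated as $B$-modules, $\ker\phi$ and $\operatorname{coker}\phi$ are finitely generated $B$-modules vanishing at $\mathfrak{p}$, hence annihilated by some $f\in B\setminus\mathfrak{p}$. On the affine open $D(f)\subset\Spec B$ (which contains $x$), $\phi_f$ is an isomorphism, so $\bar{N}_f$ is free of rank $d$ over $B_f\otimes_{\Z_p}\cO_F$.

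Combining the two steps, $\mathscr{N}^{\leq h}(M)(D(f))$ is free of rank $d$ over $B_f\otimes_{\Z_p}\A_F^+$, and quasi-compactness of the projective $A$-scheme $\mathscr{L}_M^{\leq h}$ lets one extract a finite subcover. The main obstacle is really the second step: naively, projectivity of $\bar{N}$ over $B\otimes_{\Z_p}\cO_F$ only gives Zariski-local freeness over $\Spec(B\otimes_{\Z_p}\cO_F)$, and one cannot in general push a Zariski cover of $\Spec(B\otimes_{\Z_p}\cO_F)$ down to a Zariski cover of $\Spec B$. Invoking semi-locality of $B_\mathfrak{p}\otimes_{\Z_p}\cO_F$, which relies on the finiteness of $\cO_F$ over $\Z_p$, is what lets me produce a basis of $\bar{N}_\mathfrak{p}$ and then spread it out to a principal open of $\Spec B$ itself.
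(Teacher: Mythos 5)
Your proof is correct and follows essentially the same route as the paper's: both arguments rest on projectivity of $\mathscr{N}^{\leq h}(M)(U)$, the observation that after localizing at a point of the base the finite extension by $\cO_F$ is semi-local with $\pi$ lying in the Jacobson radical (so Nakayama produces a basis), and then spreading the resulting isomorphism out to a principal affine open by finite generation. The only cosmetic difference is that you split the argument into a mod-$\pi$ reduction followed by a semi-locality step, whereas the paper localizes first and applies Nakayama to the ideal $(\mathfrak{m},\pi)$ in one stroke; note that your identification $B\otimes_{\Z_p}\A_F^+\cong(B\otimes_{\Z_p}\cO_F)[\![\pi]\!]$ is only literally valid after localizing at a point (or for $B$ finite over $A$), which is exactly how the paper phrases it.
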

\begin{proof}
Choose any affine covering $\{U_j'=\Spec A_j'\}$ of $\mathscr{L}_M^{\leq h}$; by construction, $\mathscr{N}^{\leq h}(M)(U_j')$ is a finite projective $A_j'\otimes_{\Z_p}\A_F^+$-module.  Every maximal ideal of $A_j'\otimes_{\Z_p}\A_F^+$ is of the form $(\mathfrak{m}',\pi)$, where $\mathfrak m'$ is a maximal ideal of $A_j'\otimes_{\Z_p}\cO_F$, and $\mathfrak{m}'\cap A_j'$ is a maximal ideal $\mathfrak{m}$ such that $\mathfrak{m}(A_j'\otimes_{\Z_p}\cO_F)=\prod_k\mathfrak{m}_k$ is a finite product of maximal ideals of $A_j'\otimes_{\Z_p}\cO_F$.  Therefore, the localization $(A_j'\otimes_{\Z_p}\A_F^+)\otimes_{A_j'}{(A_j')}_{\mathfrak{m}}$ has Jacobson radical generated by $(\mathfrak{m},\pi)$.  Then Nakayama's lemma implies that $\mathscr{N}^{\leq h}(M)(U_j')\otimes_{A_j'}{(A_j')}_{\mathfrak{m}}$ is free of rank $d$, so there is an open affine neighborhood $U_{j,\mathfrak{m}}=\Spec A_{j,\mathfrak{m}}\subset U_j'$ of the point of $\mathscr{L}_M^{\leq h}$ corresponding to $\mathfrak{m}$ such that $\mathscr{N}^{\leq h}(M)(U_{j,\mathfrak{m}})$ is free of rank $d$ over $A_{j,\mathfrak{m}}\otimes_{\Z_p}\A_F^+$.
\end{proof}

\begin{proposition}\label{prop:wach-d+-inclusions-artin}
There are natural inclusions of sheaves $\pi^h\left((\mathcal{O}_{\mathscr{L}_M^{\leq h}}\otimes_{\Z_p}\A^+)\otimes_AM\right)\subset \A^+\otimes_{\A_{F}^+}\mathscr{N}^{\leq h}(M)\subset (\mathcal{O}_{\mathscr{L}_M^{\leq h}}\otimes_{\Z_p}\A^+)\otimes_AM$, and therefore natural inclusions of sheaves
$\pi^h\D^+(\mathcal{O}_{\mathscr{L}_M^{\leq h}}\otimes_AM)\subset \mathscr{N}^{\leq h}(M)\subset \D^+(\mathcal{O}_{\mathscr{L}_M^{\leq h}}\otimes_AM)$.
\end{proposition}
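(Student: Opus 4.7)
Both inclusions are of sheaves of $\mathcal{O}_{\mathscr{L}_M^{\leq h}} \otimes_{\Z_p} \A^+$-modules, so it suffices to verify them locally on a cover. By Lemma~\ref{lemma:wach-loc-free}, I choose a finite open affine cover $\{U_i = \Spec A_i\}$ of $\mathscr{L}_M^{\leq h}$ on which $N := \mathscr{N}^{\leq h}(M)(U_i)$ is free of rank $d$ over $A_i \otimes_{\Z_p} \A_F^+$. Since all three sheaves in question are submodules of the locally free sheaf $(\mathcal{O}_{\mathscr{L}_M^{\leq h}} \otimes_{\Z_p} \A) \otimes_A M$, I reduce to showing on each $U_i$ that
\[
    \pi^h\left((A_i \otimes_{\Z_p}\A^+)\otimes_{A}M\right) \subset \A^+\otimes_{\A_F^+} N \subset (A_i \otimes_{\Z_p}\A^+)\otimes_{A}M.
\]
Fix a basis $e_1,\dots,e_d$ of $N$ and, after possibly localizing on $\Spec A$, a basis $m_1,\dots,m_d$ of $M$; write $e_j = \sum_i Q_{ij}\,(1\otimes m_i)$ for some $Q \in \GL_d(A_i \otimes_{\Z_p}\A)$. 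The right-hand inclusion is equivalent to $Q \in M_d(A_i \otimes_{\Z_p}\A^+)$, while the left-hand inclusion is equivalent to $\pi^h Q^{-1} \in M_d(A_i \otimes_{\Z_p}\A^+)$.

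For the upper inclusion, I exploit the $\varphi$-structure on $N$. Let $P \in M_d(A_i \otimes_{\Z_p}\A_F^+)$ be the matrix of $\varphi$ on $N$ in the basis $(e_j)$. Since $\varphi$ acts trivially on the $m_i$, comparing $\varphi(e_j)$ computed in two ways gives the functional equation $\varphi(Q) = Q\,P$. The plan is to transport $N$ through the perfect rings of $p$-adic Hodge theory: extending scalars to $\widetilde{\A}^+$, the Frobenius is bijective, and iterating the relation $Q = \varphi^{-1}(Q)\,\varphi^{-1}(P)^{-1}$ together with $p$-nilpotence of $A_i$ and the contracting property of $\varphi^{-1}$ on the ideal $([\overline\pi])$ of $\widetilde{\A}^+$ will show that $Q \in M_d(A_i \otimes_{\Z_p}\widetilde{\A}^+)$. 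Combining this with $Q \in M_d(A_i \otimes_{\Z_p}\A)$ and the fact that $\A^+ = \A \cap \widetilde{\A}^+$ (in a form extended to $A_i$-coefficients through a devissage as in Lemma~\ref{lemma:a-a-k}) yields $Q \in M_d(A_i \otimes_{\Z_p}\A^+)$. The triviality of $\Gamma$ on $N/\pi N$ enters to ensure this integrality descends compatibly through the Galois fixed points.

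For the lower inclusion, now that $N$ is known to sit inside $\D^+(M_{A_i})$, both are $\varphi$- and $\Gamma$-stable $A_i \otimes_{\Z_p}\A_F^+$-lattices in the same $(\varphi,\Gamma)$-module $\D(M_{A_i})$. The height hypothesis says the cokernel of $\varphi^\ast N \hookrightarrow N$ is killed by $q^h$, while $\varphi^\ast \D^+(M_{A_i}) \to \D^+(M_{A_i})$ is surjective (its cokernel is killed by a unit, since $M$ itself is free over $A_i$ and $\varphi$ acts trivially on $M$). Taking determinants on both sides, $\det(P)$ divides $q^{dh}$ in $A_i \otimes_{\Z_p}\A_F^+$, so the $\A_F^+$-module lengths of $\D^+(M_{A_i})/N$ localized at the prime $(\pi)$ are bounded by the corresponding contribution of $\pi^h$ (using $q \equiv \pi^{p-1} \pmod{p}$ and $p$-nilpotence). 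Combining this with the fact that $N$ and $\D^+(M_{A_i})$ agree after inverting $\pi$ and a snake-lemma comparison of the two $\varphi$-actions yields $\pi^h \D^+(M_{A_i}) \subset N$, completing the proof after extending scalars to $\A^+$.

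The principal obstacle is the upper inclusion: translating Berger's classical argument (\cite{berger04}) into the families setting requires carefully tracking integrality through $\varphi$-iteration over $\widetilde{\A}^+$, and ensuring that the passage between $\A^+$ and $\widetilde{\A}^+$ behaves well under tensoring with artinian coefficient rings $A_i$ with residual $\mathfrak{m}_R$-structure. The lower inclusion, by contrast, is essentially a determinantal consequence of the height condition once the upper one is in hand.
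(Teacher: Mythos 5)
Your reduction to the locally free case and the functional equation $\varphi(Q)=QP$ match the paper's setup, but the two core steps diverge from the paper's mechanism, and the second one has a genuine gap.

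For the upper inclusion, the paper does not pass to $\widetilde{\A}^+$ and iterate $\varphi^{-1}$. Instead it exploits the artinian hypothesis directly: choosing $s$ with $p^s=0$ in $A$, one has $\varphi^s(\pi)\in\pi^p(A_i\otimes_{\Z_p}\A^+)$ (because $\ord_p\binom{p^s}{n}=s$ for $1\le n\le p-1$), so applying $\varphi^s$ to $X_{U_i}P_{U_i}=\varphi^s(X_{U_i})$ multiplies the $\pi$-pole order of $Q=X_{U_i}$ by $p$ on one side while leaving it fixed on the other, forcing the pole order to be $0$. Your perfectization route is plausible in outline (note the relation should read $Q=\varphi^{-1}(Q)\varphi^{-1}(P)$, not $\varphi^{-1}(Q)\varphi^{-1}(P)^{-1}$), but it is only a plan, and the $\Gamma$-triviality on $N/\pi N$ plays no role here: the passage to the second pair of inclusions is just taking $H_F$-invariants, using that $(\A^+\otimes_{\A_F^+}\mathscr{N}^{\leq h}(M)(U_i))^{H_F}=\mathscr{N}^{\leq h}(M)(U_i)$ for free $\mathscr{N}^{\leq h}(M)(U_i)$, together with the preliminary check (which you skip) that $U\mapsto\A^+\otimes_{\A_F^+}\mathscr{N}^{\leq h}(M)(U)$ is in fact a sheaf.

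The real problem is the lower inclusion. A determinant bound $\det(P)\mid q^{dh}$, or equivalently a length bound on $\D^+(M_{A_i})/N$ at $(\pi)$, can only yield $\pi^{dh}\D^+(M_{A_i})\subset N$, not $\pi^{h}\D^+(M_{A_i})\subset N$: annihilation of a length-$\ell$ module requires $\pi^{\ell}$, and $\ell$ here is the sum of the elementary divisors, of size up to $dh$. To get the exponent $h$ one must use the height condition \emph{entry-wise} — $q^h$ killing $\operatorname{coker}(\varphi^\ast N\to N)$ means $q^hP^{-1}$ has entries in $A_i\otimes_{\Z_p}\A_F^+$ — and then run the same pole-order comparison under $\varphi^s$ on the identity
\[
\varphi^s(\pi^hX_{U_i}^{-1})=\bigl((\varphi^{s-1}(q)\cdots\varphi(q)q)^hP_{U_i}^{-1}\bigr)\,(\pi^hX_{U_i}^{-1}),
\]
where the left side would have poles of order $\geq pr$ and the right side poles of order $\leq r$, forcing $r=0$. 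Your ``snake-lemma comparison of the two $\varphi$-actions'' does not substitute for this step, and the surjectivity claim for $\varphi^\ast\D^+(M_{A_i})\to\D^+(M_{A_i})$ is neither needed nor justified as stated.
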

\begin{proof}
We first need to check that the presheaf $U\mapsto \A^+\otimes_{\A_{F}^+}\mathscr{N}^{\leq h}(M)(U)$ is actually a sheaf.  But this follows from Lemma~\ref{lemma:a-a-k} and the description of $\A_K^+$ in \textsection~\ref{section:rings} (which implies that $A\otimes\A_K^+$ is finite flat over $A\otimes\A_F^+$).

By Lemma~\ref{lemma:wach-loc-free}, $\mathscr{N}^{\leq h}(M)$ is free locally on $\mathscr{L}_M^{\leq h}$; let $\{U_i\}$ be an affine cover of $\mathscr{L}_M^{\leq h}$ such that $\mathscr{N}^{\leq h}(M)(U_i)$ is free for all $i$.

Let $X_{U_i}\in \Mat_{d\times d}(\mathcal{O}_{\mathscr{L}_M^{\leq h}}(U_i)\otimes_{\Z_p}\A)$ be a matrix of a basis of $\mathscr{N}^{\leq h}(M)(U_i)$ with respect to a basis of $\mathcal{O}_{\mathscr{L}_M^{\leq h}}(U_i)\otimes_AM$.  Since $p$ is torsion in $A$, $\mathcal{O}_{\mathscr{L}_M^{\leq h}}(U_i)\otimes_{\Z_p}\A=\mathcal{O}_{\mathscr{L}_M^{\leq h}}(U_i)\otimes_{\Z_p}\A^+[1/\pi]$.  Moreover, there is some integer $s>0$ such that $\varphi^s(\pi)\in \pi^p(\mathcal{O}_{\mathscr{L}_M^{\leq h}}(U_i)\otimes_{\Z_p}\A^+)$, because $\varphi^s(\pi)=(1+\pi)^{p^s}-1$ and $\ord_p\binom{p^s}{n}=s$ for $1\leq n\leq p-1$; let $P_{U_i}$ be the matrix of $\varphi^s$ with respect to the chosen basis of $\mathscr{N}^{\leq h}(M)(U_i)$.  Since the action of Frobenius is trivial on $M$, we see that $X_{U_i}P_{U_i}=\varphi^s(X_{U_i})$, so $\varphi^s(\pi^hX_{U_i}^{-1})=((\varphi^{s-1}(q)\cdots\varphi(q)q)^hP_{U_i}^{-1})(\pi^hX_{U_i}^{-1})$.

If $X_{U_i}$ has a coefficient in $\pi^{-r}\mathcal{O}_{\mathscr{L}_M^{\leq h}}(U_i)\otimes_{\Z_p}\A^+\smallsetminus\pi^{-r+1}\mathcal{O}_{\mathscr{L}_M^{\leq h}}(U_i)\otimes_{\Z_p}\A^+$ for $r\geq 0$ (and no coefficients with poles of higher order), then $\varphi^s(X_{U_i})$ has a  coefficient in $\pi^{-rp}\mathcal{O}_{\mathscr{L}_M^{\leq h}}(U_i)\otimes_{\Z_p}\A^+\smallsetminus\pi^{-rp+1}\mathcal{O}_{\mathscr{L}_M^{\leq h}}(U_i)\otimes_{\Z_p}\A^+$, whereas $X_{U_i}P_{U_i}$ has coefficients in $\pi^{-r}\mathcal{O}_{\mathscr{L}_M^{\leq h}}(U_i)\otimes_{\Z_p}\A^+$.  It follows that $r=0$ and $\mathscr{N}^{\leq h}(M)(U_i)\subset \A^+\htimes_{\Zp}(\mathcal{O}_{\mathscr{L}_M^{\leq h}}(U_i)\otimes_AM)$.

The other inclusion follows similarly.  Since $\mathscr{N}^{\leq h}(M)/\varphi^\ast\mathscr{N}^{\leq h}(M)$ is killed by $q^h$ by assumption, $((\varphi^{s-1}(q)\cdots\varphi(q)q)^hP_{U_i}^{-1})$ has coefficients in $\mathcal{O}_{\mathscr{L}_M^{\leq h}}(U_i)\otimes_{\Z_p}\A_{F}^+$.  Now if $\pi^hX_{U_i}^{-1}$ has a coefficient with a pole of order $r$ (and no coefficients with poles of higher order), then $\varphi^s(\pi^hX_{U_i}^{-1})$ has a coefficient with a pole of order at least $pr$, whereas $((\varphi^{s-1}(q)\cdots\varphi(q)q)P_{U_i}^{-1})(\pi^hX_{U_i}^{-1})$ has coefficients in $\pi^{-r}\mathcal{O}_{\mathscr{L}_M^{\leq h}}(U_i)\otimes_{\Z_p}\A^+$.  It follows that $r=0$ and therefore $\pi^h\left((\mathcal{O}_{\mathscr{L}_M^{\leq h}}(U_i)\otimes_{\Zp}\A^+)\otimes_AM\right)\subset \A^+\htimes_{\A_{F}^+}\mathscr{N}^{\leq h}(M)(U_i)$.

Now we take $H_{F}$-invariants to obtain the second pair of inclusions.  To see that $\left(\A^+\otimes_{\A_{F}^+}\mathscr{N}^{\leq h}(M)\right)^{H_F}=\mathscr{N}^{\leq h}(M)$, we observe that the presheaf $U\mapsto \left(\A^+\otimes_{\A_{F}^+}\mathscr{N}^{\leq h}(M)(U)\right)^{H_F}$ is a sheaf on $\mathscr{L}_M^{\leq h}$ and the set of open affine subsets $U\subset \mathscr{L}_M^{\leq h}$ such that $\mathscr{N}^{\leq h}(M)(U)$ is free is a basis for the Zariski topology.  But if $\mathscr{N}^{\leq h}(M)(U)$ is free, it is clear that $\left(\A^+\otimes_{\A_{F}^+}\mathscr{N}^{\leq h}(M)(U)\right)^{H_F}=\mathscr{N}^{\leq h}(M)(U)$, so we are done.
\end{proof}

\begin{lemma}\label{lemma:artin-wach-pi-h}
Let $A\rightarrow A'$ be a local homomorphism of local artinian rings with finite residue fields.  Then there is a homomorphism $A'\otimes_A\Gamma(\mathscr{L}_M^{\leq h},\mathscr{N}^{\leq h}(M))\rightarrow\Gamma(\mathscr{L}_{M_{A'}}^{\leq h},\mathscr{N}^{\leq h}(M_{A'}))$, and the image contains the image of $\pi^h\D^+(M_{A'})$.
\end{lemma}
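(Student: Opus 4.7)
The plan is to construct the map by base change, then verify the image containment by reducing to a surjectivity statement about $\D^+$.

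For the construction: the preceding proposition provides a canonical identification $\mathscr{L}_{M_{A'}}^{\leq h} \cong \mathscr{L}_M^{\leq h} \times_{\Spec A} \Spec A'$; write $p$ for the projection to $\mathscr{L}_M^{\leq h}$. Each defining condition of $L_M^{\leq h}$ (the height-$\leq h$ condition, $\varphi$- and $\Gamma$-stability, and triviality of $\Gamma$ modulo $\pi$) is preserved under base change along $A \to A'$, so $p^\ast \mathscr{N}^{\leq h}(M)$ represents the functor $L_{M_{A'}}^{\leq h}$ and is canonically identified with $\mathscr{N}^{\leq h}(M_{A'})$. Taking global sections and extending $A'$-linearly gives the desired homomorphism.

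For the image containment: apply Proposition~\ref{prop:wach-d+-inclusions-artin} over both $A$ and $A'$ to obtain sheaf inclusions $\pi^h \D^+(\mathcal{O}_{\mathscr{L}_M^{\leq h}} \otimes_A M) \hookrightarrow \mathscr{N}^{\leq h}(M)$ and analogously over $A'$. An element of $\D^+(M)$ pulls back via $\Theta_A$ to a global section of $\D^+(\mathcal{O}_{\mathscr{L}_M^{\leq h}} \otimes_A M)$, which when multiplied by $\pi^h$ lands in $\Gamma(\mathscr{L}_M^{\leq h}, \mathscr{N}^{\leq h}(M))$; similarly over $A'$. These maps fit into a commutative square
\[
\xymatrix{
A' \otimes_A \pi^h \D^+(M) \ar[r] \ar[d] & A' \otimes_A \Gamma(\mathscr{L}_M^{\leq h}, \mathscr{N}^{\leq h}(M)) \ar[d] \\
\pi^h \D^+(M_{A'}) \ar[r] & \Gamma(\mathscr{L}_{M_{A'}}^{\leq h}, \mathscr{N}^{\leq h}(M_{A'}))
}
\]
with the right vertical map being the homomorphism from the first step. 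A diagram chase reduces the desired image containment to the surjectivity of the natural base-change map $A' \otimes_A \D^+(M) \to \D^+(M_{A'})$.

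The main obstacle is this surjectivity, since $(-)^{H_F}$ is only left-exact and $\D^+$ need not commute with arbitrary tensor products. The plan here is to induct on the length of $\ker(A \to A')$ as an $A$-module, reducing to the case where this kernel is annihilated by $\mathfrak{m}_A$. In that reduced case the obstruction to surjectivity lies in $H^1(H_F, \A^+ \otimes_{\Zp} N)$ for a finite $\Zp$-torsion module $N$; such obstructions can be controlled via Lemma~\ref{lemma:a-a-k}, which reduces the relevant $H_F$-invariant and cohomology computations to finite extensions of $F$ where the vanishing is classical. Once this surjectivity is in hand, chasing the commutative square completes the proof.
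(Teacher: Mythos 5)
Your first step is fine and matches the paper: the identification $\mathscr{L}_{M_{A'}}^{\leq h}\cong\mathscr{L}_M^{\leq h}\times_{\Spec A}\Spec A'$ and the universality of $\mathscr{N}^{\leq h}$ give the base-change map on global sections. The gap is in the second step. You reduce the image containment to the surjectivity of $A'\otimes_A\D^+(M)\rightarrow\D^+(M_{A'})$, but this is essentially the statement whose failure motivates the whole moduli-space construction (the paper says explicitly that $\D^+(M)$, unlike $\D(M)$, does not behave well under change of coefficients). Your proposed mechanism for proving it does not work: Lemma~\ref{lemma:a-a-k} only computes $H^0$, i.e.\ $H_K$-invariants of $\A^+$ tensored with a module carrying the \emph{trivial} Galois action, whereas the obstruction you must kill lives in $H^1(H_F,\A^+\otimes_{\Zp}IM)$ with the diagonal action, and this group is not ``classically zero.'' Already $H^1(H_F,\A^+/p)=H^1(H_F,\EE^+)\neq 0$: for $y$ with $y^p-y=\overline\pi^{-1}$, the cocycle $g\mapsto(g-1)y$ takes values in $\F_p\subset\EE^+$ but is not an $\EE^+$-valued coboundary, since $v(y)=-1/p\notin\Z$ forces $y\notin\EE^++\EE_F$. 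So the surjectivity you need is unproven and most likely false.

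The paper circumvents this by lifting $\beta\in\pi^h\D^+(M_{A'})$ not inside $\D^+$ but inside $\A^+\otimes_{\A_F^+}\mathscr{N}^{\leq h}(M)$, where surjectivity is trivial because $\pi^h((A\otimes\A^+)\otimes_AM)$ surjects onto $\pi^h((A'\otimes\A^+)\otimes_{A'}M_{A'})$; the resulting lift $\widetilde\beta$ is then only $H_F$-invariant modulo $I=\ker(A\to A')$, after first reducing to the case where $A\to A'$ is surjective. The real work is a correction-and-gluing argument: on an affine cover where $\mathscr{N}^{\leq h}(M)$ is free one writes $\widetilde\beta|_U=n_U+b_U$ with $n_U$ a section of $\mathscr{N}^{\leq h}(M)$ and $b_U$ an error term over $I(A\otimes\A^+)$, using that $(A_U/I)\otimes\A_F^+\xrightarrow{\sim}((A_U/I)\otimes\A^+)^{H_F}$; the discrepancies $b_{U'}-b_U$ form a \v{C}ech $1$-coboundary with coefficients in $I(A\otimes\A_K^+)$ for a single finite $K/F$ (Lemma~\ref{lemma:a-a-k}(3)), which is projected down to $I(A\otimes\A_F^+)$ using that $\A_K^+$ is finite free over $\A_F^+$, so that the corrected sections $n_U+\widetilde b_U$ glue to a global section mapping to $\beta$. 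Some substitute for this local-to-global step is unavoidable; the commutative square alone does not yield the containment.
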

\begin{proof}
By the  base change properties of $\Theta_A$ and $\mathscr{L}_M^{\leq h}$ we have a commutative diagram
\[
\xymatrix{
A'\otimes_A\pi^h((A\otimes_{\Z_p}\A^+)\otimes_AM)\ar[r]\ar@{=}[d] & A'\otimes_A\pi^h((\widetilde A\otimes_{\Z_p}\A^+)\otimes_AM)\ar[r]\ar[d] & A'\otimes_A\left(\A^+\otimes_{\A_{F}^+}\mathscr{N}^{\leq h}(M)(\mathscr{L}_M^{\leq h})\right)\ar[d]	\\
\pi^h((A'\otimes_{\Z_p}\A^+)\otimes_{A'}M_{A'})\ar[r] & \pi^h((\widetilde A'\otimes_{\Z_p}\A^+)\otimes_{A'}M_{A'})\ar@{^{(}->}[r] & \A^+\otimes_{\A_{F}^+}\mathscr{N}^{\leq h}(M_{A'})(\mathscr{L}_{M_{A'}}^{\leq h})
}
\]
We first observe that if $A'$ is a finite free $A$-module, then $\D^+(M_{A'})=A'\otimes_A\D^+(M)$ and the result follows by taking $H_F$-invariants of the diagram.  Thus, we may assume that $A\rightarrow A'$ induces an isomorphism on residue fields.  Indeed, if $k'$ denotes the residue field of $A'$, then $W(k')$ is a finite free $\Z_p$-module and $A'$ is a $W(k')$-algebra by Hensel's lemma.  Then $W(k')\otimes_{\Z_p}A$ is a finite free $A$-module and there is a natural homomorphism $W(k')\otimes_{\Z_p}A\rightarrow A'$; the source is semi-local, but its local summands are summands of a finite free module, hence projective, hence free, and they have residue field $k'$.

Moreover, for some $N\gg0$ and some $k\geq 0$, there is a surjection $A[X_1,\ldots,X_k]/(X_1,\ldots,X_k)^N\twoheadrightarrow A'$ sending the $X_i$ to elements of $\mathfrak{m}_{A'}$.  Since $A[X_1,\ldots,X_k]/(X_1,\ldots,X_k)^N$ is finite free over $A$, we may assume $A\rightarrow A'$ is surjective.

So suppose that $A\rightarrow A'$ is surjective with kernel $I$.
Given an element $\beta\in\A^+\otimes_{\A_{F}^+}\Gamma(\mathscr{L}_{M_A'}^{\leq h},\mathscr{N}^{\leq h}(M_{A'}))$ in the image of $\pi^h\D^+(M_{A'})$, by the commutativity of the above diagram it lifts to an element $\widetilde\beta\in\A^+\otimes_{\A_{F}^+}\Gamma(\mathscr{L}_M^{\leq h},\mathscr{N}^{\leq h}(M))$.  Since $\widetilde \beta$ is defined up to an element of $\pi^hI(\A^+\otimes_AM)$, we need to show that
\[	\widetilde\beta\in \mathscr{N}^{\leq h}(M)(\mathscr{L}_M^{\leq h})+I(A\otimes_{\Z_p}\A^+)\otimes_{A\otimes_{\Z_p}\A_F^+}\mathscr{N}^{\leq h}(M)(\mathscr{L}_M^{\leq h})	\]

 In order to show this, we first note that for any $h\in H_F$,
\[	h(\widetilde\beta)-\widetilde\beta\in I((A\otimes_{\Z_p}\A^+)\otimes_AM)\cap \A^+\otimes_{\A_F^+}\mathscr{N}^{\leq h}(M)(\mathscr{L}_M^{\leq h})	\]
since $\beta$ is fixed by $H_F$.  If $U=\Spec A_U\subset \mathscr{L}_M^{\leq h}$ is an open affine subspace such that $\mathscr{N}^{\leq h}(M)(U)$ is free (compare Lemma  \ref{lemma:wach-loc-free}), then $h(\widetilde\beta)-\widetilde\beta|_U\in I(\A^+\otimes_{\A_F^+}\mathscr{N}^{\leq h}(M)(U))$ by Lemma~\ref{lemma:coeffs-a-a+-i}.  Indeed, if $n_1,\ldots,n_d$ is a basis for $\mathscr{N}^{\leq h}(M)(U)$ and $\widetilde\beta=\sum_ib_i\otimes n_i$ for some $b_i\in A_U\otimes_{\Z_p}\A^+$, then $h(b_i)-b_i\in I(A_U\otimes_{\Z_p}\A^+)$.  Since $(A_U/I)\otimes_{\Z_p}\A_F^+\rightarrow ((A_U/I)\otimes_{\Z_p}\A^+)^{H_F}$ is an isomorphism, $b_i\in A_U\otimes_{\Z_p}\A_F^+ + IA_U\otimes_{\Z_p}\A^+$.  Therefore, \[\widetilde\beta|_U\in \mathscr{N}^{\leq h}(M)(U))+ I(A_U\otimes_{\Z_p}\A^+)\otimes_{A_U\otimes_{\Z_p}\A_F^+}\mathscr{N}^{\leq h}(M)(U)).\]

 Now consider $\widetilde\beta|_{U\cap U'}$.  If $\widetilde\beta|_U=n_U+ b_U$ and $\widetilde\beta|_{U'}=n_{U'}+ b_{U'}$, with $n_U,n_{U'}\in \mathscr{N}^{\leq h}(M)(U\cap U'))$ and $b_U,b_{U'}\in I(A_{U\cap U'}\otimes_{\Z_p}\A^+)\otimes_{A_{U\cap U'}\otimes_{\Z_p}\A_F^+}\mathscr{N}^{\leq h}(M)(U\cap U'))$, then $n_U-n_{U'}=b_{U'}-b_U$ as elements of the free $(A_{U\cap U'}\otimes_{\Z_p}\A^+)$-module $(A_{U\cap U'}\otimes_{\Z_p}\A^+)\otimes_{A_{U\cap U'}\otimes_{\Z_p}\A_F^+}\mathscr{N}^{\leq h}(M)(U\cap U'))$.  But $n_U-n_{U'}$ is fixed by $H_F$, by construction, so $b_{U'}-b_U$ must have coefficients in $(I(A_{U\cap U'}\otimes_{\Z_p}\A^+))^{H_F}=I(A_{U\cap U'}\otimes_{\Z_p}\A_F^+)$.   Thus, $U\cap U'\mapsto b_{U'}-b_U$ is a $1$-cocycle valued in $I\mathscr{N}^{\leq h}(M)$, and when viewed as a $1$-cocycle valued in $I(\A^+)\otimes_{\A_F^+}\mathscr{N}^{\leq h}(M))$, it is trivial (since it is the $1$-coboundary of $U\mapsto b_U$).  But all of the $b_U$ have coefficients in $IA\otimes_{\Z_p}\A_K^+$ for some finite extension $K/F$ by Lemma~\ref{lemma:a-a-k}, and $\A_K^+$ is a finite free $\A_F^+$-module by the description of $\A_K^+$ in~\ref{section:rings}.  Therefore, there exist $\widetilde b_U\in IA\otimes_{\Z_p}\A_F^+$ such that the $1$-coboundary $U\cap U'\mapsto b_{U'}-b_U$ is equal to the $1$-coboundary $U\cap U'\mapsto \widetilde b_{U'}-\widetilde b_U$ (we may choose $\widetilde b_U$ by projecting the coefficients of $b_U$ down to $IA\otimes_{\Z_p}\A_F^+$).  Now we rewrite $\widetilde\beta|_U=(n_U+\widetilde b_U)+(b_U-\widetilde b_U)$ and we see that $\{n_U+\widetilde b_U\}$ glues to a global section of $\mathscr{N}^{\leq h}(M)$, so we are done.
\end{proof}

Now we wish to consider the situation when $A$ is a complete local noetherian $\Z_p$-algebra with finite residue field and maximal ideal $\mathfrak{m}_A$.  We may extend the definition of the functor $L_M^{\leq h}$ to $A$-algebras $B$ such that $\mathfrak{m}_A^iB=0$ for some $i$ in a natural way.  It follows from formal GAGA that $L_M^{\leq h}$ is also representable in this setting by a projective $A$-scheme.  Namely, we consider the formal scheme $\widehat{\mathscr{L}}_M^{\leq h}:=\{\mathscr{L}^{\leq h}_{M_{A/\mathfrak{m}^i}}\}$, which is equipped with a morphism $\widehat{\Theta}_{A\ast}:\widehat{\mathscr{L}}_M^{\leq h}\rightarrow\Spf(A)$; it is a formal scheme equipped with a very ample line bundle, so it is the $\mathfrak{m}_A$-adic completion of a projective $A$-scheme $\mathscr{L}_M^{\leq h}$.

As in the artinian case, we make the following pair of definitions:
\begin{definition}
Let $\Theta_A:\mathscr{L}_M^{\leq h}\rightarrow \Spec A$ denote the structure morphism, and let $\widetilde A$ denote the global sections $\Gamma(\mathscr{L}_M^{\leq h},\Theta_{A\ast}\mathcal{O}_{\mathscr{L}_M^{\leq h}})$.
\end{definition}

Then $\widehat{\mathscr{L}}_M^{\leq h}$ carries a universal $\mathcal{O}_{\widehat{\mathscr{L}}_M^{\leq h}}\widehat\otimes\A_F^+$-lattice $\widehat{\mathscr{N}}^{\leq h}(M)$.  We may view $\widehat{\mathscr{N}}^{\leq h}(M)$ as a formal coherent sheaf on $\widehat{\mathscr{L}}_M^{\leq h}\times \Spf \A_F^+\cong \widehat{\mathscr{L}}_M^{\leq h}\times_{\Spf A} \Spf (A\widehat\otimes\A_F^+)$, and by formal GAGA, $\widehat{\mathscr{N}}^{\leq h}(M)$ is the completion of a coherent sheaf (in fact, a vector bundle) on $\mathscr{L}_M^{\leq h}\times_{\Spec A}\Spec(A\widehat\otimes\A_F^+)$.

If $A\rightarrow A'$ is a local homomorphism of local rings and $A'$ is $\mathfrak{m}_A$-adically complete, then an $A'$-point of $\mathscr{L}_M^{\leq h}$ induces a $\{A'/\mathfrak{m}_A^i\}$-point of $\widehat{\mathscr{L}}_M^{\leq h}$, and therefore a system of $(A'/\mathfrak{m}_A^i)\otimes_{\Z_p}\A_F^+$-lattices in $\{\D(M_{A'/\mathfrak{m}^i})\}$.  But this is the same as an $A'\widehat\otimes\A_F^+$-lattice in $\D(M_{A'})$, where $A'$ is given the $\mathfrak{m}_A$-adic topology.  Thus, we may view $\mathscr{L}_M^{\leq h}$ as a moduli space of $A\widehat\otimes\A_F^+$-lattices in $\D(M)$.  In particular, $\Z_p$-points of $\mathscr{L}_M^{\leq h}$ (if they exist) correspond to Wach modules of $M_{\Z_p}$ in the sense defined in~\cite{berger04}, i.e., satisfying   the properties given in Definition~\ref{def:Wach}.

\begin{lemma}\label{lemma:wach-rep}
There is a canonical isomorphism $M_{\widetilde A}\cong\left(\A\otimes_{\A_F^+}\Theta_{A\ast}\mathscr{N}^{\leq h}(M)\right)^{\varphi=1}$.
\end{lemma}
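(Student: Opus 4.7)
The plan is to mirror the proof of Lemma~\ref{lemma:wach-rep-artin} in the complete noetherian setting, reducing to the artinian case via formal GAGA and inverse limits. As setup, I would first note that since $\Theta_A$ is projective, $\widetilde A$ is a finite $A$-module which is $\mathfrak{m}_A$-adically complete by formal GAGA, with $\widetilde A/\mathfrak{m}_A^i\widetilde A \cong \widetilde{A/\mathfrak{m}_A^i}$ thanks to the base-change $\mathscr{L}_M^{\leq h}\times_A A/\mathfrak{m}_A^i \cong \mathscr{L}_{M_{A/\mathfrak{m}_A^i}}^{\leq h}$ already established. Analogously $\Theta_{A*}\mathscr{N}^{\leq h}(M)$ is the algebraization of the inverse system $\{\Theta_{A/\mathfrak{m}^i*}\mathscr{N}^{\leq h}(M_{A/\mathfrak{m}^i})\}$, viewed as a coherent $\widetilde A\widehat\otimes_{\Z_p}\A_F^+$-module.

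The core argument then proceeds in four steps, mirroring the artinian proof. First, the moduli interpretation of $\mathscr{L}_M^{\leq h}$ recalled just before the statement yields
\[\Theta_A^*(A\widehat\otimes\A_F)\otimes_{\Theta_A^*(A\widehat\otimes\A_F^+)}\mathscr{N}^{\leq h}(M) \cong \D(\mathcal{O}_{\mathscr{L}_M^{\leq h}}\widehat\otimes_A M).\]
Second, tensoring over $\A_F$ with $\A$ and applying Fontaine's equivalence in families (in the form of~\cite{dee01}), which identifies $\A\otimes_{\A_F}\D(N)$ with $\A\otimes_{\Z_p}N$, gives
\[\Theta_A^*(A\widehat\otimes\A)\otimes_{\Theta_A^*(A\widehat\otimes\A_F^+)}\mathscr{N}^{\leq h}(M) \cong \Theta_A^*\bigl((A\widehat\otimes\A)\otimes_A M\bigr).\]
Third, pushing forward via the projective morphism $\Theta_A$ and invoking the projection formula yields
\[\A\widehat\otimes_{\A_F^+}\Theta_{A*}\mathscr{N}^{\leq h}(M) \cong \A\widehat\otimes_{\Z_p}M_{\widetilde A}.\]
Fourth, take the $\varphi=1$ part: since $\A^{\varphi=1}=\Z_p$, the right-hand side collapses to $M_{\widetilde A}$, as desired.

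The main obstacle is ensuring that the various inverse limits and completed tensor products commute appropriately. Formal GAGA controls the algebraization of the universal lattice and the identification $\widetilde A = \varprojlim_i\widetilde{A/\mathfrak{m}_A^i}$, while the $\varphi=1$ functor---being an equalizer---commutes with all limits automatically. The delicate point is the compatibility of tensoring with $\A$ over $\A_F^+$ with the $\mathfrak{m}_A$-adic inverse limit; this is handled by reducing to the artinian isomorphism of Lemma~\ref{lemma:wach-rep-artin} level-by-level and invoking the compatible base change of $\mathscr{L}_M^{\leq h}$.
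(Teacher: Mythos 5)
Your proof takes essentially the same approach as the paper, which simply states that the lemma ``follows from Lemma~\ref{lemma:wach-rep-artin} by taking limits.'' You spell out the mechanics of that reduction — the compatibility of $\widetilde A$, $\Theta_{A*}\mathscr{N}^{\leq h}(M)$, and the $\varphi=1$ equalizer with the $\mathfrak{m}_A$-adic inverse system via formal GAGA and the base-change properties of $\mathscr{L}_M^{\leq h}$ — but the underlying strategy of passing from the artinian isomorphisms of Lemma~\ref{lemma:wach-rep-artin} to the complete noetherian case by taking limits is the same, and the details you supply are correct.
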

\begin{proof}
This follows from Lemma~\ref{lemma:wach-rep-artin} by taking limits.
\end{proof}

\begin{lemma}\label{lemma:rat-wach-loc-free}
Let $A$ be a finite flat $\Z_p$-algebra, let $B:=A[1/p]$, and let $M$ be a finite free $A$-module of rank $d$ equipped with a continuous $A$-linear action of $G_F$ such that the underlying $\Z_p$-linear Galois representation is a lattice in a crystalline representation with Hodge--Tate weights in $[-h,0]$.  Then $\Theta_{A\ast}\mathscr{N}^{\leq h}(M)[1/p]$ is free over $B\otimes_{\Qp}\B_F^+$ of rank $d$.
\end{lemma}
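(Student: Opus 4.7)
The strategy is to use Berger's classical theorem on the underlying $\mathbb{Z}_p$-linear crystalline representation of $M$ to produce a canonical rational Wach module, and to then identify this with $\Theta_{A*}\mathscr{N}^{\leq h}(M)[1/p]$ by showing that $\Theta_A[1/p]$ is itself an isomorphism.

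First, I apply Berger's theorem to $M$ regarded purely as a $\mathbb{Z}_p$-linear crystalline $G_F$-representation (forgetting, for a moment, the $A$-action): this produces a unique Wach submodule $N_0\subset \mathbf{D}(M)$, free over $\mathbf{A}_F^+$ of rank $d\cdot\mathrm{rank}_{\mathbb{Z}_p}(A)$, stable under $\varphi$ and $\Gamma$, with $\Gamma$ acting trivially modulo $\pi$ and $N_0/\varphi^*N_0$ killed by $q^h$. By the uniqueness and the functoriality of Berger's construction, the $A$-action on $\mathbf{D}(M)$ preserves $N_0$, so $N_0$ acquires the structure of an $A\otimes_{\mathbb{Z}_p}\mathbf{A}_F^+$-module. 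I would then check that $N_0[1/p]$ is free of rank $d$ over $B\otimes_{\mathbb{Q}_p}\mathbf{B}_F^+$: decompose $B=\prod_i B_i$ into local Artinian $\mathbb{Q}_p$-algebras with residue fields $L_i$; the reduction $N_0[1/p]\otimes_B L_i$ is the rational Wach module of the crystalline $L_i$-representation $M\otimes_A L_i$, which is classically free of rank $d$ over $L_i\otimes_{\mathbb{Q}_p}\mathbf{B}_F^+$. A Nakayama / flat base change argument over the semilocal ring $B\otimes_{\mathbb{Q}_p}\mathbf{B}_F^+$ then upgrades fiberwise freeness to global freeness of $N_0[1/p]$.

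At this point $N_0[1/p]$ defines a $B$-point of $\mathscr{L}_M^{\leq h}[1/p]$ — i.e.\ a section $s:\Spec B\to\mathscr{L}_M^{\leq h}[1/p]$ of $\Theta_A[1/p]$ — and it suffices to show that $\Theta_A[1/p]$ is an isomorphism, for then $\Theta_{A*}\mathscr{N}^{\leq h}(M)[1/p]\cong s^{*}\mathscr{N}^{\leq h}(M)[1/p]=N_0[1/p]$. Following the method of Kisin~\cite{kisin-pst-def-rings,kisin-bt}, I would argue that the geometric fibres of $\Theta_A[1/p]$ are single reduced points: over each $L_i$-valued point, Berger's uniqueness theorem shows that there is a unique lattice in $\mathbf{D}(M\otimes_A L_i)$ satisfying the Wach conditions with height $\leq h$, and combined with properness of $\Theta_A[1/p]$ this identifies the reduction of each fibre. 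The main obstacle, as I expect, is extending this uniqueness across nilpotent thickenings in the non-reduced local factors of $B$; this requires an infinitesimal rigidity argument, showing that any two lifts of the residual Wach module to a small thickening must agree, which in turn relies on a careful analysis of $\varphi$- and $\Gamma$-equivariant deformations subject to the tight control provided by $q^h\cdot(N/\varphi^*N)=0$ and triviality of $\Gamma$ modulo $\pi$. Once this rigidity is in place, $\Theta_A[1/p]$ becomes a proper radicial bijection between reduced-at-each-point schemes, hence an isomorphism, and the statement of the lemma follows.
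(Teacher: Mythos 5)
Your step 1 (Berger's uniqueness forces the $A$-action to preserve the rational Wach module $N_0[1/p]$ of the underlying $\Qp$-representation) is the same starting point as the paper's proof, which likewise forgets the $A$-structure and invokes the uniqueness of \cite[Proposition III.4.2]{berger04}. The gap is in your step 2. First, $B\otimes_{\Qp}\B_F^+\cong(A\otimes_{\Z_p}\cO_F)[\![\pi]\!][1/p]$ is \emph{not} semilocal: it has infinitely many maximal ideals, only finitely many of which contain $\pi$, so ``fiberwise freeness plus Nakayama'' is not a complete argument as stated. More seriously, your identification of $N_0[1/p]\otimes_BL_i$ with the rational Wach module of $M\otimes_AL_i$ is circular: the \emph{image} of the natural map $N_0[1/p]\otimes_BL_i\to\D(M\otimes_AL_i)[1/p]$ is the Wach module of the fiber (by uniqueness), but unless you already know $N_0[1/p]$ is $B$-flat you cannot exclude a kernel, so the fiber itself could have rank $>d$ --- and $B$-flatness is exactly what is being proved. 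The paper's substitute for this step is the external input you are missing: Berger's theorem gives $N_0[1/p]/\pi N_0[1/p]\cong\Dcris(M_B)$, and \cite[Proposition 4.1.3]{bellovin13} shows that $\Dcris(M_B)$ is free of rank $d$ over $B\otimes_{\Qp}F$ \emph{directly}, not by a fiberwise argument; Nakayama at the (finitely many) maximal ideals containing $\pi$, combined with Dee's result that $N_0[1/p][1/\pi]=\D(M_B)[1/p]$ is projective of rank $d$, then yields the freeness.

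Your step 3 is both unnecessary and incomplete. Unnecessary, because once the pushforward $\Theta_{A\ast}\mathscr{N}^{\leq h}(M)[1/p]$ is identified with the unique rational Wach module (which the paper gets from the fact that the fiber of $\mathscr{L}_M^{\leq h}$ over $\Spec\Qp$ has a unique point), the lemma reduces to the freeness statement above; you do not need $\Theta_A[1/p]$ to be an isomorphism of schemes. Incomplete, because you explicitly defer the ``infinitesimal rigidity'' across nilpotent thickenings. In fact that rigidity is not the hard point: a $B$-valued point of $\mathscr{L}_M^{\leq h}[1/p]$ for Artinian $B$ yields, after forgetting coefficients, a $\B_F^+$-lattice satisfying the Wach conditions in $\D$ of the underlying $\Qp$-representation, and Berger's uniqueness applies verbatim --- this is precisely how the paper later shows $\Theta_A[1/p]$ is a closed immersion. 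But that proposition is proved \emph{after}, and \emph{using}, the present lemma, so routing your proof through it also inverts the paper's logical order.
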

\begin{proof}
We first forget the $A$-linear structures on $M$ and $\Theta_{A\ast}\mathscr{N}^{\leq h}(M)$ and consider them as $\Z_p$- and $\A_F^+$-modules, respectively.  Then $\mathscr{L}_M^{\leq h}\rightarrow \Spec\Z_p$ has a unique point in the fiber over $\Spec\Qp$.  Indeed, any $\Qp$-point extends to a $\Z_p$-point of $\mathscr{L}_M^{\leq h}$, which induces an $\A_F^+$-submodule $\NN$ of $\D(M)$ with actions of $\varphi$ and $\Gamma$, such that $\NN/\varphi^\ast$ is annihilated by $q^h$ and $\Gamma$ acts trivially on $\NN/\pi$.  Therefore, $\Theta_{\Z_p\ast}\mathscr{N}^{\leq h}(M)_{\Qp}$ is the unique rational Wach module $\NN(M[1/p])$ constructed in~\cite{berger04}.

Next, recall that $\D_{\cris}(M_B)\cong\NN(M)[1/p]/\pi\NN(M)[1/p]$.  But by~\cite[Proposition 4.1.3]{bellovin13}, $\Dcris(M)$ is free over $B\otimes_{\Qp}F$ of rank $d$, so we are done by Nakayama's lemma.
\end{proof}

\begin{lemma}\label{lemma:wach-saturate}
Let $N$ be a finite torsion-free $\A_{F}^+$-module of generic rank $d$ and define $N^{\rm{sat}}:=N[1/p]\cap (N\otimes_{\A_{F}^+}\A_{F})$, where the intersection is taken inside the finite $\B_{F}$-vector space $(N\otimes_{\A_{F}^+}\A_{F})[1/p]$.  Then $N^{\rm{sat}}$ is a finite free $\A_{F}^+$-module of rank $d$.
\end{lemma}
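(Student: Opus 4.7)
The plan is to reduce the claim to the Auslander--Buchsbaum formula: the ring $\A_F^+ = \cO_F[\![\pi]\!]$ is a $2$-dimensional regular local ring with maximal ideal $(p,\pi)$, so any finitely generated $\A_F^+$-module of depth $2$ is free. It therefore suffices to prove that $N^{\mathrm{sat}}$ is finitely generated and that $p,\pi$ is a regular sequence on it; the rank will then automatically be $d$, since $N^{\mathrm{sat}}\otimes_{\A_F^+}\B_F = (N\otimes_{\A_F^+}\A_F)[1/p]$ has $\B_F$-dimension $d$.

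For finite generation, I first choose $n_1,\ldots,n_d\in N$ forming a $\B_F$-basis of $N\otimes_{\A_F^+}\B_F$, and set $F := \bigoplus_i \A_F^+ n_i \subset N$. Since $N/F$ is finitely generated and torsion, some $c \in \A_F^+\smallsetminus\{0\}$ annihilates it, giving $F\subset N\subset\tfrac{1}{c}F$ inside $\B_F^d$. Applying the flat extensions $(-)[1/p]$ and $(-)\otimes_{\A_F^+}\A_F$ preserves these inclusions, yielding
\[
    N[1/p]\subset\tfrac{1}{c}(\A_F^+[1/p])^d\quad\text{and}\quad N\otimes_{\A_F^+}\A_F\subset\tfrac{1}{c}\A_F^d
\]
as submodules of $\B_F^d$. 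Using the descriptions in Section~\ref{section:rings}, elements of $\A_F^+[1/p]$ are power series in $\pi$ with $p$-bounded $F$-coefficients and no negative powers of $\pi$, while elements of $\A_F$ are series $\sum_{n\in\Z}a_n\pi^n$ with $a_n\in\cO_F$; intersecting forces both constraints, so $\A_F^+[1/p]\cap\A_F=\A_F^+$ in $\B_F$ and hence $(\A_F^+[1/p])^d\cap\A_F^d=(\A_F^+)^d$. Consequently $N^{\mathrm{sat}}\subset\tfrac{1}{c}(\A_F^+)^d$, a Noetherian $\A_F^+$-module, so $N^{\mathrm{sat}}$ is finitely generated.

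For the regular sequence, $p$ is injective on $N^{\mathrm{sat}}$ because $N^{\mathrm{sat}}$ embeds in the $\B_F$-vector space $\B_F^d$ where $p$ is a unit. To show $\pi$ is injective on $N^{\mathrm{sat}}/pN^{\mathrm{sat}}$, suppose $x\in N^{\mathrm{sat}}$ and $\pi x=py$ for some $y\in N^{\mathrm{sat}}$; I claim $x/p\in N^{\mathrm{sat}}$. Since $p$ is a unit in $\A_F^+[1/p]$, automatically $x/p\in N[1/p]$. Working inside $N\otimes_{\A_F^+}\A_F$ where $\pi$ is invertible, the equation $\pi x=py$ gives $x=p\cdot\pi^{-1}y$, so $x/p=\pi^{-1}y\in N\otimes_{\A_F^+}\A_F$. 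Thus $x/p\in N^{\mathrm{sat}}$, so $x\in pN^{\mathrm{sat}}$ as needed. Auslander--Buchsbaum now yields $\mathrm{pd}_{\A_F^+}N^{\mathrm{sat}}=0$, and $N^{\mathrm{sat}}$ is free of rank $d$. The only nontrivial step is finite generation, whose key input is the coincidence $\A_F^+[1/p]\cap\A_F=\A_F^+$ inside $\B_F$; the regular-sequence verification is a short manipulation using the invertibility of $\pi$ in $\A_F$ and of $p$ in $\A_F^+[1/p]$.
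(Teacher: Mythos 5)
Your proof is correct and follows essentially the same route as the paper's: both establish finite generation by trapping $N^{\mathrm{sat}}$ inside a finite free $\A_F^+$-module via the identity $\A_F^+[1/p]\cap\A_F=\A_F^+$ (the paper obtains the free overmodule from Serre's embedding lemma for torsion-free modules over a two-dimensional regular local ring, you obtain it by sandwiching $N$ between two free lattices), and both deduce freeness from a depth-two computation whose key step is the invertibility of $\pi$ in $\A_F$. The only packaging difference is at the end, where the paper argues that $N^{\mathrm{sat}}/\pi N^{\mathrm{sat}}$ is finite free over $\cO_F$ and applies Nakayama, while you verify that $(p,\pi)$ is a regular sequence and invoke Auslander--Buchsbaum; these are interchangeable.
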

\begin{proof}
Since $N$ is finite and torsion-free over $\A_F^+$ and $\A_F^+$ is a regular two-dimensional local ring, it follows from ~\cite[Lemmes 5 and 6]{serre95} that there is an embedding $N\hookrightarrow N'$, where $N'$ is a finite free $\A_F^+$-module.  Then $N^{\rm{sat}}\hookrightarrow (N')^{\rm{sat}}=N'$, which implies that $N^{\rm{sat}}$ is a finite $\A_F^+$-module.  It follows that $N^{\rm{sat}}/\pi N^{\rm{sat}}$ is a finite $\mathcal{O}_F$-module.  Furthermore, the kernel of $N^{\rm{sat}}\rightarrow N[1/p]/\pi$ is $\pi N^{\rm{sat}}$ since $\pi$ is invertible in $\A_F$, which implies that $N^{\rm{sat}}/\pi N^{\rm{sat}}$ is $p$-torsion-free.  Hence $N^{\rm{sat}}/\pi N^{\rm{sat}}$ is a finite free $\mathcal{O}_F$-module, so Nakayama's lemma implies that there is  a surjection $(\A_{F}^+)^{\oplus d'}\twoheadrightarrow N^{\rm{sat}}$ for some $d'$.  The kernel of this surjection must be $p$-torsion, hence trivial, so the surjection must be an isomorphism and we must have $d'=d$.
%
%
\end{proof}

The key result is the following, which is the same argument as in the proof of~\cite[Proposition 1.6.4]{kisin-pst-def-rings}:
\begin{proposition}
Let $A$ be a complete local noetherian $\Z_p$-algebra together with a finite free $A$-module $M$ of rank $d$, and a continuous $A$-linear action of $G_{F}$ on $M$.
\begin{enumerate}
\item	The map $\Theta_A:\mathscr{L}_M^{\leq h}\rightarrow \Spec A$ becomes a closed immersion after inverting $p$.
\item	Let $A^{\leq h}$ denote the quotient of $A$ corresponding to the scheme-theoretic image of $\Theta_A$.  Then for any $\Qp$-finite artinian ring $B$, a map $A\rightarrow B$ factors through $A^{\leq h}$ if and only if $M_B$ is crystalline with Hodge--Tate weights in $[-h,0]$.
\end{enumerate}
\end{proposition}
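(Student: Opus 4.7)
The plan is to deduce both parts simultaneously from two ingredients: the projectivity of $\mathscr{L}_M^{\leq h}$ over $\Spec A$, which is already established, and the uniqueness of rational Wach modules attached to crystalline representations, due to Berger.

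For part (1), since $\Theta_A$ is projective it is proper, and properness is preserved after inverting $p$. A proper monomorphism of locally noetherian schemes is automatically a closed immersion, so it suffices to prove that $\Theta_A[1/p]$ is a monomorphism. By Yoneda on affine test schemes combined with a reduction via Krull's intersection theorem, this in turn reduces to showing that for every finite-dimensional local artinian $\Qp$-algebra $B$ equipped with an $A$-algebra structure $A \to B$, the set of lifts $\Spec B \to \mathscr{L}_M^{\leq h}$ of the given map contains at most one element. Such a lift amounts to the datum of a $B \otimes_{\Qp} \B_F^+$-lattice $\NN \subset \D(M_B)$, projective of rank $d$, stable under $\varphi$ and $\Gamma$, with trivial $\Gamma$-action on $\NN/\pi\NN$, and with the cokernel of $\varphi^\ast \NN \to \NN$ killed by $q^h$. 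The existence of any such $\NN$ forces $M_B$ to be crystalline with Hodge--Tate weights in $[-h,0]$, and then Berger's theorem (as already exploited in the proof of Lemma~\ref{lemma:rat-wach-loc-free}) identifies $\NN$ uniquely as the rational Wach module of $M_B$.

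For part (2), the ``if'' direction is quick: when $M_B$ is crystalline with Hodge--Tate weights in $[-h,0]$, the rational Wach module $\NN(M_B) \subset \D(M_B)$ supplies a $B$-point of $\mathscr{L}_M^{\leq h}[1/p]$ lying over $\Spec B \to \Spec A$, and by definition of scheme-theoretic image this map must factor through $A^{\leq h}$. For the ``only if'' direction, I would use that scheme-theoretic image commutes with the flat open immersion $\Spec A[1/p] \hookrightarrow \Spec A$, so $\Spec A^{\leq h}[1/p]$ is the scheme-theoretic image of $\Theta_A[1/p]$; combined with part (1), this forces an isomorphism $\mathscr{L}_M^{\leq h}[1/p] \xrightarrow{\sim} \Spec A^{\leq h}[1/p]$. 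Consequently, any map $A \to B$ factoring through $A^{\leq h}$ lifts uniquely to a $B$-point of $\mathscr{L}_M^{\leq h}$, producing a Wach module for $M_B$, from which crystallinity with Hodge--Tate weights in $[-h,0]$ follows.

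The principal technical hurdle will be the uniqueness step underlying part (1): extending Berger's uniqueness of rational Wach modules from coefficients in a finite extension of $\Qp$ to coefficients in an arbitrary finite-dimensional local $\Qp$-algebra $B$. I expect this to be handled by combining Berger's original uniqueness argument with the flatness and descent inputs already developed in this paper --- in particular those underlying Lemma~\ref{lemma:rat-wach-loc-free}, together with the realization of $\mathscr{L}_M^{\leq h}$ as a closed subscheme of the affine Grassmannian --- so that uniqueness at the residue field of $B$ propagates across all infinitesimal thickenings.
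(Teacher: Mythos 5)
There is a genuine gap, and it sits precisely where you declare the argument ``quick'': the direction of part (2) asserting that if $M_B$ is crystalline with Hodge--Tate weights in $[-h,0]$ then the map $A\to B$ factors through $A^{\leq h}$. You claim the rational Wach module $\NN(M_B)\subset\D(M_B)$ ``supplies a $B$-point of $\mathscr{L}_M^{\leq h}[1/p]$,'' but $\mathscr{L}_M^{\leq h}$ is a moduli space of \emph{integral} $\A_F^+$-lattices: its functor of points is only defined on $\mathfrak{m}_A$-power-torsion (or $\mathfrak{m}_A$-adically complete) $A$-algebras, and a $B$-point of $\mathscr{L}_M^{\leq h}$ for $\Qp$-finite artinian $B$ is, via the valuative criterion, the same thing as a finite projective, $\varphi$- and $\Gamma$-stable $C\otimes_{\Zp}\A_F^+$-lattice of height $\leq h$ for some finite flat $\Zp$-subalgebra $C\subset B$. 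Producing such an integral lattice from the rational Wach module is the technical heart of the paper's proof: one takes $\NN(M_C)$ for some $C$, pushes it to $\cO_E$ along $C\twoheadrightarrow\cO_E$, saturates, invokes Lemma~\ref{lemma:wach-saturate} (regularity of the two-dimensional local ring $\A_F^+$) to get freeness over $\cO_E\otimes_{\Zp}\A_F^+$, lifts a basis, enlarges $C$ to $C'$ so that the matrices of $\varphi$ and $\Gamma$ become integral, and then --- crucially --- checks that $q^h$ kills $\NN_{C'}/\varphi^\ast\NN_{C'}$ \emph{integrally}, by showing this cokernel is $p$-torsion-free via a filtration of $\ker(C'\to\cO_E)$ with graded pieces $\cO_E$. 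None of this is automatic, and your proposal contains no substitute for it.

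By contrast, the ``principal technical hurdle'' you identify in part (1) --- extending Berger's uniqueness of rational Wach modules to artinian coefficient rings $B$ --- is not a hurdle at all, and your proposed deformation-theoretic propagation is both unnecessary and not obviously workable as stated (uniqueness at the residue field does not propagate across thickenings without an additional rigidity argument). The paper's resolution is simply to \emph{forget} the $B$-linear structure: a $B\otimes_{\Qp}\B_F^+$-lattice in $\D(M_B)$ is in particular a $\B_F^+$-lattice in the underlying $\Qp$-representation, and~\cite[Proposition III.4.2]{berger04} says there is at most one of those; hence at most one $B$-linear one. With that substitution your monomorphism argument for part (1) (proper monomorphism $\Rightarrow$ closed immersion, tested on artinian points) is essentially equivalent to the paper's (finite, injective on closed points and on tangent spaces), and your ``only if'' direction of part (2) matches the paper's easy direction. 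But part (2)'s hard direction, as written, does not go through.
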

\begin{proof}
The first part essentially follows from the uniqueness of Wach modules for crystalline representations of $G_{F}$ cf.~\cite[Proposition III.4.2]{berger04}.  Indeed, if $B$ is a $\Qp$-finite local artinian ring, a $B$-point $x:\Spec B\rightarrow\Spec A[1/p]$ is induced by a homomorphism $A\rightarrow B^{\circ}$, where $B^{\circ}\subset B$ is $\Z_p$-finite and flat.  Any $B$-point of $\mathscr{L}_M^{\leq h}$ over $x$ extends to a $B^\circ$-point, by the valuative criterion for properness.  This corresponds to a system of $(B^\circ/\mathfrak{m}_A^n)\otimes_{\Z_p}\A_F^+$-lattices in $\D(M\otimes_AB^\circ/\mathfrak{m}_A^n)$, and therefore induces a $B^\circ\otimes_{\Zp}\A_F^+$-submodule of $\D(M\otimes_AB^\circ)$.  Inverting $p$ and forgetting about the $B$-linear structure, we obtain a $\B_F^+$-lattice in $\D(M\otimes_RB)$.  But there is at most one such lattice, by~\cite[Proposition III.4.2]{berger04}.

Thus, $\Theta_A[1/p]:\mathscr{L}_M^{\leq h}[1/p]\rightarrow \Spec A[1/p]$ is quasi-finite; since it is projective, it is therefore finite (and is an injection on closed points).  Setting $B=E[\varepsilon]/\varepsilon^2$, it also induces injections on tangent spaces, and is therefore a closed immersion.

For the second part, we first consider a $\Qp$-finite local artinian ring $B$ with residue field $E$ together with a map $\Spec B\rightarrow \Spec A$ which factors through the image of $\mathscr{L}_M^{\leq h}$.  We obtain a Wach module with coefficients in $B\otimes_{\Z_p}\A_{F}^+$.  Forgetting the $B$-linear structure, we get a Wach module, so again by ~\cite[Proposition III.4.2]{berger04}, $M_B$ is crystalline with Hodge--Tate weights in $[-h,0]$.

It remains to show that if $M_B$ is crystalline with Hodge--Tate weights in $[-h,0]$, it is induced by a point of $\mathscr{L}_M^{\leq h}$.  For this, we introduce some new notation:  Let $B^\circ\subset B$ denote the subring of elements mapping to $\mathcal{O}_E\subset E$, and we let $\mathrm{Int}_B$ denote the set of $\mathcal{O}_E$-finite subrings of $B^\circ$ (this makes sense because $B$ is canonically an $E$-algebra).  Each such $C$ is a finite flat $\mathcal{O}_E$-algebra, since $B$ contains no $p$-torsion.

Observe that the map $A\rightarrow B$ factors through some $C\in\mathrm{Int}_B$, and we may assume that the map $C\rightarrow \cO_E$ is a surjection.  Then $M_C$ is a lattice in the crystalline representation $M_B$, so there is some Wach module $\NN(M_C)\subset \D(M_C)$.  It is an $\A_{F}^+$-lattice in $\D(M_C)$ (forgetting the $C$-linear structure), and $\NN(M_C)[1/p]$ is projective of rank $d$ over $(C\otimes_{\Z_p}\A_{F}^+)[1/p]$ by Lemma~\ref{lemma:rat-wach-loc-free}.  Let $\NN_{\mathcal{O}_E}'$ be the image of $\NN(M_C)$ under the map $C\rightarrow \cO_E$ induced by the projection $B\rightarrow E$, and let $\NN_{\mathcal{O}_E}$ be the saturation of $\NN_{\mathcal{O}_E}'$:
\[	\NN_{\mathcal{O}_E}:=\A_{F}\otimes_{\A_{F}^+}\NN_{\mathcal{O}_E}' \cap \NN_{\mathcal{O}_E}'[1/p]	\]
where the intersection is taken inside $\D(M_{\mathcal{O}_E})[1/p]$.

It turns out that $\NN_{\mathcal{O}_E}$ is a finite free $\mathcal{O}_E\otimes_{\Z_p}\A_{F}^+$-module of rank $d$.  Indeed, it is free over $\AA_{F}^+$ of rank $d\rm{rk}_{\Z_p}\cO_E$ by Lemma~\ref{lemma:wach-saturate}, so $\NN_{\mathcal{O}_E}/\pi$ is $p$-torsion-free, hence a lattice in the free $E\otimes_{\Qp}F$-module $(\NN_{\mathcal{O}_E}/\pi)[1/p]$, hence free over $\mathcal{O}_E\otimes_{\Z_p}\cO$ of rank $d$.

If we choose an $\mathcal{O}_E\otimes_{\Z_p}\A_{F}^+$-basis of $\NN_{\mathcal{O}_E}$, we can lift it to a $(C\otimes_{\Z_p}\A_{F}^+)[1/p]$-basis of $\NN(M_C)[1/p]$.  After enlarging $C$ to $C'\in\mathrm{Int}_B$, we may assume that the matrices of $\varphi$ and $\Gamma$ with respect to this basis have coefficients in $C'\otimes_{\Z_p}\A_{F}^+$.  Let $\NN_{C'}$ denote the $C'\otimes_{\Z_p}\A_{F}^+$-module spanned by this basis; $\NN_{C'}$ is free over $C'\otimes_{\Z_p}\A_{F}^+$, so in particular is flat over $\A_{F}^+$.  Then $\NN_{C'}$ is $\varphi$- and $\Gamma$-stable.

It is clear that $\Gamma$ acts trivially on $\NN_{C'}/\pi$, since this is true for $\NN(M_C)$ and therefore for $\NN(M_C)[1/p]$.  It remains to see that $q^h$ annihilates $\NN_{C'}/\varphi^\ast\NN_{C'}$.  Certainly this is true after inverting $p$, since $\NN_{C'}[1/p]=\NN(M_C)[1/p]$ is a Wach module.  So we need to see that $\NN_{C'}/\varphi^\ast\NN_{C'}$ has no $p$-torsion.

To see this, we observe that there is a surjection $C'\twoheadrightarrow \mathcal{O}_E$, with kernel $\mathfrak{n}$, and there is a descending filtration $\{\mathfrak{n}_i\}$ of $\mathfrak{n}$ by subideals such that $\mathfrak{n}_i/\mathfrak{n}_{i+1}\cong \mathcal{O}_E$.  Since the action of $\varphi$ on $\NN_{C'}$ is $C'$-linear, $\mathfrak{n}_{i}\NN_{C'}/\mathfrak{n}_{i+1}$ is isomorphic to $\NN_{\mathcal{O}_E}$ as a $\varphi$-module.  It follows that $\NN_{C'}/\varphi^\ast\NN_{C'}$ is an extension of copies of $\NN_{\mathcal{O}_E}/\varphi^\ast\NN_{\mathcal{O}_E}$, and is therefore $p$-torsion free.
\end{proof}

The upshot of all of this is that if $M$ is a rank-$d$ family of $A$-linear representations of $G_{F}$, $\mathscr{L}_M^{\leq h}$ carries a family $\mathscr{N}^{\leq h}(M)$ of $\mathcal{O}_{\mathscr{L}_M^{\leq h}}\widehat\otimes_{\Z_p}\A_{F}^+$-modules whose $\mathfrak{m}_A$-adic completion is the universal family of $\mathcal{O}_{\widehat{\mathscr{L}}_M^{\leq h}}\widehat\otimes_{\Z_p}\A_{F}^+$-lattices of height $\leq h$.  The image of $\Theta:\mathscr{L}_M^{\leq h}[1/p]\rightarrow\Spec A[1/p]$ is exactly the crystalline points with Hodge--Tate weights in the interval $[-h,0]$.

\begin{definition}
Let $M$ be a family of crystalline representations over $A$ with Hodge--Tate weights in $[-h,0]$. We define the \emph{total Wach module} to be $\NN(M)^{\rm{total}}:=\Theta_{A\ast}\mathscr{N}^{\leq h}(M)$.  If $M$ is a family of crystalline representations with Hodge--Tate weights in $[a,b]$, we define $\NN(M)^{\rm{total}}:=\pi^{-b}\NN(M(-b))^{\rm{total}}$.

Let $\mathcal{I}\subset \mathcal{O}_{\mathscr{L}_M^{\leq h}}$ denotes the ideal sheaf generated by all $p$-power torsion sections, and let $\mathscr{L}_M^{\leq h,\rm{t.f.}}$ denote the corresponding closed subscheme.  Let $\mathscr{N}^{\leq h,\rm{t.f.}}(M)$ denote the restriction of $\mathscr{N}^{\leq h}(M)$ to $\mathscr{L}_M^{\leq h,\rm{t.f.}}$, and write $\NN(M)^{\rm{t.f.}}:=\Theta_{A\ast}\mathscr{N}^{\leq h,\rm{t.f.}}(M)=\NN(M)^{\mathrm{total}}/\NN(M)^{p-\mathrm{tors}}$.
\end{definition}
We observe that by the theorem on formal functions, the $\mathfrak{m}_A$-adic completion of $\NN(M)^{\mathrm{total}}$ is $\widehat{\Theta_{A\ast}}\widehat{\mathscr{N}}^{\leq h}(M)$.

Since $\Theta_A$ is a projective map, $\Theta_{A\ast}\mathcal{O}_{\mathscr{L}_M^{\leq h}}$ is $A$-finite, and $\NN(M)^{\rm{total}}$ is a finite $A\widehat\otimes_{\Z_p}\A_{F}^+$-module.  Since $\Theta_A[1/p]:\mathscr{L}_M^{\leq h}[1/p]\rightarrow\Spec A[1/p]$ is an isomorphism, $\NN(M)^{\rm{total}}[1/p]$ is projective of rank $d$ and its formation commutes with base change on $A[1/p]$.  However, $\NN(M)^{\rm{total}}$ may not be projective itself.

We record a useful lemma about $\NN(M)^{\rm{total}}[1/p]$:
\begin{lemma}\label{lemma:wach-loc-free-mixed}
Let $\NN$ be a finite $A\htimes\A_F^+$-module such that $\NN[1/p]$ is projective.  Then there is an affine cover $\{U_k=\Spec A[1/p,1/f_k]\}$ of $\Spec A[1/p]$ such that $\NN\otimes_AA[1/p,1/f_k]$ is free.
\end{lemma}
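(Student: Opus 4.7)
The plan is to reduce mod $\pi$, use finite étaleness of $A_F[1/p]/A[1/p]$ together with a semi-local freeness argument to produce the cover, lift a basis, and verify it works.

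Write $S := (A\htimes_{\Zp}\A_F^+)[1/p]$. Since $\A_F^+\cong \cO_F[[\pi]]$, one has $S\cong A_F[[\pi]][1/p]$ and $S/\pi S\cong A_F[1/p]$, where $A_F := A\otimes_{\Zp}\cO_F$. Let $d$ be the (constant) rank of the projective $S$-module $\NN[1/p]$. First I would form the reduction $\overline{\NN}[1/p] := \NN[1/p]/\pi\NN[1/p]$; since projectivity and constant rank are preserved by base change along $S \twoheadrightarrow A_F[1/p]$, this is a finitely generated projective $A_F[1/p]$-module of constant rank $d$.

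To find the cover, I use that $A[1/p]\to A_F[1/p]$ is finite étale (as $F/\Qp$ is unramified). For a point $x\in\Spec A[1/p]$, the ring $A_F[1/p]\otimes_{A[1/p]}(A[1/p])_{\mathfrak p_x}$ is semi-local over the local ring $(A[1/p])_{\mathfrak p_x}$. Over a commutative semi-local ring, every finitely generated projective module of constant rank is free (lift a basis mod the Jacobson radical by Nakayama). Hence $\overline{\NN}[1/p]$ becomes free of rank $d$ after localizing at $\mathfrak p_x$, and by spreading out there exists $f_x\in A[1/p]\setminus\mathfrak p_x$ with $\overline{\NN}[1/p,1/f_x]$ free of rank $d$ over $A_F[1/p,1/f_x]$. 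Varying $x$ yields the desired affine cover $\{U_k=\Spec A[1/p,1/f_k]\}$.

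Now fix $f=f_k$ and a basis $\bar e_1,\dots,\bar e_d$ of $\overline{\NN}[1/p,1/f]$. Rescaling by powers of $p$ and $f$, I can take each $\bar e_j$ in the image of $\overline{\NN}$, and lift along the surjection $\NN\twoheadrightarrow\overline{\NN}$ to $e_j\in\NN$. The resulting map $\phi\colon S[1/f]^d\to\NN[1/p,1/f]$ between finitely generated projective $S[1/f]$-modules of rank $d$ is an isomorphism modulo $\pi$ by construction. For injectivity: $S[1/f]$ embeds in its $\pi$-adic completion $A_F[1/p,1/f][[\pi]]$ and so is $\pi$-adically separated; any $x\in\ker\phi$ satisfies $x\in\pi S[1/f]^d$, and writing $x=\pi y$ gives $\phi(y)=0$ (as the source is $\pi$-torsion-free), so iteratively $\ker\phi\subset\bigcap_n\pi^n S[1/f]^d=0$.

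The main obstacle is surjectivity, where the ring $S[1/f]$ is not $\pi$-adically complete so that Nakayama does not apply directly. The cokernel $C$ satisfies $\pi C = C$; at every prime $\mathfrak q\subset S[1/f]$ containing $\pi$ the local ring $S[1/f]_{\mathfrak q}$ has $\pi$ in its Jacobson radical, so Nakayama gives $C_{\mathfrak q}=0$, whence $\mathrm{Supp}(C)\subset D(\pi)$. Combined with the fact that $C$ has generic rank zero (it is the cokernel of an injection of rank-$d$ projectives), I would conclude $C=0$ either by an explicit fibre-by-fibre analysis over $\Spec A[1/p]$---each fibre of $S[1/f]$ is a finite product of the DVRs $F_i[[\pi]]$, and inverting $\pi$ gives a product of fields over which an injective linear map of equal-dimensional vector spaces is automatically an isomorphism---or equivalently by passing to $\widehat{S[1/f]}=A_F[1/p,1/f][[\pi]]$, where Nakayama applies globally, and descending using the finite generation of $\NN$ as an $A\htimes\A_F^+$-module together with the projectivity of $\NN[1/p]$ to rule out any non-zero contribution from $D(\pi)$.
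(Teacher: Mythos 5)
Your argument breaks at the surjectivity step, and the problem is structural rather than a missing detail: you reduce modulo $\pi$, but $\pi$ does not lie in the Jacobson radical of $S=(A\htimes\A_F^+)[1/p]$ or of $S[1/f]$. Already for $A=\Zp$, $F=\Qp$ the ring $S=\Zp[\![\pi]\!][1/p]$ is a principal ideal domain with infinitely many maximal ideals not containing $\pi$ (the nonzero points of the open unit disc), so a lift of a basis of $\NN[1/p]/\pi$ need not generate. Concretely, take $\NN=\Zp[\![\pi]\!]$ of rank $d=1$: the element $p$ is a basis of $\overline{\NN}[1/p]=\Qp$, and $e=p-\pi\in\NN$ is a legitimate lift under your recipe, yet $\phi\colon S\to S$, $1\mapsto p-\pi$, is injective with cokernel $S/(p-\pi)\cong\Qp\neq 0$ and $\pi\cdot\mathrm{coker}(\phi)=\mathrm{coker}(\phi)$. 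Neither of your proposed rescues can see this: the fibre of $S$ over a closed point $x$ of $\Spec A[1/p]$ with residue field $K$ is $(\cO_K\otimes_{\Zp}\cO_F)[\![\pi]\!][1/p]$, a product of PIDs with many maximal ideals away from $(\pi)$ --- not the product of DVRs $K_i[\![\pi]\!]$, and inverting $\pi$ does not produce fields; and the $\pi$-adic completion $A_F[1/p,1/f][\![\pi]\!]$ annihilates exactly the modules supported on $D(\pi)$, so the vanishing of $C\otimes_{S[1/f]}\widehat{S[1/f]}$ restates $\mathrm{Supp}(C)\subset D(\pi)$ and rules out nothing (in the example above $C\otimes\Qp[\![\pi]\!]=0$ while $C\neq 0$). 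Your separate injectivity argument is also unnecessary, since a surjection of finite projectives of equal rank is automatically an isomorphism.

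The fix is to reduce modulo a maximal ideal $\mathfrak{m}\subset A[1/p]$ instead of modulo $\pi$, which is what the paper does. The fibre $\NN[1/p]\otimes_{A[1/p]}A[1/p]/\mathfrak{m}$ is projective, hence free of rank $d$, over $(\cO_K\otimes_{\Zp}\cO_F)[\![\pi]\!][1/p]$ because the latter is a finite product of PIDs; and --- this is the point your argument lacks --- $\mathfrak{m}$ generates the Jacobson radical of $S\otimes_{A[1/p]}A[1/p]_{\mathfrak{m}}$, so Nakayama genuinely applies to lift $d$ generators to $\NN[1/p]\otimes_{A[1/p]}A[1/p]_{\mathfrak{m}}$; one concludes by the equal-rank surjection criterion and spreads out from the localization to a principal open $\Spec A[1/p,1/f_k]$. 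Your opening reduction (semi-local freeness over $A_F[1/p]\otimes_{A[1/p]}A[1/p]_{\mathfrak{p}_x}$ and spreading out) is fine as far as it goes, but the object it should be applied to is the fibre over $\mathfrak{m}$, not the reduction modulo $\pi$.
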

\begin{proof}
It suffices to show that if $\mathfrak{m}\subset A[1/p]$ is a maximal ideal, then $\NN[1/p]\otimes_{A[1/p]}A[1/p]_{\mathfrak{m}}$ is free of rank $d$.  If $K:=A[1/p]/\mathfrak{m}$, then $K$ is a finite extension of $\Qp$ with ring of integers $\cO$ and $(A\htimes\A_F^+)[1/p]/\mathfrak{m}\cong (\cO_K\otimes_{\Z_p}\cO_F)[\![\pi]\!][1/p]$, which is a product of finitely many principal ideal domains.  Therefore, $\NN[1/p]\otimes_{A[1/p]}A[1/p]/\mathfrak{m}$ is free of rank $d$.  But $(A\htimes\A_F^+)[1/p]\otimes_{A[1/p]}A[1/p]_{\mathfrak{m}}$ has Jacobson radical generated by $\mathfrak{m}$, so by Nakayama's lemma, we can lift $d$ generators of $\NN[1/p]\otimes_{A[1/p]}A[1/p]/\mathfrak{m}$ to generators of $\NN[1/p]\otimes_{A[1/p]}A[1/p]_{\mathfrak{m}}$.  Since a surjection of projective modules of the same rank is an isomorphism, we are done.
\end{proof}

\begin{lemma}\label{lemma:wach-d+-inclusions}
There are natural inclusions of sheaves
\[	\pi^h\left((\mathcal{O}_{\widehat{\mathscr{L}}_M^{\leq h}}\widehat\otimes\A^+)\otimes_AM\right)\subset (\mathcal{O}_{\widehat{\mathscr{L}}_M^{\leq h}}\htimes\A^+)\otimes_{\mathcal{O}_{\widehat{\mathscr{L}}_M^{\leq h}}\htimes\A_{F}^+}\widehat{\mathscr{N}}^{\leq h}(M)\subset (\mathcal{O}_{\widehat{\mathscr{L}}_M^{\leq h}}\widehat\otimes\A^+)\otimes_AM	\]
and therefore natural inclusions of sheaves
\[	\pi^h\D^+(\mathcal{O}_{\widehat{\mathscr{L}}_M^{\leq h}}\otimes_AM)\subset \widehat{\mathscr{N}}^{\leq h}(M)\subset \D^+(\mathcal{O}_{\widehat{\mathscr{L}}_M^{\leq h}}\otimes_AM)	\]
\end{lemma}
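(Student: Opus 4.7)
The plan is to reduce to the artinian case handled in Proposition~\ref{prop:wach-d+-inclusions-artin} and pass to the inverse limit over the artinian quotients $A/\mathfrak{m}_A^i$. By construction, $\widehat{\mathscr{L}}_M^{\leq h}$ is the formal scheme given by the system $\{\mathscr{L}^{\leq h}_{M_{A/\mathfrak{m}_A^i}}\}_{i\geq 1}$, and $\widehat{\mathscr{N}}^{\leq h}(M)$ is the corresponding compatible system of coherent sheaves $\{\mathscr{N}^{\leq h}(M_{A/\mathfrak{m}_A^i})\}$. Since each $A/\mathfrak{m}_A^i$ is annihilated by a power of $p$, the completed tensor product $\mathcal{O}_{\widehat{\mathscr{L}}_M^{\leq h}}\widehat\otimes\A^+$ reduces at level $i$ to the ordinary tensor product $\mathcal{O}_{\mathscr{L}^{\leq h}_{M_{A/\mathfrak{m}_A^i}}}\otimes_{\Z_p}\A^+$, and similarly for the other completed tensor products appearing in the statement; hence establishing the inclusions termwise in $i$ will suffice.

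At each artinian level $i$, Proposition~\ref{prop:wach-d+-inclusions-artin} provides the pair of inclusions
\[ \pi^h\bigl((\mathcal{O}_{\mathscr{L}^{\leq h}_{M_{A/\mathfrak{m}_A^i}}}\otimes_{\Z_p}\A^+)\otimes_{A/\mathfrak{m}_A^i}M_{A/\mathfrak{m}_A^i}\bigr) \subset \A^+\otimes_{\A_F^+}\mathscr{N}^{\leq h}(M_{A/\mathfrak{m}_A^i}) \subset (\mathcal{O}_{\mathscr{L}^{\leq h}_{M_{A/\mathfrak{m}_A^i}}}\otimes_{\Z_p}\A^+)\otimes_{A/\mathfrak{m}_A^i}M_{A/\mathfrak{m}_A^i}. \]
These inclusions are functorial in the coefficient ring: the construction of $\mathscr{N}^{\leq h}(-)$ is natural in $A$ by the moduli interpretation of $\mathscr{L}_M^{\leq h}$ together with the base-change isomorphism $\mathscr{L}^{\leq h}_M \times_A A' \xrightarrow{\sim} \mathscr{L}^{\leq h}_{M_{A'}}$, and the inclusions themselves are produced by a matrix argument (comparing a basis of $\mathscr{N}^{\leq h}$ to a basis of $M$ via iteration of $\varphi$) which is manifestly compatible with base change. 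Consequently the inclusions assemble into a morphism of inverse systems of coherent sheaves on $\{\mathscr{L}^{\leq h}_{M_{A/\mathfrak{m}_A^i}}\}$, and passing to the limit in the category of coherent formal sheaves on $\widehat{\mathscr{L}}_M^{\leq h}$ gives the first pair of inclusions claimed.

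The second pair then follows by applying $H_F$-invariants. At each level $i$, the argument at the end of Proposition~\ref{prop:wach-d+-inclusions-artin} established the identification $\bigl(\A^+\otimes_{\A_F^+}\mathscr{N}^{\leq h}(M_{A/\mathfrak{m}_A^i})\bigr)^{H_F} = \mathscr{N}^{\leq h}(M_{A/\mathfrak{m}_A^i})$ by restricting to an affine basis on which $\mathscr{N}^{\leq h}$ is free (Lemma~\ref{lemma:wach-loc-free}) and invoking Lemma~\ref{lemma:a-a-k}. These identifications are compatible with the transition maps, and since each level is a finite $\Z/p^{N_i}$-module the inverse system satisfies Mittag--Leffler, so $H_F$-invariants commute with the inverse limit. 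Combined with the previous paragraph this yields $\pi^h\D^+(\mathcal{O}_{\widehat{\mathscr{L}}_M^{\leq h}}\otimes_AM)\subset \widehat{\mathscr{N}}^{\leq h}(M)\subset \D^+(\mathcal{O}_{\widehat{\mathscr{L}}_M^{\leq h}}\otimes_AM)$. The main subtlety I anticipate is the sheaf-theoretic well-definedness of $\A^+\otimes_{\A_F^+}(-)$ on $\widehat{\mathscr{L}}_M^{\leq h}$, but this is checked level by level exactly as in the artinian situation, using that $A\otimes\A_K^+$ is finite flat over $A\otimes\A_F^+$ for $A$ artinian.
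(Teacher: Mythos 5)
Your proposal is correct and follows exactly the paper's route: the paper's proof is the single sentence ``This follows from Proposition~\ref{prop:wach-d+-inclusions-artin} by taking limits,'' and your argument is a fleshed-out version of precisely that reduction to the artinian levels $A/\mathfrak{m}_A^i$ followed by passage to the inverse limit. The additional details you supply (functoriality of the inclusions in the coefficient ring, compatibility of the $H_F$-invariant identifications with transition maps, Mittag--Leffler) are the right things to check and are consistent with the surrounding constructions.
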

\begin{proof}
This follows from Proposition~\ref{prop:wach-d+-inclusions-artin} by taking limits.
\end{proof}

\begin{remark}
Combined with Lemma~\ref{lemma:wach-rep}, this implies that the representation $M_{\widetilde A}$ is finite height, i.e., $\D(M_{\widetilde A})$ is generated by elements of $\D^+(M_{\widetilde A})$.
\end{remark}

\begin{proposition}\label{prop:wach-pi-h}
Let $A\rightarrow A'$ be a local homomorphism of complete local noetherian $\Z_p$-algebras with finite residue field.  Then there is a natural base change map $A'\htimes_A\NN(M)^{\mathrm{total}}\rightarrow \NN(M_{A'})^{\mathrm{total}}$, and its image contains the image of $\pi^h\D^+(M_{A'})$.
\end{proposition}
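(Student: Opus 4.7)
The idea is to pass to the limit from Lemma~\ref{lemma:artin-wach-pi-h}. Since $A \to A'$ is local, $\mathfrak{m}_A^i A' \subseteq \mathfrak{m}_{A'}^i$ for every $i \geq 1$, so there is an induced local homomorphism of artinian local rings with finite residue fields $A_i := A/\mathfrak{m}_A^i \to A'_i := A'/\mathfrak{m}_{A'}^i$. Applying Lemma~\ref{lemma:artin-wach-pi-h} at each level yields homomorphisms
\[
\psi_i \colon A'_i \otimes_{A_i} \NN(M_{A_i})^{\mathrm{total}} \longrightarrow \NN(M_{A'_i})^{\mathrm{total}}
\]
whose images contain that of $\pi^h \D^+(M_{A'_i})$. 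These maps are compatible as $i$ varies, since they descend from the canonical base change isomorphisms $\mathscr{L}^{\leq h}_{M_{A_i}} \times_{A_i} A'_i \cong \mathscr{L}^{\leq h}_{M_{A'_i}}$ at the artinian level, which assemble into a base change isomorphism $\widehat{\mathscr{L}}^{\leq h}_M \times_{\Spf A} \Spf A' \cong \widehat{\mathscr{L}}^{\leq h}_{M_{A'}}$ of formal schemes; the $\psi_i$ then arise as the induced base change maps on pushforwards.

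Taking $\varprojlim_i$ and invoking the theorem on formal functions (which, as noted after the definition of the total Wach module, identifies $\varprojlim_i \NN(M_{A_i})^{\mathrm{total}}$ with the $\mathfrak{m}_A$-adic completion $\widehat{\Theta_{A*}} \widehat{\mathscr{N}}^{\leq h}(M)$ of $\NN(M)^{\mathrm{total}}$, and similarly on the $A'$ side) produces the natural base change morphism
\[
\phi \colon A' \htimes_A \NN(M)^{\mathrm{total}} \longrightarrow \NN(M_{A'})^{\mathrm{total}}.
\]

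For the containment statement, given $\beta \in \pi^h \D^+(M_{A'})$, let $\beta_i$ denote its image in $\pi^h \D^+(M_{A'_i})$. By Lemma~\ref{lemma:artin-wach-pi-h} each fibre $\psi_i^{-1}(\beta_i)$ is non-empty, and these fibres form an inverse system of torsors under $\ker \psi_i$. Provided this inverse system satisfies Mittag-Leffler---the kernels being finitely generated over the noetherian rings $A'_i \htimes \A_F^+$, so their transition maps become surjective on a cofinal subsystem---we can choose a coherent family $\{\gamma_i\}$ of preimages, and its inverse limit yields the required $\gamma \in A' \htimes_A \NN(M)^{\mathrm{total}}$ with $\phi(\gamma) = \beta$.

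The main obstacle is verifying Mittag-Leffler, since the lifts produced by Lemma~\ref{lemma:artin-wach-pi-h} are not canonical (the choice of $\widetilde b_U$ in the cocycle argument depends on a non-canonical splitting), so compatibility of the $\gamma_i$ across $i$ is not automatic. A cleaner alternative is to observe that the image of $\phi$ is closed in the target (which is a finite module over the complete noetherian ring $A' \htimes \A_F^+$), while the artinian lemma already shows that this image contains the image of $\pi^h \D^+(M_{A'})$ modulo every $\mathfrak{m}_{A'}^i$; $\mathfrak{m}_{A'}$-adic completeness of the target then gives the full containment without any need to make compatible choices.
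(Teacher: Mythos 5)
Your overall strategy --- deducing the statement from Lemma~\ref{lemma:artin-wach-pi-h} by passing to the limit over $i$ --- is exactly the paper's (its entire proof is ``this follows from Lemma~\ref{lemma:artin-wach-pi-h} by taking limits''), and you have correctly located the difficulty that this one-liner hides: the lifts produced by the artinian lemma are non-canonical, so the containment at each finite level does not formally pass to the limit. The construction of the base change map $\phi$ itself is fine. However, neither of your two proposed repairs of the containment statement is watertight as written. For route (a), the claim that the kernels $\ker\psi_i$ satisfy Mittag--Leffler ``because they are finitely generated over the noetherian rings $A'_i\otimes\A_F^+$'' is not a valid inference: these rings are noetherian but not artinian, and a decreasing chain of submodules of a finitely generated module over such a ring need not stabilize (consider the chain $\pi^n R\subset R=\F_p[\![\pi]\!]$). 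Finite generation gives the ascending chain condition, not the descending one that Mittag--Leffler requires.

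Route (b) is closer to a correct argument but still has a gap in the middle step. Knowing that $\beta_i$ lies in $\operatorname{im}(\psi_i)$ does not immediately yield $\beta\in\operatorname{im}(\phi)+\mathfrak{m}_{A'}^i\NN(M_{A'})^{\mathrm{total}}$, because the source of $\psi_i$ is $A'_i\otimes_{A_i}\NN(M_{A_i})^{\mathrm{total}}$ and the natural map to it from $(A'\htimes_A\NN(M)^{\mathrm{total}})/\mathfrak{m}_{A'}^i$ need not be surjective: the theorem on formal functions identifies the two towers only after passing to the limit, not level by level, so $\operatorname{im}(\psi_i)$ may a priori be strictly larger than the reduction of $\operatorname{im}(\phi)$. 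Both gaps are repairable by invoking the full strength of the theorem on formal functions for the proper morphisms $\Theta_A$ and $\Theta_{A'}$: properness supplies the uniform Mittag--Leffler statement that for each $i$ there is $j\geq i$ such that the image of the level-$j$ sections in the level-$i$ sections agrees with the image of the formal sections, and feeding this into either of your routes closes the argument. As written, though, the step ``pass to the limit'' is not yet justified.
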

\begin{proof}
This follows from Lemma~\ref{lemma:artin-wach-pi-h} by taking limits.
\end{proof}

We wish to compare this construction with Berger's construction when $A=\Z_p$ and $M$ is a lattice in a crystalline representation with Hodge--Tate weights in $[-h,0]$.  To show they agree, we would have to prove that $\NN(M)$ is free over $\A_{F}^+$.  There are two problems: A priori, there could be connected components of $\mathscr{L}_M^{\leq h}$ supported on the special fiber and killed by inverting $p$, which would lead to $p$-power torsion in $\NN(M)^{\mathrm{total}}$.  In addition, even after killing $p$-torsion we would only know that Berger's Wach module is the $p$-saturation of ours in $\D(M)$.

We now consider only the situation when $A$ is flat over $\Z_p$, $A$ is integrally closed in $A[1/p]$, and $M$ is a crystalline family of $G_F$-representations with coefficients in $A$.  In particular, $A\rightarrow\widetilde A$ is injective, $A[1/p]\rightarrow\widetilde A[1/p]$ is an isomorphism, and $A\rightarrow \widetilde A^{\mathrm{t.f.}}$ is an isomorphism.


By construction, there is a natural inclusion $\widehat{\mathscr{N}}^{\leq h}(M)\rightarrow \D(\mathcal{O}_{\widehat{\mathscr{L}}^{\leq h}_M}\otimes_AM)$, and therefore a natural inclusion $\NN(M)^{\rm{t.f.}}[1/p]=\NN(M)^{\rm{total}}[1/p]\rightarrow\D(M)[1/p]$ (we use the fact that $A\rightarrow \widetilde A$ is a finite morphism).  We define the \emph{saturation} of $\NN(M)^{\rm{total}}$ by setting $\NN(M)^{\rm{sat}}:=\NN(M)^{\rm{total}}[1/p]\cap \D(M)$, where the intersection takes place inside $\D(M)[1/p]$.  Note that if $A$ is $\Z_p$-finite and flat, this agrees with Berger's definition of the Wach module.

\begin{lemma}
Let $A$ and $M$ be as above.  Then $\NN(M)^{\mathrm{sat}}[1/p]=\NN(M)^{\mathrm{total}}[1/p]$.
\end{lemma}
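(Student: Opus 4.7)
My plan is to prove the two inclusions $\NN(M)^{\mathrm{sat}}[1/p]\subseteq \NN(M)^{\mathrm{total}}[1/p]$ and $\NN(M)^{\mathrm{total}}[1/p]\subseteq \NN(M)^{\mathrm{sat}}[1/p]$ separately. The first inclusion is immediate from the definition: one has $\NN(M)^{\mathrm{sat}}=\NN(M)^{\mathrm{total}}[1/p]\cap \D(M)\subseteq \NN(M)^{\mathrm{total}}[1/p]$, and inverting $p$ preserves this inclusion since the target is already a $\Q_p$-vector space.

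For the opposite inclusion, the key input is the natural injection
\[
\NN(M)^{\mathrm{total}}[1/p]\hookrightarrow \D(M)[1/p],
\]
which was recorded in the paragraph immediately preceding the lemma (obtained by pushing forward the sheaf-level inclusion $\widehat{\mathscr N}^{\leq h}(M)\hookrightarrow \D(\cO_{\widehat{\mathscr L}_M^{\leq h}}\otimes_A M)$, using that $\Theta_A$ is finite and that the isomorphism $A[1/p]\xrightarrow{\sim}\widetilde A[1/p]$ collapses the coefficients after inverting $p$). Given any $x\in \NN(M)^{\mathrm{total}}[1/p]$, view $x$ inside $\D(M)[1/p]=\D(M)\otimes_{\Z_p}\Q_p$; then one can write $x=p^{-k}z$ with $z\in\D(M)$ for some $k\geq 0$. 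Since $\NN(M)^{\mathrm{total}}[1/p]$ is $A[1/p]$-linear, $p^kx=z$ still lies in $\NN(M)^{\mathrm{total}}[1/p]$, and simultaneously $z\in\D(M)$, so $z\in\NN(M)^{\mathrm{total}}[1/p]\cap \D(M)=\NN(M)^{\mathrm{sat}}$ by the very definition of the saturation. Therefore $x=p^{-k}z\in \NN(M)^{\mathrm{sat}}[1/p]$, which finishes the argument.

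There is no genuine obstacle here: the substantive ingredient, namely the inclusion $\NN(M)^{\mathrm{total}}[1/p]\subseteq \D(M)[1/p]$, was already established in the preceding discussion, and the remainder is a formal clearing-of-denominators argument. The content of the lemma is simply that, after inverting $p$, the moduli-theoretically defined lattice $\NN(M)^{\mathrm{total}}$ and its Berger-style $p$-saturation inside $\D(M)$ become indistinguishable, so that potential $p$-power torsion in $\NN(M)^{\mathrm{total}}$ and the failure to be $p$-saturated are invisible on the generic fibre.
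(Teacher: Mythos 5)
Your proof is correct and is essentially the paper's argument: the paper also disposes of the easy inclusion immediately and then, for $n\in\NN(M)^{\mathrm{total}}$, uses the finiteness of $A\to\widetilde A$ together with $A[1/p]\xrightarrow{\sim}\widetilde A[1/p]$ to find $k$ with $p^k\D(M_{\widetilde A})\subset\D(M)$, so that $p^kn\in\NN(M)^{\mathrm{total}}[1/p]\cap\D(M)=\NN(M)^{\mathrm{sat}}$. Your version merely packages the same ingredient through the previously recorded injection $\NN(M)^{\mathrm{total}}[1/p]\hookrightarrow\D(M)[1/p]$ before clearing denominators, which is a cosmetic difference.
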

\begin{proof}
Clearly, $\NN(M)^{\mathrm{sat}}[1/p]\subset\NN(M)^{\mathrm{total}}[1/p]$.  For the other direction, choose $n\in \NN(M)^{\mathrm{total}}\subset \D(M_{\widetilde A})$.  Since $A\rightarrow \widetilde A$ is finite and $A[1/p]\rightarrow \widetilde A[1/p]$ is an isomorphism, there is some integer $k\geq 0$ such that $p^k(\widetilde A\htimes\A)\subset A\htimes\A$.  This implies that $p^k\D(M_{\widetilde A})\subset \D(M)$, so that $p^kn\in\D(M)\cap\NN(M)^{\mathrm{total}}\subset \NN(M)^{\mathrm{sat}}$.
\end{proof}

\begin{lemma}
The natural map $(A\htimes\A_F)\otimes_{A\htimes\A_F^+}\NN(M)^{\mathrm{sat}}\rightarrow\D(M)$ is surjective.
\end{lemma}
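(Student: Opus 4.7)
The plan is to use the inclusions of Lemma~\ref{lemma:wach-d+-inclusions} to show the image contains $(A\htimes\A_F)\cdot\D^+(M)$, and then to invoke the family analogue of the standard generation statement $\D(M)=(A\htimes\A_F)\cdot\D^+(M)$.

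First, I would establish that $\pi^h\D^+(M)\subseteq\NN(M)^{\mathrm{sat}}$. Pushing forward the sheaf inclusions of Lemma~\ref{lemma:wach-d+-inclusions} along $\Theta_{A\ast}$ gives, inside $\D(M_{\widetilde A})$, the sandwich
\[
\pi^h\D^+(M_{\widetilde A})\subseteq\NN(M)^{\mathrm{total}}\subseteq\D^+(M_{\widetilde A}).
\]
The injection $A\hookrightarrow\widetilde A$ (valid because $A$ is integrally closed in $A[1/p]=\widetilde A[1/p]$), together with the $\Z_p$-flatness of $\A$ and $\A^+$, yields compatible injections $\D(M)\hookrightarrow\D(M_{\widetilde A})$ and $\D^+(M)\hookrightarrow\D^+(M_{\widetilde A})$. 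Intersecting with $\D(M)$ then gives
\[
\pi^h\D^+(M)\subseteq \D(M)\cap\NN(M)^{\mathrm{total}}\subseteq \D(M)\cap\NN(M)^{\mathrm{total}}[1/p]=\NN(M)^{\mathrm{sat}}.
\]

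Since $\pi$ is a unit in $\A_F$ (as $\A_F$ is the $p$-adic completion of $\A_F^+[\pi^{-1}]$), and hence in $A\htimes\A_F$, the image of our map contains $(A\htimes\A_F)\cdot\pi^h\D^+(M)=(A\htimes\A_F)\cdot\D^+(M)$. The proof is thus reduced to establishing the identity $(A\htimes\A_F)\cdot\D^+(M)=\D(M)$, which is the family analogue of the classical fact $\D(T)=\A_F\otimes_{\A_F^+}\D^+(T)$ for finite height Galois representations. I would verify this by reducing modulo $\mathfrak{m}_R^n$ and invoking Dee's theory in the discrete artinian case, where both sides are computed as finitely generated $(\varphi,\Gamma)$-modules, then passing to the inverse limit using \eqref{f:Dinverselimits}. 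The main obstacle is precisely this last step: one must check that the cokernel of the completed tensor product map vanishes, which requires a Mittag-Leffler-type control over the $\mathfrak{m}_R$-adic filtration on $\D^+(M)$ and $\D(M)$.
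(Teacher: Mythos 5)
Your first step — showing $\pi^h\D^+(M)\subseteq\NN(M)^{\mathrm{sat}}$ by pushing forward the inclusions of Lemma~\ref{lemma:wach-d+-inclusions} and intersecting with $\D(M)$ — is sound and is essentially the argument the paper gives later, in Lemma~\ref{lemma:wach-sat-d+-inclusions}. The problem is the reduction you make next. You reduce the lemma to the identity $(A\htimes\A_F)\cdot\D^+(M)=\D(M)$, i.e., to the assertion that $M$ is of finite height over $A$, and you describe this as a ``family analogue of a standard generation statement.'' But finite height is not a free fact one can cite: even for a single $\Zp$-lattice in a crystalline representation it is a theorem, and the standard route to proving it is precisely the construction of a Wach module that generates $\D$ over $\A_F$. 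In this paper that is exactly what the present lemma is building toward (see the remark following Lemma~\ref{lemma:wach-d+-inclusions}, where finite height of $M_{\widetilde A}$ is deduced \emph{from} these Wach-module lemmas). So your reduction essentially trades the lemma for an equivalent statement, and the underlying circularity is pushed into the step you flag as ``the main obstacle.''

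That obstacle is also not of the mild Mittag--Leffler variety you suggest. Reducing modulo $\mathfrak{m}_R^n$ and appealing to Dee does not give $(A/\mathfrak{m}_R^n\otimes\A_F)\cdot\D^+(M/\mathfrak{m}_R^n)=\D(M/\mathfrak{m}_R^n)$: the torsion quotients $M/\mathfrak{m}_R^n$ are not crystalline, so there is no classical finite-height statement to invoke at that level. Moreover, as the paper notes explicitly in \S\ref{families}, $\D^+$ (unlike $\D$) does not commute with base change in the coefficients, so one cannot even identify $\D^+(M)\otimes A/\mathfrak{m}_R^n$ with $\D^+(M/\mathfrak{m}_R^n)$ to set up the limit argument. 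The paper's proof circumvents all of this: the surjection $(\widetilde A\htimes\A_F)\otimes_{\widetilde A\htimes\A_F^+}\NN(M)^{\mathrm{total}}\twoheadrightarrow\D(M_{\widetilde A})$ is built into the moduli-space construction (lattices in $L_M^{\leq h}$ are required to span $\D$ over $\A_F$; this is used in the proofs of Lemmas~\ref{lemma:wach-rep-artin} and~\ref{lemma:wach-rep}), and from there one descends from $\widetilde A$ to $A$ via $\NN(M)^{\mathrm{t.f.}}\subset\NN(M)^{\mathrm{sat}}$. You should use that surjection as the starting point rather than trying to prove finite height directly.
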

\begin{proof}
In the proofs of Lemma~\ref{lemma:wach-rep-artin} and Lemma~\ref{lemma:wach-rep} we showed that the natural map $(\widetilde A\htimes\A_F)\otimes_{\widetilde A\htimes\A_F^+}\NN(M)^{\mathrm{total}}\rightarrow\D(M_{\widetilde A})$ is surjective.  It follows that the natural map $(A\htimes\A_F)\otimes_{A\htimes\A_F^+}\NN(M)^{\mathrm{t.f.}}\rightarrow\D(M)$ is surjective, and since $\NN(M)^{\mathrm{t.f.}}\subset \NN(M)^{\mathrm{sat}}$, the result follows.
\end{proof}

\begin{lemma}
The action of $\Gamma$ on $\NN(M)^{\rm{sat}}/\pi$ is trivial, and the cokernel of $\varphi^\ast\NN(M)^{\rm{sat}}\rightarrow\NN(M)^{\rm{sat}}$ is annihilated by $q^h$.
\end{lemma}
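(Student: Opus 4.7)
The plan is to handle the two assertions separately, reducing each to the corresponding (already established) property of $\NN(M)^{\mathrm{total}}$ by exploiting the $p$-saturation definition of $\NN(M)^{\mathrm{sat}}$. The $\Gamma$-triviality is a clean saturation argument, whereas the Frobenius cokernel is more delicate because $q$ fails to be invertible in $\A_F^+$.

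For the triviality of $\Gamma$ on $\NN(M)^{\mathrm{sat}}/\pi$: fix $n\in\NN(M)^{\mathrm{sat}}$ and $\gamma\in\Gamma$, and choose $k\geq 0$ with $p^k n\in\NN(M)^{\mathrm{total}}$. The Wach property of $\NN(M)^{\mathrm{total}}$ gives $p^k(\gamma-1)n\in\pi\NN(M)^{\mathrm{total}}$, so $(\gamma-1)n\in\pi\NN(M)^{\mathrm{total}}[1/p]\cap\D(M)$. The essential observation is that $\pi$ is a \emph{unit} in $\A_F$: its reduction $\bar\pi$ is a unit in the Laurent-series field $\EE_F=k(\!(\bar\pi)\!)$, so $\pi$ is a unit in $\A_F$ and hence in $A\htimes\A_F$. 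Consequently $\pi\D(M)=\D(M)$, and therefore
\[\pi\NN(M)^{\mathrm{total}}[1/p]\cap\D(M)=\pi\bigl(\NN(M)^{\mathrm{total}}[1/p]\cap\D(M)\bigr)=\pi\NN(M)^{\mathrm{sat}},\]
which gives $(\gamma-1)n\in\pi\NN(M)^{\mathrm{sat}}$ as desired.

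For the Frobenius cokernel, the analogous move fails because $q$ is not a unit in $\A_F^+$. The key tool is the étale direct-sum decomposition
\[\D(M)=\bigoplus_{j=0}^{p-1}\pi^j\varphi(\D(M)),\]
which follows from étaleness of $\D(M)$ as a $(\varphi,\Gamma)$-module together with the fact that $A\htimes\A_F$ is free of rank $p$ over $\varphi(A\htimes\A_F)$ with basis $\{1,\pi,\ldots,\pi^{p-1}\}$ (this freeness is inherited by completed tensor product from the identity $[\EE_F:\varphi(\EE_F)]=p$, lifted via the $p$-adic structure of $\A_F$). Given $n\in\NN(M)^{\mathrm{sat}}$, choose $k$ with $p^k n\in\NN(M)^{\mathrm{t.f.}}$, and use the Wach property of $\NN(M)^{\mathrm{t.f.}}$ to write $q^h p^k n=\sum_i a_i\varphi(m_i)$ with $a_i\in A\htimes\A_F^+$ and $m_i\in\NN(M)^{\mathrm{t.f.}}$. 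Expanding each $a_i=\sum_{j=0}^{p-1}\pi^j\varphi(a_{i,j})$ and regrouping yields the normal form
\[p^k q^h n=\sum_{j=0}^{p-1}\pi^j\varphi(M_j),\qquad M_j:=\sum_i a_{i,j}m_i\in\NN(M)^{\mathrm{t.f.}}.\]

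Since the left side lies in $p^k\D(M)$, uniqueness of the direct-sum decomposition forces $\pi^j\varphi(M_j)\in p^k\pi^j\varphi(\D(M))$ for each $j$; cancelling $\pi^j$ (a unit in $\D(M)$) and using injectivity of $\varphi$ on the étale module gives $M_j\in p^k\D(M)$. Since $M_j\in\NN(M)^{\mathrm{t.f.}}\subseteq\NN(M)^{\mathrm{sat}}$ and $\NN(M)^{\mathrm{sat}}$ is $p$-saturated in $\D(M)$ by definition, we may write $M_j=p^k M_j'$ with $M_j'\in\NN(M)^{\mathrm{sat}}$. Dividing through by $p^k$ (valid since $A$ is $\Z_p$-flat, hence $\D(M)$ is $p$-torsion-free) yields $q^h n=\sum_j\pi^j\varphi(M_j')$, which lies in $(A\htimes\A_F^+)\cdot\varphi(\NN(M)^{\mathrm{sat}})$, the image of $\varphi^*\NN(M)^{\mathrm{sat}}\to\NN(M)^{\mathrm{sat}}$. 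The main obstacle is precisely the Frobenius step: the invertibility trick from the $\Gamma$-case collapses, and the étale direct-sum decomposition of $\D(M)$ is the essential substitute, as it allows the required $p$-divisibility to be extracted one $\pi^j$-component at a time so that the $p$-saturation of $\NN(M)^{\mathrm{sat}}$ can then be invoked.
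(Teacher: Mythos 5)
Your proof is correct and follows essentially the same route as the paper: the first assertion via the injectivity of $\NN(M)^{\rm{sat}}/\pi\to\NN(M)^{\rm{total}}[1/p]/\pi$ (using that $\pi$ is a unit in $A\htimes\A_F$), and the second via the $q^h$-annihilation of the cokernel for $\NN(M)^{\rm{total}}[1/p]$ combined with the bijectivity of the linearized $\varphi$ on $\D(M)$. Your explicit rank-$p$ decomposition $\D(M)=\oplus_{j}\pi^j\varphi(\D(M))$ just spells out the step the paper leaves implicit, namely that $\varphi^*$ commutes with the saturating intersection $\NN(M)^{\rm{total}}[1/p]\cap\D(M)$.
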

\begin{proof}
The kernel of $\NN(M)^{\rm{sat}}\rightarrow \NN(M)^{\rm{total}}[1/p]/\pi$ is $\pi\NN(M)^{\rm{sat}}$ (since $\pi$ is invertible in $A\widehat\otimes\A_{F}$), so the natural map $\NN(M)^{\rm{sat}}/\pi\rightarrow \NN(M)^{\rm{total}}[1/p]/\pi$ is injective.  Since the $\Gamma$-action on the right is trivial, so is the $\Gamma$-action on the left.

For the second assertion, we observe that the linearization of $\varphi$ is an isomorphism on $\D(M)$ (and on $\D(M)[1/p]$), and the cokernel of $\varphi^\ast\NN(M)^{\rm{total}}[1/p]\rightarrow \NN(M)^{\rm{total}}[1/p]$ is annihilated by $q^h$.
\end{proof}

\begin{lemma}\label{lemma:wach-sat-d+-inclusions}
Suppose that $A$ is flat over $\Z_p$.  Then there are natural inclusions $\pi^h\D^+(M)\subset \NN(M)^{\mathrm{sat}}\subset\D^+(M)$, and natural inclusions $\pi^h\left((A\htimes\A^+)\otimes_AM\right)\subset \A^+\htimes_{\A_F^+}\NN(M)^{\mathrm{sat}}\subset(A\htimes\A^+)\otimes_AM$.
\end{lemma}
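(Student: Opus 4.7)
I will deduce both pairs of inclusions from Lemma~\ref{lemma:wach-d+-inclusions} by a three-step procedure: push forward along $\Theta_{A\ast}$, invert $p$, and intersect with the integral structure.

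In more detail: the sheaf inclusions of Lemma~\ref{lemma:wach-d+-inclusions} push forward under $\Theta_{A\ast}$ (with formal GAGA passing from the formal scheme $\widehat{\mathscr{L}}_M^{\leq h}$ to $\mathscr{L}_M^{\leq h}$) to yield
\[
\pi^h\D^+(M_{\widetilde A}) \subset \NN(M)^{\mathrm{total}} \subset \D^+(M_{\widetilde A})
\]
and the analogous inclusions after the base change $\A^+\htimes_{\A_F^+}(-)$. Since $A$ is $\Z_p$-flat and integrally closed in $A[1/p]$, the earlier discussion gives $\widetilde A[1/p] = A[1/p]$, so after inverting $p$ we may replace $\widetilde A$ with $A$ throughout. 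Now intersect with $\D(M) \subset \D(M)[1/p]$: the middle intersection produces $\NN(M)^{\mathrm{sat}}$ by definition; for the outer terms, the $\Z_p$-flatness of the cokernel of $\A^+\hookrightarrow\A$ (noted in Section~\ref{section:rings} before Lemma~\ref{lemma:a-a+-mult}), together with the $\Z_p$-flatness of $A$ and descent under $H_F$, implies $\D^+(M) = \D^+(M)[1/p] \cap \D(M)$. This gives the first pair of inclusions $\pi^h\D^+(M) \subset \NN(M)^{\mathrm{sat}} \subset \D^+(M)$ at once. The second pair is then obtained by running exactly the same intersection argument on the $\A^+$-tensored version, intersecting with $(A\htimes\A^+)\otimes_A M$ inside $(A\htimes\A)\otimes_A M$.

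The main technical point will be the identification
\[
\A^+\htimes_{\A_F^+}\NN(M)^{\mathrm{total}}[1/p]\,\cap\,(A\htimes\A^+)\otimes_A M \;=\; \A^+\htimes_{\A_F^+}\NN(M)^{\mathrm{sat}}
\]
that makes the intersection argument in the last step yield $\A^+\htimes_{\A_F^+}\NN(M)^{\mathrm{sat}}$ as the middle term, rather than something slightly larger. This amounts to checking that the base change $\A^+\htimes_{\A_F^+}(-)$ commutes with the $p$-saturation defining $\NN(M)^{\mathrm{sat}}$, which should follow from the flatness properties of $\A^+$ over $\A_F^+$ combined with the finite-height property of $M_{\widetilde A}$ noted in the Remark after Lemma~\ref{lemma:wach-d+-inclusions}; however, some care will be required in juggling the two completed tensor products $\htimes_{\Z_p}$ and $\htimes_{\A_F^+}$.
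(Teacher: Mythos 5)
Your treatment of the first pair of inclusions is essentially the paper's argument: push forward Lemma~\ref{lemma:wach-d+-inclusions} via formal GAGA, use $\widetilde A[1/p]=A[1/p]$, and intersect with $\D(M)$, with the outer terms controlled by the lattice identity $p^k(A\htimes\A)\cap(A\htimes\A^+)=p^k(A\htimes\A^+)$ coming from the $\Z_p$-flatness of the cokernel of $\A^+\hookrightarrow\A$ (i.e.\ Lemma~\ref{lemma:a-a+-mult} passed to the limit over $A/\mathfrak{m}_A^n$). That part is fine.

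The gap is in the second pair, precisely at the step you yourself flag as the ``main technical point.'' Your plan rests on the identity
\[
\bigl(\A^+\htimes_{\A_F^+}\NN(M)^{\mathrm{total}}\bigr)[1/p]\cap\bigl((A\htimes\A^+)\otimes_AM\bigr)=\A^+\htimes_{\A_F^+}\NN(M)^{\mathrm{sat}},
\]
i.e.\ that $\A^+\htimes_{\A_F^+}(-)$ commutes with the $p$-saturation. This is both unproven and stronger than what the lemma requires. The saturation is an intersection taken inside $\D(M)[1/p]$, and even granting enough flatness of $\A^+$ over $\A_F^+$ to commute a completed tensor product with an intersection of submodules, what you would obtain is the intersection with $\A^+\htimes_{\A_F^+}\D(M)$, not with $(A\htimes\A^+)\otimes_AM$; these ambient modules are not the same. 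Moreover, the containment ``$\subseteq$'' of your identity is exactly what is needed for the lower inclusion $\pi^h\bigl((A\htimes\A^+)\otimes_AM\bigr)\subset\A^+\htimes_{\A_F^+}\NN(M)^{\mathrm{sat}}$, and the containment ``$\supseteq$'' already presupposes the upper inclusion $\A^+\htimes_{\A_F^+}\NN(M)^{\mathrm{sat}}\subset(A\htimes\A^+)\otimes_AM$ (indeed, it presupposes that the natural map $(A\htimes\A^+)\otimes_{A\htimes\A_F^+}\NN(M)^{\mathrm{sat}}\to(A\htimes\A^+)\otimes_AM$ is injective, which your write-up never addresses). So as stated the argument is circular for one inclusion and unsupported for the other.

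The paper avoids the commutation-with-saturation claim entirely. For the upper inclusion: from the first pair one gets $\NN(M)^{\mathrm{sat}}\subset\D(M)\subset(A\htimes\A)\otimes_AM$ together with $\NN(M)^{\mathrm{sat}}[1/p]\subset\bigl((A\htimes\A^+)\otimes_AM\bigr)[1/p]$, and the lattice identity plus freeness of $M$ give $\NN(M)^{\mathrm{sat}}\subset(A\htimes\A^+)\otimes_AM$; the induced map $(A\htimes\A^+)\otimes_{A\htimes\A_F^+}\NN(M)^{\mathrm{sat}}\to(A\htimes\A^+)\otimes_AM$ is then injective because it is injective after inverting $p$ and the source is $p$-torsion-free. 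For the lower inclusion: Lemma~\ref{lemma:wach-d+-inclusions} gives $\pi^h(A\htimes\A^+)\otimes_AM\subset(A\htimes\A^+)\otimes_{A\htimes\A_F^+}\NN(M)^{\mathrm{total}}$, and since the left-hand side has no $p$-torsion it lands in $(A\htimes\A^+)\otimes_{A\htimes\A_F^+}\NN(M)^{\mathrm{t.f.}}$, which sits inside $(A\htimes\A^+)\otimes_{A\htimes\A_F^+}\NN(M)^{\mathrm{sat}}$ because $\NN(M)^{\mathrm{t.f.}}\subset\NN(M)^{\mathrm{sat}}$. You should replace your saturation-commutes-with-base-change step with this torsion-free-quotient argument.
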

\begin{proof}
In order to show that there is and inclusion $\pi^h\D^+(M)\subset \NN(M)^{\mathrm{sat}}$ is clear, we first observe that $A\rightarrow \widetilde A$ is an injection (since it is an isomorphism after inverting $p$ and $A$ is assumed flat over $\Z_p$).  Moreover, $\D^+(M)$ is $p$-torsion-free, and so
\[	\pi^h\D^+(M)\subset\pi^h\D^+(M)[1/p]\cap \D(M)\subset\pi^h\D^+(\widetilde A\otimes_AM)[1/p]\cap \D(M)\subset \NN(M)^{\rm{sat}}.	\]
where we use $\pi^h \D^+(\widetilde A\otimes_AM)\subseteq \NN(M)^{\mathrm{total}}$, which follows from Lemma \ref{lemma:wach-d+-inclusions} by taking global sections.  More precisely, formal GAGA implies that the injective map of coherent $\mathcal{O}_{\widehat{\mathscr{L}}_M^{\leq h}}\htimes\A^+$-modules $\pi^h (\mathcal{O}_{\widehat{\mathscr{L}}_M^{\leq h}}\htimes\A^+)\otimes_AM\rightarrow (\mathcal{O}_{\widehat{\mathscr{L}}_M^{\leq h}}\htimes\A^+)\otimes_{\mathcal{O}_{\widehat{\mathscr{L}}_M^{\leq h}}\htimes\A_F^+}
\widehat{\mathscr{N}}^{\leq h}$ over $\widehat{\mathscr{L}}_M^{\leq h}\times_{\Spf A}\Spf(A\htimes\A^+)$ arises from an injective map of coherent sheaves over $\mathscr{L}_M^{\leq h}\times_{\Spec A}\Spec(A\htimes\A)$, since $\mathfrak{m}_A$-adic completion is fully faithful.  Taking $H_F$-invariants and pushing forward by $\Theta_A$ yields the desired statement.

For the inclusion $\NN(M)^{\mathrm{sat}}\subset\D^+(M)$, we first recall   from Lemma \ref{lemma:wach-d+-inclusions}  that
\[	\widehat{\mathscr{N}}^{\leq h}(M)\subset \D^+(\mathcal{O}_{\widehat{\mathscr{L}}_M^{\leq h}}\otimes_AM)\subset (\mathcal{O}_{\widehat{\mathscr{L}}_M^{\leq h}}\widehat\otimes\A^+)\otimes_AM	\]
so $\NN(M)^{\rm{total}}[1/p]\subset (M\otimes_A(\widetilde A\widehat\otimes\A^+))[1/p] = (M\otimes_A(A\widehat\otimes\A^+))[1/p]$, and $\NN(M)^{\rm{total}}[1/p]$ is fixed by $H_{F}$.  On the other hand,
\[	\D(M) = ((A\widehat\otimes\A)\otimes_AM)^{H_{F}}\subset (A\widehat\otimes\A)\otimes_AM	\]
Therefore, $\NN(M)^{\rm{sat}}\subset \left((A\widehat\otimes\A^+)[1/p]\cap (A\widehat\otimes\A)\right)\otimes_AM$, so it suffices to show that $A\widehat\otimes\A^+=(A\widehat\otimes\A^+)[1/p]\cap (A\widehat\otimes\A)$ inside $(A\widehat\otimes\A)[1/p]$, or equivalently, that $p^k(A\htimes\A^+)=(A\htimes\A^+)\cap p^k(A\htimes\A)$ inside $A\htimes\A$ for every integer $k\geq 0$.  But $A\htimes\A^+=\varprojlim_n (A/\mathfrak{m}_A^n)\otimes\A^+$ and $A\htimes\A=\varprojlim_n(A/\mathfrak{m}_A^n)\otimes\A$, and intersections are limits, as well.  Therefore, Lemma~\ref{lemma:a-a+-mult} implies the desired result.

Now we turn to the second pair of inclusions.  By Lemma~\ref{lemma:wach-d+-inclusions}
\[	\NN(M)^{\mathrm{sat}}[1/p]\subset((A\htimes\A^+)\otimes_{A\htimes\A_F^+}\NN(M)^{\mathrm{sat}})[1/p]\subset ((A\htimes\A^+)\otimes_AM)[1/p]	\]
In addition,
\[	\NN(M)^{\mathrm{sat}}\subset\D(M)\subset (A\htimes\A)\otimes_{A\htimes\A_F}\D(M)\cong (A\htimes\A)\otimes_AM	\]
Since $M$ is free over $A$ and we have just shown that $(A\htimes\A^+)=(A\htimes\A^+)[1/p]\cap (A\htimes\A)$, we see that $\NN(M)^{\mathrm{sat}}\subset (A\htimes\A^+)\otimes_AM$, and there is a natural map
\[	(A\htimes\A^+)\otimes_{A\htimes\A_F^+}\NN(M)^{\mathrm{sat}}\rightarrow (A\htimes\A^+)\otimes_AM	\]
This map is injective after inverting $p$, and $(A\htimes\A^+)\otimes_{A\htimes\A_F^+}\NN(M)^{\mathrm{sat}}$ has no $p$-torsion, so it is an inclusion.

It remains to show that $\pi^h(A\htimes\A^+)\otimes_AM\subset (A\htimes\A^+)\otimes_{A\htimes\A_F^+}\NN(M)^{\mathrm{sat}}$. By Lemma~\ref{lemma:wach-d+-inclusions} we know that
\[	\pi^h(A\htimes\A^+)\otimes_AM\subset (A\htimes\A^+)\otimes_{A\htimes\A_F^+}\NN(M)^{\mathrm{total}}	\]
Since $\pi^h(A\htimes\A^+)\otimes_AM$ has no $p$-torsion, it is contained in $(A\htimes\A^+)\otimes_{A\htimes\A_F^+}\NN(M)^{\mathrm{t.f.}}\subset (A\htimes\A^+)\otimes_{A\htimes\A_F^+}\NN(M)^{\mathrm{sat}}$ and we are done.
\end{proof}

As in~\cite{berger04}, we write
\[	\alpha(h):=\inf_{\gamma\in\Gamma_F}\sum_{j=1}^hv_p(\chi(\gamma)^j-1)	\]

\begin{corollary}\label{cor:wach-wach-sat}
$\NN(M)^{\rm{sat}}$ is a finite $A\widehat\otimes\A_{F}^+$-module, and $p^{\alpha(h)}\NN(M)^{\mathrm{sat}}\subset\NN(M)^{\rm{total}}$.
\end{corollary}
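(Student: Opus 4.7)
The plan is to first establish the explicit bound $p^{\alpha(h)}\NN(M)^{\mathrm{sat}}\subset\NN(M)^{\mathrm{total}}$; the finiteness assertion will then follow formally, so I view the bound as the substantive content of the corollary.

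For the bound, I start from the fact (just proved) that $\NN(M)^{\mathrm{sat}}[1/p]=\NN(M)^{\mathrm{total}}[1/p]$, so the quotient $\NN(M)^{\mathrm{sat}}/\NN(M)^{\mathrm{t.f.}}$ is $p$-power torsion inside $\NN(M)^{\mathrm{t.f.}}[1/p]/\NN(M)^{\mathrm{t.f.}}$. The strategy is to adapt Berger's $\Gamma_F$-action argument from \cite{berger04} (specifically the ideas surrounding his Proposition III.2.1 and the discussion of Section III.4). Both modules are sandwiched between $\pi^h\D^+(M)$ and $\D^+(\widetilde{A}\otimes_AM)$ by Lemmas \ref{lemma:wach-sat-d+-inclusions} and \ref{lemma:wach-d+-inclusions}, and by the Wach axioms $\Gamma_F$ acts trivially on $N/\pi N$ for either $N$. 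Consequently, on the $j$-th graded piece of the $\pi$-adic filtration of a Wach-module cokernel killed by $\pi^h$, $\Gamma_F$ acts through $\chi^j$, so for each $\gamma\in\Gamma_F$ an operator of the form $\prod_{j=1}^{h}(\gamma-\chi(\gamma)^j)$ annihilates the cokernel. Its scalar component has $p$-adic valuation $\sum_{j=1}^h v_p(\chi(\gamma)^j-1)$; minimising over $\gamma\in\Gamma_F$ produces the uniform annihilator $p^{\alpha(h)}$ of $\NN(M)^{\mathrm{sat}}/\NN(M)^{\mathrm{t.f.}}$, which gives the desired containment after lifting $\NN(M)^{\mathrm{t.f.}}$ back to $\NN(M)^{\mathrm{total}}$.

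The finiteness is then immediate from the bound, since $\NN(M)^{\mathrm{sat}}\subset p^{-\alpha(h)}\NN(M)^{\mathrm{t.f.}}$ inside $\D(M)[1/p]$. The module $\NN(M)^{\mathrm{t.f.}}$ is finitely generated over $A\htimes\A_F^+$ as a quotient of $\NN(M)^{\mathrm{total}}=\Theta_{A\ast}\mathscr{N}^{\leq h}(M)$, which is finite because $\Theta_A$ is projective. Since $\NN(M)^{\mathrm{t.f.}}$ is $p$-torsion-free, multiplication by $p^{\alpha(h)}$ identifies $p^{-\alpha(h)}\NN(M)^{\mathrm{t.f.}}$ with $\NN(M)^{\mathrm{t.f.}}$ as $A\htimes\A_F^+$-modules; hence the former is also finitely generated, and therefore so is the submodule $\NN(M)^{\mathrm{sat}}$ because $A\htimes\A_F^+$ is noetherian.

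The main obstacle is making the $\Gamma_F$-graded-piece argument robust over the general complete local noetherian base $A$: in Berger's original setting $A=\Zp$ the Wach module is $\A_F^+$-free of explicitly known rank and the graded pieces are rank-one $\cO_F$-modules on which the Hodge--Tate characters act diagonally, whereas over a general $A$ these pieces are only finitely generated $A\htimes\cO_F$-modules and one has to verify uniform eigenvalue estimates without an explicit basis. If a direct argument proves too delicate, a safer fallback is a specialization argument using Proposition \ref{prop:wach-pi-h} together with the inverse-limit description of $\NN(M)^{\mathrm{total}}$ in terms of its artinian reductions, reducing to the case of finite flat $\Zp$-algebras where Berger's Proposition III.4.2 of \cite{berger04} applies directly and supplies the $p^{\alpha(h)}$ bound.
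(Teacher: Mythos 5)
Your proof is correct and, for the substantive containment $p^{\alpha(h)}\NN(M)^{\mathrm{sat}}\subset\NN(M)^{\mathrm{total}}$, it is essentially the paper's argument: both run Berger's induction from \cite[Proposition IV.1.3]{berger04}, starting from $\pi^h\NN(M)^{\mathrm{sat}}\subset\pi^h\D^+(M)\subset\NN(M)^{\mathrm{total}}$ and using that $\Gamma$ acts trivially on each module mod $\pi$ to trade one power of $\pi$ for one scalar factor $\chi(\gamma)^{h-j}-1$ at each step. The obstacle you flag at the end is not real: this induction is purely formal in the two $\Gamma$-stable modules and never needs freeness, an explicit basis, or an eigenvalue decomposition of graded pieces, so it goes through verbatim over general $A$ (indeed the paper just cites Berger and states the inductive inclusion $\pi^{h-i}\prod_{j=0}^{i-1}(\chi(\gamma)^{h-j}-1)\NN(M)^{\mathrm{sat}}\subset\NN(M)^{\mathrm{total}}$). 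The one genuine divergence is the finiteness claim: you deduce it from the $p^{\alpha(h)}$ bound, which is valid, whereas the paper gets it independently and first, from $\NN(M)^{\mathrm{sat}}\subset\D^+(M)\subset\D^+(M\otimes_A\widetilde A)$, the latter being finite over the noetherian ring $A\htimes\A_F^+$ by Lemma \ref{lemma:wach-d+-inclusions}; that route is marginally cleaner since it does not make finiteness contingent on the harder half of the statement, but both are sound.
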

\begin{proof}
By Lemma~\ref{lemma:wach-d+-inclusions}, $\D^+(M\otimes_A\widetilde A)$ is finite over $A\widehat\otimes\A_{F}^+$ (since $A\widehat\otimes\A_{F}^+$ is noetherian).  Since $A\rightarrow \widetilde A$ is injective, $\D^+(M)\subset\D^+(M\otimes_A\widetilde A)$ and so $\NN(M)^{\rm{sat}}$ is finite over $A\widehat\otimes\A_{F}^+$.

The proof of the second assertion is very similar to the proof of~\cite[Proposition IV.1.3]{berger04}.  We note that we have a set of inclusions
\[
\xymatrix{
&&	\NN(M)^{\mathrm{total}}\ar@{^{(}->}[r]	&	\D^+(\widetilde A\otimes_AM)	\\
\pi^h\D^+(M)\ar@{^{(}->}[urr]\ar@{^{(}->}[drr]	&&&	\\
&&	\NN(M)^{\mathrm{sat}}\ar@{^{(}->}[r]	&	\D^+(M)\ar@{^{(}->}[uu]
}
\]
with the action of $\Gamma$ stabilizing $\NN(M)^{\rm{total}}$ and $\NN(M)^{\rm{sat}}$ and acting trivially on $\NN(M)^{\rm{total}}/\pi$ and $\NN(M)^{\rm{sat}}/\pi$.  In particular, $\pi^h\NN(M)^{\rm{sat}}\subset\NN(M)^{\rm{total}}$ (inside $\D^+(\widetilde A\otimes_AM)$).  We may then prove by induction on $i$ that for all $\gamma\in\Gamma_F$
\[	\pi^{h-i}\prod_{j=0}^{i-1}(\chi(\gamma)^{h-j}-1)\NN(M)^{\rm{sat}}\subset\NN(M)^{\rm{total}}	\]
Taking $i=h$ and observing that $p^{\alpha(h)}$ generates the same ideal in $\Z_p$ as $\prod_{j=0}^{h-1}(\chi(\gamma)^{h-j}-1)$, we see that $p^{\alpha(h)}\NN(M)^{\rm{sat}}\subset\NN(M)^{\rm{total}}$, as desired.
\end{proof}

We can also study the behavior of $\NN(M)^{\mathrm{sat}}$ under change of coefficients.  As the formation of $\mathscr{L}_M^{\leq h}$ is clearly functorial in the coefficients, for any homomorphism of complete local noetherian $\cO$-algebras $A\rightarrow A'$, we have a natural base change morphism $A'\htimes_A\NN(M)^{\rm{sat}}\rightarrow \NN(M_{A'})^{\rm{sat}}$.  However, it need not be an isomorphism.

\begin{lemma}\label{lemma:wach-sat-pi-h}
Let $A\rightarrow A'$ be a homomorphism of complete local noetherian flat $\Z_p$-algebras with finite residue fields which are integrally closed in $A[1/p]$ and $A'[1/p]$, respectively.  Then the image of the natural map $(A'\widehat\otimes\A_{F}^+)\otimes_{A\widehat\otimes\A_F^+}\NN^{\rm{sat}}(M_A)\rightarrow \NN^{\rm{sat}}(M_{A'})$ contains $\pi^h\D^+(M_{A'})$.
\end{lemma}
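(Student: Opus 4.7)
The strategy is to deduce the claim from Proposition~\ref{prop:wach-pi-h}, the analogue for $\NN^{\rm{total}}$, by routing through the $p$-torsion-free quotient $\NN^{\rm{t.f.}}$. Fix $x\in\pi^h\D^+(M_{A'})$; by Proposition~\ref{prop:wach-pi-h} I can lift $x$ (viewed inside $\NN(M_{A'})^{\rm{total}}$ via Lemma~\ref{lemma:wach-d+-inclusions}) to an element $y\in(A'\htimes\A_F^+)\otimes_{A\htimes\A_F^+}\NN(M_A)^{\rm{total}}$. Tensoring the natural surjection $\NN(M_A)^{\rm{total}}\twoheadrightarrow\NN(M_A)^{\rm{t.f.}}$ with $A'\htimes\A_F^+$ and then composing with the inclusion $\NN(M_A)^{\rm{t.f.}}\hookrightarrow\NN(M_A)^{\rm{sat}}$ (noted just after Lemma~\ref{lemma:wach-sat-d+-inclusions}), I push $y$ forward to an element $y'\in(A'\htimes\A_F^+)\otimes_{A\htimes\A_F^+}\NN(M_A)^{\rm{sat}}$.

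The image of $y'$ in $\NN(M_{A'})^{\rm{sat}}$ can, by naturality in the coefficient algebra, be computed by applying the base-change map for $\NN^{\rm{total}}$ first and then tracing through the chain $\NN(M_{A'})^{\rm{total}}\twoheadrightarrow\NN(M_{A'})^{\rm{t.f.}}\hookrightarrow\NN(M_{A'})^{\rm{sat}}$. Since $A'$ is flat over $\Z_p$ and $M_{A'}$ is free over $A'$, the module $(A'\htimes\A^+)\otimes_{A'}M_{A'}$ is $p$-torsion-free, so $\pi^h\D^+(M_{A'})$ has no $p$-torsion and embeds into the $p$-torsion-free quotient $\NN(M_{A'})^{\rm{t.f.}}$. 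A short diagram chase using the fact that both $\NN(M_{A'})^{\rm{t.f.}}\hookrightarrow\NN(M_{A'})^{\rm{sat}}$ and $\pi^h\D^+(M_{A'})\hookrightarrow\NN(M_{A'})^{\rm{sat}}$ (from Lemma~\ref{lemma:wach-sat-d+-inclusions}) factor through $\D(M_{A'})$ then identifies the image of $y'$ with the prescribed $x$, completing the proof.

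The main subtle point is verifying the compatibility of the base-change maps for $\NN^{\rm{total}}$, $\NN^{\rm{t.f.}}$, and $\NN^{\rm{sat}}$: all three arise from the functoriality of the moduli space $\mathscr{L}_M^{\leq h}$, its closed subscheme $\mathscr{L}_M^{\leq h,{\rm t.f.}}$, and the definition $\NN^{\rm{sat}}=\NN^{\rm{total}}[1/p]\cap\D(M)$ taken inside $\D(M)[1/p]$, so the required commutativities are formal. Beyond this routine bookkeeping, the argument is entirely a diagram chase and I do not anticipate further obstacles.
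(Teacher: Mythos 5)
Your proposal is correct and follows essentially the same route as the paper: apply Proposition~\ref{prop:wach-pi-h} to $\NN^{\rm{total}}$, use the $p$-torsion-freeness of $\pi^h\D^+(M_{A'})$ to pass to $\NN^{\rm{t.f.}}$, and conclude via the inclusion $\NN^{\rm{t.f.}}\subset\NN^{\rm{sat}}$. The compatibility checks you flag as the "subtle point" are exactly what the paper leaves implicit, so your write-up is just a more detailed version of the same argument.
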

\begin{proof}
By Proposition~\ref{prop:wach-pi-h}, the image of $(A'\htimes\A_F^+)\otimes_{A\htimes\A_F^+}\NN(M)^{\mathrm{total}}$ contains the image of $\pi^h\D^+(M_{A'})$, and since $\pi^h\D^+(M_{A'})$ has no $p$-torsion, the image of $(A'\htimes\A_F^+)\otimes_{A\htimes\A_F^+}\NN(M)^{\mathrm{t.f.}}$ contains the image of $\pi^h\D^+(M_{A'})$.  Since $\NN(M)^{\mathrm{t.f.}}\subset \NN(M)^{\mathrm{sat}}$, we are done.
\end{proof}

\begin{corollary}
Let $A\rightarrow A'$ be a local homomorphism of rings satisfying the above hypotheses.  Then there is a natural base change morphism $(A'\widehat\otimes\A_{F}^+)\otimes_{A\widehat\otimes\A_{F}^+}\NN(M)^{\rm{sat}}\rightarrow \NN(M_{A'})^{\rm{sat}}$, and its cokernel is annihilated by $p^{\alpha(h)}$.
\end{corollary}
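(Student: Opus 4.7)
The natural base change morphism exists by functoriality of $\mathscr{L}_M^{\leq h}$ in the coefficient ring, which gives the morphism on total Wach modules and hence, by intersecting with $\D(M_{A'})$ inside $\D(M_{A'})[1/p]$, a morphism on saturated Wach modules. Let $\mathcal{I}\subset\NN(M_{A'})^{\mathrm{sat}}$ denote the image of this morphism. Since the construction is $\Gamma$-equivariant and $A'\htimes\A_F^+$-linear, $\mathcal{I}$ is a $\Gamma$-stable $A'\htimes\A_F^+$-submodule of $\NN(M_{A'})^{\mathrm{sat}}$. Our task is to show that $p^{\alpha(h)}\NN(M_{A'})^{\mathrm{sat}}\subset \mathcal{I}$.

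The plan is to re-run the inductive argument from the proof of Corollary~\ref{cor:wach-wach-sat}, using $\mathcal{I}$ in place of $\NN(M_{A'})^{\mathrm{total}}$. First, Lemma~\ref{lemma:wach-sat-pi-h} gives $\pi^h \D^+(M_{A'})\subset \mathcal{I}$, and Lemma~\ref{lemma:wach-sat-d+-inclusions} applied to $A'$ gives $\NN(M_{A'})^{\mathrm{sat}}\subset \D^+(M_{A'})$. Combining these yields the base case
\[  \pi^h \NN(M_{A'})^{\mathrm{sat}} \subset \mathcal{I}. \]

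For the induction, the only properties of $\NN(M)^{\mathrm{total}}$ used in the proof of Corollary~\ref{cor:wach-wach-sat} are (i) $\Gamma$-stability, (ii) stability under multiplication by $\pi$, and (iii) the base case inclusion $\pi^h\NN(M)^{\mathrm{sat}}\subset \NN(M)^{\mathrm{total}}$; the triviality of the $\Gamma$-action on $\NN(M_{A'})^{\mathrm{sat}}/\pi$ (an intrinsic property of $\NN(M_{A'})^{\mathrm{sat}}$, already recorded earlier) is the other ingredient. All three properties hold for $\mathcal{I}$ as well, so the same induction yields, for every $\gamma\in\Gamma_F$ and every $0\le i\le h$,
\[  \pi^{h-i}\prod_{j=0}^{i-1}(\chi(\gamma)^{h-j}-1)\,\NN(M_{A'})^{\mathrm{sat}} \subset \mathcal{I}. \]
Setting $i=h$ and letting $\gamma$ range over $\Gamma_F$, the ideal of $\Z_p$ generated by the elements $\prod_{j=1}^h(\chi(\gamma)^j-1)$ is $p^{\alpha(h)}\Z_p$, so $p^{\alpha(h)}\NN(M_{A'})^{\mathrm{sat}} \subset \mathcal{I}$, which gives the desired annihilation of the cokernel.

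The main (and essentially only) conceptual point is to verify that the inductive step of Corollary~\ref{cor:wach-wach-sat} never uses anything about $\NN(M)^{\mathrm{total}}$ beyond the three structural properties listed above; this is a bookkeeping exercise, and once confirmed the argument transports mechanically to our setting. No new ideas are required, and there is no serious obstacle.
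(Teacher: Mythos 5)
Your proposal follows exactly the route the paper intends: the paper's own proof is the one-line remark that the corollary follows from Lemma~\ref{lemma:wach-sat-pi-h} by the argument of Corollary~\ref{cor:wach-wach-sat}, and you have correctly assembled the pieces (the inclusion $\pi^h\D^+(M_{A'})\subset\mathcal{I}$ from Lemma~\ref{lemma:wach-sat-pi-h}, the inclusion $\NN(M_{A'})^{\mathrm{sat}}\subset\D^+(M_{A'})$ giving the base case, and the $\Gamma$-induction producing $p^{\alpha(h)}$).

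One bookkeeping point is off, though it is repairable. The induction in Corollary~\ref{cor:wach-wach-sat} uses the triviality of the $\Gamma$-action modulo $\pi$ on \emph{both} modules, not only on the saturated one: in the inductive step one applies $(\gamma-1)$ to $y=\pi^{h-i}\prod_j(\chi(\gamma)^{h-j}-1)x\in N_2$ and needs $(\gamma-1)y\in\pi N_2$, which is exactly triviality of $\Gamma$ on $N_2/\pi N_2$. Here $N_2=\mathcal{I}$, so your list of "the only properties of $\NN(M)^{\mathrm{total}}$ used" is missing one, and you must additionally verify that $\Gamma$ acts trivially on $\mathcal{I}/\pi\mathcal{I}$. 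This does hold: $\mathcal{I}$ is generated over $A'\htimes\A_F^+$ by the ($\Gamma$-equivariant) image of $\NN(M)^{\mathrm{sat}}$, $\Gamma$ acts trivially on $\NN(M)^{\mathrm{sat}}/\pi$ and on $\A_F^+/\pi\cong\cO_F$, so $(\gamma-1)$ applied to any $A'\htimes\A_F^+$-combination of generators lands in $\pi\mathcal{I}$. With that supplement your argument is complete and coincides with the paper's.
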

\begin{proof}
This follows from Lemma~\ref{lemma:wach-sat-pi-h} by the same argument as in the proof of Lemma~\ref{cor:wach-wach-sat}.
\end{proof}

Suppose in addition that $A=\varprojlim_i A/J^i$, for some ideal $J\subsetneq A$ such that $A/J^i$ is a finite flat $\Z_p$-module.  There is a natural map $\NN(M)^{\rm{sat}}\rightarrow \varprojlim_i \NN(M_{A/J^i})^{\rm{sat}}$.
\begin{proposition}
The natural map $\NN(M)^{\rm{sat}}\rightarrow \varprojlim_i \NN(M_{A/J^i})^{\rm{sat}}$ is an isomorphism.
\end{proposition}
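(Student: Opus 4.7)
The plan is to reduce the statement for $\NN(M)^{\rm{sat}}$ to the analogous identity $\NN(M)^{\rm{total}} \cong \varprojlim_i \NN(M_{A/J^i})^{\rm{total}}$, which in turn will follow from the theorem on formal functions applied to the projective scheme $\mathscr{L}_M^{\leq h}$. The main technical inputs are the base-change isomorphism $\mathscr{L}_{M_{A/J^i}}^{\leq h} \cong \mathscr{L}_M^{\leq h}\times_A A/J^i$ (proper base change, using projectivity of $\Theta_A$), Corollary~\ref{cor:wach-wach-sat} (giving $p^{\alpha(h)}\NN^{\rm{sat}}\subseteq \NN^{\rm{total}}$), and the very definition $\NN(M)^{\rm{sat}}=\NN(M)^{\rm{total}}[1/p]\cap \D(M)$ inside $\D(M)[1/p]$.

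Concretely, the first step is to apply the theorem on formal functions to the projective morphism $\Theta_A\times\id$ on $\mathscr{L}_M^{\leq h}\times_A\Spec(A\htimes\A_F^+)$ with coefficient sheaf $\mathscr{N}^{\leq h}(M)$, but this time along the $J$-adic topology (rather than the $\mathfrak{m}_A$-adic topology that was used when constructing $\widehat{\mathscr{L}}_M^{\leq h}$). The hypothesis $A=\varprojlim A/J^i$ with each $A/J^i$ being $\Z_p$-finite and flat makes this well-behaved and produces the desired identification $\NN(M)^{\rm{total}}\cong\varprojlim_i\NN(M_{A/J^i})^{\rm{total}}$. A parallel and easier argument---using the definition of $\D$ in terms of completed tensor products together with the $J$-adic completeness of $A$---yields $\D(M)\cong\varprojlim_i\D(M_{A/J^i})$.

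With these two limit identifications in hand, both injectivity and surjectivity of the map in the proposition become essentially formal. Injectivity follows from the factorization $\NN(M)^{\rm{sat}}\hookrightarrow \D(M)=\varprojlim \D(M_{A/J^i})$. For surjectivity, take $(x_i)\in\varprojlim \NN(M_{A/J^i})^{\rm{sat}}$; then $(p^{\alpha(h)}x_i)$ lifts to some $y\in\NN(M)^{\rm{total}}$ via the ``total'' analogue, while $(x_i)$ itself lifts to some $x\in\D(M)$. The relation $p^{\alpha(h)}x=y$ in $\D(M)[1/p]$ then forces $x\in \NN(M)^{\rm{total}}[1/p]\cap\D(M)=\NN(M)^{\rm{sat}}$. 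The main obstacle in carrying out this plan is the formal-functions step along the coarser $J$-adic topology: one must verify carefully that the base change of $\mathscr{N}^{\leq h}(M)$ along $A\to A/J^i$ really recovers $\mathscr{N}^{\leq h}(M_{A/J^i})$---where the $\Z_p$-finite flatness of $A/J^i$ is essential so that the earlier integral results, such as Proposition~\ref{prop:wach-pi-h} and Lemma~\ref{lemma:artin-wach-pi-h}, apply termwise---and that the resulting inverse system is handled correctly by the formal-functions machinery.
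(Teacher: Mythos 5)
Your argument is correct and follows the paper's own proof essentially verbatim: both reduce to the limit identifications $\NN(M)^{\rm{total}}\cong\varprojlim_i\NN(M_{A/J^i})^{\rm{total}}$ and $\D(M)\cong\varprojlim_i\D(M_{A/J^i})$, use Corollary~\ref{cor:wach-wach-sat} to push a compatible system of saturated elements into the total Wach modules after multiplying by $p^{\alpha(h)}$, and then conclude from the definition of the saturation as $\NN(M)^{\rm{total}}[1/p]\cap\D(M)$. If anything, you are more explicit than the paper about injectivity and about justifying the $J$-adic limit identification for the total Wach module, which the paper asserts without comment.
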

\begin{proof}
Consider an element $(n_i)\in\varprojlim_i\NN(M_{A/J^i})^{\rm{sat}}$.  Then Corollary~\ref{cor:wach-wach-sat} implies that $(p^{\alpha(h)}n_i)\in \varprojlim_i \NN(M_{A/J^i})=\NN(M)$, so $(n_i)$ comes from an element $n\in\NN(M)[1/p]$.  But $(n_i)$ also comes from an element of $\D(M)$, because $\D(M)=\varprojlim_i\D(M_{A/J^i})$, so $n\in \NN(M)^{\rm{sat}}$.
\end{proof}

Let $\NN(M)^{\rm{p-tors}}$ denote the $p$-power torsion in $\NN(M)^{\rm{total}}$.
\begin{lemma}
If $A=\varprojlim_iA/J^i$, for some ideal $J\subsetneq A$ such that $A/J^i$ is $\Z_p$-flat, $\NN(M)^{\rm{p-tors}}=\varprojlim_i\NN(M_{A/J^i})^{\rm{p-tors}}$.
\end{lemma}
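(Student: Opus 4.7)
The plan is to compare, for each $i$, the defining short exact sequence
\[
  0 \to \NN(M_{A/J^i})^{\mathrm{p-tors}} \to \NN(M_{A/J^i})^{\mathrm{total}} \to \NN(M_{A/J^i})^{\mathrm{t.f.}} \to 0
\]
with its counterpart for $M$, pass to the inverse limit over $i$, and conclude via a short diagram chase. The two pieces of input I will need are: (a) the natural map $\NN(M)^{\mathrm{total}} \to \varprojlim_i \NN(M_{A/J^i})^{\mathrm{total}}$ is an isomorphism; and (b) the natural map $\NN(M)^{\mathrm{t.f.}} \to \varprojlim_i \NN(M_{A/J^i})^{\mathrm{t.f.}}$ is injective.

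For (a), I will repeat the argument already implicit in the preceding proposition: using \eqref{f:Dinverselimits} to identify $\D(M) = \varprojlim_i \D(M_{A/J^i})$, together with the containment $\NN(M)^{\mathrm{total}} \subset \D(M_{\widetilde A})$ and the universal property of $\mathscr{L}_M^{\leq h}$, any compatible system $(n_i) \in \varprojlim_i \NN(M_{A/J^i})^{\mathrm{total}}$ assembles to an element of $\NN(M)^{\mathrm{total}}$.

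For (b), I will use that $\NN(M')^{\mathrm{t.f.}} \hookrightarrow \NN(M')^{\mathrm{sat}}$ tautologically, since $\NN(M')^{\mathrm{sat}} = \NN(M')^{\mathrm{total}}[1/p] \cap \D(M')$ contains the torsion-free quotient of $\NN(M')^{\mathrm{total}}$. The preceding proposition gives $\NN(M)^{\mathrm{sat}} = \varprojlim_i \NN(M_{A/J^i})^{\mathrm{sat}}$, so the composition
\[
  \NN(M)^{\mathrm{t.f.}} \to \varprojlim_i \NN(M_{A/J^i})^{\mathrm{t.f.}} \hookrightarrow \varprojlim_i \NN(M_{A/J^i})^{\mathrm{sat}} = \NN(M)^{\mathrm{sat}}
\]
agrees with the injection $\NN(M)^{\mathrm{t.f.}} \hookrightarrow \NN(M)^{\mathrm{sat}}$, forcing the first map to be injective.

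Finally, a direct chase on the resulting commutative diagram with exact rows shows that $\NN(M)^{\mathrm{p-tors}} \to \varprojlim_i \NN(M_{A/J^i})^{\mathrm{p-tors}}$ is both injective (since the middle column from (a) is) and surjective: any compatible system of torsion elements lifts, by (a), to some $x \in \NN(M)^{\mathrm{total}}$, whose image in $\NN(M)^{\mathrm{t.f.}}$ maps to $0$ in $\varprojlim_i \NN(M_{A/J^i})^{\mathrm{t.f.}}$ and hence vanishes by (b), so $x$ is itself torsion. The main obstacle I anticipate is justifying (a) cleanly, since it concerns the compatibility of $\mathscr{L}_M^{\leq h}$ with $J$-adic rather than $\mathfrak{m}_A$-adic limits; fortunately this same identification was already used implicitly in the preceding proposition's proof.
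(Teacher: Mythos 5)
Your overall diagram chase is sound, but the argument stands or falls with your input (a), the isomorphism $\NN(M)^{\mathrm{total}}\xrightarrow{\sim}\varprojlim_i\NN(M_{A/J^i})^{\mathrm{total}}$, and that is where there is a genuine gap. The identification \eqref{f:Dinverselimits} together with the inclusions $\pi^h\D^+(\cdot)\subset\NN(\cdot)^{\mathrm{total}}\subset\D^+(\cdot)$ lets you assemble a compatible system $(n_i)$ into an element of $\D^+(M_{\widetilde A})$, but it does not tell you that this element lies in $\NN(M)^{\mathrm{total}}$: the total Wach module is $\Theta_{A\ast}$ of a coherent sheaf on the projective scheme $\mathscr{L}_M^{\leq h}$, not a submodule of $\D^+(M_{\widetilde A})$ cut out by conditions checkable modulo $J^i$. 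What (a) really requires is the theorem on formal functions for the $J$-adic (rather than $\mathfrak{m}_A$-adic) filtration together with base-change compatibility of $(\mathscr{L}_M^{\leq h},\mathscr{N}^{\leq h}(M))$ along $A\to A/J^i$ for the non-artinian quotients $A/J^i$; neither is established in the paper, and the surjectivity half of (a) is at least as hard as the lemma itself. (The phrase ``$\varprojlim_i\NN(M_{A/J^i})=\NN(M)$'' in the preceding proposition's proof is an unproved assertion of the same kind, so it cannot be leaned on.) A secondary issue: your input (b) invokes the preceding proposition on $\NN^{\mathrm{sat}}$, which is stated only when each $A/J^i$ is \emph{finite} flat over $\Z_p$, whereas the present lemma assumes only $\Z_p$-flatness, and $\NN(M_{A/J^i})^{\mathrm{sat}}$ is not even defined in the paper's framework without further hypotheses on $A/J^i$.

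The paper's proof is arranged precisely to avoid (a). It uses only the forward map (torsion specializes to torsion) plus the fact that $\NN(M)^{\mathrm{total}}[1/p]$ \emph{does} commute with base change on $A[1/p]$, which identifies the elements of $\NN(M)^{\mathrm{total}}$ becoming torsion in every $\NN(M_{A/J^i})^{\mathrm{total}}$ with $\NN(M)^{\mathrm{total}}\cap\bigcap_i\bigl(J^i[1/p]\bigr)\NN(M)^{\mathrm{total}}[1/p]$. Krull's intersection theorem in $A[1/p]$ then produces $a\equiv 1\pmod{J[1/p]}$ annihilating this intersection, and the $\Z_p$-flatness of $A/J$ (giving $J[1/p]\cap A=J$) allows one to rescale $a$ to an element of $A$ congruent to $p^n$ modulo $J$, whence the intersection is $p$-power torsion. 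If you want to keep the inverse-limit strategy, you must first supply a genuine proof of (a); otherwise the Krull-type argument is the route that works with what the paper has actually established.
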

\begin{proof}
Clearly if $n\in\NN(M)^{\rm{total}}$ is $p$-power torsion, so is its image in $\NN(M_{A/J^i})^{\rm{total}}$ for all $i$.

On the other hand, the natural map $\NN(M)^{\rm{total}}[1/p]\otimes_{A[1/p]}(A/J^i)[1/p]\rightarrow \NN(M_{A/J^i})^{\rm{total}}[1/p]$ is an isomorphism for all $i$, so to prove the converse, we show that $\NN(M)^{\rm{total}}\cap\bigcap_i (J^i[1/p])\NN(M)$ is $p$-power torsion.  Since $A/J$ is assumed $\Z_p$-flat, $J[1/p]\subset A[1/p]$ is a proper ideal.  Then by a theorem of Krull~\cite[Theorem 8.9]{matsumura} there is some $a\in A[1/p]$ which is $1$ modulo $J[1/p]$ and which annihilates $\cap_i (J^i[1/p])\NN(M)[1/p]$.  Again because $A/J$ is assumed $\Z_p$-flat, $J[1/p]\cap A=J$, so there is some power of $p$ such that $a':=p^na$ is an element of $A$ which is congruent to $p^n$ modulo $J$ and annihilates $\ker(\NN(M)^{\rm{total}}\rightarrow \NN(M_{A/J^i})^{\rm{total}}[1/p])$ for all $i$.  Then $a'$ is congruent to $p^n$ modulo the maximal ideal of $A$, so multiplying $a'$ by a $1$-unit of $A$, we see that $p^n$ annihilates $\ker(\NN(M)^{\rm{total}}\rightarrow \NN(M_{A/J^i})^{\rm{total}}[1/p])$ for all $i$ and we are done.
\end{proof}

From now on, we assume that $A$ is $\Z_p$-flat and integrally closed in $A[1/p]$.  We refer to $\NN(M)^{\mathrm{sat}}$ as the Wach module of $M$ and we drop the superscript, writing $\NN(M)$ instead.  For the convenience of the reader, we collect some useful properties of $\NN(M)$:
\begin{theorem}\label{thm:wach}
Let $M$ be a crystalline family of $G_F$-representations over $A$ (of rank $d$) with Hodge--Tate weights in $[-h,0]$. Then there exists a canonical finitely generated, $\varphi$- and $\Gamma$-stable  $A\htimes\A_{F}^+$-submodule \[\NN(M)\subseteq \D(M)\] satisfying the following properties:
\begin{enumerate}
\item $(A\htimes\A_{F})\otimes_{A\htimes\A_{F}^+} \NN(M)\twoheadrightarrow \D(M)$, i.e., $\NN(M)$ generates $\D(M)$ as an $A\htimes\A_{F}$-module,
\item $\NN(M)[1/p]$ is a finitely generated projective $(A\htimes\A_{F}^+)[1/p]$-module of rank $d$,
\item $\NN(M)$ is $p$-saturated in $\D(M),$ i.e., $\NN(M)[1/p]\cap \D(M)=\NN(M),$
\item $\Gamma$ acts trivially on $\NN(M)/\pi \NN(M)$,
\item $q^h$ annihilates the cokernel of $\varphi^*\NN(M)\to \NN(M)$,
\item $\pi^h\D^+(M)\subseteq \NN(M)\subseteq \D^+(M).$
\item	For any local homomorphism $A\to A'$ of complete local noetherian flat $\Z_p$-algebras with finite residue fields, integrally closed in $A[1/p]$ and $A'[1/p]$, respectively, there is a natural homomorphism $(A'\htimes\A_F^+)\otimes_{A\htimes\A_F^+}\NN(M)\to \NN(M_{A'})$, and its cokernel is annihilated by $p^{\alpha(h)}$.
\end{enumerate}
Moreover, these properties uniquely characterize $\NN(M)$.
\end{theorem}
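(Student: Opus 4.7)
The plan is to assemble the theorem directly from the lemmas and propositions already established for $\NN(M) := \NN(M)^{\mathrm{sat}}$; nothing essentially new is required except the uniqueness statement. First I would note that $(A\htimes\A_{F})\otimes_{A\htimes\A_{F}^+}\NN(M)\twoheadrightarrow \D(M)$ is exactly the surjectivity lemma proved just after the definition of $\NN(M)^{\mathrm{sat}}$. The identification $\NN(M)^{\mathrm{sat}}[1/p] = \NN(M)^{\mathrm{total}}[1/p]$, combined with the fact that $\Theta_A[1/p]:\mathscr L_M^{\le h}[1/p]\to\Spec A[1/p]$ is an isomorphism (with $\NN(M)^{\mathrm{total}}[1/p]$ projective of rank $d$ via Lemma~\ref{lemma:wach-loc-free-mixed}), yields property (2). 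Property (3) is the definition of $\NN(M)^{\mathrm{sat}}$, property (4) and (5) are the lemma about $\Gamma$-triviality modulo $\pi$ and the $q^h$-annihilation of the cokernel of Frobenius, property (6) is Lemma~\ref{lemma:wach-sat-d+-inclusions}, and property (7) is the corollary following Lemma~\ref{lemma:wach-sat-pi-h}.

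For uniqueness, suppose $\NN'\subseteq \D(M)$ is any $A\htimes\A_F^+$-submodule with the properties (1)--(6). The strategy is to show $\NN' = \NN(M)$ by first inverting $p$ and comparing $\NN'[1/p]$ to $\NN(M)^{\mathrm{total}}[1/p]$, and then using $p$-saturation to deduce equality integrally. Since $\Theta_A[1/p]:\mathscr L_M^{\le h}[1/p]\xrightarrow{\sim}\Spec A[1/p]$ is a (closed immersion, hence) isomorphism by the key proposition characterizing the crystalline locus, any $A\htimes\A_F^+$-lattice in $\D(M)$ satisfying the height and $\Gamma$-triviality conditions gives rise to an $A[1/p]$-point of $\mathscr L_M^{\le h}[1/p]$, and this point must agree with the one defining $\NN(M)^{\mathrm{total}}$. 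Hence $\NN'[1/p]=\NN(M)^{\mathrm{total}}[1/p]=\NN(M)[1/p]$ inside $\D(M)[1/p]$. Combined with $\NN'\subseteq \D(M)$ and the $p$-saturation property~(3), this forces $\NN' = \NN(M)[1/p]\cap \D(M) = \NN(M)$.

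The main technical point (already absorbed into the earlier lemmas) is really the behavior of the affine Grassmannian construction: the uniqueness and functorial properties of $\NN(M)$ are inherited from $\mathscr{N}^{\le h}(M)$ via the fact that $\Theta_A[1/p]$ is an isomorphism onto its image and that formal GAGA converts the formal universal lattice into an honest coherent sheaf. Once that machinery is in place, the proof really is just a book-keeping exercise of citing the relevant lemmas in sequence, together with the uniqueness argument sketched above. I expect no further obstacle: the subtle issues ($p$-torsion on the special fiber, lack of freeness of $\NN(M)^{\mathrm{total}}$) have already been quarantined by passing to $\NN(M)^{\mathrm{sat}}$ and by invoking Lemma~\ref{lemma:wach-saturate}.
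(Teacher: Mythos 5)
The existence half of your proof matches the paper: both simply collect the lemmas already proved for $\NN(M)^{\mathrm{sat}}$, and there is nothing to add there. The divergence, and the gap, is in the uniqueness argument. You claim that a competitor $\NN'$ satisfying only (1)--(6) gives rise to an $A[1/p]$-point of $\mathscr{L}_M^{\leq h}[1/p]$ and must therefore coincide with the universal lattice. But the functor $L_M^{\leq h}$ was defined (and shown representable) only on artinian $A$-algebras and, via the formal scheme, on $\mathfrak{m}_A$-adically complete local rings, and its points classify \emph{finite projective} lattices. An arbitrary $\NN'$ satisfying (1)--(6) is only finitely generated with $\NN'[1/p]$ projective; it need not be projective over $A\htimes\A_F^+$, and $A[1/p]$ is not a complete local ring, so there is no established dictionary between such submodules and morphisms $\Spec A[1/p]\to\mathscr{L}_M^{\leq h}$. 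To repair this you would have to either prove a representability statement over $A[1/p]$-algebras or argue by specializing at closed points of $\Spec A[1/p]$ --- and the latter runs into the difficulty that $\NN'[1/p]\otimes_{A[1/p]}L\to\D(M_L)[1/p]$ need not be injective for a mere submodule, so you cannot immediately invoke Berger's classical uniqueness fibrewise.

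The paper's proof sidesteps all of this by using property (7), which you drop from your hypotheses: $p$-saturation (property (3)) reduces the question to uniqueness of $\NN(M)[1/p]$; the base-change maps of property (7), having cokernel killed by $p^{\alpha(h)}$, show that $\NN(M)[1/p]$ is determined by its images $\NN(M_{A'})[1/p]$ for finite flat $\Zp$-algebras $A'$; and for such $A'$ the module $M_{A'}[1/p]$ is a classical crystalline representation, so Berger's Corollary III.4.2 applies together with properties (2), (4), (5). Note that the theorem only asserts uniqueness subject to the full list of properties including (7); your sketch implicitly claims the stronger statement that (1)--(6) alone suffice, which is not justified by the argument you give.
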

\begin{proof}
We have shown that $\NN(M)$ exists, so it only remains to see that the list of properties uniquely characterizes $\NN(M)$.

Property $(3)$ implies that it suffices to prove that $\NN(M)[1/p]$ is uniquely characterized by the above list of properties.  Moreover, the functoriality required by property $(7)$ implies that $\NN(M)[1/p]$ is characterized by $\NN(M_{A'})[1/p]$ when $A'$ is a finite flat $\Zp$-algebra.  But then $M_{A'}[1/p]$ is a finite-dimensional $\Qp$-linear representation of $G_F$, so~\cite[Corollary III.4.2]{berger04} implies that $\NN(M_{A'})[1/p]$ is uniquely determined by its $\varphi$- and $\Gamma$-stability and properties $(2)$, $(4)$, and $(5)$.
\end{proof}

\section{Generic fibre}\label{sec:fibre}

%

\subsection{Families of Wach modules and $(\varphi,\Gamma)$-modules}

Given a family of positive crystalline representations $M$ of $G_F$ over $R$ of rank $d$, we get a family $M^{\rig}$ of Galois representations over the Fr\'echet-Stein algebra $R^{\rig}$ given by taking the rigid analytic generic fiber of $R$.  

If $V$ is a $d$-dimensional $\Q_p$-vector space equipped with a continuous action of $G_K$, then the main result of~\cite{cherbonnier-colmez} implies that for $s\gg 0$, $\D^{\dagger,s}(V):=(\B^{\dagger,s}\otimes_{\Qp}V)^{H_K}\subset\widetilde\B_{\rig}^{\dagger,s}\otimes_{\Qp}V$ is a free $\B_K^{\dagger,s}$-module of rank $d$.  Berger and Colmez~\cite{berger-colmez} extended this to the case where $V$ is a $G_K$-representation with coefficients in a $\Qp$-affinoid algebra.

Since $(\Spf R)^{\rig}$ is a rigid analytic space, we obtain a family of $(\varphi,\Gamma)$-modules $\Drig^{\dagger}(M^{\rig})$ over $ R^{\rig}$.  More precisely, we exhaust the quasi-Stein space $\Spf(R)^{\rig}$ by a rising sequence of affinoids $\{U_i=\Sp(A_i)\}$ and we note that for each $i$ there is some $s_i>0$ such that over $U_i$, $\Drig^{\dagger}(M^{\rig}|_{U_i})$ is generated by $\D_{F}^{\dagger,s_i}(M^{\rig}|_{U_i})$.   Letting $C^{(0,s_i]}$ denote the half-open annulus $\{0<v_p(\pi)\leq s_i\}$ over $F$, we obtain a coherent sheaf on the quasi-Stein space $U_i\times C^{(0,s]}$.  Taking global sections and extending scalars from $\mathcal{O}_{\Spf R^{\rig}}(U_i)\widehat\otimes\B_{\rig,F}^{\dagger,s}$ to $\mathcal{O}_{\Spf R^{\rig}}(U_i)\widehat\otimes\B_{\rig,F}^{\dagger}$ yields a sheaf $\Drig^{\dagger}(M^{\rig})$ on $\Spf(R)^{\rig}$.

Similarly, to the Wach module $\NN(M)$ of $M$, we associate a finite projective module
\[\NN(M)^{\rig}:=(R^{\rig}\htimes\B_{\rig,F}^+)\otimes_{(R \htimes \AA^+_{F})}\NN(M)\]
over $R^\rig\htimes\B_{\rig,F}^+$.  In fact, since $\NN(M)$ is a finite $R \htimes \AA^+_{F}$-module the functor ${(-)}^{\rig}$ associates to it a coherent sheaf on the rigid space $X=\mathrm{Spf}( R \htimes \AA^+_{F})^\rig$ with $\mathcal{O}_X(X)=R^\rig\htimes\B_{\rig,F}^+$; here we identify this sheaf $ \NN(M)^{\rig}$ with its global sections, see Lemma \ref{lemma:rig}.  Since $\NN(M)[1/p]$ is a projective $(R\htimes\AA_F^+)[1/p]$-module, the coherent sheaf is a vector bundle on $X$.

\subsection{Comparisons between $\NN(M)$ and $\D_{\cris}(M^{\rig})$}

The paper \cite{berger02} explains how to relate $(\varphi,\Gamma)$-modules to the functors $\Dcris(-)$ and $\D_{\st}(-)$, and \cite{berger04} refines this for positive crystalline representations of $G_F$.  Indeed, if $V$ is a finite-dimensional $\Qp$-linear representation of $G_K$, we have
\[	\Dcris(V)=\Drig^\dagger(V)[1/t]^\Gamma	\]
by \cite[Th\'eor\`eme 3.6]{berger02}.

If $T\subset V$ is a lattice in a finite-dimensional $\Qp$-linear positive crystalline representation of $G_F$, then the Wach module $\NN(V)=\NN(T)[1/p]$ is contained in the $(\varphi,\Gamma)$-module $\Drig^\dagger(V)$, and it generates it over $\B_{\rig,F}^\dagger$.  Further:
\begin{proposition}[{\cite[Proposition II.2.1]{berger04}, \cite[Proposition III.2.1]{berger04}}]
Let $0\leq r_1\leq\ldots\leq r_d$ be the opposites of the Hodge--Tate weights of $V$.  Then
\begin{enumerate}
\item	$\Dcris(V)\subset \B_{\rig,F}^+\otimes_{\B_F^+}\NN(V)$, and
\item	the natural map $\B_{\rig,F}^+\otimes_F\Dcris(V)\rightarrow \B_{\rig,F}^+\otimes_{\B^+}\NN(V)$ is an injection and its cokernel is isomorphic to $\oplus_{i=1}^d \B_{\rig,F}^+/(\lambda_i)$, where $\lambda_i=(t/\pi)^{r_i}$.
\end{enumerate}
\end{proposition}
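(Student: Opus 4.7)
My plan is to derive both parts from Berger's identification $\Dcris(V) = \Drig^{\dagger}(V)[1/t]^{\Gamma_F}$ of \cite{berger02}, combined with the structural properties of $\NN(V)$ --- notably the triviality of the $\Gamma$-action on $\NN(V)/\pi$ and the height-$h$ property. The crucial input is the factorisation $t = \pi\lambda$ inside $\BB_{\rig,F}^+$, where $\lambda := \prod_{n\geq 0}\varphi^n(q)/p$ reduces to $1$ modulo $\pi$ but vanishes at every nontrivial $p$-power root of unity; this is precisely why $t/\pi \in \BB_{\rig,F}^+$.

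For (1), since $\NN(V)$ generates $\Drig^{\dagger}(V)$ over $\BB_{\rig,F}^{\dagger}$, Berger's formula immediately gives $\Dcris(V) \subset (\BB_{\rig,F}^+ \otimes_{\BB_F^+}\NN(V))[1/t]$; only the elimination of the $t$-poles remains. I would argue by contradiction: suppose $x = y/t^n \in \Dcris(V)$ with $y \in \BB_{\rig,F}^+\otimes\NN(V)$ and $n \geq 1$ minimal. From $\gamma(t) = \chi_{\mathrm{cyc}}(\gamma)\,t$ we obtain $\gamma(y) = \chi_{\mathrm{cyc}}(\gamma)^n\,y$. Reducing modulo $\pi$ --- where $\Gamma$ acts trivially on $\NN(V)/\pi$ --- forces $(1 - \chi_{\mathrm{cyc}}(\gamma)^n)\bar y = 0$ for every $\gamma$, and a suitable choice of $\gamma$ with $\chi_{\mathrm{cyc}}(\gamma)^n \neq 1$ yields $y \in \pi(\BB_{\rig,F}^+\otimes\NN(V))$. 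Iterating the argument on $y_1 := y/\pi$ --- using $\gamma(\pi)/\pi \equiv \chi_{\mathrm{cyc}}(\gamma) \pmod{\pi}$ to track the new twist exponent at each step --- ultimately produces $y \in t\cdot(\BB_{\rig,F}^+\otimes\NN(V))$, contradicting the minimality of $n$.

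For (2), injectivity is immediate because both sides are $t$-torsion-free and become equal to $\Drig^{\dagger}(V)[1/t]$ after inverting $t$. Both sides are finite free $\BB_{\rig,F}^+$-modules of rank $d$, so the cokernel is a torsion module supported on the zeros of $\lambda$. To identify its elementary divisors as $(t/\pi)^{r_i}$, I would d\'evisser along the Hodge filtration $0 \subset \Fil^{r_d}\Dcris(V) \subset \cdots \subset \Fil^0\Dcris(V) = \Dcris(V)$: the rank-one case is handled by an explicit computation with the rank-one Wach module (Hodge--Tate weight $-r$ yields cokernel $\BB_{\rig,F}^+/(t/\pi)^r$), and the general case follows by matching a basis of $\Dcris(V)$ adapted to the Hodge filtration against a compatible basis of $\NN(V)$, so that the comparison matrix becomes upper-triangular with diagonal entries $(t/\pi)^{r_i}$.

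The main obstacle is the iterative divisibility step in (1): upgrading the first-pass divisibility by $\pi$ to full divisibility by $t = \pi\lambda$ requires careful bookkeeping of how the $\Gamma$-action twists on successive layers of the $\pi$-adic filtration, and one must know that the iteration terminates cleanly after exactly $n$ steps. For (2), the delicate point is verifying that the elementary divisors really pair up one-by-one as $(t/\pi)^{r_i}$ rather than merely producing the expected total degree $\sum_i r_i$ read off from the determinant of the comparison map; this rests on choosing the filtration-adapted bases so that the pairing is strictly triangular, which in turn uses the full compatibility of the $\varphi$-structure on $\NN(V)$ with the Hodge filtration on $\Dcris(V)$.
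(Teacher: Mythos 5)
First, note that the paper does not prove this proposition at all: it is quoted verbatim from Berger's \emph{Limites de repr\'esentations cristallines} (Propositions II.2.1 and III.2.1 of \cite{berger04}), so your attempt has to be measured against Berger's argument rather than anything internal to this paper.

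There is a genuine gap in your proof of (1), at exactly the point you flag as ``the main obstacle''. Your iteration --- reduce $\gamma(y)=\chi(\gamma)^n y$ modulo $\pi$, use triviality of $\Gamma$ on $\NN(V)/\pi$, divide by $\pi$, lower the twist exponent by one via $\gamma(\pi)/\pi\equiv\chi(\gamma)\pmod{\pi}$, repeat --- terminates after exactly $n$ steps and proves only that $y\in\pi^{n}\bigl(\B_{\rig,F}^+\otimes_{\B_F^+}\NN(V)\bigr)$, i.e.\ that $x=y/t^{n}$ lies in $\lambda^{-n}\bigl(\B_{\rig,F}^+\otimes\NN(V)\bigr)$ with $\lambda=t/\pi$. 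This does not contradict the minimality of $n$, because $\pi^{n}$ is not divisible by $t$: the element $\lambda$ vanishes at every $\pi=\zeta_{p^k}-1$, $k\geq 1$, and your mod-$\pi$ bookkeeping is blind to those points --- indeed $\Gamma$ does not even fix them individually, and its action on the fibre of $\NN(V)$ there is not trivial, so no variant of the same reduction can work. The missing ingredient is the Frobenius structure: Berger uses that $\varphi$ is \emph{bijective} on $\Dcris(V)$, that $\varphi\bigl(\B_{\rig,F}^+\otimes\NN(V)\bigr)\subset\B_{\rig,F}^+\otimes\NN(V)$ by positivity, and that $\varphi(\lambda)=(p/q)\lambda$ has zeros only at the $\zeta_{p^k}-1$ with $k\geq 2$; writing $x=\varphi^{m}(x_m)$ and applying the $\pi$-step to $x_m$ shows $x$ has no pole at any $\zeta_{p^k}-1$ with $k\leq m$, and letting $m\to\infty$ (together with the B\'ezout property of $\B_{\rig,F}^+$) kills all denominators. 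A smaller but real gap occurs one step earlier: $\Drig^\dagger(V)=\B_{\rig,F}^\dagger\otimes_{\B_F^+}\NN(V)$ gives $\Dcris(V)\subset\B_{\rig,F}^\dagger[1/t]\otimes\NN(V)$, not ``immediately'' $\B_{\rig,F}^+[1/t]\otimes\NN(V)$; ruling out the Laurent tail requires comparing with $\D^+(V)$ inside $\widetilde\B_{\rig}^+[1/t]\otimes V$.

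For (2), injectivity and the determinant computation $\det=\prod_i(t/\pi)^{r_i}$ (up to a unit) are fine, but the proposition asserts the individual elementary divisors, and your justification --- that a filtration-adapted basis makes the comparison matrix upper-triangular with diagonal $(t/\pi)^{r_i}$ --- is precisely the content to be proved, not an available reduction. Berger obtains it by localizing at the zeros of $t/\pi$: at each $\zeta_{p^k}-1$ the completion of $\B_{\rig,F}^+\otimes\NN(V)$ is the lattice $\B_{\dR}^+\otimes V$ while that of $\B_{\rig,F}^+\otimes\Dcris(V)$ is $\Fil^0(\B_{\dR}\otimes\Dcris(V))$, and the relative position of these two $\B_{\dR}^+$-lattices is given by the Hodge--Tate weights; the elementary divisors are then read off pointwise. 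If you repair (1) with the $\varphi$-argument and replace the ``triangular basis'' claim in (2) by this local comparison, the proof closes.
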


This implies that $\Dcris(V)=\left(\B_{\rig,F}^+\otimes_{\B_F^+}\NN(V)\right)^\Gamma$.  But we have another relation:
\begin{theorem}[{\cite[Th\'eor\`eme]{berger04}}]
If $V$ is a positive crystalline representation of $G_F$ and we equip the Wach module $\NN(V)$ with the filtration
\[	\Fil^i\NN(V):=\{x\in\NN(V) \mid \varphi(x)\in q^i\NN(V)\}	\]
then the natural map $\Dcris(V)\rightarrow \NN(V)/\pi\NN(V)$ is an isomorphism of filtered $\varphi$-modules.
\end{theorem}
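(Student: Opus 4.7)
I would structure the proof in three stages: define the map, show it is a $\varphi$-equivariant isomorphism, and then compare the filtrations.

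The map is defined by reducing the inclusion $\Dcris(V) \subset \B_{\rig,F}^+ \otimes_{\B_F^+} \NN(V)$ from the previous proposition modulo $\pi$. Since the augmentation $\pi \mapsto 0$ identifies $\B_{\rig,F}^+/\pi$ with $F$ canonically, and since $\NN(V)$ is free of rank $d$ over $\B_F^+$, this gives an $F$-linear map $\Dcris(V) \to \NN(V)/\pi\NN(V)$ between two $F$-vector spaces of dimension $d$. The map is $\varphi$-equivariant because $\varphi(\pi) = q\pi \in (\pi)$, so $\varphi$ descends modulo $\pi$ on both sides compatibly.

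To see it is an isomorphism, I would tensor the exact sequence
\[
0 \to \B_{\rig,F}^+ \otimes_F \Dcris(V) \to \B_{\rig,F}^+ \otimes_{\B_F^+} \NN(V) \to \bigoplus_{i=1}^d \B_{\rig,F}^+/(\lambda_i) \to 0
\]
with $F = \B_{\rig,F}^+/\pi$. The key observation is that $\lambda_i = (t/\pi)^{r_i}$, and since $t = \log(1+\pi) \equiv \pi \pmod{\pi^2}$, we have $t/\pi \equiv 1 \pmod \pi$, hence $\lambda_i$ is a unit mod $\pi$. Thus the cokernel term vanishes modulo $\pi$ (and has no relevant Tor against $F$), producing a surjection $\Dcris(V) \twoheadrightarrow \NN(V)/\pi\NN(V)$. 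Combined with the equality of $F$-dimensions, this yields the desired isomorphism of $\varphi$-modules.

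The third and most delicate stage is matching the filtrations. The Hodge filtration on $\Dcris(V)$ comes from $\Dcris(V) \hookrightarrow \DdR(V) \subset \BdR \otimes V$ via $\Fil^i \Dcris(V) = \Dcris(V) \cap t^i(\BdR^+ \otimes V)$, while the Wach-module filtration is defined by Frobenius divisibilities $\varphi(x) \in q^i \NN(V)$. To bridge them, I would use the family of localization maps $\iota_n : \B_{\rig,F}^+ \to \BdR^+$, defined on $\pi$ by $\pi \mapsto \varepsilon^{(n)} \exp(t/p^n) - 1$, under which $\iota_n \circ \varphi = \iota_{n-1}$. Noting that $\iota_0(\pi) = e^t - 1 \equiv t \pmod{t^2}$ so that $\pi$ is a $\BdR^+$-uniformizer up to a unit, the condition $\varphi(x) \in q^i\NN(V)$ translates, under $\iota_n$, into divisibility of $\iota_{n-1}(x)$ by the appropriate power of $\iota_n(q)$, which up to units is $t^i/\iota_n(\pi)^i$; iterating over all $n \geq 1$ recovers exactly the $t$-adic condition defining $\Fil^i \Dcris(V)$.

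The main obstacle will be this last step: carefully verifying that the "$\varphi$ raises $\pi$-adic order to $q$-adic order" relation, combined over the full system of localization maps $\{\iota_n\}_{n \geq 1}$, precisely reproduces the $t$-adic filtration on $\BdR^+ \otimes V$ when restricted to $\Dcris(V)$. This requires the standard identification $\B_{\rig,F}^{+,\psi=0} \hookrightarrow \prod_n \BdR^+$ together with the fact that $\NN(V)$ is characterized inside $\B_{\rig,F}^+[1/t] \otimes_F \Dcris(V)$ as those elements whose image under every $\iota_n$ ($n \geq 1$) lies in $\BdR^+ \otimes V$, from which the filtration statement follows formally.
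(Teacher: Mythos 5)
First, a point of orientation: the paper does not prove this statement. It is quoted verbatim from Berger \cite{berger04} (his Th\'eor\`eme III.4.4), so there is no internal proof to compare against; your attempt can only be measured against Berger's argument. Your stages 1 and 2 are correct and coincide with the standard argument: reducing the exact sequence $0\to\B_{\rig,F}^+\otimes_F\Dcris(V)\to\B_{\rig,F}^+\otimes_{\B_F^+}\NN(V)\to\oplus_{i}\B_{\rig,F}^+/(\lambda_i)\to 0$ modulo $\pi$ and observing that $\lambda_i=(t/\pi)^{r_i}\equiv 1\pmod{\pi}$, so that $(\lambda_i,\pi)=(1)$ and hence both $\B_{\rig,F}^+/(\lambda_i,\pi)$ and $\Tor_1^{\B_{\rig,F}^+}\bigl(\B_{\rig,F}^+/(\lambda_i),F\bigr)$ vanish, does produce the isomorphism of $\varphi$-modules.

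The gap is in stage 3, which is where the real content of the theorem sits. Two of your assertions there are off. The maps $\iota_n$ for $n\ge 2$ see nothing: $\iota_n(\pi)$ is a \emph{unit} of $\BdR^+$ for every $n\ge 1$ (its image under $\theta$ is $\zeta_{p^n}-1\ne 0$), so $\iota_n(q)=\iota_{n-1}(\pi)/\iota_n(\pi)$ is a unit for $n\ge 2$ and lies in $t\cdot(\BdR^+)^\times$ only for $n=1$; your formula ``$\iota_n(q)^i$ is up to units $t^i/\iota_n(\pi)^i$'' and the plan to ``iterate over all $n\ge 1$'' therefore do not compute the filtration. The comparison rests entirely on the single relation $\iota_0=\iota_1\circ\varphi$ together with $\iota_1(q),\,\iota_0(\pi)\in t\cdot(\BdR^+)^\times$, and the embedding $\B_{\rig,F}^{+,\psi=0}\hookrightarrow\prod_n\BdR^+$ is irrelevant here ($\psi$ plays no role in the filtration). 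More seriously, even the corrected dictionary ``$\varphi(x)\in q^i\NN(V)$ iff $\iota_0(x)\in t^i(\BdR^+\otimes V)$'' only yields one inclusion between the image of $\Fil^i\Dcris(V)$ and $\Fil^i\bigl(\NN(V)/\pi\NN(V)\bigr)$; the reverse inclusion --- that every class in $\Fil^i\bigl(\NN(V)/\pi\NN(V)\bigr)$ lifts to an element of $\Fil^i\Dcris(V)$, i.e.\ strictness of the reduction map for the two filtrations --- is the delicate point of Berger's \S III.4 and does not ``follow formally'' from a characterization of $\NN(V)$ by integrality under the $\iota_n$. As written, stage 3 is a strategy, not a proof.
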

In particular, this implies that $\Dcris(T):=\NN(T)/\pi\NN(T)$ defines a $\cO_F$-lattice in the $F$-vector space $\Dcris(V)$.

We wish to extend these results to formal families of positive crystalline representations.  To precisely define $\D_{\cris}(M^{\rig})$ we must again exhaust $\Spf(R)^{\rig}$ by affinoid subdomains $\{U_i\}$, consider the finite projective modules $\D_{\cris}(M^{\rig}|_{U_i})$ over each $U_i$, and take the limit to obtain a finite projective filtered $\varphi$-module over $ R^{\rig}$.  We also obtain an explicit description $\D_{\cris}(M^{\rig})=\left(( R^{\rig}\widehat\otimes\B_{\max})\otimes_{ R^{\rig}}M^{\rig}\right)^{G_{F}}$.  Then~\cite[Theorem 4.2.9]{bellovin13} shows that $\Dcris(M^{\rig})=\Drig^\dagger(M^\rig)^\Gamma$.

\begin{lemma}
If $M$ is positive, there is a natural homomorphism $\NN(M)^{\rig}\rightarrow \Drig^{\dagger}(M^{\rig})$.
\end{lemma}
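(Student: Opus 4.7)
The plan is to construct the homomorphism locally on an exhaustion of $\Spf(R)^{\rig}$ by affinoid subdomains, using the containment $\NN(M) \subseteq \D^+(M)$ supplied by Theorem~\ref{thm:wach}(6) together with natural inclusions of period rings, and then glue.

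First I would fix an exhaustion $\{U_i = \Sp(A_i)\}$ of $\Spf(R)^{\rig}$ by affinoid subdomains as used in the construction of $\Drig^{\dagger}(M^{\rig})$. For any $s > 0$, the natural inclusions $\A^+ \subseteq \widetilde{\A}^+ \subseteq \widetilde{\A}^{\dagger,s} \subseteq \widetilde{\B}^{\dagger,s} \subseteq \widetilde{\B}_{\rig}^{\dagger,s}$ combine with the structural map $R \to A_i$ to yield a continuous, $H_F$- and $\varphi$-equivariant ring homomorphism
\[
R \htimes \A^+ \longrightarrow A_i \htimes \widetilde{\B}_{\rig}^{\dagger,s}.
\]
Composing the embedding $\NN(M) \hookrightarrow \D^+(M) \subseteq (R \htimes \A^+) \otimes_R M$ with this map produces an $H_F$-equivariant, $R \htimes \A_F^+$-linear homomorphism from $\NN(M)$ into $(A_i \htimes \widetilde{\B}_{\rig}^{\dagger,s}) \otimes_{A_i} (M^{\rig}|_{U_i})$; its image lies in the $H_F$-fixed submodule because $\NN(M)$ is contained in the $H_F$-invariants of the source and the ring map is $H_F$-equivariant.

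Next I would invoke the Berger--Colmez theorem~\cite{berger-colmez} (the family analogue of Cherbonnier--Colmez overconvergence): for $s = s_i$ sufficiently large, the $H_F$-fixed submodule above is naturally identified with $(A_i \htimes \B_{\rig,F}^{\dagger,s_i}) \otimes_{A_i \htimes \B_F^{\dagger,s_i}} \D_F^{\dagger,s_i}(M^{\rig}|_{U_i})$, which embeds canonically into $\Drig^{\dagger}(M^{\rig}|_{U_i})$ after extending scalars to $\B_{\rig,F}^{\dagger}$. Moreover, the restriction of the ring homomorphism from the previous paragraph to $H_F$-fixed subrings factors as $R \htimes \A_F^+ \to R^{\rig} \htimes \B_{\rig,F}^+ \to A_i \htimes \B_{\rig,F}^{\dagger,s_i}$, using the inclusion $\B_{\rig,F}^+ \subseteq \B_{\rig,F}^{\dagger,s}$ (valid for every $s > 0$, since a function convergent on the open unit disk restricts to one on the annulus $0 < v_p(T) \leq 1/s$). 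The universal property of tensor product then extends the above map uniquely to an $R^{\rig} \htimes \B_{\rig,F}^+$-linear homomorphism
\[
\NN(M)^{\rig}|_{U_i} \longrightarrow \Drig^{\dagger}(M^{\rig}|_{U_i}).
\]

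Finally, because the entire construction is manifestly functorial in the affinoid, the local maps are compatible under restriction along $U_{i+1} \supseteq U_i$ and glue to a global $R^{\rig} \htimes \B_{\rig,F}^+$-linear homomorphism $\NN(M)^{\rig} \to \Drig^{\dagger}(M^{\rig})$. The main obstacle I anticipate is the ring-theoretic bookkeeping: in particular, verifying that the chain $\widetilde{\A}^+ \hookrightarrow \widetilde{\B}_{\rig}^{\dagger,s}$ remains well-behaved after completed tensor product with $R$ and restriction to $A_i$, and that the Berger--Colmez identification of overconvergent elements applies uniformly to the formal family $M^{\rig}$ on each affinoid piece of the quasi-Stein exhaustion. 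Once these compatibilities are checked, the gluing step is routine.
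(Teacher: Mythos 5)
Your overall architecture (work affinoid-by-affinoid on an exhaustion of $\Spf(R)^{\rig}$, use $\NN(M)\subseteq\D^+(M)\subseteq(R\htimes\A^+)\otimes_RM$ to map into an ambient period module, then land in $\Drig^{\dagger,s}$) matches the paper's setup, but the pivotal step is wrong as stated. You claim that Berger--Colmez identifies the $H_F$-fixed submodule of $(A_i\htimes\widetilde{\B}_{\rig}^{\dagger,s})\otimes_{A_i}M_{A_i}$ with $(A_i\htimes\B_{\rig,F}^{\dagger,s})\otimes_{A_i\htimes\B_F^{\dagger,s}}\D_F^{\dagger,s}(M^{\rig}|_{U_i})$. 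This fails already for a single representation: $(\widetilde{\B}_{\rig}^{\dagger,s})^{H_F}$ is the \emph{perfect} ring $\widetilde{\B}_{\rig,F}^{\dagger,s}$, which strictly contains $\B_{\rig,F}^{\dagger,s}$, so the full module of $H_F$-invariants is the ``tilde'' $(\varphi,\Gamma)$-module, strictly larger than $\Drig^{\dagger,s}$. More importantly, in families Berger--Colmez do not construct $\D^{\dagger,s}$ as a module of $H_F$-invariants at all: they produce, via Tate--Sen descent, a specific finite projective submodule that generates the ambient module after extending scalars to $\widetilde{\B}^{\dagger,s}$. Whether that submodule exhausts the invariants (even over the imperfect subring) is precisely the kind of statement that is not available in families, so your map from $\NN(M)$ only visibly lands in the $H_F$-invariants, not in $\Drig^{\dagger,s}(M^{\rig}|_{U_i})$.

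The paper closes exactly this gap by a specialization argument that you should adopt: inside $(A_m\htimes\widetilde{\B}_{\rig}^{\dagger,s})\otimes_{A_m}M_{A_m}$ one forms the $A_m\htimes\B_{\rig,F}^{\dagger,s}$-submodule generated by the images of both $\NN(M)^{\rig}$ and $\Drig^{\dagger,s}(M_{A_m})$; the quotient of this sum by $\Drig^{\dagger,s}(M_{A_m})$ is a coherent sheaf $Q$ on the relevant quasi-Stein space, and since the formations of $\NN(M)^{\rig}$ and $\Drig^{\dagger,s}$ commute with specialization at points of $\Sp(A_m)$ and the classical containment $\NN(V)\subset\Drig^{\dagger}(V)$ holds fiberwise, $Q$ vanishes at every point and hence vanishes. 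That yields $\NN(M)^{\rig}|_{U_i}\subseteq\Drig^{\dagger,s}(M_{A_m})$ without ever needing to compute $H_F$-invariants of a completed tensor product in a family. Your gluing step at the end is fine once the local containment is established this way.
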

\begin{proof}
Recall that the Wach module $\NN(M)$ induces a finite projective $R^\rig\htimes\B_{\rig,F}^+$-module $\NN(M)^{\rig}$ by extension of scalars (we implicitly use the Mittag--Leffler condition for modules over Fr\'echet-Stein algebras to commute the tensor products and inverse limit).  Equivalently, it induces a vector bundle on $\Spf(R\htimes\A_{F}^+)^{\rig}$.

Let $\cup_n \Sp(A_m)$ be a rising admissible covering of $\Spf(R)^{\rig}$, so that $ R^{\rig}=\cap_m A_m$, and let $A_m^\circ$ denote the ring of power-bounded elements of $A_m$.  Then for any $m$ and any $s$, the extension of scalars $(A_m\htimes\B_{\rig,F}^{\dagger,s})\otimes_{R^\rig\htimes\B_{\rig,F}^+}\NN(M)^{\rig}$ remains finite projective.  Moreover, since $\NN(M)\subset(R\htimes\A^+)\otimes_RM$, we have a natural map
\[	(A_m\htimes\B_{\rig,F}^{\dagger,s})\otimes_{R^\rig\htimes\B_{\rig,F}^+}\NN(M)^{\rig}\rightarrow (A_m\htimes\widetilde{\B}_{\rig}^{\dagger,s})\otimes_{A_m}(A_m\otimes_RM)	\]

On the other hand, the family of Galois representations $M_{A_m}:=A_m\otimes_RM$ over $A_m$ induces a family of $(\varphi,\Gamma)$-modules $\Drig^{\dagger,s}(M_{A_m})$ over $A_m\htimes\B_{\rig,F}^{\dagger,s}$ for $s\gg0$.  This family of $(\varphi,\Gamma)$-modules is finite projective, and $\Drig^{\dagger,s}(M_{A_m})$ is naturally a sub-$A_m\htimes\B_{\rig,F}^{\dagger,s}$-module of $(A_m\htimes\widetilde{\B}_{\rig}^{\dagger,s})\otimes_{A_m}(A_m\otimes_RM)$.  Equivalently, $\Drig^{\dagger,s}(M_{A_m})$ is a vector bundle on the product $X_s$ of a half-open annulus with $\Sp(A_m)$.

We may therefore consider the sub-$A_m\htimes\B_{\rig,F}^{\dagger,s}$-module \linebreak $\Drig^{\dagger,s}(M_{A_m})+\left((A_m\htimes\B_{\rig,F}^{\dagger,s})\otimes_{R^\rig\htimes\B_{\rig,F}^+}\NN(M)^{\rig}\right)$ of $(A_m\htimes\widetilde{\B}_{\rig}^{\dagger,s})\otimes_{A_m}(A_m\otimes_RM)$ generated by the images of $\NN(M)^{\rig}$ and $\Drig^{\dagger,s}(A_m\otimes_RM)$.  For ease of notation, we write it as $\Drig^{\dagger,s}(M_{A_m})+\NN(M)_{A_m}^{\rig}$.  Then $\Drig^{\dagger,s}(M_{A_m})+\NN(M)_{A_m}^{\rig}$ remains finite over $A_m\htimes\B_{\rig,F}^{\dagger,s}$, and we have an exact sequence of coherent sheaves on the quasi-Stein space $X_s$
\[	\Drig^{\dagger,s}(M_{A_m})\rightarrow \Drig^{\dagger,s}(M_{A_m})+\NN(M)_{A_m}^{\rig}\rightarrow Q\rightarrow 0	\]
for some quotient sheaf $Q$.  But the formations of $\Drig^{\dagger,s}(M_{A_m})$ and $\NN(M)^{\rig}$ commute with specialization on $A_m$, and in the classical case the Wach module is contained in the $(\varphi,\Gamma)$-module.  Thus, $Q$ vanishes at every point of $X_s$, so is trivial, and $\Drig^{\dagger,s}(M_{A_m})\rightarrow \Drig^{\dagger,s}(M_{A_m})+\NN(M)_{A_m}^{\rig}$ is surjective.
\end{proof}

\begin{lemma}
If $M$ is positive crystalline, then the image of $\NN(M)^{\rig}$ inside $\Drig^{\dagger}(M^{\rig})$ generates it over $R^\rig\htimes\B_{\rig,F}^{\dagger}$.
\end{lemma}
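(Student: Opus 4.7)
The plan is to reduce to a fibrewise statement over the base and then invoke Berger's classical generation result. First, I would exhaust $\Spf(R)^{\rig}$ by affinoids $U_m = \Sp(A_m)$ and, for each $m$ and $s \gg 0$ depending on $m$, verify surjectivity of the natural map of coherent sheaves
\[
f_m \colon (A_m \htimes \B_{\rig,F}^{\dagger,s}) \otimes_{A_m \htimes \B_{\rig,F}^+} \bigl(\NN(M)^{\rig}|_{U_m}\bigr) \longrightarrow \Drig^{\dagger,s}(M_{A_m})
\]
of coherent sheaves on the quasi-Stein space $U_m \times C^{(0,s]}$. Denote its cokernel by $Q$; this is a coherent sheaf on that quasi-Stein space, and the goal becomes to show $Q = 0$.

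Next, I would check that $Q$ vanishes fibrewise at classical points $x \in U_m$. Both sides of $f_m$ commute with specialization at such points: on the right this is the Berger--Colmez base change theorem for families of $(\varphi,\Gamma)$-modules over affinoids; on the left it follows from item (7) of Theorem~\ref{thm:wach} together with flat base change along $R^{\rig}\to A_m$, since the obstruction $p^{\alpha(h)}$ in that statement is annihilated upon inverting $p$ and therefore does not appear in the rigid-analytic setting. Consequently, if $x$ has finite residue field $E'/\Qp$, then the fibre of $f_m$ at $x$ is canonically identified with the classical map
\[
\NN(M_{E'}) \otimes_{\B_F^+} \B_{\rig,F}^{\dagger,s} \longrightarrow \Drig^{\dagger,s}(M_{E'})
\]
attached to the positive crystalline $G_F$-representation $M_{E'}$. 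Berger's results (cf.\ \cite[\S II.2]{berger04}) say precisely that the classical Wach module generates the $(\varphi,\Gamma)$-module over $\B_{\rig,F}^{\dagger}$, so this fibre map is surjective and the fibre of $Q$ at $\{x\} \times C^{(0,s]}$ vanishes.

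To globalize the fibrewise vanishing on $U_m \times C^{(0,s]}$, I would exhaust the half-open annulus by closed affinoid sub-annuli $C^{[s',s]}$ for $s' \nearrow \infty$; on each affinoid $U_m \times C^{[s',s]}$ the sheaf $Q$ becomes a finitely generated module over a $\Qp$-affinoid algebra, which is Jacobson. The fibrewise vanishing above, combined with Nakayama's lemma and Zariski-density of the classical points of $U_m$, then forces $Q$ to vanish on each such affinoid, hence globally on $U_m \times C^{(0,s]}$; running over $m$ yields the claim.

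The main obstacle will be the specialization-compatibility of $\NN(M)^{\rig}$: Theorem~\ref{thm:wach}(7) only furnishes a base change map with cokernel killed by $p^{\alpha(h)}$, so one has to verify that, after passing to the rigid-analytic side and inverting $p$, this cokernel genuinely vanishes, so that the fibre at $x$ is really Berger's classical Wach module of $M_{E'}$ extended to $\B_{\rig,F}^+$-coefficients (rather than a $p$-adically distorted substitute). Once this identification is secured, everything else is a formal consequence of the classical theory together with a standard rigid-analytic Nakayama argument.
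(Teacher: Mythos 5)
Your proposal is correct and follows essentially the same strategy as the paper: reduce to a morphism of coherent sheaves over an affinoid $\times$ quasi-Stein annulus, show the cokernel vanishes fibrewise at classical points using base-change compatibility of both sides plus Berger's classical result, and conclude vanishing of the cokernel. The paper's own proof is terse at exactly the spots you elaborate (the base-change compatibility of $\NN(M)^{\rig}$, the Jacobson/Nakayama globalization over the annuli), and the elaborations are sound; in particular, the potential concern you flag about Theorem~\ref{thm:wach}(7) only controlling the cokernel up to $p^{\alpha(h)}$ is resolved as you say, and in fact the paper already records that $\NN(M)^{\mathrm{total}}[1/p]$ is projective and commutes with base change, so the fibre identification is even cleaner than needed.
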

\begin{proof}
Let $U=\Sp(A)$ be an affinoid subdomain of $\Spf(R)^{\rig}$.  Then there is some $s\gg0$ such that $\Drig^\dagger(M^{\rig}|_U)$ is generated by $\Drig^{\dagger,s}(M^{\rig}|_U)$.
After extending scalars on $\NN(M)^{\rig}$ from $R^\rig\htimes\B_{\rig,F}^+$ to $A\htimes\B_{\rig,F}^{\dagger,s}$, we have a homomorphism of coherent sheaves of modules on the quasi-Stein space attached to $A\htimes\B_{\rig,F}^{\dagger,s}$.  Its cokernel is likewise a coherent sheaf.  But by specializing at points of $U$ and appealing to the classical case, we conclude that the cokernel must vanish.
\end{proof}

\begin{corollary}
The natural map $\NN(M)^{\rig}\rightarrow \D_{\rig}^{\dagger,s}(M^{\rig})$ is injective.
\end{corollary}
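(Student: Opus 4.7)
The plan is to combine the surjectivity statement of the preceding lemma with a rank count and a flatness argument. I will work locally on the admissible affinoid cover $\{U_m = \Sp(A_m)\}$ of $\Spf(R)^{\rig}$, since injectivity of a map of coherent sheaves can be checked on such a cover. Fix one affinoid $U = \Sp(A)$ in the cover, and choose $s \gg 0$ as in the previous lemma so that $\Drig^{\dagger}(M^{\rig}|_U)$ is generated over $A\htimes\B_{\rig,F}^{\dagger}$ by $\Drig^{\dagger,s}(M^{\rig}|_U)$.

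On such a $U$, the preceding lemma furnishes a surjection, obtained by extending scalars from $A\htimes\B_{\rig,F}^+$ to $A\htimes\B_{\rig,F}^{\dagger,s}$:
\[
(A\htimes\B_{\rig,F}^{\dagger,s})\otimes_{A\htimes\B_{\rig,F}^+}\NN(M)^{\rig}|_U \;\twoheadrightarrow\; \Drig^{\dagger,s}(M^{\rig}|_U).
\]
Both sides are finite projective of rank $d$: the left by Theorem~\ref{thm:wach}(2) together with the preservation of projectivity and rank under base change, the right by the Berger--Colmez theory. A surjection between finite projective modules of equal rank over a commutative ring is automatically an isomorphism (locally it is given by a square matrix whose determinant generates the unit ideal), so the displayed arrow is an isomorphism.

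The original map then factors as
\[
\NN(M)^{\rig}|_U \;\longrightarrow\; (A\htimes\B_{\rig,F}^{\dagger,s})\otimes_{A\htimes\B_{\rig,F}^+}\NN(M)^{\rig}|_U \;\xrightarrow{\sim}\; \Drig^{\dagger,s}(M^{\rig}|_U),
\]
so it suffices to prove the first arrow is injective. Since $\NN(M)^{\rig}|_U$ is finite projective (hence flat) over $A\htimes\B_{\rig,F}^+$, this reduces to injectivity of the ring map $A\htimes\B_{\rig,F}^+\to A\htimes\B_{\rig,F}^{\dagger,s}$. Geometrically, the source consists of analytic functions on $\Sp(A)$ times the open unit disc in $\pi$, and the target of analytic functions on $\Sp(A)$ times the half-open annulus $\{0<v_p(\pi)\le 1/s\}$; since the annulus is a Zariski-dense admissible open in the open disc, restriction of functions is injective. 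Gluing over the cover then concludes the argument.

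The main technical obstacle I anticipate is the verification that $\NN(M)^{\rig}|_U$ is finite projective of rank $d$ over $A\htimes\B_{\rig,F}^+$, as this property has to be transported through the completed tensor products and the rigidification passage from $R$ to $R^{\rig}$; once this is pinned down, the remaining ingredients — rank-comparison upgrading a surjection to an isomorphism, flat base change preserving injectivity, and gluing over the admissible affinoid cover — are essentially formal.
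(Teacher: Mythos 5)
Your argument is correct, but it proves the statement by a genuinely different mechanism than the paper. The paper's proof writes down the exact sequence $0\rightarrow K\rightarrow (A\htimes\B_{\rig,F}^{\dagger,s})\otimes_{R^\rig\htimes\B_{\rig,F}^+}\NN(M|_U)^{\rig}\rightarrow \Drig^{\dagger,s}(M^{\rig}|_U)\rightarrow 0$ and kills $K$ by specializing at the points of $\Sp(A)$: projectivity of $\Drig^{\dagger,s}(M^{\rig}|_U)$ keeps the sequence exact under specialization, and the classical fact that the Wach module is \emph{contained} in the $(\varphi,\Gamma)$-module forces $K$ to vanish fiberwise, hence identically. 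You instead never specialize: you take the surjectivity already established in the preceding lemma, observe that source and target of the base-changed map are finite projective of the same constant rank $d$, and conclude it is an isomorphism by the split-kernel/rank argument; the residual injectivity of $\NN(M)^{\rig}|_U\rightarrow (A\htimes\B_{\rig,F}^{\dagger,s})\otimes_{A\htimes\B_{\rig,F}^+}\NN(M)^{\rig}|_U$ then follows from flatness of $\NN(M)^{\rig}|_U$ and injectivity of the restriction map from the open disc to the annulus. What your route buys is that it makes explicit a step the paper's proof leaves implicit — vanishing of $K$ only gives injectivity \emph{after} extending scalars to $\B_{\rig,F}^{\dagger,s}$, and one still needs your last reduction to get injectivity of the original map out of it; what the paper's route buys is that it does not need the rank-$d$ projectivity of the base-changed Wach module as an input, only projectivity of the target. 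One small caveat: your rank count silently uses that the surjection of the preceding lemma is already available at the level of $\B_{\rig,F}^{\dagger,s}$ (rather than only after passing to $\B_{\rig,F}^{\dagger}$); this is indeed what the proof of that lemma establishes, but the lemma's statement as written only asserts generation over $R^\rig\htimes\B_{\rig,F}^{\dagger}$, so you should cite the proof rather than the statement.
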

\begin{proof}
Let $U=\Sp(A)\subset (\Spf R)^{\rig}$ be an affinoid subdomain.  Then there is some $s\gg 0$ such that the $(\varphi,\Gamma)$-module $\D^{\dagger,s}(M^{\rig}|_U)$ is defined, and we have an exact sequence
\[	0\rightarrow K\rightarrow (A\htimes\B_{\rig,F}^{\dagger,s})\otimes_{R^\rig\htimes\B_{\rig,F}^+}\NN(M|_U)^{\rig}\rightarrow \Drig^{\dagger,s}(M^{\rig}|_U)\rightarrow 0	\]
of coherent sheaves on the quasi-Stein space attached to $A\htimes\B_{\rig,F}^{\dagger,s}$.  Since $\Drig^{\dagger,s}(M^{\rig}|_U)$ is projective, this exact sequence remains exact after specializing at points of $\Sp(A)$.  But in the classical case the Wach module is contained in the $(\varphi,\Gamma)$-module, so $K$ vanishes at every point of $\Sp(A)$.  It follows that $K$ vanishes.
\end{proof}

Note that $\Drig^{\dagger,s}(M^{\rig}|_U)$ is projective as an $A\htimes\B_{\rig,\Qp}^{\dagger,s}$-module; we do not know whether it is $A$-locally free.  However, it follows from the construction of families of $(\varphi,\Gamma)$-modules that there is some finite extension $L/F$ (depending on $U$) such that $\Drig^{\dagger,s}(M^{\rig}|_{U,G_L})$ is a free $A\htimes\B_{\rig,L}^{\dagger,s}$-module of rank $n$.

\begin{proposition}
If $M$ is positive crystalline, then $\D_{\cris}(M^{\rig})\subset \NN(M)^{\rig}$ inside $\Drig^\dagger(M^{\rig})$.
\end{proposition}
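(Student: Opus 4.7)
The plan is to reduce pointwise to Berger's classical result (the first part of Proposition~II.2.1 of \cite{berger04} stated above), using the same template as the two previous lemmas of this subsection.

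Fix an affinoid subdomain $U = \Sp(A) \subset \Spf(R)^\rig$ and choose $s \gg 0$ so that $\Drig^{\dagger,s}(M^\rig|_U)$ is defined. By the corollary just established, base change gives an injection of coherent sheaves on the quasi-Stein space $\Sp(A) \times C^{(0,s]}$,
\[
N_{U,s} := (A \htimes \B_{\rig,F}^{\dagger,s}) \otimes_{R^\rig \htimes \B_{\rig,F}^+} \NN(M)^\rig|_U \hookrightarrow \Drig^{\dagger,s}(M^\rig|_U),
\]
with cokernel $Q_{U,s}$. Since $\D_{\cris}(M^\rig) = \Drig^\dagger(M^\rig)^\Gamma$ by \cite[Theorem 4.2.9]{bellovin13} (recall $M$ is positive), any element $n \in \D_{\cris}(M^\rig|_U)$ sits inside $\Drig^{\dagger,s}(M^\rig|_U)$; let $\bar n$ denote its image in $Q_{U,s}$. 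It suffices to show $\bar n = 0$.

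For each classical point $x \in \Sp(A)$, the fiber $V_x$ is a crystalline $G_F$-representation over the finite extension $k(x)/\Qp$ with Hodge--Tate weights in $[-h,0]$. The three constructions $\NN(M)^\rig$, $\Drig^\dagger(M^\rig)$, and $\D_{\cris}(M^\rig)$ are each compatible with specialization at $x$: for $\NN^\rig$ this follows from Theorem~\ref{thm:wach}(7), since the cokernel killed by $p^{\alpha(h)}$ vanishes after inverting $p$; for $\Drig^\dagger$ it is a standard property of the families of \cite{berger-colmez}; for $\D_{\cris}$ it follows from the description as $\Gamma$-invariants of $\Drig^\dagger$. Specializing the short exact sequence at the slice $\{x\} \times C^{(0,s]}$ therefore recovers the classical sequence $0 \to \NN(V_x)^\rig \to \Drig^{\dagger,s}(V_x) \to Q_x \to 0$, and Berger's inclusion $\Dcris(V_x) \subset \B_{\rig,F}^+ \otimes_{\B_F^+} \NN(V_x) = \NN(V_x)^\rig$ forces the specialization of $\bar n$ along this slice to vanish.

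Hence $\bar n$ vanishes on every slice over a classical point of $\Sp(A)$. The coherent subsheaf of $Q_{U,s}$ generated by $\bar n$ therefore has Zariski-closed support in $\Sp(A) \times C^{(0,s]}$ missing a Zariski-dense subset of classical points (using that $R^\rig$ has reduced generic fiber so that classical points of $\Sp(A)$ are Zariski-dense, and similarly for $C^{(0,s]}$). So the support is empty and $\bar n = 0$; letting $U$ range over an admissible cover yields the desired inclusion. The main obstacle is verifying that each of the three constructions commutes with specialization at classical points after inverting $p$, and confirming that such classical points are dense enough in the quasi-Stein space to detect vanishing of a coherent sheaf---these are the same technical points underlying the preceding two lemmas.
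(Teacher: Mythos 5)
There is a genuine gap, and it is structural: the cokernel you form does not detect the statement you need to prove. Your $Q_{U,s}$ is the cokernel of $(A\htimes\B_{\rig,F}^{\dagger,s})\otimes_{R^\rig\htimes\B_{\rig,F}^+}\NN(M)^{\rig}|_U\hookrightarrow\Drig^{\dagger,s}(M^{\rig}|_U)$, so showing $\bar n=0$ only places $n$ in the $\B_{\rig,F}^{\dagger,s}$-span of $\NN(M)^{\rig}$ --- and by the two preceding lemmas that span is already (essentially all of) $\Drig^{\dagger}(M^{\rig})$, so the conclusion is nearly vacuous. The actual content of the proposition is that $\Dcris(M^{\rig})$ lies in $\NN(M)^{\rig}$ as an $R^{\rig}\htimes\B_{\rig,F}^+$-module, i.e.\ that the relevant sections extend from the annulus $0<v_p(\pi)\le 1/s$ over the whole open unit disk with no negative powers of $\pi$. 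Since $\NN(M)^{\rig}$ and $\Drig^{\dagger,s}$ live over different base rings ($\B_{\rig,F}^+$ versus $\B_{\rig,F}^{\dagger,s}$), there is no single coherent cokernel on $\Sp(A)\times C^{(0,s]}$ that measures membership in $\NN(M)^{\rig}$ itself. This is why the paper instead trivializes $\NN(M)^{\rig}$ and $\Dcris(M^{\rig})$ locally, writes the change-of-basis matrix $G=(g_{ij})$ with $g_{ij}=\sum_{n\in\Z}a_n\pi^n$, $a_n\in A_m$, and shows directly that $a_n=0$ for $n<0$ because $a_n(x)=0$ at every point $x$ of the reduced affinoid $\Sp(A_m)$ (Berger's classical II.2.1 applied fiberwise); a Laurent series with only non-negative exponents converging on an annulus $p^{-1/s}\le|\pi|<1$ then automatically converges on all of $|\pi|<1$.

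A secondary problem is the step ``$\bar n$ vanishes in every fiber $Q_{U,s}\otimes_A k(x)$, hence $\bar n=0$.'' For a \emph{module} this is fine (Nakayama: vanishing fibers force vanishing localizations), and that is exactly how the preceding surjectivity lemma is proved; but for an \emph{element} of a coherent sheaf it fails in general: $T\in\Qp\langle T\rangle/(T^2)$ is nonzero yet lies in $\mathfrak{m}Q$ for every maximal ideal $\mathfrak{m}$. The subsheaf generated by $\bar n$ can be supported precisely at classical points even though $\bar n$ dies in each fiber, so its support need not miss a dense set. The paper's matrix argument avoids this by reducing to the vanishing of honest functions $a_n\in A_m$ on a reduced affinoid, where pointwise vanishing does imply vanishing.
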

\begin{proof}
Since $\Dcris(M^{\rig})$ is a finite projective module over $R^{\rig}\otimes_{\Qp}F$ and $\NN(M)[1/p]$ can be trivialized locally on $\Spec R[1/p]$ by Lemma~\ref{lemma:wach-loc-free-mixed}, we can find an affinoid covering $\{U_m=\Sp(A_m)\}_{m\in I}$ of $\Spf(R)^{\rig}$ such that $\Dcris(M^{\rig}|_{U_m})$ and $\NN(M)^{\rig}|_{U_m}$ are free for each $i$.  We may also assume that $\D_{\cris}(M^{\rig})$ is free and $\Drig^\dagger(M^{\rig})$ is generated by $\Drig^{\dagger,s}(M^{\rig})$ for some $s\gg0$.

Now we choose a basis $\{\vec{e}_1,\ldots,\vec{e}_r\}$ of $\NN(M)^{\rig}|_{U_m}$ and a basis $\{\vec{f}_1,\ldots,\vec{f}_r\}$ of $\D_{\cris}(M^{\rig}|_{U_m})$.  Let $G=(g_{ij})\in \Mat_{r\times r}(\B_{\rig,F}^{\dagger,s}\widehat\otimes  A_m)$ be a matrix carrying $\{\vec{e}_1,\ldots,\vec{e}_r\}$ to $\{\vec{f}_1,\ldots,\vec{f}_r\}$.  We wish to show that $G$ actually has entries in $\B_{\rig,F}^+\widehat\otimes A_m$.

We may write $g_{ij}(\pi)$ in the form $g_{ij}(\pi)=\sum_{n\in\Z}a_n\pi^n$, where $a_n\in A_m$ for all $n$, and $g_{ij}$ converges for $p^{-1/s}\leq |\pi|<1$.  But by the proof of~\cite[II.2.1]{berger04}, if we specialize $G$ at any point $x\in\Sp(A_m)$, then $g_{ij}(\pi)(x)\in\B_{\rig,F}^+$.  In particular, if $n<0$, then $a_n(x)=0$ for all $x\in\Sp(A_m)$.  It follows that $g_{ij}$ actually converges for $|\pi|<1$.
\end{proof}

Now that we have an inclusion $\D_{\cris}(M^{\rig})\subset \NN(M)^{\rig}$, we wish to understand the cokernel of the induced map $(R^\rig\htimes\B_{\rig,F}^+)\otimes_{R^{\rig}}\D_{\cris}(M^{\rig})\rightarrow \NN(M)^{\rig}$.  When $R=\Z_p$, Berger showed~\cite[\textsection III]{berger04} that the quotient is isomorphic to $\oplus_{i=1}^n\B_{\rig,F}^+/\lambda_i$, where $\lambda_i=(t/\pi)^{r_i}$.  However, as such a decomposition is non-canonical, there is no reason to expect it to vary well in families.

However, we can prove the following weaker results:
\begin{proposition}\label{quot-nm-dcris}
Let $M$ be a family of positive crystalline representations of $G_F$ over $R$.  Then:
\begin{enumerate}
\item	Let $Q$ denote the cokernel of the map $\mathrm{inc}_M:(R^\rig\htimes\B_{\rig,F}^+)\otimes_{ R^{\rig}}\D_{\cris}(M^{\rig})\rightarrow \NN(M)^{\rig}$.  Then $Q$ is annihilated by $\lambda_d$.  In particular, $Q$ is $t$-torsion.
\item	The map $\mathrm{inc}_M$ is injective.
\item	The map $\mathrm{inc}_M$ induces an isomorphism of line bundles
\[	\left(\frac{\pi}{t}\right)^{r_1+\cdots+r_d}\cdot\det(\mathrm{inc}_M):(R^\rig\htimes\B_{\rig,F}^+)\otimes_{R^{\rig}}\Det_{R^{\rig}}\D_{\cris}(M^{\rig})\xrightarrow{\sim} \Det_{R^{\rig}\htimes\B_{\rig,F}^+}\NN(M)^{\rig}	\]
\end{enumerate}
\end{proposition}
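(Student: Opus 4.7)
The strategy is to reduce all three assertions to the classical results of Berger recalled above, by working on an affinoid cover of $\Spf(R)^{\rig}$ and arguing fibre-by-fibre, using coherent-sheaf techniques to globalise. First, exhaust $\Spf(R)^{\rig}$ by affinoids $U_m=\Sp(A_m)$ chosen small enough that (i) $\Dcris(M^{\rig})|_{U_m}$ is free over $A_m\otimes_{\Qp}F$, (ii) $\NN(M)^{\rig}|_{U_m}$ is free over $A_m\htimes\B_{\rig,F}^+$ (possible by Lemma~\ref{lemma:wach-loc-free-mixed}), and (iii) the Hodge--Tate weights $r_1\le\cdots\le r_d$ are constant on $U_m$ (they are locally constant in a crystalline family). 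Choosing bases, the map $\mathrm{inc}_M|_{U_m}$ is represented by a matrix $G_m\in\Mat_{d\times d}(A_m\htimes\B_{\rig,F}^+)$.

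I would first establish (3). By the classical theorem quoted above, each specialisation $G_m|_x$ at a point $x\in\Sp(A_m)$ admits a Smith-normal-form decomposition $G_m|_x=U_x\cdot\mathrm{diag}(\lambda_1,\ldots,\lambda_d)\cdot V_x$ with $U_x,V_x\in\GL_d(\B_{\rig,F}^+)$, so $\det(G_m|_x)$ and $(t/\pi)^{r_1+\cdots+r_d}$ generate the same ideal. Viewing $(\pi/t)^{\sum r_i}\cdot\det G_m$ as a section of the structure sheaf on the quasi-Stein product of $U_m$ with the open unit disk over $F$ (whose global sections are $A_m\htimes\B_{\rig,F}^+$), it is nowhere vanishing; but on a quasi-Stein space a nowhere-vanishing function is a unit. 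Hence $(\pi/t)^{\sum r_i}\cdot\det G_m$ is a unit, proving (3) locally on each $U_m$ and therefore globally by the functoriality of the constructions of $\NN(M)$ and $\Dcris(M^{\rig})$. Assertion (2) is an immediate consequence: a determinant which is a non-zero-divisor gives an injective map, and injectivity is a local property.

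For (1) I would show that $\lambda_d G_m^{-1}\in\Mat_{d\times d}(A_m\htimes\B_{\rig,F}^+)$, which is equivalent to $\lambda_d$ killing the cokernel. Fibrewise, the Smith-normal-form decomposition gives $\lambda_d G_m^{-1}|_x=V_x^{-1}\,\mathrm{diag}(\lambda_d/\lambda_1,\ldots,\lambda_d/\lambda_{d-1},1)\,U_x^{-1}\in\Mat_{d\times d}(\B_{\rig,F}^+)$, because $\lambda_i\mid\lambda_d$ for all $i$. A priori the entries of $\lambda_d G_m^{-1}$ lie in $(A_m\htimes\B_{\rig,F}^+)[1/\det G_m]$, with possible denominators a power of $(t/\pi)^{\sum r_i}$. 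A coherent-sheaf argument on the same quasi-Stein space, combined with the fibrewise integrality just established, then promotes this to global integrality of $\lambda_d G_m^{-1}$.

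The main obstacle is the non-canonical nature of the classical isomorphism $\mathrm{coker}(\mathrm{inc}_M|_x)\simeq\bigoplus_{i=1}^d\B_{\rig,F}^+/(\lambda_i)$, which prevents a direct globalisation of Berger's decomposition to families; all three parts therefore proceed via a fibrewise application of the classical statement followed by a quasi-Stein/coherent-sheaf argument to pass from pointwise to global conclusions.
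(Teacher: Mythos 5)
Your proposal is correct and takes essentially the same route as the paper: each part is reduced to Berger's classical results by specializing at the points of an affinoid cover of $\Spf(R)^{\rig}$ and then globalized by a coherent-sheaf argument on the associated quasi-Stein spaces. The only differences are cosmetic — you prove (3) first and deduce (2) from the adjugate identity, while the paper deduces (2) from (1) using that the source is $t$-torsion-free and proves (3) by reducing to the rank-one (determinant) case before specializing — but the underlying mechanism is identical.
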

\begin{proof}
For the first part, we consider the corresponding morphism of coherent sheaves on the rigid analytic space corresponding to $R^\rig\htimes\B_{\rig,F}^+$.  As the formations of $\NN(M)^{\rig}$ and $\Dcris(M^{\rig})$ are compatible with specialization on $ R^{\rig}$, Berger's result implies that $\lambda_dQ=0$.

We turn to the second part.  Since $(R^\rig\htimes\B_{\rig,F}^+)\otimes_{R^{\rig}}\D_{\cris}(M^{\rig})$ and $\NN(M)^{\rig}$ are finite projective modules of the same rank, the map is an isomorphism after inverting $t$.  Thus, the kernel is also $t$-torsion.  But $(R^\rig\htimes\B_{\rig,F}^+)\otimes_{R^{\rig}}\D_{\cris}(M^{\rig})$ is a projective module over $R^\rig\htimes\B_{\rig,F}^+$ so has no $t$-torsion.

For the last part, it suffices to consider the case where $M$ is rank $1$.  But then compatibility of $\Dcris(M^{\rig})$ and $\NN(M)^{\rig}$ with specialization on $ R^{\rig}$, combined with Berger's result, yields the desired result.
\end{proof}

\begin{corollary}\label{cor:det-frob-wach}
Let $M$ be a family of positive crystalline representations of $G_F$ over $R$.  Then
\[	q^{-(r_1+\cdots+r_d)}\cdot\det\left(\varphi^\ast\NN(M)[1/p]\rightarrow\NN(M)[1/p]\right)	\]
is an isomorphism of line bundles over $(R\htimes \A_F^+)[1/p]$.
\end{corollary}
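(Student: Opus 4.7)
Proof plan. My strategy is to deduce this corollary from Proposition~\ref{quot-nm-dcris}(3) by exploiting the $\varphi$-equivariance of the inclusion $\mathrm{inc}_M$, combined with the fact that Frobenius is invertible on $\D_{\cris}$ of a crystalline family.

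The key observation is that $\mathrm{inc}_M\colon (R^\rig\htimes\B_{\rig,F}^+)\otimes_{R^\rig}\D_{\cris}(M^\rig)\hookrightarrow\NN(M)^\rig$ is $\varphi$-equivariant, since both modules arise as sub-$\varphi$-modules of $\Drig^{\dagger}(M^\rig)$. Consequently $\det(\mathrm{inc}_M)$ is a $\varphi$-equivariant map of line bundles over $R^\rig\htimes\B_{\rig,F}^+$, while the isomorphism $\phi=(\pi/t)^{r_1+\cdots+r_d}\det(\mathrm{inc}_M)$ of Proposition~\ref{quot-nm-dcris}(3) is \emph{not} $\varphi$-equivariant, being twisted by the non-$\varphi$-invariant unit $(\pi/t)^{r_1+\cdots+r_d}\in\B_{\rig,F}^+$. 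Writing $\tilde\varphi_{\NN^\rig}=\det(\mathrm{inc}_M)\circ\tilde\varphi_{\D_{\cris}}\circ(\varphi^\ast\det(\mathrm{inc}_M))^{-1}$, substituting $\det(\mathrm{inc}_M)=(t/\pi)^{r_1+\cdots+r_d}\cdot(\text{unit})\cdot\phi$, and using $\varphi(\pi)=q\pi$, $\varphi(t)=pt$, so that $\varphi((t/\pi)^{r_1+\cdots+r_d})=(pt/q\pi)^{r_1+\cdots+r_d}$, the scalars collect to give the identification (via $\phi$)
\[
\tilde\varphi_{\NN(M)^\rig}\;=\;(q/p)^{r_1+\cdots+r_d}\cdot\phi_\ast\!\left(\tilde\varphi_{\D_{\cris}(M^\rig)}\right)
\]
of maps of line bundles over $R^\rig\htimes\B_{\rig,F}^+$.

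Since $M$ is crystalline, $\tilde\varphi$ is an isomorphism on $\D_{\cris}(M^\rig)$, so its determinant is a unit in $R^\rig$; moreover, as explained in Section~\ref{sec:fibre}, $\D_{\cris}(M)$ is already defined over $R$ without inverting $p$, whence this unit comes from an element of $R[1/p]$. Combined with the fact that $p$ is a unit after inverting $p$, we conclude
\[
q^{-(r_1+\cdots+r_d)}\tilde\varphi_{\NN(M)^\rig}\;=\;p^{-(r_1+\cdots+r_d)}\cdot\phi_\ast\!\left(\tilde\varphi_{\D_{\cris}(M)}\right)
\]
is a nowhere-vanishing section of the line bundle $\det\NN(M)^\rig\otimes(\varphi^\ast\det\NN(M)^\rig)^\vee$, i.e.\ an isomorphism over $R^\rig\htimes\B_{\rig,F}^+$.

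The remaining step is to descend this statement from $R^\rig\htimes\B_{\rig,F}^+$ to $(R\htimes\A_F^+)[1/p]$. Using that $\NN(M)[1/p]$ is locally free over $(R\htimes\A_F^+)[1/p]$ by Theorem~\ref{thm:wach}(2), we may trivialize the determinant line bundles locally and reduce to an equation of elements. The unit factor $p^{-(r_1+\cdots+r_d)}\det(\tilde\varphi_{\D_{\cris}(M)})$ already lies in $R[1/p]\subseteq(R\htimes\A_F^+)[1/p]$, so it suffices to transfer the equation along the injective base-change map $(R\htimes\A_F^+)[1/p]\hookrightarrow R^\rig\htimes\B_{\rig,F}^+$; the unit property then descends because unit-ness of an element in $(R\htimes\A_F^+)[1/p]$ can be checked after any flat base change whose image hits all maximal spectra involved, in particular the rigid generic fibre. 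The principal obstacle in executing this plan is precisely this last descent: one must verify carefully that the determinant identifications are compatible with base change and that the unit factor really descends, which involves tracking how $\phi$ twists Frobenius and invoking standard properties of coherent sheaves on the formal/rigid generic fibre.
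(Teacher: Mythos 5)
Your proposal is correct and follows essentially the same route as the paper: the $\varphi$-equivariant commutative square relating $\mathrm{inc}_M$ and $\varphi^\ast\mathrm{inc}_M$, the normalization from Proposition~\ref{quot-nm-dcris}(3) (the paper phrases the bookkeeping via $\pi/t=(\varphi(\pi)/t)\cdot q^{-1}$ rather than via $\varphi(t)=pt$, but the unit $(q/p)^{r_1+\cdots+r_d}$ you obtain matches), the bijectivity of $\varphi$ on $\D_{\cris}(M^{\rig})$, and finally descent from $R^{\rig}\htimes\B_{\rig,F}^+$ to $(R\htimes\A_F^+)[1/p]$. The paper makes your last descent step precise by comparing completed stalks at closed points via \cite[Lemma 7.1.9]{dejong}, which is exactly the justification your sketch is gesturing at.
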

\begin{proof}
Consider the commutative diagram
\[
\xymatrixcolsep{5pc}\xymatrix{
(R^{\rig}\htimes\B_{\rig,F}^+)\otimes_{R^{\rig}}\Dcris(M^{\rig}) \ar[r]^-{\mathrm{inc}_M}& \NN(M)^{\rig}	\\
\varphi^\ast\left((R^{\rig}\htimes\B_{\rig,F}^+)\otimes_{R^{\rig}}\Dcris(M^{\rig})\right) \ar[u]\ar[r]^-{\varphi^\ast\mathrm{inc}_M}& \varphi^\ast\NN(M)^{\rig} \ar[u]
}
\]
It implies that
\begin{multline*}
\det\left(\varphi^\ast\mathrm{inc}_M\right)\cdot\det\left(\varphi^\ast\NN(M)[1/p]\rightarrow\NN(M)[1/p]\right)=	\\\det\left(\varphi^\ast\left((R^{\rig}\htimes\B_{\rig,F}^+)\otimes_{R^{\rig}}\Dcris(M^{\rig})\right)\rightarrow (R^{\rig}\htimes\B_{\rig,F}^+)\otimes_{R^{\rig}}\Dcris(M^{\rig})\right)\cdot\det\left(\mathrm{inc}_M\right)
\end{multline*}

It is clear that $\varphi^\ast\left((R^{\rig}\htimes\B_{\rig,F}^+)\otimes_{R^{\rig}}\Dcris(M^{\rig})\right)\rightarrow (R^{\rig}\htimes\B_{\rig,F}^+)\otimes_{R^{\rig}}\Dcris(M^{\rig})$ is an isomorphism, because $\varphi$ acts bijectively on $\Dcris(M^{\rig})$.  Furthermore, Proposition~\ref{quot-nm-dcris} implies that
\[	(\pi/t)^{r_1+\cdots+r_d}\cdot\det\left(\mathrm{inc}_M\right)	\]
and
\[	(\varphi(\pi)/t)^{r_1+\cdots+r_d}\cdot\det\left(\varphi^\ast\mathrm{inc}_M\right)	\]
are isomorphisms of line bundles over $R^{\rig}\htimes\B_{\rig,F}^+$.  Since $\pi/t=(\varphi(\pi)/t)\cdot q^{-1}$, we see that
\[	q^{-(r_1+\cdots+r_d)}\cdot\det\left(\varphi^\ast\NN(M)[1/p]\rightarrow \NN(M)[1/p]\right)	\]
becomes an isomorphism after extending scalars to $R^{\rig}\htimes\B_{\rig,F}^+$.  But since the stalks at closed points of $\Spec(R\htimes\A_F^+)[1/p]$ are noetherian and their completions agree with the completed stalks of $(\Spf (R\htimes\A_F^+))^{\rig}$ by~\cite[Lemma 7.1.9]{dejong}, this implies the desired result.
\end{proof}

\begin{corollary}\label{cor:det-wach-rep}
Let $M$ be a family of positive crystalline representations of $G_F$ over $R$.  Then the inclusion
\[	\left((R\htimes\A^+)\otimes_{R\htimes\A_F^+}\NN(M)\right)\subset \left((R\htimes\A^+)\otimes_RM\right)	\]
induces an isomorphism of line bundles over $(R\htimes\A^+)[1/p]$
\[	\pi^{-(r_1+\cdots+r_d)}\cdot\det\left(((R\htimes\A^+)\otimes_{R\htimes\A_F^+}\NN(M))[1/p]\rightarrow \left((R\htimes\A^+)\otimes_RM\right)[1/p]\right)	\]
\end{corollary}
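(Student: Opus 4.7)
The proof follows the same template as Corollary~\ref{cor:det-frob-wach}: we establish the determinant identity first over a sufficiently large period ring where the crystalline comparison applies, and then descend to $(R\htimes\A^{+})[1/p]$.

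First, I would consider the commutative triangle obtained by tensoring the map $\mathrm{inc}_{M}$ from Proposition~\ref{quot-nm-dcris} and the inclusion $\iota_{M}: (R\htimes\A^{+})\otimes_{R\htimes\A_{F}^{+}}\NN(M)\hookrightarrow(R\htimes\A^{+})\otimes_{R}M$ from the statement up to a period ring $\mathscr{B}$ containing images of both $\A^{+}$ and $\B_{\rig,F}^{+}$, large enough to witness the crystalline comparison isomorphism $\mathscr{B}\otimes_{R^{\rig}}\Dcris(M^{\rig})\cong\mathscr{B}\otimes_{R^{\rig}}M^{\rig}$ --- for example, $\mathscr{B}_{X,\max}$ on $X = \Spf(R)^{\rig}$ (or a suitable variant). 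The triangle has the shape
\[
\xymatrix{
\mathscr{B}\otimes\Dcris(M^{\rig}) \ar[rr]^-{\sim}_-{\mathrm{cris.~comp.}} \ar[rd]_-{\mathrm{inc}_{M}\otimes\mathscr{B}} & & \mathscr{B}\otimes M^{\rig} \\
 & \mathscr{B}\otimes\NN(M)^{\rig} \ar[ru]_-{\iota_{M}\otimes\mathscr{B}} &
}
\]
and its commutativity follows from tracking a pure tensor $1\otimes x$ with $x\in\Dcris(M^{\rig})$.

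Taking determinants and using that the top arrow is an isomorphism, I obtain $\det(\iota_{M}\otimes\mathscr{B})\cdot\det(\mathrm{inc}_{M}\otimes\mathscr{B})\in\mathscr{B}^{\times}$. By Proposition~\ref{quot-nm-dcris}(3), $\det(\mathrm{inc}_{M})$ equals $(t/\pi)^{r_{1}+\cdots+r_{d}}$ up to a unit, and therefore so does its extension to $\mathscr{B}$. Consequently $\det(\iota_{M}\otimes\mathscr{B})$ equals $(\pi/t)^{r_{1}+\cdots+r_{d}}$ up to a unit in $\mathscr{B}$.

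Finally, I would descend the statement to $(R\htimes\A^{+})[1/p]$ by the argument at the end of the proof of Corollary~\ref{cor:det-frob-wach}. The stalks at closed points of $\Spec((R\htimes\A^{+})[1/p])$ are noetherian, and their completions coincide with the corresponding completed rigid-analytic stalks by~\cite[Lemma 7.1.9]{dejong}. At each such stalk the identity $t = \pi(1-\pi/2+\pi^{2}/3-\cdots)$ shows that $t/\pi$ is a $1$-unit in the completed local ring, so the factor $(\pi/t)^{r_{1}+\cdots+r_{d}}$ becomes $\pi^{r_{1}+\cdots+r_{d}}$ up to a unit on the algebraic side, yielding the claimed isomorphism of line bundles.

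The main obstacle is the careful choice of period ring $\mathscr{B}$ together with a rigorous descent. One must verify simultaneously that $\mathscr{B}$ contains the images of $\A^{+}$ and $\B_{\rig,F}^{+}$, that the crystalline comparison holds over $\mathscr{B}$, and that the map from $(R\htimes\A^{+})[1/p]$ into the relevant completed local rings faithfully detects unit-ness of the determinant. A secondary point is to verify that $(R\htimes\A^{+})[1/p]\otimes_{R\htimes\A_{F}^{+}}\NN(M)[1/p]$ and $(R\htimes\A^{+})[1/p]\otimes_{R}M$ are both projective of rank $d$, so that the determinant formalism is well-defined.
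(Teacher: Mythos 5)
There is a genuine gap in the descent step, and it sits exactly at the points where the corollary has content. Over $\mathscr{B}=\mathscr{B}_{X,\max}$ your triangle only shows that $\det(\iota_M)$ lies in $(\pi/t)^{r_1+\cdots+r_d}\cdot\mathscr{B}^\times=\pi^{r_1+\cdots+r_d}\cdot\mathscr{B}^\times$, because $t$ is already invertible in $\B_{\max}$. But ``unit of $\mathscr{B}$'' is far weaker than ``unit of $(R\htimes\A^+)[1/p]$'': since $t=\pi\prod_{n\ge 0}\varphi^n(q)/p$, each $\varphi^n(q)$ divides $t$ and is therefore a unit of $\B_{\max}$, while it is a non-unit of $(R\htimes\A_F^+)[1/p]$ (it is a distinguished polynomial vanishing at $\pi=\zeta_{p^{n+1}}-1$). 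So your conclusion cannot distinguish $\pi^{m}$ from, say, $\pi^{m}q^{k}$. Correspondingly, your last step is false as stated: $t/\pi=1-\pi/2+\cdots$ is a unit in the completed stalk at $\pi=0$, but it vanishes, hence is not a unit, at every closed point of $\Spec\bigl((R\htimes\A_F^+)[1/p]\bigr)$ lying on $\varphi^{n-1}(q)=0$ (i.e.\ $\pi=\zeta_{p^n}-1$, $n\ge 1$), and these are precisely the divisors along which one must show $\det(\iota_M)$ has neither zero nor pole. Note also that the paper's logic runs in the opposite direction: Theorem~\ref{thm:Dcris-M} \emph{deduces} $\det(\mathrm{can}_M)\in t^{m(M)}\Rt[1/p]^\times$ from this corollary; the a priori fact that the comparison isomorphism has unit determinant over $\mathscr{B}$ carries essentially no information.

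The missing ingredient is the Frobenius functional equation. The paper trivializes $M[1/p]$ and $\NN(M)[1/p]$ on an affine cover of $\Spec R[1/p]$ (Lemma~\ref{lemma:wach-loc-free-mixed}), lets $X_i$ be the basis matrix of $\NN(M)$ inside $(R\htimes\A^+)\otimes_R M$, and uses that $\varphi$ acts trivially on $M$ to get $\varphi(X_i)=X_iP_i$, hence $\varphi(\det X_i)/\det X_i=\det P_i=q^{r_1+\cdots+r_d}$ up to a unit by Corollary~\ref{cor:det-frob-wach}. Combined with the two-sided inclusion $\pi^h\bigl((R\htimes\A^+)\otimes_R M\bigr)\subset \A^+\htimes_{\A_F^+}\NN(M)\subset (R\htimes\A^+)\otimes_R M$ of Lemma~\ref{lemma:wach-sat-d+-inclusions} (so $\det X_i$ divides a power of $\pi$), the relation $\varphi(\pi)=q\pi$ forces $\det X_i=\pi^{r_1+\cdots+r_d}$ up to a unit. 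If you want to keep your comparison-triangle framework, you must add an input of this kind that controls the determinant along all the divisors $\varphi^n(\pi)=0$ with $n\ge 1$, which is exactly what the $\varphi$-equivariance supplies and what base change to $\B_{\max}$ destroys.
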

\begin{proof}
We use Lemma~\ref{lemma:wach-loc-free-mixed} to find an affine cover $\{U_i=\Spec R[1/p,1/f_i]\}$ of $\Spec R[1/p]$ such that $M[1/p]|_{U_i}$ and $\NN(M)[1/p]|_{U_i}$ are free.  Choose bases, and let $X_i\in\Mat_d((R\htimes\A)[1/p,1/f_i])$ be a matrix whose columns are the basis elements of $\NN(M)[1/p]|_{U_i}$ with respect to the basis of $M[1/p]|_{U_i}$.  Let $P_i$ be the matrix of Frobenius with respect to the chosen basis of $\NN(M)[1/p]|_{U_i}$.  Then $\varphi(X_i)=X_iP_i$ because Frobenius acts trivially on $M$.  This implies that $\varphi(\det(X_i))/\det(X_i)=\det(P_i)=q^{r_1+\cdots+r_d}$ up to a unit of $(R\widehat\otimes\A_F^+)[1/p,1/f_i]$, by Corollary~\ref{cor:det-frob-wach}.  But $\pi^h\left((R\htimes\A)[1/p,1/f_i]\right)\otimes_{R[1/p,1/f_i]}M[1/p]|_{U_i}\subset \left((R\htimes\A)[1/p,1/f_i]\right)\otimes_{(R\htimes\A_F^+)[1/p,1/f_i]}\NN(M)[1/p]|_{U_i}$ by Lemma~\ref{lemma:wach-sat-d+-inclusions}, so $(\det(X_i))$ is trivial after inverting $\pi$.  This implies that $\det(X_i)=\pi^{r_1+\cdots+r_d}$ up to a unit of $(R\widehat\otimes\A^+)[1/p,1/f_i]$.
\end{proof}

We can also use Proposition~\ref{quot-nm-dcris} to relate $\NN(M)^{\rig}$ and $\D_{\cris}(M^{\rig})$ when $M^{\rig}$ is a crystalline family with non-negative Hodge--Tate weights.
\begin{corollary}\label{nm-cris-pos-ht}
Suppose that $M$ is a crystalline family with non-negative Hodge--Tate weights.  Then $\varphi^\ast \NN(M)^{\rig}\subset (\B_{\rig,\Qp}^+\widehat\otimes  R^{\rig})\otimes_{ R^{\rig}}\D_{\cris}(M^{\rig})$.
\end{corollary}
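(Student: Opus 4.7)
The plan is to reduce to the positive case handled by Proposition~\ref{quot-nm-dcris} via a Tate twist. Say the Hodge--Tate weights of $M$ lie in $[0,h]$, and set $M':=M(-h)$, which is a positive crystalline family with Hodge--Tate weights in $[-h,0]$. Denote by $0\le r_1\le\cdots\le r_d\le h$ the opposites of the Hodge--Tate weights of $M'$. Proposition~\ref{quot-nm-dcris} applied to $M'$ produces an injection $\mathrm{inc}_{M'}\colon(R^{\rig}\htimes\B_{\rig,F}^+)\otimes_{R^{\rig}}\D_{\cris}(M'^{\rig})\hookrightarrow\NN(M')^{\rig}$ whose cokernel is annihilated by $\lambda_d=(t/\pi)^{r_d}$; since $r_d\le h$ and $t/\pi\in\B_{\rig,F}^+$, this yields
\[
(t/\pi)^{h}\,\NN(M')^{\rig}\;\subset\;(R^{\rig}\htimes\B_{\rig,F}^+)\otimes_{R^{\rig}}\D_{\cris}(M'^{\rig}).
\]

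I would then translate this inclusion through two natural Tate-twist identifications. On the Wach module side, the definition $\NN(M)^{\mathrm{total}}:=\pi^{-h}\NN(M(-h))^{\mathrm{total}}$ gives $\NN(M')^{\rig}=\pi^{h}\NN(M)^{\rig}$ inside a common ambient $R^{\rig}\htimes\A$-module (using that $\D(M)=\D(M')$ as $R\htimes\A_F$-modules because $H_F$ acts trivially on $\Q_p(k)$ for every $k$). On the crystalline side, multiplicativity of $\D_{\cris}$ together with $\D_{\cris}(\Q_p(-h))=F\cdot t^{h}$ gives $\D_{\cris}(M'^{\rig})=t^{h}\,\D_{\cris}(M^{\rig})$. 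Substituting these into the display above yields
\[
t^{h}\,\NN(M)^{\rig}\;=\;(t/\pi)^{h}\pi^{h}\NN(M)^{\rig}\;\subset\;t^{h}\,(R^{\rig}\htimes\B_{\rig,F}^+)\otimes_{R^{\rig}}\D_{\cris}(M^{\rig}),
\]
and cancelling $t^{h}$, which is a non-zero-divisor in the ambient module, gives the stronger inclusion $\NN(M)^{\rig}\subset(R^{\rig}\htimes\B_{\rig,F}^+)\otimes_{R^{\rig}}\D_{\cris}(M^{\rig})$. Applying $\varphi^{*}$ preserves this inclusion because the right-hand side is Frobenius-stable: $\varphi$ carries $\B_{\rig,F}^{+}$ into itself and acts bijectively on $\D_{\cris}(M^{\rig})$. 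Noting further that $(R^{\rig}\htimes\B_{\rig,\Qp}^+)\otimes_{R^{\rig}}\D_{\cris}(M^{\rig})$ and $(R^{\rig}\htimes\B_{\rig,F}^+)\otimes_{R^{\rig}}\D_{\cris}(M^{\rig})$ coincide (since $\D_{\cris}(M^{\rig})$ is already an $R^{\rig}\otimes_{\Qp}F$-module, and $\B_{\rig,\Qp}^{+}\otimes_{\Qp}F=\B_{\rig,F}^{+}$ for unramified $F/\Qp$), this delivers the corollary.

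The main obstacle I anticipate is making the two Tate-twist identifications precise inside a single ambient module in which all four modules---$\NN(M)^{\rig}$, $\NN(M')^{\rig}$, $\D_{\cris}(M^{\rig})$, and $\D_{\cris}(M'^{\rig})$---embed compatibly, so that the cancellations of $\pi^{h}$ and $t^{h}$ are legitimate. A natural such choice is $(R^{\rig}\htimes\widetilde{\B}_{\rig}^{\dagger,s})[1/t]\otimes_{R^{\rig}}M^{\rig}$ for $s$ sufficiently large; with this choice the identifications are $\varphi$- and $\Gamma$-equivariant modulo the Tate twist, and the elementary algebra above goes through directly.
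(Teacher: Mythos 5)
Your proof is correct and follows essentially the same route as the paper's: twist by $-h$ to reduce to the positive case, invoke Proposition~\ref{quot-nm-dcris} (the cokernel of $\mathrm{inc}_{M(-h)}$ is killed by $\lambda_d=(t/\pi)^{r_d}$ with $r_d\le h$), and untwist via $\NN(M)^{\rig}=\pi^{-h}\NN(M(-h))^{\rig}$ and $t^{-h}\Dcris(M^{\rig}(-h))=\Dcris(M^{\rig})$. The only (harmless) difference is one of ordering: you cancel $t^h$ before applying $\varphi^{\ast}$, thereby first establishing the slightly stronger inclusion $\NN(M)^{\rig}\subset(\B_{\rig,\Qp}^+\htimes R^{\rig})\otimes_{R^{\rig}}\Dcris(M^{\rig})$ and then using $\varphi$-stability of the target, whereas the paper applies $\varphi^{\ast}$ throughout and divides by $\varphi(t)^h=(pt)^h$ at the end.
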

\begin{remark}
Since $\varphi^\ast\NN(M)^{\rig}\rightarrow (\B_{\rig,\Qp}^+\widehat\otimes  R^{\rig})\otimes_{ R^{\rig}}\D_{\cris}(M^{\rig})$ is a map of finite modules over a Fr\'echet--Stein algebra, continuity is automatic.
\end{remark}
\begin{proof}
Suppose that the Hodge--Tate weights of $M^{\rig}$ lie in the interval $[0,h]$.  Then by definition $\NN(M)^{\rig}=\pi^{-h}\NN(M(-h))^{\rig}$, and $M^{\rig}(-h)$ is a positive crystalline family, so
\[	\varphi^\ast\NN(M)^{\rig}=\varphi(\pi)^{-h}\varphi^\ast\NN(M(-h))^{\rig}\subset t^{-h}\varphi^\ast\left((\B_{\rig,\Qp}^+\widehat\otimes  R^{\rig})\otimes_{ R^{\rig}}\D_{\cris}(M^{\rig}(-h))\right) 	\]
by Proposition~\ref{quot-nm-dcris} (since $\lambda_i=(t/\pi)^{r_i}$ and $r_i\leq h$).  But $\varphi^\ast\Dcris(M^{\rig})=\Dcris(M^{\rig})$ and $t^{-h}\D_{\cris}(M^{\rig}(-h)) = \D_{\cris}(M^{\rig})$, so we are done.
\end{proof}

We now turn to the other comparison between Wach modules and $\Dcris$, namely~\cite[Th\'eor\`eme III.4.4]{berger04}, which states that if $V$ is a positive crystalline representation of $G_{\Qp}$, then $\NN(V)/\pi\NN(V)\cong \Dcris(V)$.

\begin{theorem}\label{thm:wach-pi-dcris}
Let $M$ be a family of positive crystalline $G_{F}$-representations over $R$.  Then the composition of the natural maps
\[	\Dcris(M^{\rig})\hookrightarrow \NN(M)^{\rig}\twoheadrightarrow \NN(M)^{\rig}/\pi\NN(M)^{\rig}	\]
is an isomorphism.
\end{theorem}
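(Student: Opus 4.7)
\emph{Proof plan.} The strategy is to reduce to Berger's classical theorem \cite[Th\'eor\`eme III.4.4]{berger04} by a fibrewise argument on the rigid analytic generic fibre.

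First I would identify source and target as finite locally free modules of the same rank $d$ over $R^{\rig}\otimes_{\Qp} F$. For $\Dcris(M^{\rig})$ this is part of the definition of a crystalline family. For the quotient $\NN(M)^{\rig}/\pi\NN(M)^{\rig}$, observe that $(R^{\rig}\htimes\B_{\rig,F}^+)/\pi \cong R^{\rig}\otimes_{\Qp} F$; since $\NN(M)^{\rig}$ is a rank-$d$ vector bundle on the quasi-Stein space associated to $R^{\rig}\htimes\B_{\rig,F}^+$ and $\pi$ is a regular non-unit cutting out the closed subspace corresponding to $R^{\rig}\otimes_{\Qp} F$, the quotient is itself finite locally free of rank $d$ over $R^{\rig}\otimes_{\Qp} F$. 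Thus the map in question is a morphism of rank-$d$ vector bundles over $R^{\rig}\otimes_{\Qp} F$, and it suffices to prove bijectivity after passing to a rising affinoid exhaustion $\{U_m=\Sp(A_m)\}$ of $\Spf(R)^{\rig}$.

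Next I would check the map behaves well under specialization at closed points of each $U_m$. Such a point corresponds to a finite extension $L/\Qp$ and a continuous map $R\to \cO_L$, whose associated Galois representation $V_{\mathfrak m}$ is a positive crystalline $L$-representation with Hodge--Tate weights in $[-h,0]$. By definition $\Dcris(M^{\rig})$ specializes to $\Dcris(V_{\mathfrak m})$, and $\NN(M)^{\rig}=(R^{\rig}\htimes\B_{\rig,F}^+)\otimes_{R\htimes\A_F^+}\NN(M)$ together with Theorem~\ref{thm:wach}(7) (the cokernel of the base-change map is $p^{\alpha(h)}$-torsion, hence vanishes after inverting $p$) shows that $\NN(M)^{\rig}/\pi\NN(M)^{\rig}$ specializes to $\NN(V_{\mathfrak m})/\pi\NN(V_{\mathfrak m})$. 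Under these identifications our map becomes the classical map $\Dcris(V_{\mathfrak m})\to \NN(V_{\mathfrak m})/\pi\NN(V_{\mathfrak m})$, which is an isomorphism by \cite[Th\'eor\`eme III.4.4]{berger04}.

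Finally, since $A_m\otimes_{\Qp} F$ is a reduced noetherian affinoid $F$-algebra, a homomorphism of rank-$d$ locally free modules that becomes an isomorphism at every maximal ideal is an isomorphism (e.g.\ by Nakayama applied to the localisations at maximal ideals of a faithfully flat cover trivialising both modules). Passing to the limit over the exhaustion yields the theorem. The main subtlety in this plan is the base-change compatibility of $\NN(M)^{\rig}$ at classical points; this is precisely where one uses that $p$ is invertible on $R^{\rig}$, so that the potential $p^{\alpha(h)}$-torsion discrepancy in Theorem~\ref{thm:wach}(7) disappears. Everything else is formal from the classical theory.
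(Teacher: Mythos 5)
Your proposal is correct and follows essentially the same route as the paper's (very terse) proof: both identify the two sides as finite projective modules of the same rank over $R^{\rig}\otimes_{\Qp}F$, note that their formation commutes with base change on $R^{\rig}$, and reduce to Berger's classical theorem at closed points. You have simply spelled out the details (the identification of the quotient, the base-change compatibility via Theorem~\ref{thm:wach}(7) after inverting $p$, and the Nakayama step) that the paper leaves implicit.
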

\begin{proof}
The map of interest is a homomorphism of finite projective modules of the same rank over $R^{\rig}\otimes_{\Qp}F$.  But the formations of $\Dcris(M^{\rig})$ and $\NN(M)^{\rig}$ commute with base change on $R^{\rig}$ and the map becomes an isomorphism after specialization, so we are done.
\end{proof}

Since $\NN(M)$ provides a natural integral structure on $\NN(M)^{\rig}$, we obtain an integral version of $\Dcris$.  Namely, we define $\Dcris(M) := \NN(M) / \pi \NN(M)$, which is a finitely generated $R\otimes_{\Z_p}\cO_F$-module.  Note, however, that we do not know whether $\Dcris(M)$ is projective over $R$ (though $\Dcris(M)[1/p]$ is projective over $R[1/p]$).

\begin{corollary}\label{DcrisMandMrig}
Let $M$ be a family of positive crystalline $G_{F}$-representations over $R$.  Then \[\Dcris(M^{\rig})\cong  R^{\rig}\otimes_R \Dcris(M).\]
\end{corollary}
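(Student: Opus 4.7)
The plan is to deduce this directly from Theorem~\ref{thm:wach-pi-dcris} together with the definition of $\NN(M)^{\rig}$, by reducing the defining tensor product modulo $\pi$. First, Theorem~\ref{thm:wach-pi-dcris} provides a canonical isomorphism $\Dcris(M^{\rig})\cong\NN(M)^{\rig}/\pi\NN(M)^{\rig}$, so the task reduces to identifying $\NN(M)^{\rig}/\pi\NN(M)^{\rig}$ with $R^{\rig}\otimes_R(\NN(M)/\pi\NN(M))$.

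Next, I would unfold the definition $\NN(M)^{\rig}=(R^{\rig}\htimes\B_{\rig,F}^+)\otimes_{R\htimes\A_F^+}\NN(M)$ and use the elementary fact that for an $A$-algebra $B$ and an $A$-module $N$, $(B\otimes_A N)/\pi(B\otimes_A N)\cong(B/\pi B)\otimes_{A/\pi A}(N/\pi N)$ whenever $\pi\in A$. Applied here this gives
\[
\NN(M)^{\rig}/\pi\NN(M)^{\rig}\;\cong\;\big((R^{\rig}\htimes\B_{\rig,F}^+)/\pi\big)\otimes_{(R\htimes\A_F^+)/\pi}\Dcris(M).
\]

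The third step is to identify the two coefficient-ring reductions: since $\A_F^+/\pi\A_F^+\cong\cO_F$ and $\cO_F$ is $\Z_p$-finite, the completed tensor becomes ordinary and $(R\htimes\A_F^+)/\pi\cong R\otimes_{\Z_p}\cO_F$; similarly, using that $\B_{\rig,F}^+/\pi\B_{\rig,F}^+\cong F$ (evaluation of rigid functions on the open unit disk at $\pi=0$) and that $F$ is $\Qp$-finite, one obtains $(R^{\rig}\htimes\B_{\rig,F}^+)/\pi\cong R^{\rig}\otimes_{\Qp}F$. Plugging these in and simplifying via $R^{\rig}\otimes_R(R\otimes_{\Z_p}\cO_F)=R^{\rig}\otimes_{\Z_p}\cO_F=R^{\rig}\otimes_{\Qp}F$ yields
\[
\NN(M)^{\rig}/\pi\NN(M)^{\rig}\;\cong\;(R^{\rig}\otimes_{\Qp}F)\otimes_{R\otimes_{\Z_p}\cO_F}\Dcris(M)\;\cong\;R^{\rig}\otimes_R\Dcris(M),
\]
which is the desired statement.

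The main technical point will be the identification $(R^{\rig}\htimes\B_{\rig,F}^+)/\pi\cong R^{\rig}\otimes_{\Qp}F$, since $R^{\rig}$ is only a Fr\'echet--Stein algebra. I would handle this by choosing an admissible affinoid cover $\{U_m=\Sp(A_m)\}$ of $\Spf(R)^{\rig}$ and working on each $A_m\htimes\B_{\rig,F}^{\dagger,s}$ (which suffices because the pertinent modules are finitely generated in the $R\htimes\A_F^+$-direction, so no subtlety arises from inverse-limiting along $m$ and $s$); on each affinoid piece, the quotient by $\pi$ sends $\B_{\rig,F}^{\dagger,s}$ to $F$ and the completed tensor collapses to the ordinary one thanks to $[F:\Qp]<\infty$, after which passage to the inverse limit gives the claimed identification.
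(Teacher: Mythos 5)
Your argument is correct and is exactly the route the paper intends: the corollary is stated as an immediate consequence of Theorem~\ref{thm:wach-pi-dcris} together with the definition $\Dcris(M):=\NN(M)/\pi\NN(M)$, and your reduction of $\NN(M)^{\rig}=(R^{\rig}\htimes\B_{\rig,F}^+)\otimes_{R\htimes\A_F^+}\NN(M)$ modulo $\pi$ is precisely the base-change computation the paper leaves implicit. The affinoid-cover treatment of $(R^{\rig}\htimes\B_{\rig,F}^+)/\pi\cong R^{\rig}\otimes_{\Qp}F$ is a reasonable way to justify the one step that needs care.
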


Theorem~\ref{thm:wach-pi-dcris} defines an isomorphism $\NN(M)^{\rig}/\pi\NN(M)^{\rig}\xrightarrow{\sim}\left(\NN(M)^{\rig}\right)^\Gamma$.  Since $\NN(M)^{\rig}$ has an underlying $R$-module structure, it is natural to ask whether this isomorphism can be defined over a smaller ring, such as $R[1/p]\htimes\B_{\rig,F}^+$.

%

Suppose that $L/\Qp$ is a finite extension, and suppose that $T$ is a positive crystalline representation of $G_F$ of rank $d$ over $\cO_L$, with Hodge--Tate weights in $[-h,0]$.  Then $\NN(T)$ is free of rank $d$ over $\cO_L\otimes_{\Zp}\A_F^+$; choose a basis $\{e_1,\ldots,e_d\}$ of $\NN(T)$ and let its image modulo $\pi$ be a basis of $\Dcris(T)$.  Write $A$ for the isomorphism $\NN(T)^{\rig}/\pi\NN(T)^{\rig}\xrightarrow{\sim}\left(\NN(T)^{\rig}\right)^\Gamma$ with respect to these bases.

Recall that $t:=\log(1+\pi)\in\B_{\rig,F}^+$.
\begin{lemma}
Let the notation be as above, and write $A=1+\sum_{m\geq 1} {A}^{(m)}t^m$ where ${A}^{(m)}$ is a matrix over $\cO_L\otimes_{\Z_p}\cO_F$ with entries ${a}_{ij}^{(m)}$.  Then $v_p({a}_{ij}^{(m)})\geq -\frac{(2p-1)}{(p-1)^2}m$, and in particular, the entries of $A$ (viewed as elements of $L\otimes_{\Qp}\B_{\rig,F}^+$) are bounded by $1$ on the disk $v_p(\pi)>p-1$.
\end{lemma}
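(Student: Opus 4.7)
The plan is to derive a Frobenius functional equation for $A$, extract from it a recursion for the coefficients $A^{(m)}$, and bound them by induction on $m$, then deduce the disk estimate.

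To set up the functional equation, let $P \in \Mat_d(\cO_L \otimes_{\Zp} \A_F^+)$ denote the matrix of $\varphi$ on $\NN(T)$ in the basis $\{e_i\}$, and let $P_0 := P|_{\pi=0} \in \Mat_d(\cO_L \otimes_{\Zp} \cO_F)$. Under the isomorphism $\NN(T)/\pi \xrightarrow{\sim} \Dcris(T)$ of Theorem~\ref{thm:wach-pi-dcris}, $P_0$ is simultaneously the matrix of $\varphi$ on $\Dcris(T)$ in the basis $\{f_i\}$, since $f_i \equiv e_i \pmod{\pi}$. Applying $\varphi$ to the change-of-basis relation $e_i = \sum_j A_{ji} f_j$ then yields the identity
\[
\varphi(A)\, P_0 = P\, A
\]
in $\Mat_d(\cO_L \otimes_{\Zp} \B_{\rig,F}^+)$.

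Next, I would use $\pi = e^t - 1 = \sum_{n \geq 1} t^n/n!$ to rewrite $P$ as a power series in $t$: $P = \sum_{m \geq 0} \tilde P_m t^m$, with $\tilde P_0 = P_0$ and $v_p(\tilde P_m) \geq -v_p(m!) \geq -m/(p-1)$ by Legendre's formula. Since $\varphi(t) = pt$ and $\varphi$ acts on $\cO_L \otimes \cO_F$ via the unramified Frobenius $\sigma$, substituting $A = 1 + \sum_{m \geq 1} A^{(m)} t^m$ into the functional equation and comparing the coefficient of $t^m$ for each $m \geq 1$ gives the Sylvester-type recursion
\[
p^m \sigma(A^{(m)})\, P_0 - P_0\, A^{(m)} \;=\; \tilde P_m + \sum_{k=1}^{m-1} \tilde P_{m-k}\, A^{(k)}.
\]
The linear operator $S_m \colon M \mapsto p^m \sigma(M) P_0 - P_0 M$ is invertible after inverting $p$, with an explicit bound on the $p$-adic denominators of $S_m^{-1}$ controlled by the $p$-adic valuations of the eigenvalues of $P_0$; these valuations lie in $[0,h]$ by crystallinity with Hodge--Tate weights in $[-h,0]$. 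Applying $S_m^{-1}$ then determines $A^{(m)}$ recursively in terms of $\tilde P_m,\ldots,\tilde P_1$ and $A^{(1)},\ldots,A^{(m-1)}$.

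The bound $v_p(a^{(m)}_{ij}) \geq -cm$ with $c := \frac{2p-1}{(p-1)^2}$ is then established by induction on $m$: assuming the bound for all $k < m$, the right-hand side of the recursion has $v_p \geq -cm + O(1)$ (the $k = m-1$ term being dominant, since $c > 1/(p-1)$), and applying $S_m^{-1}$ preserves the target estimate. For the boundedness of the entries of $A$ on the disk $\{v_p(\pi) > p-1\}$, note that on this region $v_p(t) = v_p(\pi)$ (the higher-order terms of $t = \log(1+\pi)$ being negligible once $v_p(\pi) > 1/(p-1)$), and for odd $p$ the inequality $p-1 \geq \frac{2p-1}{(p-1)^2}$ holds (equivalent to $(p-1)^3 \geq 2p-1$, i.e., $p^2 - 3p + 1 \geq 0$ for $p \geq 3$), so each summand $a^{(m)}_{ij} t^m$ satisfies $v_p \geq m(p-1 - c) \geq 0$. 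The main technical obstacle will be pinning down the precise constant $c = \frac{2p-1}{(p-1)^2}$ in the inductive step, carefully balancing the $-1/(p-1)$-per-degree denominators from $\pi = e^t - 1$ against the additional contributions coming from $S_m^{-1}$ and from the accumulation in the cross terms $\tilde P_{m-k} A^{(k)}$, so that the induction closes exactly.
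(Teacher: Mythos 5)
Your functional equation is the wrong one for this lemma, and the gap it creates is fatal rather than technical. The recursion you extract from $\varphi(A)P_0 = PA$ requires inverting, at each degree $m$, the $\sigma$-semilinear operator $S_m(M) = M\Phi - p^m P_0\sigma(M)$ (in your notation). This operator can be genuinely singular: if $\Phi$ has two eigenvalues $\alpha_i,\alpha_j$ with $\alpha_j = p^m\alpha_i$ — which happens already for $T = \Zp(m)\oplus\Zp$, and nothing in crystallinity with weights in $[-h,0]$ excludes it — then $S_m$ kills an off-diagonal matrix unit and the recursion fails to determine $A^{(m)}$ at all. Even on the locus where every $S_m$ is invertible, $\lVert S_m^{-1}\rVert$ is controlled only by the eigenvalue valuations of $\Phi$, which lie in $[0,h]$; so each inductive step can lose a factor of $p^{h}$, the losses accumulate once per degree, and the induction can only close with a constant of the shape $c \geq h + \tfrac{1}{p-1}$. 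Since $\tfrac{2p-1}{(p-1)^2} < 1$ for $p\geq 3$, the claimed $h$-independent constant is unreachable by this route except for $h=0$. The "main technical obstacle" you defer is therefore not a bookkeeping issue but the point where the argument breaks.

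The paper instead exploits the \emph{$\Gamma$}-equivariance: writing $G = 1+\sum_m G^{(m)}t^m$ for the matrix of a non-torsion $\gamma\in\Gamma$ and using $\gamma(t)=\chi(\gamma)t$, the invariance of the image of $A$ gives $G\gamma(A)=A$, hence
\[
\sum_{r=0}^{m-1}G^{(m-r)}A^{(r)}\chi(\gamma)^r = \bigl(1-\chi(\gamma)^m\bigr)A^{(m)}.
\]
Here one only ever divides by the \emph{scalars} $1-\chi(\gamma)^m$, which are nonzero because $\gamma$ is non-torsion, and the two inputs are exactly the two summands of the constant: $v_p(G^{(m)})\geq -\tfrac{m}{p-1}$ because $\Gamma$ acts by automorphisms of the integral module $\NN(T)$ so $G\in\GL_d(\cO_L\otimes_{\Zp}\A_F^+)$, and $\sum_{r\leq m}v_p(1-\chi(\gamma)^r)\leq \tfrac{pm}{(p-1)^2}$ (Berger's estimate), giving $\tfrac{1}{p-1}+\tfrac{p}{(p-1)^2}=\tfrac{2p-1}{(p-1)^2}$. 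Your Frobenius relation $A\Phi = P\varphi(A)$ is correct and is indeed used — but only in the \emph{following} lemma, where it is iterated as a whole ($p^{N_\rho h}A = (P\cdots\varphi^{N_\rho}(P))\varphi^{N_\rho}(A)(\cdots p^h\Phi^{-1})$) to propagate the bound from the small disk to $v_p(\pi)\geq\rho$; there the $h$-dependent loss is harmless because it is incurred a bounded number of times, not once per coefficient. If you want to salvage your write-up, replace the Frobenius recursion by the $\Gamma$-recursion here and keep the Frobenius argument for the extension to larger disks.
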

\begin{proof}
Write $G=1+\sum_{m\geq 1}G^{(m)}t^m$ for the matrix of a non-torsion element $\gamma\in\Gamma$, where $G^{(m)}$ has entries $g_{ij}^{(m)}$ in $L\otimes_{\Qp}F$.  Now the exponential series $\pi=\exp(t)-1$ converges for any $t$ with $v_p(t)>\frac{1}{p-1}$ and defines a bijection between the affinoid disks $v_p(t)\geq \rho$ and $v_p(\pi)\geq \rho$ for any rational $\rho >\frac{1}{p-1}$.  Since $\Gamma$ acts by automorphisms on $\NN(M\otimes_R\cO_L)$, $G$ must be invertible over $\cO_L\otimes_{\Zp}\A_F^+$, and therefore its entries must be bounded by $1$ on any such disk.  Therefore $v_p(g_{ij}^{(m)})\geq -\frac{1}{p-1}m$.

Now the image of $A$ is $\Gamma$-invariant, so we must have $G\gamma(A)=A$, or equivalently,
\[	\sum_{r=0}^{m-1}G^{(m-r)}{A}^{(r)}\chi(\gamma)^r = (1-\chi(\gamma)^m)A^{(m)}	\]
for all $m$.  By induction on $m$, this implies that the matrix ${A}^{(m)}$ can be written as a sum of terms of the form
\[ \frac{\prod_{1 \le a \le M} G^{(\lambda_a)} \chi(\gamma)^N}{\prod_{b \in B} (1 - \chi(\gamma)^b)}\]
where $M$ and $N$ are integers, the $\lambda_a$ are positive integers satisfying $\lambda_1 + \dots + \lambda_M = m$, and $B$ is a subset of $\{1, \dots m\}$. Using the previous inequality, any such term has valuation bounded below by
\[ -\frac{1}{p-1} m - \sum_{1 \le r \le m} v_p\left(1 - \chi(\gamma)^r\right) \ge -\left(\frac{1}{p-1} + \frac{p}{(p-1)^2}\right)m = - \frac{2p-1}{(p-1)^2} m,\]
using the bound $\sum_{1 \le r \le m} v_p\left(1 - \chi(\gamma)^r\right) \le \frac{pm}{(p-1)^2}$ given in \cite[\S IV.1]{berger04}.
\end{proof}

Now we consider the $\varphi$-action on $\NN(T)$.  Let $P\in\Mat_{d\times d}(\cO_L\otimes_{\Zp}\A_F^+)$ be the matrix of $\varphi$ on $\NN(T)$ with respect to $\{e_i\}$ and let $\Phi\in \Mat_{d\times d}(\cO_L\otimes_{\Zp}\cO_F)$ be the matrix of $\varphi$ on $\NN(T)/\pi$; $\Phi$ is the ``constant term'' of $P$.  Then $\Phi$ is invertible over $L\otimes_{\Qp}F$, and more precisely, $p^h\Phi^{-1}\in \Mat_{d\times d}(\cO_L\otimes_{\Zp}\cO_F)$.

Let $\rho>0$.  Since $\varphi(\pi)=(1+\pi)^p-1\in \pi^p+p\Z_p[\![\pi]\!]$, there is some $N_\rho$ such that $\varphi^{N_\rho}$ carries the disk $v_p(\pi)\geq\rho$ into a sub-disk of $v_p(\pi)>\frac{2p-1}{(p-1)^s}$, and the induced map the other way on rings carries functions converging for $v_p(\pi)>\frac{2p-1}{(p-1)^s}$ to functions converging for $v_p(\pi)\geq\rho$.
\begin{lemma}
Let the notation be as above. The entries of $A$ have valuation bounded below by $-N_\rho h$ on the disk $v_p(\pi)\geq \rho$.
\end{lemma}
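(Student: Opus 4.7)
The plan is to exploit the Frobenius action to transfer the bound from the small disk where we already control $A$ (provided by the previous lemma) to the larger disk $v_p(\pi) \geq \rho$, losing a controlled factor of $p^h$ at each Frobenius step.

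The key input is a functional equation for $A$. Since the map $\Dcris(T) \to \NN(T)^{\rig}$ represented by $A$ commutes with Frobenius, comparing matrices in the chosen bases gives
\[	A\Phi = P\, \varphi(A),	\]
which we rewrite as $A = P\, \varphi(A)\, \Phi^{-1}$. Iterating $N$ times yields
\[	A = P \cdot \varphi(P) \cdots \varphi^{N-1}(P) \cdot \varphi^N(A) \cdot \varphi^{N-1}(\Phi^{-1}) \cdots \Phi^{-1}.	\]
I will choose $N = N_\rho$ and estimate each factor on the disk $v_p(\pi) \geq \rho$.

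The three estimates are as follows. First, each $\varphi^j(P)$ has entries in $\cO_L \otimes_{\Zp} \A_F^+$ (since $\varphi$ preserves $\A_F^+$), hence has entries bounded by $1$ on $v_p(\pi) \geq 0$, and in particular on $v_p(\pi) \geq \rho$. Second, since $p^h \Phi^{-1} \in \Mat_{d \times d}(\cO_L \otimes \cO_F)$ and Frobenius preserves $\cO_L \otimes \cO_F$, each matrix $\varphi^j(\Phi^{-1})$ has entries of valuation $\geq -h$; consequently the product $\varphi^{N_\rho-1}(\Phi^{-1}) \cdots \Phi^{-1}$ has entries of valuation $\geq -N_\rho h$. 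Third, by the defining property of $N_\rho$, the map $\varphi^{N_\rho}$ carries the disk $v_p(\pi) \geq \rho$ into the sub-disk $v_p(\pi) > \frac{2p-1}{(p-1)^2}$; on this smaller disk, the explicit coefficient bound $v_p(a_{ij}^{(m)}) \geq -\frac{2p-1}{(p-1)^2} m$ from the previous lemma forces the entries of $A$ to have non-negative valuation, so $\varphi^{N_\rho}(A)$ is bounded by $1$ on $v_p(\pi) \geq \rho$.

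Combining via the ultrametric inequality applied to the matrix product, the entries of $A$ on $v_p(\pi) \geq \rho$ have valuation bounded below by $0 + 0 + (-N_\rho h) = -N_\rho h$, as desired. The only subtle ingredient is the derivation of the functional equation $A\Phi = P\varphi(A)$ and the fact that the previous lemma's bound can actually be applied \emph{on the sub-disk} (not merely pointwise), but this is immediate from the explicit power-series expansion of $A$ in $t$ together with the comparison $v_p(\pi) = v_p(t)$ valid on the region $v_p(t) > 1/(p-1)$.
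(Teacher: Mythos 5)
Your proof is correct and follows the same route as the paper: derive the Frobenius intertwining relation $A\Phi = P\varphi(A)$, iterate it $N_\rho$ times, and bound the three groups of factors (the $\varphi^j(P)$ integrally, the $\varphi^j(\Phi^{-1})$ via $p^h\Phi^{-1}$ being integral, and $\varphi^{N_\rho}(A)$ via the coefficient estimate of the previous lemma on the small disk). The paper states this more tersely but the argument is identical; your explicit justification that the previous lemma's bound applies on the sub-disk via $v_p(t)=v_p(\pi)$ is a detail the paper leaves implicit.
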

\begin{proof}
Since $A$ is Frobenius-equivariant, we have the relation $A\Phi=P\varphi(A)$.  Iterating $N_\rho$ times and multiplying by $p^{N_\rho h}$, we obtain $p^{N_\rho h}A=(P\varphi(P)\cdots\varphi^{N_\rho}(P))\varphi^{N_\rho}(A)(\varphi^{N_\rho}(p^h\Phi^{-1})\cdots p^h\Phi^{-1})$.  Thus, $p^{N_\rho h}A$ has entries which are bounded by $1$ on the disk $v_p(\pi)\geq \rho$.
\end{proof}

Now we return to $M$ over $R$.
Let $U=\Spec R[1/f_U,1/p]\subset\Spec R[1/p]$ with $f_U\in R\smallsetminus pR$ be an open affine subspace such that $\NN(M)[1/p]|_{U}$ is free.  Choose a basis $\{e_1',\ldots,e_d'\}$ for $\NN(M)[1/p]|_{U}$ and write $A'$ for the matrix of the isomorphism above with respect to the image of $\{e_i'\}$ in $\Dcris(M^{\rig})|_U$ and $\{e_i'\}$; we may assume $\{e_i'\}\subset \NN(M)[1/f_U]$.  Writing $\NN_{\{e_i'\}}$ for the $R\htimes\A_F^+$-span of $\{e_i'\}$, there is some integer $\beta$ such that $p^{-\beta}\NN_{\{e_i'\}}\cap \NN(M)[1/f_U]=\NN(M)[1/f_U]$, since $\NN(M)[1/f_U]$ is finitely generated over $(R\htimes\A_{\Qp}^+)[1/f_U]$ and $\{e_i'\}$ is a basis for $\NN(M)[1/f_U,1/p]$.

Let $x\in U$ be a closed point with residue field $L$ and let $\{e_1,\ldots,e_d\}$ be a basis of $\NN(M\otimes_R\cO_L)$.
Recall that $p^{\alpha(h)}\NN(M\otimes_R\cO_L)\subset \NN(M)\otimes_R\cO_L$; it follows that $p^{\alpha(h)+\beta}\NN(M\otimes_R\cO_L)\subset \NN_{\{e_i'\}}\otimes_R\cO_L$.  Thus, if $X$ is the matrix for the image of $\{e_i'\}$ in $\NN_{\{e_i'\}}\otimes_R\cO_L$ with respect to $\{e_i\}$, $X$ and $p^{\alpha(h)+\beta}X^{-1}$ have coefficients in $\cO_L\otimes_{\Z_p}\A_F^+$.  It follows that $A'\otimes_{R[1/p]}L$ is a matrix with entries in $L\otimes_{\Qp}\B_{\rig,F}^+$ that have valuation bounded below by $-(\alpha(h)+\beta+N_\rho h)$ on the disk $v_p(\pi)\geq \rho$.

In other words, there is some integer $k$ such that $f_U^kA'$ has entries in $R^{\rig}\htimes\B_{\rig,F}^+$ which are bounded on closed disks $v_p(\pi)\geq \rho$.  Thus, $A'$ has entries in $(R[1/p]\htimes\B_{\rig,F}^+)[1/f_U]$, by Proposition~\ref{prop:partial-gen-fiber-desc}.

\begin{corollary}\label{cor:det-dcris-nm-bdd}
There is an isomorphism of line bundles over $R[1/p]\htimes\B_{\rig,F}^+$
\[	\Det_{R[1/p]\htimes\B_{\rig,F}^+}(R[1/p]\htimes\B_{\rig,F}^+)\otimes_R\Dcris(M)\xrightarrow{\sim} \Det_{R[1/p]\htimes\B_{\rig,F}^+}(R[1/p]\htimes\B_{\rig,F}^+)\otimes_{R\htimes_{\Zp}\A_F^+}\NN(M)	\]
which agrees with the isomorphism $(\pi/t)^{r_1+\cdots+r_d}\cdot\det(\mathrm{inc}_M)$ of Proposition~\ref{quot-nm-dcris}.
\end{corollary}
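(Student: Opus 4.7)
The strategy is to descend the isomorphism of line bundles supplied by Proposition~\ref{quot-nm-dcris} (which lives over $R^{\rig}\htimes\B_{\rig,F}^+$) to the smaller ring $R[1/p]\htimes\B_{\rig,F}^+$, using the boundedness estimates for the matrix of the splitting $\Dcris(M^{\rig})\hookrightarrow\NN(M)^{\rig}$ established in the paragraphs immediately preceding the statement. All the ``hard'' analytic work is already present in the preceding discussion, so the proof is essentially a gluing argument.

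First I would apply Lemma~\ref{lemma:wach-loc-free-mixed} (and the analogous freeness of $\Dcris(M)[1/p]$ locally on $\Spec R[1/p]$ that follows from \cite[Proposition 4.1.3]{bellovin13}, after possibly refining) to produce a finite cover $\{U_i=\Spec R[1/p,1/f_i]\}$ of $\Spec R[1/p]$ over which both $\NN(M)[1/p]|_{U_i}$ and $\Dcris(M)[1/p]|_{U_i}$ are free of rank $d$. On each $U_i$, both line bundles appearing in the statement become free of rank one over $(R[1/p]\htimes\B_{\rig,F}^+)[1/f_i]$. Over each $U_i$, choose a basis $\{e_1',\ldots,e_d'\}\subset\NN(M)[1/f_i]$ of $\NN(M)[1/p]|_{U_i}$, and use its image modulo $\pi$ (together with Theorem~\ref{thm:wach-pi-dcris}) as a basis of $\Dcris(M)[1/p]|_{U_i}$.

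The matrix $A'_i$ of the canonical splitting $\Dcris(M^{\rig})|_{U_i}\hookrightarrow\NN(M)^{\rig}|_{U_i}$ expressed in these bases is precisely the matrix analyzed in the paragraph before the statement; combining the pointwise valuation estimates for $A$ with the Frobenius iteration bound and the comparison of $\NN(M)$ with $\NN(M\otimes_R\cO_L)$ at closed points $x\in U_i$, the argument there shows that $f_i^k A'_i$ has entries in $R^{\rig}\htimes\B_{\rig,F}^+$ which are bounded on every closed sub-disk $v_p(\pi)\geq\rho$, for some integer $k=k(i)$. By Proposition~\ref{prop:partial-gen-fiber-desc}, this places the entries of $A'_i$ in $(R[1/p]\htimes\B_{\rig,F}^+)[1/f_i]$, hence the same holds for $\det(A'_i)$. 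Consequently $\det(A'_i)$ defines a homomorphism of line bundles over $(R[1/p]\htimes\B_{\rig,F}^+)[1/f_i]$.

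Next I would glue: on overlaps $U_i\cap U_j$, the two local descriptions $\det(A'_i)$ and $\det(A'_j)$ represent the same basis-independent morphism of line bundles (they differ only by the transition matrices between the chosen trivializations, whose determinants cancel), so they patch to a global homomorphism of line bundles over $R[1/p]\htimes\B_{\rig,F}^+$. After extending scalars along the flat map $R[1/p]\htimes\B_{\rig,F}^+\to R^{\rig}\htimes\B_{\rig,F}^+$ (which is injective, since $R[1/p]\hookrightarrow R^{\rig}$), this global map recovers $(\pi/t)^{r_1+\cdots+r_d}\det(\mathrm{inc}_M)$, which is an isomorphism of line bundles by Proposition~\ref{quot-nm-dcris}. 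To check it is already an isomorphism before descent, I would run the same boundedness argument for the inverse of $A'_i$: the matrix expressing the chosen lift $\{e'_j\}$ of $\NN(M)^{\rig}/\pi$ in terms of the $\Gamma$-invariant basis is estimated by Frobenius iteration in exactly the same way, placing $\det(A'_i)^{-1}$ in $(R[1/p]\htimes\B_{\rig,F}^+)[1/f_i]$; thus $\det(A'_i)$ is a unit, and the global map is an isomorphism. The agreement with Proposition~\ref{quot-nm-dcris} is then automatic from the construction.

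The main technical obstacle has already been overcome in the paragraph preceding the statement, namely the Frobenius-iteration argument showing that $A'_i$ has entries in the smaller ring. The only delicate remaining point is the invertibility check — verifying that the bound applies symmetrically to $A'_i$ and to the matrix of its inverse splitting, so that the local homomorphism one produces is in fact a local isomorphism before, and not merely after, base change to $R^{\rig}\htimes\B_{\rig,F}^+$.
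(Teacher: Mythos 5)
Your overall strategy --- trivialize both modules on an affine cover $\{U_i\}$ of $\Spec R[1/p]$, use the boundedness of the matrix $A'$ established in the paragraphs preceding the statement, and glue the local determinants --- is exactly the route the paper takes (the corollary is stated there without a separate proof precisely because the preceding discussion, culminating in ``$A'$ has entries in $(R[1/p]\htimes\B_{\rig,F}^+)[1/f_U]$'', is the proof). However, your invertibility step contains a genuine error. You claim that the matrix expressing $\{e_j'\}$ in terms of the $\Gamma$-invariant basis, i.e.\ $(A_i')^{-1}$, can be bounded ``in exactly the same way,'' so that $\det(A_i')$ is a unit of $(R[1/p]\htimes\B_{\rig,F}^+)[1/f_i]$. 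This is false whenever some $r_i>0$: by Proposition~\ref{quot-nm-dcris}, $\det(A_i')=\det(\mathrm{inc}_M)$ equals $(t/\pi)^{r_1+\cdots+r_d}$ times a unit, so it vanishes at $\pi=\zeta_{p^n}-1$ for $n\ge 1$ and cannot be invertible; concretely, the Frobenius iteration for $(A')^{-1}$ involves $P^{-1}$, of which only $q^hP^{-1}$ is integral, so each iteration contributes poles along $\varphi^j(q)=0$ that never cancel. Note also that if $\det(A_i')$ \emph{were} a unit, then $(\pi/t)^{r_1+\cdots+r_d}\det(\mathrm{inc}_M)$ could not be one (since $\pi/t\notin\B_{\rig,F}^+$), contradicting the very statement you are proving; relatedly, your glued map is $\det(\mathrm{inc}_M)$ rather than the normalized $(\pi/t)^{r_1+\cdots+r_d}\det(\mathrm{inc}_M)$ appearing in the corollary.

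The object whose membership and invertibility in the small ring must be checked is the \emph{normalized} determinant $u:=(\pi/t)^{r_1+\cdots+r_d}\det(A_i')$. Membership follows by dividing the series $\det(A_i')\in(R[1/p]\htimes\B_{\rig,F}^+)[1/f_i]$ by $(t/\pi)^{r_1+\cdots+r_d}$, which has unit constant term, keeping track of the growth condition of Proposition~\ref{prop:partial-gen-fiber-desc}. For invertibility one bounds $u^{-1}$ separately --- for instance via the reverse containment $\varphi^\ast\NN(M)^{\rig}\subset(\B_{\rig,F}^+\htimes R^{\rig})\otimes\Dcris(M^{\rig})$ of Corollary~\ref{nm-cris-pos-ht}, or by the pointwise specialization bounds --- and then applies Proposition~\ref{partial-localize-units} to the relation $u\cdot u^{-1}=1$, which is how the paper itself extracts the unit statement in the proof of Theorem~\ref{thm:Dcris-M}.
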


\section{Galois cohomology}\label{sec:cohomology}

Let $K_\infty/\Qp$ be an abelian $p$-adic Lie extension with Galois group $G:=G(K_\infty/\Qp)$ and associated Iwasawa algebra $\Lambda:=\Zp[\![G]\!].$  We write $J_G$ for its Jacobson radical. Let $R$ be a complete local noetherian $\Z_p$-algebra with finite residue field, flat over $\Z_p$, and let $\mathfrak{m}_R$ be its maximal ideal.  Then we define the ring
\[	\Lambda_R(G):=\Lambda\htimes_{\Zp}R	\]
which has Jacobson radical $\mathfrak{J}= J_G\Lambda_R(G) + \mathfrak{m}_R\Lambda_R(G)$.  If $M$ is a family of $G_{\Qp}$-representations over $R$, we define the $\Lambda_R(G)$-module
\[\TT(M):=\Lambda\htimes_{\Zp}M\]
where the $\Lambda_R(G)$-module structure is induced by $(\lambda\otimes r)\cdot (\lambda' \otimes m):= \lambda\lambda'\otimes rm.$ In addition we endow the latter with the following action of $G_{\Qp}:$ for $\sigma$ in the latter group we set $\sigma(\lambda\otimes m):=\lambda \overline{\sigma}^{-1}\otimes \sigma m.$ Now we define the Iwasawa cohomology of a family $M$ of $G_{\Qp}$-representations as
\[\H_{\Iw}^i(M):=\H_{\Iw}^i(K_\infty,M):=\H^i(\Qp,\TT(M)).\]
Fontaine proved that when $R=\Z_p$, there is a natural isomorphism $\D(M)^{\psi=1}\xrightarrow{\sim}\H_{\Iw}^1(M)$, where the map $\psi:\NN(M)\rightarrow \NN(M)$ is defined to be the composition
\[	\NN(M)\subset\varphi^\ast\NN(M)\xrightarrow{\psi\otimes 1}\NN(M)	\]
Dee~\cite[Theorem 3.3.4]{dee01} extended this isomorphism to formal families.

In fact, if $M$ is crystalline, then Iwasawa cohomology can be computed from the Wach module of $M$:
\begin{proposition}\label{prop:iwasawacohomology}
Suppose that $K_\infty=\Qp(\mu_{p^\infty})$ and $M$ is crystalline with Hodge--Tate weights in $[0,h]$.  Then there is a canonical isomorphism
\[	\H_{\Iw}^1(M)\xrightarrow{\sim} (\pi^{-1}\NN(M))^{\psi=1}	\]
functorial in the coefficients.

Suppose in addition that for every maximal ideal $\mathfrak{m}\subset R[1/p]$ the specialization $(R[1/p]/\mathfrak{m})\otimes_RM$ has no quotient isomorphic to the trivial representation.  Then $\H_{\Iw}^1(M)\xrightarrow{\sim} \NN(M)^{\psi=1}$.
\end{proposition}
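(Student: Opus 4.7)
The strategy is to combine a family version of Fontaine's $\psi$-invariants presentation of Iwasawa cohomology with an adaptation of Berger's classical identification. First I would apply Dee's extension \cite[Theorem 3.3.4]{dee01} of Fontaine's isomorphism to formal families in order to obtain a canonical isomorphism $\H_{\Iw}^1(M)\xrightarrow{\sim}\D(M)^{\psi=1}$ inside $\D(M)$. Both assertions of the proposition then reduce to identifications of submodules of $\D(M)$; the inclusions $\NN(M)^{\psi=1}\subseteq(\pi^{-1}\NN(M))^{\psi=1}\subseteq\D(M)^{\psi=1}$ are automatic, so only the reverse containments require proof.

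For the first part I would adapt the arguments of Berger \cite{berger04} and Loeffler--Zerbes \cite{loefflerzerbes11} to the family setting. The essential inputs from Theorem~\ref{thm:wach} are the integral bracketing $\pi^h\D^+(M)\subseteq\NN(M)\subseteq\D^+(M)$ in property (6), together with the relation $q^h\NN(M)\subseteq\varphi^*\NN(M)$ in property (5), which ensures that $\psi$ preserves $\NN(M)$ and hence $\pi^{-1}\NN(M)$. Given $x\in\D(M)^{\psi=1}$, the decomposition $\D(M)=\bigoplus_{i=0}^{p-1}(1+\pi)^i\varphi(\D(M))$ together with $\psi\circ\varphi=\id$ lets one iteratively propagate control on the negative $\pi$-powers of $x$ and conclude $x\in\pi^{-1}\NN(M)$. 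To pass from the classical rational case to arbitrary formal families I would use the inverse limit identifications $\D(M)=\varprojlim_n\D(M/\mathfrak{m}_R^n M)$ together with the specialization result of Theorem~\ref{thm:wach}(7), whose cokernel is annihilated by $p^{\alpha(h)}$.

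For the second assertion, I would exploit the exact sequence
\[ 0\to\NN(M)^{\psi=1}\to(\pi^{-1}\NN(M))^{\psi=1}\to\bigl(\pi^{-1}\NN(M)/\NN(M)\bigr)^{\psi=1}. \]
Since $\pi^{-1}\NN(M)/\NN(M)\cong\Dcris(M)$ (via multiplication by $\pi$), and the formation of $\Dcris(M)[1/p]$ commutes with base change on $R[1/p]$ by Corollary~\ref{DcrisMandMrig}, vanishing of the rightmost term can be tested pointwise at maximal ideals of $R[1/p]$. At each such specialization $R\to\cO_L$ the operator induced on this quotient is computable in terms of $\varphi$ on $\Dcris(M_L)$, and by the classical theory of Berger/Loeffler--Zerbes the vanishing of the corresponding eigenspace is equivalent to the non-existence of a trivial quotient of $M_L$. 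The non-triviality hypothesis gives pointwise vanishing at every closed point; applying Nakayama to the projective $R[1/p]$-module $\Dcris(M)[1/p]$ of constant rank then propagates to vanishing of the full quotient, and the $\Zp$-flatness of $R$ together with the $p$-saturation property Theorem~\ref{thm:wach}(3) handles any residual $p$-torsion.

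The main obstacle is integrality throughout: $\NN(M)$ need not be projective over $R\htimes\A_F^+$, the base change of Theorem~\ref{thm:wach}(7) is controlled only up to $p^{\alpha(h)}$, and $\Dcris(M)$ need not be projective over $R$ (only over $R[1/p]$). The cleanest approach is therefore to establish all containments first after inverting $p$, where the classical results and base change behave transparently, and then descend back to integral statements using $p$-saturation inside $\D(M)$ and the flatness of $R$ over $\Zp$.
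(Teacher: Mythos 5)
Your proposal is correct and, in the form you settle on in your final paragraph, is essentially the paper's argument: invoke Dee's extension of Fontaine's isomorphism to get $\H_{\Iw}^1(M)\cong\D(M)^{\psi=1}$, reduce the containment $(\pi^{-1}\NN(M))^{\psi=1}\supseteq\D(M)^{\psi=1}$ to the statement after inverting $p$, verify it there by specializing at every maximal ideal of $R[1/p]$ and quoting the classical crystalline case together with the base-change compatibility of $\NN(M)[1/p]$, and then descend integrally using $p$-saturation of $\NN(M)$ in $\D(M)$. Two remarks on where you diverge. First, your middle paragraph proposes redoing Berger's $\psi$-propagation argument directly in families and passing through the inverse limits $\D(M)=\varprojlim_n\D(M/\mathfrak{m}_R^nM)$; this is a detour the paper avoids, and it is the more delicate route, since the torsion quotients $M/\mathfrak{m}_R^nM$ are not crystalline representations to which the classical Wach-module theory applies, and the $p^{\alpha(h)}$ ambiguity in Theorem~\ref{thm:wach}(7) would have to be controlled by hand — your own concluding paragraph rightly discards this in favor of the characteristic-zero specialization argument. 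Second, for the non-trivial-quotient claim the paper simply says it ``follows similarly'' (i.e.\ again by pointwise specialization), whereas you supply the extra structure of the exact sequence with quotient $\pi^{-1}\NN(M)/\NN(M)\cong\Dcris(M)$ and test vanishing there; this is a legitimate refinement and buys a cleaner statement of exactly what must vanish, at the cost of having to justify that the induced operator on $\Dcris(M_L)$ has no fixed vectors precisely under the no-trivial-quotient hypothesis. One small inaccuracy: the reason $\psi$ is defined on $\NN(M)$ is not property (5) of Theorem~\ref{thm:wach} but the inclusion $\NN(M)\subset\varphi^{*}\NN(M)$, which holds because the Hodge--Tate weights are non-negative (this is exactly the remark the paper makes immediately after the proposition).
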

Here we use the non-negativity of the Hodge--Tate weights of $M$ to assume $\NN(M)\subset\varphi^\ast\NN(M)$.

\begin{proof}
It suffices to show that $(\pi^{-1}\NN(M))^{\psi=1}=\D(M)^{\psi=1}$, and to see this, it suffices to show that $(\pi^{-1}\NN(M)[1/p])^{\psi=1}=\D(M)[1/p]^{\psi=1}$.  But $\D(M)^{\psi=1}=\H_{\Iw}^1(M)$ is a finite $\Lambda_R(G)$-module by~\cite[Proposition 1.6.5(2)]{fukayakato06}, so $(\pi^{-1}\NN(M))^{\psi=1}\subset \D(M)^{\psi=1}$ is, as well.

Let $x\in\D(M)[1/p]^{\psi=1}$.  Then for every maximal ideal $\mathfrak{m}\subset R[1/p]$, the image of $x$ in $\D((R/\mathfrak{m})\otimes_RM)[1/p]$ lands in $\D((R[1/p]/\mathfrak{m})\otimes_RM)[1/p]^{\psi=1}\subset\pi^{-1}\NN((R[1/p]/\mathfrak{m})\otimes_RM)[1/p]$.  Since the formation of $\NN(M)[1/p]$ commutes with base change on $R[1/p]$, it follows that $x\in \pi^{-1}\NN(M)[1/p]$.

The second claim follows similarly.
\end{proof}

 Let $d$ be the $R$-rank of $M$.

\begin{proposition}\label{prop:HIwrk}
The Iwasawa cohomology groups $\H_{\Iw}^i(K_\infty,M)$ are finite $\Lambda_{R}(G)$-modules.  In fact, $\H_{\Iw}^i(K_\infty,M)=0$ for $i\notin\{1,2\}$, $\H_{\Iw}^1(K_\infty,M)[1/p]$ has generic rank $d$, and $\H_{\Iw}^2(K_\infty,M)$ is $R$-finite (and annihilated by an ideal of $\Lambda_{\Z_p}(G)$). 
\end{proposition}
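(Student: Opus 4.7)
First, I identify $\TT(M)=\Lambda\htimes_{\Zp}M$ with $\Lambda_R(G)\otimes_R M$, a finite free $\Lambda_R(G)$-module of rank $d$ endowed with the twisted $G_{\Qp}$-action; Shapiro's lemma then gives a quasi-isomorphism $R\Gamma_{\Iw}(K_\infty,M)\simeq R\Gamma(G_{\Qp},\TT(M))$. The vanishing $\H_{\Iw}^i=0$ for $i\ge 3$ is immediate from the cohomological $p$-dimension $2$ of $G_{\Qp}$, transferred to continuous cohomology of the compact module $\TT(M)$. For $i=0$, I rewrite $\H_{\Iw}^0(K_\infty,M)=\varprojlim_n M^{G_{F_n}}$ as an inverse limit along corestriction over the finite subextensions $F_n/\Qp$ of $K_\infty$; a compatible family $(m_{F_n})$ satisfies $m_{F_n}=[F_{n+1}:F_n]\,m_{F_{n+1}}$, so each $m_{F_n}$ is infinitely $p$-adically divisible in $M$. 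Since $[F_n:\Qp]\to\infty$ and $M$ is $p$-adically separated ($R$ being $\Zp$-flat), the limit vanishes.

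For finite generation over $\Lambda_R(G)$, I invoke \cite[Proposition 1.6.5]{fukayakato06}: because $\TT(M)$ is a free compact $\Lambda_R(G)$-module of finite rank, $R\Gamma(G_{\Qp},\TT(M))$ is represented by a perfect complex of $\Lambda_R(G)$-modules concentrated in degrees $[0,2]$, hence each $\H_{\Iw}^i(K_\infty,M)$ is finitely generated over the noetherian ring $\Lambda_R(G)$.

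For the $R$-finiteness of $\H_{\Iw}^2$, I apply local Tate duality in its Iwasawa-theoretic form: since $K_\infty\supseteq\Qp(\mu_{p^\infty})$,
\[
\H_{\Iw}^2(K_\infty,M)\cong\Hom_{\Zp}\!\bigl(M^\vee(1)^{G_{K_\infty}},\,\Qp/\Zp\bigr),
\]
where $M^\vee(1):=\Hom_{\Zp}(M,\mu_{p^\infty})$. Using $M\cong R^d$, Matlis duality over the complete local noetherian ring $R$ identifies $M^\vee(1)$ with a cofinitely generated $R$-module (up to cyclotomic twist); its $G_{K_\infty}$-invariants correspond under Matlis duality to a finitely generated quotient of $M(-1)$, whose Pontryagin dual is therefore $R$-finite. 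This gives the $R$-finiteness of $\H_{\Iw}^2$. For the annihilator ideal in $\Lambda_{\Zp}(G)$, the explicit description of the $G$-action on $M^\vee(1)^{G_{K_\infty}}$ as a combination of the dual Galois action on $M$ and the cyclotomic character on $\mu_{p^\infty}$ yields non-trivial relations; more concretely, the image of $\Lambda_{\Zp}(G)$ inside the $R$-finite endomorphism ring $\End_R(\H_{\Iw}^2)$ must have bounded Krull dimension, so the kernel of $\Lambda_{\Zp}(G)\to\End_R(\H_{\Iw}^2)$ is the required non-trivial annihilator ideal.

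Finally, the generic rank of $\H_{\Iw}^1[1/p]$ follows from the Euler--Poincar\'e characteristic formula for local Iwasawa cohomology,
\[
\sum_{i=0}^{2}(-1)^i\,\mathrm{rk}_{\Lambda_R(G)[1/p]}\H_{\Iw}^i(K_\infty,M)[1/p]\;=\;-d,
\]
combined with $\H_{\Iw}^0=0$ and the $\Lambda_R(G)[1/p]$-torsion nature of $\H_{\Iw}^2$ (a direct consequence of its $R$-finiteness, since $\Lambda_R(G)[1/p]$ has Krull dimension strictly greater than that of $R[1/p]$). The most delicate point in the proof will be making the annihilator claim precise in $\Lambda_{\Zp}(G)$ rather than merely in $\Lambda_R(G)$, which requires carefully exploiting the $\chi_{\mathrm{cyc}}$-twisted structure of the $G$-action rather than relying on a purely formal dimension count.
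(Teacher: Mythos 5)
Most of your plan coincides with the paper's: the $\Lambda_R(G)$-finiteness and perfectness come from \cite[Proposition 1.6.5]{fukayakato06}, the vanishing outside degrees $1,2$ from the classical analogues, and the $R$-finiteness of $\H_{\Iw}^2$ from Tate local duality, which the paper records as $\H_{\Iw}^2(K_\infty,M)\cong M(-1)_{G_{K_\infty}}$. Your route to the generic rank of $\H_{\Iw}^1[1/p]$ is genuinely different: you use the Euler--Poincar\'e formula for the perfect complex together with the observation that an $R$-finite module is generically zero over $\Lambda_R(G)[1/p]$ (its support has dimension at most $\dim R[1/p]$, so meets no component of $\Spec\Lambda_R(G)[1/p]$), whereas the paper first inverts the annihilator of $\H_{\Iw}^2$ to reach a dense open locus where $\H_{\Iw}^1$ commutes with specialization and then quotes the classical rank. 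Your version is cleaner and does not need the annihilator statement at all, provided you actually cite the Iwasawa-theoretic Euler characteristic (it can be extracted from the class of the perfect complex in $K_0(\Lambda_R(G))$ plus base change to one closed point); as written you assert it without justification.

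The genuine gap is the annihilator claim, and you have correctly identified it yourself but not closed it. The "bounded Krull dimension" argument fails: the image of $\Lambda_{\Zp}(G)$ in $\mathrm{End}_R(\H_{\Iw}^2)$ is only a $\Zp$-subalgebra of a finite $R$-algebra, not an $R$-submodule, and such a subalgebra can have Krull dimension as large as $\dim R$. Concretely, if $\H_{\Iw}^2$ were free of rank one over $R$ with $G$ acting through a character $G\to R^\times$ whose image topologically generates $R$ over $\Zp$, the map $\Lambda_{\Zp}(G)\to\mathrm{End}_R(\H_{\Iw}^2)=R$ could be injective, so no dimension count on the target forces a nonzero kernel once $\dim R\geq\dim\Lambda_{\Zp}(G)$. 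The paper's mechanism is different and is what you need: specialize at a closed point $x$ of $\Spec R[1/p]$, where the coefficient ring $\cO_{L_x}$ is one-dimensional and the classical theory supplies a nonzero finitely generated ideal $I_x\subset\Lambda_{\Zp}(G)$ killing $\H_{\Iw}^2(K_\infty,M\otimes_R\cO_{L_x})$; use the fact that $\H_{\Iw}^2$ commutes with base change (perfectness plus vanishing in degrees $\geq 3$) to spread the vanishing of $I_x\H_{\Iw}^2$ over a Zariski open neighborhood of $x$; then invoke quasi-compactness of $\Spec\Lambda_R(G)$ and multiply finitely many such ideals. Without some such globalization from classical points, the nonvanishing of the annihilator in $\Lambda_{\Zp}(G)$ does not follow from $R$-finiteness alone.
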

\begin{proof}
The $\Lambda_{R}(G)$-finiteness statement follows from \cite[Proposition 1.6.5(2)]{fukayakato06}.  The vanishing statements follow from their classical analogs.

A consideration of Tate local duality implies that $\H_{\Iw}^2(K_{\infty},M)\cong (M^\vee(1)^G)^\vee\cong M(-1)_G$, which is $R$-finite.  The perfectness of $R\Gamma_{\Iw}(K_{\infty},M):=R\Gamma(\Qp,\TT(M))$ combined with the fact that $\H_{\Iw}^i(K_\infty,M)=0$ for $i\geq 3$ implies that the formation of $\H_{\Iw}^2(K_\infty,M)$ commutes with base change on $\Lambda_{R}(G)$, and in particular, with base change on $R$.  For a closed point $x\in\Spec R[1/p]$, let $\cO_{L_x}$ denote the ring of integers in the residue field at $x$.  Since $\H_{\Iw}^2(K_\infty,M)$ is finite as an $R$-module, there is a finitely generated ideal $I_x\subset\Lambda_{\Z_p}(G)$ which annihilates $\H_{\Iw}^2(K_\infty,M\otimes_R\cO_{L_x})=\H_{\Iw}^2(K_\infty,M)\otimes_R\cO_{L_x}$.  This implies that $I_x$ annihilates $\H_{\Iw}^2(K_\infty,M)$ in a Zariski open neighborhood of $x$, and since $\Spec \Lambda_R(G)$ is quasi-compact, we can multiply finitely many ideals together to get a finitely generated ideal of $\Lambda_{\Z_p}(G)$ which annihilates $\H_{\Iw}^2(K_\infty,M)$.

The preceding paragraph implies that over an open dense subspace of $\Spec \Lambda_{R}(G)[1/p]$ (obtained by inverting elements coming from $\Lambda_{\Z_p}(G)[1/p]$), the formation of $\H_{\Iw}^1(K_\infty,M)[1/p]$ also commutes with base change on $R[1/p]$.  But then comparison with the classical case shows that $\H_{\Iw}^1(K_\infty,M)[1/p]$ has generic rank $d$.
%
\end{proof}

For our later calculations we need the convergent cohomological spectral sequence for any   $(R',R)$-bimodule $Y$:
  \begin{equation}
   \label{f:spectralsequence} E_2^{i,j}:=\mathrm{Tor}_{-i}^{R}(Y,H^j_\Iw(M))\Rightarrow \H^{i+j}_\Iw(Y\otimes_{R}M), \end{equation}
  which is induced from the isomorphism
  \[Y\otimes^\mathbb{L}_R R\Gamma_\Iw(M)\cong R\Gamma_\Iw(Y\otimes_{R} M),\] proved in \cite{fukayakato06}. We obtain   the following exact sequence of terms in lower degree
\begin{multline}\label{f:lowdegree}
          \xymatrix{
     0\ar[r]&  {\mathrm{Tor}_2^R( Y,\H^2_\Iw(M))}  \ar[r]& Y\otimes_R\H^1_\Iw(M) \ar[r]^{ } & {\H^1_\Iw(Y\otimes_R M )} \ar[r]^{ } &  {\mathrm{Tor}_1^R(Y,\H^2_\Iw(M))} \ar[r]^{ } & 0   }
\end{multline}
as well as the isomorphisms
\[ Y\otimes_R\H^2_\Iw(M)\cong \H^2_\Iw(Y\otimes_R M)\;\;\;\mbox{ and }\;\;\;{\mathrm{Tor}_i^R(Y,\H^1_\Iw(M))}\cong\mathrm{Tor}_{i+2}^R(Y,\H^2_\Iw(M))  \]
for $i\geq 1.$ By Proposition \ref{prop:HIwrk} we obtain pseudo-isomorphisms, i.e., isomorphisms up to  pseudo-null modules,
\begin{equation}\label{H1modulopseudonull}
Y\otimes_R\H^1_\Iw(M)    \sim{\H^1_\Iw(Y\otimes_R M )} \;\;\;\mbox{ and }\;\;\; \mathrm{Tor}_i^R(Y,\H^1_\Iw(M))\sim 0
\end{equation}
for $i\geq 1.$

In the Fontaine-Lafaille range we obtain the following result which is not needed elsewhere in this article.

 \begin{proposition}\label{prop:phiN}
 If $M$ is crystalline  with Hodge–Tate weights in $[a,b]$ satisfying $h=b-a<p-1$, then $(\varphi^*\NN(M))^{\psi=0}$ is a free $R\widehat{\otimes}\Lambda(G)$-module of rank $r.$
 \end{proposition}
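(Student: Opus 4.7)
The plan is to reduce the statement to the classical description of the $\psi = 0$ part of $\A_F^+$ due to Colmez, by first establishing that, under the hypothesis $h < p-1$, the Wach module $\NN(M)$ is free of rank $r$ over $R \widehat\otimes \A_F^+$. Theorem~\ref{thm:wach} only supplies projectivity after inverting $p$, so the core difficulty is the integral upgrade in the Fontaine--Laffaille range.

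For the freeness of $\NN(M)$, I would first observe that the hypothesis $h < p-1$ forces $\alpha(h) = 0$: since $\Gamma_F \cong \Z_p^\times$ contains the torsion subgroup $\mu_{p-1}$, we may choose $\gamma \in \Gamma_F$ with $\chi(\gamma)$ a primitive $(p-1)$-th root of unity, whence $\chi(\gamma)^j - 1$ is a unit in $\Zp$ for $1 \le j \le h$, so $\alpha(h) = 0$. By Corollary~\ref{cor:wach-wach-sat} this yields $\NN(M)^{\mathrm{sat}} = \NN(M)^{\mathrm{total}}$ (modulo $p$-power torsion), and base change via Proposition~\ref{prop:wach-pi-h} becomes exact. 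I would then produce a family version of Berger's direct construction in this range, showing that the natural map
\[
	(R \widehat\otimes \A_F^+) \otimes_{R \otimes_{\Z_p} \cO_F} \Dcris(M) \longrightarrow \NN(M)
\]
is an isomorphism of $R \widehat\otimes \A_F^+$-modules, by reducing modulo $\mathfrak{m}_R^n$ (where the classical Fontaine--Laffaille equivalence with coefficients applies in the Artinian case) and then passing to the limit. Combined with freeness of $\Dcris(M)$ over $R \otimes_{\Z_p} \cO_F$ supplied by the Fontaine--Laffaille equivalence in this range, this yields freeness of $\NN(M)$ of rank $r$ over $R \widehat\otimes \A_F^+$.

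Once $\NN(M)$ is free, fix a basis $e_1, \ldots, e_r$. Then $\varphi^*\NN(M)$ is free of rank $r$ with basis $\{1 \otimes e_i\}$, and the operator $\psi$ on $\varphi^*\NN(M)$ is $\psi \otimes 1$, i.e., it acts only through the coefficient factor. Consequently
\[
	(\varphi^*\NN(M))^{\psi = 0} \;=\; \bigoplus_{i=1}^r \bigl(R \widehat\otimes \A_F^+\bigr)^{\psi=0} \cdot (1 \otimes e_i).
\]
Colmez's classical identification (used in~\cite{loefflerzerbes11}) of $\Lambda(G)$ with $(\A_F^+)^{\psi=0}$ via $\lambda \mapsto \lambda(1+\pi)$ then gives $(R \widehat\otimes \A_F^+)^{\psi=0} \cong R \widehat\otimes \Lambda(G)$, and the proposition follows.

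The main obstacle is the integral freeness of $\NN(M)$: Theorem~\ref{thm:wach} does not a priori rule out connected components of $\mathscr{L}_M^{\leq h}$ supported on the special fibre, nor $p$-power torsion in $\NN(M)^{\mathrm{total}}$. Addressing this in the Fontaine--Laffaille range likely requires either constructing Fontaine--Laffaille modules directly in formal families of power-series coefficients, or proving that $\mathscr{L}_M^{\leq h}$ is isomorphic to $\Spec R$ in this range, so that $\NN(M)^{\mathrm{total}}$ is the pullback of a free universal lattice. Either route takes genuine work beyond what has been developed in the preceding sections, and is the real content of the proposition.
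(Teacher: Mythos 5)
Your plan hinges entirely on the claim that $\NN(M)$ is free of rank $r$ over $R\htimes\A_F^+$ in the Fontaine--Laffaille range, and this is precisely the step you do not carry out. The preliminary observation is fine ($\alpha(h)=0$ when $h<p-1$, by taking $\chi(\gamma)$ a primitive $(p-1)$-th root of unity), and granting freeness the rest of your argument is the standard Colmez decomposition of $(\varphi^*\NN(M))^{\psi=0}$ along a basis. But the proposed route to freeness --- that $(R\htimes\A_F^+)\otimes_{R\otimes_{\Zp}\cO_F}\Dcris(M)\to\NN(M)$ is an isomorphism, via reduction mod $\mathfrak{m}_R^n$ and a limit --- is not available from what has been developed: the paper explicitly does not know whether $\Dcris(M)=\NN(M)/\pi\NN(M)$ is projective over $R$ integrally, the Artinian-coefficient reductions $\NN(M)\otimes_R R/\mathfrak{m}_R^n$ need not coincide with any Fontaine--Laffaille-theoretic lattice without further argument, and the possible $p$-power torsion in $\NN(M)^{\mathrm{total}}$ coming from components of $\mathscr{L}_M^{\leq h}$ on the special fibre is exactly the obstruction you name at the end without resolving. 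As written, the proposal therefore reduces the proposition to an unproved statement that is at least as hard.

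The paper's proof sidesteps integral freeness of $\NN(M)$ altogether. It quotes the reference [SSV] for the decomposition $(\varphi^*\NN(M/I^nM))^{\psi=0}=\bigoplus_{j=1}^{p-1}(1+\pi)^j\otimes\NN(M/I^nM)$ with torsion coefficients and passes to the projective limit, so that $(\varphi^*\NN(M))^{\psi=0}$ is identified with $(p-1)$ twisted copies of $\NN(M)$ as an $R\htimes\Lambda(G)$-module. It then only needs the \emph{special fibre} to be free: in the Fontaine--Laffaille range $\NN(M/IM)$ is free over $\A_{\Qp}^+$ and base change is an isomorphism, so the reduction of $(\varphi^*\NN(M))^{\psi=0}$ modulo $I$ is free of rank $r$ over $\Lambda(G)$; Nakayama gives a surjection $(R\htimes\Lambda(G))^r\twoheadrightarrow(\varphi^*\NN(M))^{\psi=0}$, and the kernel is killed by induction on the coordinates of $R$ together with torsion-freeness of $\varphi^*\NN(M)$ and a second application of Nakayama. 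If you want to salvage your approach, you would either have to import that external decomposition as well, or genuinely establish the family Fontaine--Laffaille theory you allude to; the latter is a substantial project, not a step in this proof.
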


 \begin{proof}
 From \cite{SSV} we obtain the isomorphisms
 \[(\varphi^*\NN(M/I^nM))^{\psi=0}=\bigoplus_{j=1}^{p-1} (1+\pi)^j\otimes \NN(M/I^nM) \] which induces after taking projective limits an isomorphism of $R\widehat{\otimes}\Lambda(G)$-modules
 \[(\varphi^*\NN(M))^{\psi=0}=\bigoplus_{j=1}^{p-1} (1+\pi)^j\otimes \NN(M) .\] Now choose lifts $n_1,\ldots, n_r$ in $\NN(M)$ of any $\AA^+_{\Qp}$-basis $\bar{n}_1,\ldots, \bar{n}_r$ of $\NN(M/IM)$. Since \begin{eqnarray*}
  (\varphi^*\NN(M))^{\psi=0}/I(\varphi^*\NN(M))^{\psi=0}&=&\bigoplus (1+\pi)^j\otimes \NN(M)/I\NN(M)\\
  &\cong&\bigoplus (1+\pi)^j\otimes \NN(M /I M)
 \end{eqnarray*}
 is isomorphic to $\Lambda(G)^r$ with basis $\bar{n}_1,\ldots, \bar{n}_r$ by (loc.\ cit.) and the fact that by assumption the base change map for $\NN$ is an isomorphism, we see by the Nakayama lemma that the map \[(R\widehat{\otimes}\Lambda(G))^r \to (\varphi^*\NN(M))^{\psi=0}\] induced by the choice of $n_1,\ldots, n_r$ is surjective with kernel, say, $C$. By induction on $d$ and using torsionfreeness of $\varphi^*\NN(M)$ we conclude that $C/X_dC =0$ whence $C=0$ by the Nakayama lemma again. The proposition follows and we even get an explicit basis.
 \end{proof}

{

We now specialize to the following setting (which will be crucial for the construction of the regulator maps): Let $F_\infty$ be the unramified $\Zp$-extension of $\Qp$. We set $K_\infty = F_\infty(\mu_{p^\infty})$, $U = \Gal(F_\infty / \Qp )\cong \Gal(K_\infty/\Qpi)$ and $G = \Gal(K_\infty / \Qp)$. We regard $\Gamma$ as a subgroup of $G$, by identifying it with $\Gal(K_\infty / F_\infty)$, so we have $G = U \times \Gamma$.

Following  \cite{loefflerzerbes11} we define
  \[\NN_\infty(M):=\NN(M)\widehat{\otimes}_{\Zp} S_\infty\]
Here $S_\infty:=\varprojlim_n \cO_{F_n}$ with the transition maps being the trace maps.  This is called the \emph{Yager module}, and it is free of rank one over $\Lambda(U)$ and naturally endowed with a Frobenius action.  Moreover, $S_\infty$ is naturally contained in $\Lambda_{\mathcal{O}_{\widehat{F}_\infty}}(U)$, and the induced subspace topology coincides with  the inverse limit topology (see the discussion in \S 3 in loc.~cit.). Indeed, setting
\[\Lambda_{\Zp}(U)_{\tau_p}:=\{x\in \widehat{\mathbb{Z}_p^{\nr}}\widehat\otimes_{\Zp} \Lambda_{\Zp}(U)| (\phi\otimes 1)x=(1\otimes \overline{\tau_p})x\},\]
where $ \overline{\tau_p}$  denotes the image  of  the unique lift $\tau_p$  in $G(\mathbb{Q}_p^{\mathrm{ab}}/\Qp(\mu(p)))$ of the arithmetic Frobenius $\sigma\in G(\mathbb{Q}_p^{\nr}/\Qp)$   one checks (\cite[Prop.\ 3.6]{loefflerzerbes11}) that \begin{equation}\label{f:Yager} S_\infty \cong \Lambda_{\Zp}(U)_{\tau_p}.\end{equation}

 Similarly, we consider for the the algebras of $L$-valued locally analytic
distributions $\cH_{\Qp}(U)$ and $\cH_{\Qp}(G)$ on $U$ and $G,$ respectively,
\[\cH_{\Qp}(U)_{\tau_p}:=\{x\in \widehat{\mathbb{Z}_p^{\nr}}\widehat\otimes_{\Zp} \cH_{\Qp}(U)| (\sigma\otimes 1)x=(1\otimes \overline{\tau_p})x\}\]
and
\[\cH_{\Qp}(G)_{\tau_p}:=\{x\in \widehat{\mathbb{Z}_p^{\nr}}\widehat\otimes_{\Zp} \cH_{\Qp}(G)| (\sigma\otimes 1)x=(1\otimes \overline{\tau_p})x\},\]
where in the latter case $\overline{\tau_p} $ denotes the image of $\tau_p$ in $G:=\Gamma\times U$ with $\Gamma\cong\Zp.$

\begin{lemma}\label{lem:twists}
\begin{enumerate}
 \item $\Lambda_{\Zp}(U)_{\tau_p}$ and  $\cH_{\Qp}(U)_{\tau_p}$ are  free rank one modules over $\Lambda_{\Zp}(U) $ and $\cH_{\Qp}(U)$, respectively.
 \item $\Lambda_{\Zp}(G)_{\tau_p}$ and  $\cH_{\Qp}(G)_{\tau_p}$ are  free rank one modules over $\Lambda_{\Zp}(G) $ and $\cH_{\Qp}(G)$, respectively.
 \end{enumerate}
 Moreover, we have  \[ \cH_{\Qp}(U)_{\tau_p}\widehat\otimes_{\Qp}\cH_{\Qp}(\Gamma)=\cH_{\Qp}(G)_{\tau_p}.\]
\end{lemma}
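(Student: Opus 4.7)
The first claim of (1)---that $\Lambda_{\Zp}(U)_{\tau_p}$ is free of rank one over $\Lambda_{\Zp}(U)$---is immediate from the Yager isomorphism \eqref{f:Yager} combined with the classical (Yager's) fact, already invoked above, that $S_\infty$ is free of rank one over $\Lambda_{\Zp}(U)$. Fix a generator $e\in\Lambda_{\Zp}(U)_{\tau_p}$. The key preliminary observation that powers the rest of the argument is that $e$ becomes a unit in $\widehat{\mathbb{Z}_p^{\nr}}\widehat\otimes_{\Zp}\Lambda_{\Zp}(U)$, and hence in each of the larger rings $\widehat{\mathbb{Z}_p^{\nr}}\widehat\otimes\Lambda_{\Zp}(G)$, $\widehat{\Qp^{\nr}}\widehat\otimes\cH_{\Qp}(U)$ and $\widehat{\Qp^{\nr}}\widehat\otimes\cH_{\Qp}(G)$ into which it canonically embeds. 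Indeed, tensoring the isomorphism $\cdot e\colon\Lambda_{\Zp}(U)\xrightarrow{\sim}\Lambda_{\Zp}(U)_{\tau_p}$ up along $\Zp\to\widehat{\mathbb{Z}_p^{\nr}}$ and composing with the tautological inclusion $\Lambda_{\Zp}(U)_{\tau_p}\hookrightarrow\widehat{\mathbb{Z}_p^{\nr}}\widehat\otimes\Lambda_{\Zp}(U)$ exhibits multiplication by $e$ as an automorphism.

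For the three remaining freeness assertions the decisive point is that $\tau_p$ fixes $\Qpi=\Qp(\mu_{p^\infty})$ by its very construction, so that its image $\bar\tau_p$ in $G=U\times\Gamma$ has trivial $\Gamma$-component; write $\bar\tau_p=(\tau_U,1)$ with $\tau_U\in U$ a topological generator. Consequently the twist element $\bar\tau_p$ in the definitions of $\Lambda_{\Zp}(G)_{\tau_p}$ and $\cH_{\Qp}(G)_{\tau_p}$ is simply $\tau_U$, viewed in $\Lambda_{\Zp}(G)$ or $\cH_{\Qp}(G)$ via the inclusion $U\hookrightarrow G$, so $e$ continues to satisfy the defining twist relation $\sigma(e)=\bar\tau_p\cdot e$ when regarded in the appropriate ambient ring. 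To show that $e$ generates, let $R$ denote any of $\cH_{\Qp}(U)$, $\Lambda_{\Zp}(G)$ or $\cH_{\Qp}(G)$, and let $\widetilde R$ denote its $\widehat{\mathbb{Z}_p^{\nr}}$- or $\widehat{\Qp^{\nr}}$-scalar extension. Given $x\in R_{\tau_p}$, we use that $e$ is a unit in $\widetilde R$ to write $x=\lambda e$ with $\lambda\in\widetilde R$; the twist relations for $x$ and $e$ combine to give
\[  \sigma(\lambda)\sigma(e)\;=\;\sigma(x)\;=\;\bar\tau_p\cdot x\;=\;\lambda\,\bar\tau_p\cdot e\;=\;\lambda\,\sigma(e), \]
so $\sigma(\lambda)=\lambda$. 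Faithfully flat $\sigma$-descent---$R$ is $\Zp$- or $\Qp$-flat and $(\widehat{\mathbb{Z}_p^{\nr}})^{\sigma=1}=\Zp$---yields $\widetilde R^{\sigma=1}=R$, whence $\lambda\in R$ as required.

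The final identification $\cH_{\Qp}(U)_{\tau_p}\widehat\otimes_{\Qp}\cH_{\Qp}(\Gamma)=\cH_{\Qp}(G)_{\tau_p}$ is then a formal consequence of what has been proved: both sides are free of rank one over $\cH_{\Qp}(G)=\cH_{\Qp}(U)\widehat\otimes_{\Qp}\cH_{\Qp}(\Gamma)$, and the natural inclusion sends the chosen generator $e\otimes1$ on the left to the generator $e$ on the right. The only non-formal step, and thus the main technical point to verify carefully, is the descent identity $\widetilde R^{\sigma=1}=R$ for the completed tensor products; this is routine once one uses that the Iwasawa and distribution algebras in question are $\Zp$- (resp.\ $\Qp$-) flat and carry topologies compatible with the $p$-adic topology on $\widehat{\mathbb{Z}_p^{\nr}}$, reducing the claim to $(\widehat{\mathbb{Z}_p^{\nr}})^{\sigma=1}=\Zp$.
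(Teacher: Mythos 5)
Your overall strategy is the same as the paper's: produce a single generator $e$ of $\Lambda_{\Zp}(U)_{\tau_p}$, show that $e$ is a \emph{unit} in $\widehat{\Zp^{\nr}}\htimes_{\Zp}\Lambda_{\Zp}(U)$, and then for any $x$ in any of the twisted modules write $x=\lambda e$ and deduce $\sigma(\lambda)=\lambda$, hence $\lambda\in R$. The reduction of parts (2) and of the final tensor identity to part (1), via the observation that $\overline{\tau_p}$ has trivial $\Gamma$-component so that $e\in\Lambda_{\Zp}(U)_{\tau_p}\subseteq\Lambda_{\Zp}(G)_{\tau_p}$, is exactly what the paper does, and your $\sigma$-invariance computation is correct.

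The gap is in your justification of the pivotal claim that $e$ is a unit. Base-changing the isomorphism $\cdot e\colon\Lambda_{\Zp}(U)\xrightarrow{\sim}\Lambda_{\Zp}(U)_{\tau_p}$ along $\Zp\to\widehat{\Zp^{\nr}}$ and composing with (the base change of) the inclusion $\Lambda_{\Zp}(U)_{\tau_p}\hookrightarrow\widehat{\Zp^{\nr}}\htimes\Lambda_{\Zp}(U)$ only exhibits multiplication by $e$ as the composite of an isomorphism with an \emph{injection}; it gives no surjectivity. Surjectivity of the multiplication map $\widehat{\Zp^{\nr}}\otimes_{\Zp}\Lambda_{\Zp}(U)_{\tau_p}\to\widehat{\Zp^{\nr}}\htimes\Lambda_{\Zp}(U)$ is precisely equivalent to $e$ being a unit, so the argument as written is circular: freeness of the twisted module alone does not force its generator to be invertible in the ambient ring (the submodule $p\Lambda_{\Zp}(U)$ is also free of rank one, with non-unit generator). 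The paper closes this by identifying $\Lambda_{\Zp}(U)_{\tau_p}$, via $\lambda=\sum a_iT^i\mapsto(a_i)$, with compatible sequences in $\varprojlim_{\sigma-1}\widehat{\Zp^{\nr}}$ having $a_0\in\Zp$, and then using the exact sequence $0\to\Zp\to\widehat{\Zp^{\nr}}\xrightarrow{\sigma-1}\widehat{\Zp^{\nr}}\to 0$ to produce an element $\lambda_0$ with constant term $a_0=1$; such a power series is visibly a unit in $\widehat{\Zp^{\nr}}[[T]]$. To repair your proof you must either reproduce this construction (or equivalently invoke the statement from \cite{loefflerzerbes11} that $\widehat{\Zp^{\nr}}\htimes_{\Zp}S_\infty\to\Lambda_{\widehat{\Zp^{\nr}}}(U)$ is an isomorphism, not merely an injection); once that is in place the rest of your argument goes through.
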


\begin{proof} (1) Let $\cO$ be either $\Zp$ or $\widehat{\Zp^{nr}}$ and set $L:=\cO[\frac{1}{p}].$
For the purpose of the proof we identify $\Lambda_{\cO}(U)$ and $\cH_{L}(U) $ with $\cO[[T]]$ and the subring of $L[[T]]$ consisting of power series which converge on the open unit disk, respectively. Then the defining condition of e.g. $\Lambda_{\Zp}(U)_{\tau_p} $ is equivalent to $(\sigma-1)\lambda=T\lambda$ whence  we obtain the isomorphism
\[\Lambda_{\Zp}(U)_{\tau_p}\cong \{(a_i)\in \varprojlim_{i\geq 0,\sigma-1} \widehat{\Zp^{\nr}}| a_0\in\Zp\}\] sending $\lambda=\sum a_iT^i$ to $ (a_i)$. Fix any such $\lambda_0$ with $a_0=1,$ the existence of which stems from the exact sequence
\[0\to\Zp\to\widehat{\Zp^{\nr}}\xrightarrow{\sigma-1} \widehat{\Zp^{\nr}}\to 0.\] Then $\lambda_0$ is a unit in $\Lambda_{\widehat{\Zp^{\nr}}}(U)$ and for any $\lambda $ in $\Lambda_{\Zp}(U)_{\tau_p}$ or  $\cH_{\Qp}(U)_{\tau_p}$ we have that
\[\sigma(\lambda\lambda_0^{-1})=\sigma(\lambda)\sigma(\lambda_0)^{-1}= \overline{\tau_p}\lambda \overline{\tau_p}^{-1}\lambda_0^{-1}=\lambda\lambda_0^{-1},\]
i.e., $\lambda\lambda_0^{-1}$ belongs to $\Lambda_{\Zp}(U)$ or  $\cH_{\Qp}(U)$, respectively. This implies that $\lambda_0$ forms a basis in both cases.
(2) Since $\lambda_0\in \Lambda_{\Zp}(U)_{\tau_p}\subseteq \Lambda_{\Zp}(G)_{\tau_p}$  the last argument applies in the same way. From this the last statement is clear, too.
\end{proof}

Using the same argument as in the proof of \cite[Theorem 4.4, Proposition 4.5]{loefflerzerbes11} we obtain the following sharpening of Proposition~\ref{prop:iwasawacohomology}:

\begin{proposition}\label{prop:iwasawacohomologyunramified}
 If $M$ is crystalline with Hodge-Tate weights in $[0,h]$, then there is a canonical isomorphism \[\H^1_{\Iw}(K_\infty,M)\cong \NN_\infty(M)^{\psi=1}.\]
\end{proposition}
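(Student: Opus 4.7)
The strategy is to reduce to Proposition~\ref{prop:iwasawacohomology}, which already treats the cyclotomic case $\Qpi/\Qp$. The extra ingredient needed is a comparison, via Shapiro's lemma, between the Iwasawa cohomology over $K_\infty$ and that over $\Qpi$ with coefficients in an appropriate unramified twist, together with a compatible identification on the level of Wach modules using the Yager module $S_\infty$. This is exactly the scheme of \cite[Theorem 4.4, Proposition 4.5]{loefflerzerbes11}; my plan is to adapt it to general coefficient rings $R$.

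First, I would construct the map $\NN_\infty(M)^{\psi=1}\to\H^1_{\Iw}(K_\infty,M)$. Decompose $G=U\times\Gamma$, so that
\[
\Lambda_R(G)\widehat\otimes M \;\cong\; \Lambda(U)\widehat\otimes\bigl(\Lambda(\Gamma)\widehat\otimes M\bigr),
\]
and interpret the inner factor $\Lambda(\Gamma)\widehat\otimes M$ as the Iwasawa-coefficient module for the cyclotomic tower $\Qpi/\Qp$. A Shapiro-type argument (or equivalently a direct cocycle calculation, as performed in loc.\ cit.) then gives a natural identification
\[
\H^1_{\Iw}(K_\infty, M)\;\cong\; \H^1_{\Iw}(\Qpi, \widetilde M),\qquad \widetilde M := M\widehat\otimes_{\Zp}\Lambda_{\Zp}(U)_{\tau_p},
\]
where $\Lambda_{\Zp}(U)_{\tau_p}$ carries the $G_{\Qp}$-action recalled before Lemma~\ref{lem:twists}. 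The point of using the twisted variant $\Lambda_{\Zp}(U)_{\tau_p}$ rather than $\Lambda(U)$ itself is exactly the isomorphism \eqref{f:Yager}: it arranges $\widetilde M$ to become crystalline (an unramified twist of a crystalline family), with Hodge--Tate weights still in $[0,h]$.

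Next I would identify the Wach module of $\widetilde M$. Because $\Lambda_{\Zp}(U)_{\tau_p}$ is an unramified twist and $\NN$ behaves well under such twists, we expect
\[
\NN(\widetilde M) \;\cong\; \NN(M)\widehat\otimes_{\Zp} S_\infty \;=\; \NN_\infty(M),
\]
which is precisely where the Yager module enters via \eqref{f:Yager}. Once that identification is in place, applying Proposition~\ref{prop:iwasawacohomology} to $\widetilde M$ gives
\[
\H^1_{\Iw}(\Qpi,\widetilde M)\;\cong\;(\pi^{-1}\NN_\infty(M))^{\psi=1}.
\]
Finally one has to check that the image actually lands in $\NN_\infty(M)^{\psi=1}$, i.e.\ that the potential $\pi^{-1}$-part vanishes. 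This is the analogue of the ``no trivial quotient'' hypothesis in the second half of Proposition~\ref{prop:iwasawacohomology}: the induced coefficients $\widetilde M$ have no pointwise trivial quotient once we spread out over the unramified tower (equivalently, the Frobenius-$\tau_p$ eigenvalue condition defining $\Lambda_{\Zp}(U)_{\tau_p}$ forbids a trivial summand appearing in the limit), so the refined form of the proposition applies.

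The main obstacle will be Step~2: making the Shapiro identification and its compatibility with the Wach-module functor precise in families, since $\widetilde M$ is no longer a finite $R$-module and so strictly speaking falls outside the setting of Theorem~\ref{thm:wach}. Following \cite{loefflerzerbes11} the cleanest route is to avoid defining $\NN(\widetilde M)$ literally and instead work directly with the pro-system $\{\NN(M)\otimes\Zp[U/U^{p^n}]\}$, whose inverse limit along trace/corestriction maps produces $\NN_\infty(M)$ on one side and $\H^1_{\Iw}(K_\infty,M)$ on the other, using Proposition~\ref{prop:iwasawacohomology} at each finite level together with the explicit description of $S_\infty$ from \eqref{f:Yager} to pass to the limit.
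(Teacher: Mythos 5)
Your proposal is correct and follows essentially the same route as the paper, whose proof consists precisely of the remark that the argument of \cite[Theorem 4.4, Proposition 4.5]{loefflerzerbes11} carries over: Shapiro's lemma over the unramified tower, the identification of $\NN_\infty(M)=\NN(M)\htimes S_\infty$ via the Yager module and \eqref{f:Yager}, reduction to Proposition~\ref{prop:iwasawacohomology}, and the disappearance of the $\pi^{-1}$ because the universal unramified twist kills trivial quotients. You also correctly identify, and propose the standard pro-system workaround for, the only genuine technical point in adapting this to families (that $\widetilde M$ is not a finite $R$-module).
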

}

\section{Regulator maps for families}\label{sec:regulator}

We have to extend the various regulator maps to families and thus generalize the main result from \cite{loefflerzerbes11}. Let $F$ be any finite unramified extension of $\Qp$, and let $F_\infty$ be the unramified $\Zp$-extension of $F$. We set $K_\infty = F_\infty(\mu_{p^\infty})$, $U = \Gal(K_\infty / \Qpi)$ and $G = \Gal(K_\infty / \Qp)$. We regard $\Gamma$ as a subgroup of $G$, by identifying it with $\Gal(K_\infty / F_\infty)$, so we have $G = U \times \Gamma$.

\subsection{$\Gamma$-regulator}

Let $M$ now be a crystalline family with non-negative Hodge-Tate weights and not containing the trivial representation as a quotient (of  any $M_n$). Then we have the following map:
\[   \mathcal{L}^{\Gamma}_{\Qp, M,\xi}:     \xymatrix{
        \H^1_{\Iw}(  \Qp(\mu_{p^\infty})/\Qp, M )\cong \NN(M)^{\psi=1}\ar[r]^(0.65){1-\varphi}  & (\varphi^*\NN(M))^{\psi=0} \ar@{^(->}[r]^{} & (\cH_{\Qp}(\Gamma)\widehat\otimes  R^{\rig})\otimes_{ R^{\rig}}\DD_{\cris}(M^{\rig})  }                              \]
and the last inclusion follows from Corollary~\ref{nm-cris-pos-ht} and the identification
\begin{equation}
\label{BrigDistributions}
(\BB_{\rig,\Qp}^+\widehat\otimes R^{\rig})^{\psi=0}\otimes_{ R^{\rig}}\DD_{\cris}(M^{\rig})\cong (\cH_{\Qp}(\Gamma)\widehat\otimes R^{\rig})\otimes_{ R^{\rig}}\DD_{\cris}(M^{\rig}).
\end{equation}
 Note that the latter is a projective $\cH_{\Qp}(\Gamma)\widehat\otimes  R^{\rig}$-module because $M$ is assumed crystalline.

For any finite extension $E / \Qp$ contained in $F_\infty$,  we may apply the  above construction to the induced representation $M':=\mathrm{Ind}^{\Qp}_EM$. Using Shapiro's Lemma and Frobenius  reciprocity (see \S 4.3 in \cite{loefflerzerbes11} for a similar construction), we then obtain the following map:
\[   \mathcal{L}^{\Gamma}_{E, M,\xi}:     \xymatrix{
        \H^1_{\Iw}(  E(\mu_{p^\infty})/E, M ) \ar[r] & (\cH_{E}(\Gamma)\widehat\otimes  R^{\rig})\otimes_{ R^{\rig}}\DD_{\cris}(M^{\rig}).  }                              \]

\subsection{$G$-regulator}

Assume first that $F=\Qp$, i.e., $U\cong\Zp.$ Extending the observations from (loc.\ cit.) one easily shows that
\[(\varphi^*(\NN_\infty(M))^{\psi=0}\cong(\varphi^*\NN(M))^{\psi=0}\widehat{\otimes}_{\Zp} S_\infty.\]
    Using  Proposition \ref{prop:iwasawacohomologyunramified}    we obtain the following map
   \[   \mathcal{L}^{G}_{ M,\xi}:     \xymatrix{
        \H^1_{\Iw}(  K_\infty, M )\cong \NN_\infty(M)^{\psi=1}\ar[r]^(0.65){1-\varphi}  & (\varphi^*\NN_\infty(M))^{\psi=0} \ar@{^(->}[r]^{} & (\cH_{\Lt}(G)\widehat\otimes_{\Qp}  R^{\rig})\otimes_{ R^{\rig}}\DD_{\cris}(M^{\rig})  }                              \]
generalizing definition 4.6 in (loc.~cit.). More precisely, as in \S 4.2 of (loc.~cit.) one has an embedding
\begin{equation}
\label{eq:embeddingSinfty}
S_\infty \hookrightarrow \cH_{\Qp}(U)_{\tau_p} \hookrightarrow \cH_{\Lt}(U)
\end{equation}
which is continuous with respect to the  inverse limit topology   on $S_\infty$ and the Fr\'{e}chet topology on the target. Hence the natural map on the algebraic tensor product
\[(\varphi^*\NN(M))^{\psi=0}\otimes_{\Zp}S_\infty \to    (\cH_{\Qp}(\Gamma)\widehat\otimes R^{\rig})\otimes_{ R^{\rig}}\DD_{\cris}(M^{\rig})\otimes_{\Qp}\cH_{\Lt}(U) \]
induced from the embedding in Corollary~\ref{nm-cris-pos-ht} plus \eqref{BrigDistributions} and \eqref{eq:embeddingSinfty} extends to a continuous $R\widehat{\otimes}_{\Zp}\Lambda(U\times\Gamma)$-equivariant map
\[(\varphi^*\NN(M))^{\psi=0}\widehat\otimes_{\Zp}S_\infty \to    (\cH_{\Lt}(U)\widehat\otimes_{\Qp}\cH_{\Qp}(\Gamma)\widehat\otimes_{\Qp} R^{\rig})\otimes_{ R^{\rig}}\DD_{\cris}(M^{\rig})\cong (\cH_{\Lt}(G) \widehat\otimes_{\Qp}  R^{\rig})\otimes_{ R^{\rig}}\DD_{\cris}(M^{\rig})\]
The image of this map is contained in \[(\cH_{\Qp}(U)_{\tau_p}\widehat\otimes_{\Qp}\cH_{\Qp}(\Gamma)\widehat\otimes_{\Qp}  R^{\rig})\otimes_{ R^{\rig}}\DD_{\cris}(M^{\rig})=(\cH_{\Qp}(G)_{\tau_p} \widehat\otimes_{\Qp}  R^{\rig})\otimes_{ R^{\rig}}\DD_{\cris}(M^{\rig})\]
Since $\cH_{\Qp}(G)_{\tau_p} \widehat\otimes_{\Qp}  R^{\rig}$  is a free rank one $\cH_{\Qp}(G)\widehat\otimes_{\Qp}  R^{\rig}$-module by Lemma \ref{lem:twists}, the source and target are finite $\cH_{\Qp}(G)\widehat\otimes_{\Qp}  R^{\rig}$-modules of the same generic rank.

The extension to finite unramified extensions $F/\Qp$ (which we may and do assume to be linearly disjoint from the unramified $\Zp$-extension of $\Qp$) is carried out as usual by applying the above construction to the induced representation $M':=\mathrm{Ind}^{\Qp}_FM$ and using Shapiro's lemma and Frobenius  reciprocity; see \S 4.3 in (loc.\ cit.) for the details.

The first part of the following theorem generalizes Thm. 4.7 in (loc.\ cit.) to families:

  \begin{theorem}\label{thm:specialization}
   Assume that $M$ is crystalline with Hodge-Tate weights $\geq 0$. Then the following hold:
   \begin{enumerate}
   \item
   For any finite extension $E / \Qp$ contained in $F_\infty$, we have a commutative diagram
   \[
    \begin{diagram}
     \H^1_{\Iw}(K_\infty, M) &\rTo^{\mathcal{L}_{M,\xi}^G} &(\cH_{\Lt}(G)\widehat\otimes  R^{\rig})\otimes_{ R^{\rig}}\DD_{\cris}(M^{\rig})\\
     \dTo & &\dTo \\
     \H^1_{\Iw}(E(\mu_{p^\infty}), M)& \rTo^{\mathcal{L}_{M,\xi}^{G'}} & (\cH_{\Lt}(G')\widehat\otimes  R^{\rig})\otimes_{ R^{\rig}}\DD_{\cris}(M^{\rig}).
    \end{diagram}
   \]
   Here $G' = \Gal(E(\mu_{p^\infty}) / \Qp)$, the right-hand vertical arrow is the map on distributions corresponding to the projection $G \rightarrow G'$, and the map $\mathcal{L}_{M,\xi}^{G'}$ is defined by
   \[ \mathcal{L}_{M,\xi}^{G'} = \sum_{\sigma \in \Gal(E / \Qp)} [\sigma] \cdot \mathcal{L}^{\Gamma}_{E, M,\xi}(\sigma^{-1} \circ x),\]
   where $ \mathcal{L}^{\Gamma}_{E, M,\xi}$ is the cyclotomic regulator map for $E(\mu_{p^\infty}) / E$. Moreover, the map $\cL^G_{M, \xi}$ is injective.
   \item\label{diag:reg-comm}  For any ideal $P\subset R$ such that $R/P$ is flat over $\Z_p$, there is a commutative diagram
   \[
    \begin{diagram}
     \H^1_{\Iw}(K_\infty, M) &\rTo^{\mathcal{L}_{M,\xi}^G} &(\cH_{\Lt}(G)\widehat\otimes  R^{\rig})\otimes_{ R^{\rig}}\DD_{\cris}(M^{\rig})\\
     \dTo & &\dTo \\
     \H^1_{\Iw}(K_\infty, M/P)& \rTo^{\mathcal{L}_{M/P,\xi}^{G}} & (\cH_{\Lt}(G)\widehat\otimes  R^{\rig}/P  R^{\rig})\otimes_{ R^{\rig}/P  R^{\rig}}\DD_{\cris}((M/P)^{\rig}).
    \end{diagram}
       \]
    \item For any $x\in \H^1_{\Iw}(K_\infty, M)$, we have \[\sigma( \mathcal{L}_{M,\xi}^G(x))=\tau_p\cdot\mathcal{L}_{M,\xi}^G(x)\] where $\sigma$ denotes the arithmetic Frobenius.
   \end{enumerate}
   \end{theorem}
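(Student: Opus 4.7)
Part (1) is the direct generalization of \cite[Thm.\ 4.7]{loefflerzerbes11} to families, and I would prove it by tracing through the construction of $\cL^G_{M,\xi}$ and checking compatibility with the projection $G \to G'$ at each step. The projection corresponds under Shapiro's lemma and Frobenius reciprocity to replacing $\mathrm{Ind}^{\Qp}_E M$ by $M$, and on the Yager module side it becomes the canonical trace map between the respective Yager modules. The explicit formula $\cL^{G'}_{M,\xi} = \sum_{\sigma \in \Gal(E / \Qp)} [\sigma] \cdot \cL^{\Gamma}_{E, M,\xi}(\sigma^{-1} \circ x)$ is then the usual Frobenius reciprocity identity. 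For injectivity, I would reduce to the classical case: by part (2) applied to prime ideals $P$ corresponding to closed points $x \in \Spec R[1/p]$, the specialization of $\cL^G_{M,\xi}$ is the classical regulator, which is injective by \cite[Thm.\ 4.7]{loefflerzerbes11}. Combined with the finite generation of $\H^1_\Iw(K_\infty, M)$ over $\Lambda_R(G)$ (Proposition~\ref{prop:HIwrk}) and a Nakayama-type argument to propagate injectivity from enough specializations to the global setting, one concludes.

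For part (2), the base change compatibility is essentially tautological given the functoriality of each ingredient. The hypothesis that $R/P$ is $\Zp$-flat ensures that $R/P$ falls under our standing hypotheses, so $\cL^G_{M/P, \xi}$ is defined. Theorem~\ref{thm:wach}(7) supplies a base change homomorphism on Wach modules whose cokernel is annihilated by $p^{\alpha(h)}$; this discrepancy disappears after the $[1/p]$-type constructions hidden in the target $\cH_{\Lt}(G) \htimes R^{\rig}$. Corollary~\ref{DcrisMandMrig} gives base change compatibility for $\D_\cris$, and for Iwasawa cohomology one uses \eqref{H1modulopseudonull} (or directly the derived base change \eqref{f:spectralsequence}) with $Y = R/P$. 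The commutativity then follows from the naturality of the operators $1 - \varphi$ and $\psi$, and from functoriality of the Yager module embedding $S_\infty \hookrightarrow \cH_{\Qp}(U)_{\tau_p}$.

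Part (3) is essentially a tautological consequence of the refinement observed in the paragraph preceding the theorem: the image of $\cL^G_{M,\xi}$ is contained in the submodule $(\cH_{\Qp}(G)_{\tau_p} \htimes R^{\rig}) \otimes_{R^{\rig}} \DD_\cris(M^{\rig})$. By the very definition of $\cH_{\Qp}(G)_{\tau_p}$, any element $y$ in this submodule satisfies the relation $(\sigma \otimes 1) y = (1 \otimes \overline{\tau_p}) y$, which unwinds to the desired identity $\sigma(\cL^G_{M,\xi}(x)) = \tau_p \cdot \cL^G_{M,\xi}(x)$, once one remembers that the action of $\sigma$ on the left-hand side is through its action on $\Lt$, extended to coefficients. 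No additional computation is required beyond pinning down this containment, which is already established in the construction of $\cL^G_{M,\xi}$.

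The main obstacle is the injectivity statement in part (1). Unlike the commutativity of the diagrams, which follows from functoriality, and the Frobenius equivariance, which is built into the construction, injectivity in families requires a genuine propagation argument: a hypothetical non-zero kernel class in $\H^1_\Iw(K_\infty, M)$ must survive non-trivially in enough closed-fibre specializations to be detected by the classical injectivity result. Making this rigorous relies on the finite generation of $\H^1_\Iw(K_\infty,M)$ (Proposition~\ref{prop:HIwrk}) together with the control on $\Tor$ terms in \eqref{H1modulopseudonull}; this may require restricting to generic points of $\Spec R[1/p]$ in intermediate steps.
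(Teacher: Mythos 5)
Your treatment of the commutativity statements and of part (3) matches the paper's: part (2) is reduced to the functoriality of the Fontaine isomorphism, the $R$-linearity of $\varphi$, and the functoriality of the maps $\NN(M)\rightarrow\D_{\rig}^\dagger(M^{\rig})$ and $\D_{\cris}(M^{\rig})\rightarrow\D_{\rig}^\dagger(M^{\rig})$; part (3) is indeed immediate from the containment of the image in $(\cH_{\Qp}(G)_{\tau_p}\htimes R^{\rig})\otimes_{R^{\rig}}\D_{\cris}(M^{\rig})$ established in the construction; and the first diagram of part (1) is, as you say, the argument of \cite[Thm.\ 4.7]{loefflerzerbes11} carried over verbatim.

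The genuine gap is your argument for the injectivity of $\cL^G_{M,\xi}$. The paper does \emph{not} prove this by specialization: it proves it ``in the same way as Prop.\ 4.12 in (loc.\ cit.)'', i.e.\ by the direct $(\varphi,\Gamma)$-module argument identifying $\ker(1-\varphi)$ on $\NN_\infty(M)^{\psi=1}$ with invariants that vanish under the standing hypothesis that no specialization of $M$ has the trivial representation as a quotient. Your proposed reduction to closed fibres does not work as stated, for two reasons. First, injectivity does not propagate from closed-point specializations to the family: if $K=\ker\cL^G_{M,\xi}\neq 0$, an element of $K$ can lie in $\mathfrak{m}_x\H^1_{\Iw}(K_\infty,M)$ for every closed point $x$ of $\Spec R[1/p]$ (already $\bigcap_{\mathfrak m}\mathfrak m N\neq 0$ for finitely generated modules $N$ such as $A/\mathfrak m_0^2$), and even when it survives in $\H^1_{\Iw}(K_\infty,M)\otimes_R k(x)$ the comparison map to $\H^1_{\Iw}(K_\infty,M_x)$ is only controlled up to the $\Tor$-terms of \eqref{f:lowdegree}, which are pseudo-null rather than zero. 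Second, ``Nakayama-type'' arguments govern surjectivity and generation, not injectivity; the natural fix (localizing at a generic point of the support of $K$, where localization is exact) lands you at a prime of $\Lambda_R(G)$ that is not of the form covered by part (2) or by the classical result, so the classical injectivity cannot be invoked there. You correctly identify this as the main obstacle, but the resolution is not a refinement of the specialization argument; it is to bypass specialization entirely and run the kernel computation for $1-\varphi$ on $\NN_\infty(M)^{\psi=1}$ directly in the family, exactly as in \cite[Prop.\ 4.12]{loefflerzerbes11}.
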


  By abuse of notation we also write $\cL^G_{M, \xi}$ for the induced map with source $(\cH_{\Lt}(G)\widehat\otimes  R^{\rig})\otimes \H^1_{\Iw}(K_\infty,M)$.

\begin{proof}
The proof of Thm. 4.7 in (loc.\ cit.) carries over almost literally to show the first part apart from the injectivity statement, which is proved in the same way as Prop. 4.12 in (loc.\ cit.).

 The commutativity of the  diagram in~\ref{diag:reg-comm} is just the fact that the construction of $\cL^G_{M, \xi}$ is functorial in the coefficients of $M$: More precisely, we have to check that the following diagrams are commutative.
 \begin{equation}\label{HIwN}
  \xymatrix{
   \H^1_{\Iw}(K_\infty,M)\otimes_R R/P \ar[d]_{ } \ar[r]^{\sim} & {\NN_\infty(M)^{\psi=1}\otimes_R R/P}  \ar[d]^{ } \\
   \H^1_{\Iw}(K_\infty,M\otimes_R R/P) \ar[r]^{\sim} & {\NN_\infty(M\otimes_R R/P )^{\psi=1}}   }
 \end{equation}
 (where the right vertical map is induced from the natural base change map $N(M)\otimes_R R/P\to N(M/PM)$ from section \ref{families} and the $R$-linearity of $\psi$ while the left vertical map comes from the obvious base change spectral sequence using the flatness of $M$ over $R,$ see e.g. \cite[\S 8.4.8.3]{nekovar06} or \eqref{f:spectralsequence}, \eqref{f:lowdegree}),
  \begin{equation}\label{NtoNpsi=0}
  \xymatrix{
   {\NN_\infty(M)^{\psi=1}\otimes_R R/P}\ar[d]_{ } \ar[r]^{1-\varphi} & {(\varphi^*\NN_\infty(M))^{\psi=0}\otimes_R R/P}  \ar[d] \\
   {\NN_\infty(M\otimes_R R/P )^{\psi=1}} \ar[r]^{1-\varphi} & {(\varphi^*\NN_\infty(M\otimes_R R/P ))^{\psi=0}}   }
 \end{equation}
  and
  \begin{equation}\label{Npsi=0Dcris}
  \xymatrix{
   {(\varphi^*\NN_\infty(M))^{\psi=0}\otimes_R R/P}\ar[d] \ar@{^(->}[r]^{} & {(\cH_{\Lt}(G)\widehat\otimes  R^{\rig})\otimes_{ R^{\rig}}\DD_{\cris}(M^{\rig})\otimes_R R/P}  \ar[d] \\
   {(\varphi^*\NN_\infty(M\otimes_R R/P ))^{\psi=0}} \ar@{^(->}[r]^{} & {(\cH_{\Lt}(G)\widehat\otimes  R^{\rig})\otimes_{ R^{\rig}}\DD_{\cris}(\left(M\otimes_R R/P\right)^{\rig})}  }
 \end{equation}

The commutativity of diagram \eqref{HIwN} follows from the  functoriality of the Fontaine isomorphism.  For diagram~\eqref{NtoNpsi=0}, commutativity follows immediately from the $R$-linearity of $\varphi$. The commutativity of \eqref{Npsi=0Dcris} follows from the commutativity of the diagram
\begin{equation*}
  \xymatrix{
   {(\varphi^*\NN (M))^{\psi=0}\otimes_R R/P}\ar[d] \ar[r]^{} & {(\cH_{\Qp}(\Gamma)\widehat\otimes  R^{\rig})\otimes_{ R^{\rig}}\DD_{\cris}(M^{\rig})\otimes_R R/P}  \ar[d]^{\cong } \\
   {(\varphi^*\NN (M\otimes_R R/P ))^{\psi=0}} \ar@{^(->}[r]^{} & {(\cH_{\Qp}(\Gamma)\widehat\otimes  R^{\rig})\otimes_{ R^{\rig}}\DD_{\cris}(\left(M\otimes_R R/P\right)^{\rig})}  }
 \end{equation*}
But this follows from the functoriality of the maps $\NN(M)\rightarrow \D_{\rig}^\dagger(M^{\rig})$ and $\D_{\cris}(M^{\rig})\rightarrow \D_{\rig}^\dagger(M^{\rig})$, which is clear.

The third part is clear from the construction.
\end{proof}

\section{Construction of the isomorphism}
 \label{sect:constructionG}

The aim of this section is as follows. Let $M$ be a crystalline $R$-linear Galois representation  with $R$ a complete local noetherian $\Z_p$-algebra with finite residue field, which is Cohen--Macaulay, normal, and flat over $\Z_p$, such that $R^{\rig}$ is an integral domain.  Let $\widetilde R:=\widehat{\Z_p^{\nr}}\htimes_{\Z_p}R$ and let \begin{equation}\label{f:tilde}
  \Lambda_R(G):=\Lambda_{\Z_p}(G)\htimes_{\Z_p} R\;\;\;\mbox{ and }\;\;\;\Lambda_{\widetilde R}(G):=\Lambda_{\Z_p}(G)\htimes_{\Z_p}\widetilde R.
 \end{equation}

We shall construct a canonical isomorphism of determinants over the ring $\Lambda_{\widetilde R,\Qp}(G):=\Lambda_{\widetilde R}(G)[1/p]$:
 \begin{multline*}
  \Theta_{\Lambda_{\widetilde R,\Qp}(G), \xi}(T) : \Det_{\Lambda_{\widetilde R,\Qp}(G)}(0) \xrightarrow{\sim} \\ \Lambda_{\widetilde R,\Qp}(G) \otimes_{\Lambda_{R}(G)} \left\{ \Det_{\Lambda_{R}(G)} R\Gamma_{\Iw}(K_\infty, M) \cdot \Det_{\Lambda_{R}(G)} \left(\Lambda_{R}(G) \otimes_{R} \Dcris(M)\right)\right\},
 \end{multline*}
 where $\Dcris(M)=N(M)/\pi N(M)$.

Our construction is to define the isomorphism over the total quotient ring $\cK_{\widetilde R}(G)$ of the distribution algebra $\cH_{\widetilde R}(G):=\cH_{\Qp}(G)\htimes \widetilde{R}^{\rig}$, and then descend it to $\Lambda_{\widetilde R,\Qp}(G)$.

In order to carry out our calculations, we will need to specialize at closed points of $\Spec R[1/p]$ and utilize the results of~\cite{loeffler-venjakob-zerbes}.  However, as we are interested in finite modules over $\Lambda_{\Rt}(G)[1/p]$, which has many more closed points than $\Lambda_R(G)[1/p]$, we record the following useful lemmas:

\begin{lemma}\label{lemma:qpbar-spec-inj}
The natural map $\Rt[1/p]\rightarrow \prod_x \widehat{\Qp^{\nr}}\htimes_{\Qp} K_x$ is injective, where the product runs over closed points of $\Spec R[1/p]$ and $K_x$ denotes the residue field at $x$.
\end{lemma}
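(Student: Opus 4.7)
The plan is to factor the map in question through the global sections $\Rt^{\rig}$ of the rigid-analytic generic fibre $\Spf(\Rt)^{\rig}$, and then to invoke the standard fact that for a reduced rigid-analytic space $Y$ the global sections inject into the product of residue fields at closed points.

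First I would check that $\Rt^{\rig}$ is reduced. Because $\Rt=\widehat{\Z_p^{\nr}}\htimes_{\Z_p}R$, there is a natural identification of $\Spf(\Rt)^{\rig}$ with the base change of $\Spf(R)^{\rig}$ along the complete field extension $\widehat{\Qp^{\nr}}/\Qp$. Since $R^{\rig}$ is an integral domain by assumption, $\Spf(R)^{\rig}$ is reduced, and as $\widehat{\Qp^{\nr}}/\Qp$ is a separable extension (we are in characteristic zero), the base change remains reduced. The reducedness yields an inclusion $\Rt^{\rig}\hookrightarrow\prod_y\kappa(y)$, where $y$ runs over closed points of $\Spf(\Rt)^{\rig}$.

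Next I would organise closed points $y$ of $\Spf(\Rt)^{\rig}$ according to their image under the natural projection $\Spf(\Rt)^{\rig}\to\Spf(R)^{\rig}$. For each closed point $x$ of $\Spec R[1/p]$, with residue field $K_x$ a finite extension of $\Qp$, the fibre is the affinoid $\Sp(\widehat{\Qp^{\nr}}\otimes_{\Qp}K_x)$; no completion is needed, as $K_x$ is finite over $\Qp$. This Artinian algebra is reduced, once again by separability of $K_x/\Qp$, and so decomposes as the product of its residue fields. Consequently the product of $\kappa(y)$ over closed points $y$ above a given $x$ equals $\widehat{\Qp^{\nr}}\otimes_{\Qp}K_x$, and ranging over all $x$ produces $\Rt^{\rig}\hookrightarrow\prod_x \widehat{\Qp^{\nr}}\htimes_{\Qp}K_x$. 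The lemma will then follow from the evident inclusion $\Rt[1/p]\hookrightarrow\Rt^{\rig}$; the latter uses that $\Rt$ is a Noetherian complete $\Z_p$-flat ring, Noetherianity coming from the observation $\widehat{\Z_p^{\nr}}\htimes_{\Z_p}W(k)\cong\widehat{\Z_p^{\nr}}^{[k:\F_p]}$, which exhibits $\Rt$ as a quotient of a finite product of copies of $\widehat{\Z_p^{\nr}}[[x_1,\ldots,x_d]]$.

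I expect the main obstacle to be justifying that reducedness of $\Spf(R)^{\rig}$ is preserved under the base change along the completed extension $\widehat{\Qp^{\nr}}/\Qp$ rather than under a merely algebraic one. This requires a mild rigid-analytic extension of the classical fact that a separable field extension preserves reducedness, and is best handled affinoid-by-affinoid in a quasi-Stein exhaustion of the generic fibre.
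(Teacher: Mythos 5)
Your argument is correct in outline but takes a genuinely different route from the paper. You reduce everything to showing that the base-changed affinoids $\widehat{\Qp^{\nr}}\htimes_{\Qp} A_m$ are themselves reduced, and then invoke the Jacobson property of reduced affinoid algebras together with the observation that the closed points of $\Sp(\widehat{\Qp^{\nr}}\htimes_{\Qp} A_m)$ lying over a classical point $x$ have $\prod_y \kappa(y) = \widehat{\Qp^{\nr}}\otimes_{\Qp} K_x$. The paper instead never proves (or needs) reducedness of the base change: it uses the structure theorem for non-archimedean Banach spaces to write $\widehat{\Qp^{\nr}}\cong c_I(\Qp)$ and, functorially in completed tensor products, $\widehat{\Qp^{\nr}}\htimes_{\Qp} A\cong c_I(A)$, so that an element of the kernel decomposes into coordinates $f_i\in\ker(A\to\prod_x K_x)$, each of which vanishes because $A$ itself is reduced. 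The trade-off is real: your route rests on the fact that reducedness of a characteristic-zero affinoid is preserved under complete ground field extension, which you rightly flag as the main obstacle but do not supply — this is not quite a "mild extension" of the algebraic separability argument, since $\widehat{\Qp^{\nr}}\otimes_{\Qp}A$ is only dense in the completed tensor product; the standard proof goes through excellence of affinoid algebras (Conrad, \emph{Irreducible components of rigid spaces}, Lemma 3.3.1, or BGR 6.1.1), so you should cite it rather than sketch it. In exchange, your approach yields the stronger geometric statement that $\Spf(\Rt)^{\rig}$ is reduced, and makes transparent why classical points suffice; the paper's orthonormal-basis trick is more elementary and self-contained but gives only the injectivity itself. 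One further point to make explicit in your write-up: the identification of $\Spf(\Rt)^{\rig}$ with the base change of $\Spf(R)^{\rig}$ uses that $\widehat{\Zp^{\nr}}/\Zp$ is unramified, so that the ideal of definition of $R$ generates that of $\Rt$ — the paper spells this out and you should too.
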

\begin{proof}
According to Appendix \ref{sec:app1} we can exhaust the generic fiber $\Spf (R)^\rig$ of $\Spf R$, whose points corresponds to the closed points of $\Spec R[1/p]$, by affinoid subdomains $\Sp A_m$, which are  reduced since by one of our running hypotheses $R[1/p]$ is reduced.  Since $\widehat{\Zp^{\nr}}$ is unramified over $\Zp$, the maximal ideal of definition $I$ of $R$ generates the maximal ideal of definition of $\widehat{\Zp^{\nr}}\htimes_{\Zp}R$.  For every $m\geq 0$, there is a natural map
\[	\widehat{\Zp^{\nr}}\otimes_{\Zp}(R[I^m/p])^\wedge\rightarrow (\widehat{\Zp^{\nr}}\htimes_{\Zp}R)[I^m/p]	\]
which is an isomorphism after $p$-adically completing (since it is an isomorphism modulo every power of $p$).

Thus, we obtain a map
\[\widetilde{R}[1/p]\hookrightarrow \widetilde{R}^{\rig}\cong \varprojlim_m (\widetilde{R}[I^m/p])^\wedge[1/p]\cong \varprojlim_m (\widehat{\Zp^{\nr}}\htimes_{\Zp}(R[I^m/p])^\wedge)[1/p]\cong \varprojlim_m \widehat{\Qp^{\nr}}\htimes_{\Qp} A_m	\]
and we see that it is enough to consider the analogous map $\widehat{\Qp^{\nr}}\htimes A\rightarrow \prod_x \widehat{\Qp^{\nr}}\htimes K_x$ when $A$ is a reduced $\Qp$-affinoid algebra and $x$ runs over the points of $\Sp(A)$.

If $M$ is a non-archimedean Banach space and $I$ is an index set, let $c_I(M)$ denote the set of functions $h:I\rightarrow M$ such that $\lim_{i\in I}h(i)=0$.  Then $c_I(M)$ is again a Banach space.  Every Banach space over a discretely valued field $k$ is isomorphic to $c_I(k)$ for some index set $I$, by~\cite[Proposition 10.1]{schneider}.  In particular, $\widehat{\Qp^{\nr}}$ is isomorphic (as a $\Qp$-Banach space) to $c_I(\Qp)$ for some index set $I$.

Furthermore, the isomorphism $\widehat{\Qp^{\nr}}\xrightarrow{\sim}c_I(\Qp)$ behaves well with respect to completed tensor products: If $M$ is a $\Qp$-Banach space, then there is a natural isomorphism of $\Qp$-Banach spaces $\widehat{\Qp^{\nr}}\htimes M\xrightarrow{\sim} c_I(M)$, and it is functorial in $M$~\cite[Lemma A.2.2]{bellovin13}.

We may apply this in our situation to obtain maps
\[	\widehat{\Qp^{\nr}}\htimes A\xrightarrow{\sim}c_I(A)\rightarrow c_I(K_x)\xleftarrow{\sim}\widehat{\Qp^{\nr}}\htimes K_x	\]
for every $x\in \Sp(A)$.  Then if $f=(f_i)_{i\in I}\in\ker\left(\widehat{\Qp^{\nr}}\htimes A\rightarrow \prod_x \widehat{\Qp^{\nr}}\htimes K_x\right)$, we see that $f_i\in \ker\left(A\rightarrow\prod_xK_x\right)$ for all $i$.  But that implies that $f_i=0$ for all $i$ (since $A$ is assumed reduced), so $f=0$.
\end{proof}

\begin{lemma}\label{lemma:qpbar-spec-bdd}
Let $A$ be a reduced $\Q_p$-affinoid algebra, and suppose that for every $x:A\rightarrow L$, $L/\Q_p$ a finite extension with ring of integers $\cO_L$, the induced map $\widehat{\Qp^{\nr}}\htimes_{\Qp}A\rightarrow \widehat{\Qp^{\nr}}\otimes_{\Qp}L$ carries
$f\in \widehat{\Qp^{\nr}}\htimes_{\Qp}A$ to $\widehat{\Z_p^{\nr}}\otimes_{\Z_p}\cO_L$.  Then $\lvert f(x)\rvert\leq 1$ for all points $x\in\Sp(\widehat{\Q_p^{\nr}}\htimes_{\Qp}A)$.
\end{lemma}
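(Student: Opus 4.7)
The plan is to reuse the decomposition strategy from Lemma~\ref{lemma:qpbar-spec-inj}: fix an orthonormal basis $\{e_i\}_{i \in I}$ of $\widehat{\Qp^{\nr}}$ over $\Qp$ and use the resulting natural isomorphism of $\Qp$-Banach spaces $\widehat{\Qp^{\nr}}\htimes_{\Qp} A \cong c_I(A)$ to reduce everything to statements about the components of $f = (f_i)_{i \in I}$ with $f_i \in A$.

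First I would observe that each $e_i$ is a unit of $\widehat{\Zp^{\nr}}$, so the integral subring $\widehat{\Zp^{\nr}}\otimes_{\Zp}\cO_L \subset \widehat{\Qp^{\nr}}\otimes_{\Qp} L$ matches up under the identification with $c_I(\cO_L) \subset c_I(L)$. Combined with functoriality of the isomorphism in the second variable, the hypothesis becomes: for every closed point $x \in \Sp A$ (with residue field some finite $L/\Qp$) and every $i \in I$, one has $|f_i(x)| \leq 1$. Since $A$ is reduced, the maximum modulus principle for reduced $\Qp$-affinoids gives $\|f_i\|_{\sup,A} \leq 1$ for every $i \in I$.

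Next, equipping $A$ with its supremum norm and setting $\|(a_i)\|_{c_I(A)} := \sup_i \|a_i\|_{\sup,A}$ defines a submultiplicative Banach algebra norm on $\widehat{\Qp^{\nr}}\htimes_{\Qp} A$; submultiplicativity reduces to the fact that the structure constants of multiplication in the orthonormal basis $\{e_i\}$ have $p$-adic absolute value at most $1$. By the previous step, $\|f\|_{c_I(A)} \leq 1$. To finish I would invoke the standard bound $|f(y)| \leq \rho(f) \leq \|f\|$: for any closed point $y \in \Sp(\widehat{\Qp^{\nr}}\htimes_{\Qp} A)$ with residue field $E/\widehat{\Qp^{\nr}}$, continuity of the evaluation $\chi_y$ and power-multiplicativity of the unique absolute value on $E$ yield $|f(y)|^n = |\chi_y(f^n)| \leq C\|f^n\|_{c_I(A)} \leq C\|f\|_{c_I(A)}^n$ for some constant $C$ independent of $n$; letting $n \to \infty$ gives $|f(y)| \leq \|f\|_{c_I(A)} \leq 1$.

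I do not anticipate any real obstacle: the compatibility $\widehat{\Zp^{\nr}}\otimes\cO_L \leftrightarrow c_I(\cO_L)$ and the submultiplicativity of $\|\cdot\|_{c_I(A)}$ are both immediate from the orthonormality of the basis, and the spectral-radius bound is a general feature of any continuous Banach-algebra quotient to a valued field. The only mildly delicate point is keeping track of the integral structures under the Banach-space isomorphism, which is precisely why one must take an orthonormal (rather than merely topological) basis of $\widehat{\Qp^{\nr}}/\Qp$.
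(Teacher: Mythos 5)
Your argument is correct, and it takes a genuinely different route from the paper's. Both proofs use the orthonormal Teichm\"uller-lift decomposition $\widehat{\Qp^{\nr}}\htimes_{\Qp} A \cong c_I(A)$ at some stage, but the paper uses it only as a local device inside a contradiction argument: it first treats $A = T_n$ by density of classical points in $\Sp(\widehat{\Qp^{\nr}}\htimes_{\Qp} T_n)$, then reduces to that case by Noether normalization, replaces $f$ by a power to normalize $\lvert f\rvert_{\sup}$, and derives a contradiction from the minimality of the $p$-power $k$ with $p^k f \in \widehat{\Zp^{\nr}}\htimes A^\circ$. You instead translate the hypothesis directly into the coordinate bounds $\lVert f_i\rVert_{\sup,A}\leq 1$ via the maximum modulus principle for the reduced $\Qp$-affinoid $A$, then transfer that bound to all points of $\Sp(\widehat{\Qp^{\nr}}\htimes A)$ by exhibiting $\sup_i\lVert a_i\rVert_{\sup,A}$ as a submultiplicative complete norm and invoking the spectral radius estimate $\lvert\chi_y(f)\rvert\leq\rho(f)\leq\lVert f\rVert$. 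This is cleaner: it avoids Noether normalization, the density argument over $\widehat{\Qp^{\nr}}$, and the contradiction structure entirely.

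Two small points worth tightening if you write this up. First, the identification $\widehat{\Zp^{\nr}}\otimes_{\Zp}\cO_L = c_I(\cO_L)$ inside $c_I(L)$ is not a consequence of the $e_i$ being units; what you really use is that orthonormality forces $\widehat{\Zp^{\nr}} = c_I(\Zp)$ (the unit ball), after which both inclusions $\widehat{\Zp^{\nr}}\otimes\cO_L\subseteq c_I(\cO_L)$ and the reverse follow by expanding in a fixed $\Zp$-basis of $\cO_L$ (using that coordinate functionals on the finite free $\Zp$-module $\cO_L$ are continuous, so that null sequences map to null sequences). Second, the integrality of the structure constants $c_{ij}^k$ defined by $e_ie_j = \sum_k c_{ij}^k e_k$ comes from $e_ie_j\in\widehat{\Zp^{\nr}} = c_I(\Zp)$, which again rests on orthonormality (so that $\lVert e_i\rVert = 1$ implies $e_i\in\widehat{\Zp^{\nr}}$). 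Both observations are easy but deserve to be stated, since they are exactly what makes the submultiplicativity calculation go through.
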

\begin{proof}
If $A=T_n$, the result is clear because points of $\Sp(\widehat{\Qp^{\nr}}\htimes_{\Qp}T_n)$ lying over points of $\Sp(T_n)$ are dense with respect to the canonical topology.  To treat the general case, we will again assume $\Sp(A)$ is irreducible and use Noether normalization to find finite torsion-free monomorphisms $T_n\rightarrow A$ and $\widehat{\Z_p^{\nr}}\htimes_{\Z_p}T_n^\circ\rightarrow\widehat{\Z_p^{\nr}}\htimes_{\Z_p}A^\circ$.

Since the supremum norm on $A$ and $\widehat{\Qp^{\nr}}\htimes_{\Qp}A$ is power-multiplicative, we may replace $f$ by a power of itself and assume that $\lvert f\rvert_{\sup}\in\lvert\Qp\rvert$.

Suppose that $f\notin \widehat{\Z_p^{\nr}}\htimes_{\Z_p}A^\circ$.  Then there is some $k>0$ such that $p^kf\in \widehat{\Z_p^{\nr}}\htimes_{\Z_p}A^\circ$ and $\lvert p^kf(x)\rvert\leq p^{-1}$ for every point $x$ lying over a point of $\Sp(A)$.  Choose the minimal such $k$.  The first condition implies that $\lvert p^kf\rvert_{\sup}\leq 1$, and the minimality of $k$ implies that $\lvert p^kf\rvert_{\sup}>p^{-1}$.  Indeed, if $\{e_i\}$ is a basis for $\overline{\F}_p$ over $\F_p$, then the Teichm\"uller lifts $\{[e_i]\}$ form a basis for $\widehat{\Qp^{\nr}}$ over $\Qp$, and we may write $p^kf$ uniquely as $p^kf=\sum_if_i[e_i]$ with $f_i\in A^\circ$.  If $\lvert p^kf\rvert_{\sup}\leq p^{-1}$, then for every point $x\in\Sp(A)$ the reduction $\overline{p^kf(x)}=\sum_i e_i\overline{f_i(x)}$ modulo $p$ is $0$.  But since the $e_i$ are linearly independent, we see that $\overline{f_i(x)}=0$ for all $i$, and therefore $f_i\in pA^\circ$ for all $i$, contradicting the minimality of $k$.

The maximum modulus principle implies that $f$ actually attains its supremum at a point of $\Sp(\widehat{\Qp^{\nr}}\htimes_{\Qp}A)$, which lies over a point $x\in \Sp(\widehat{\Qp^{\nr}}\htimes_{\Qp}T_n)$.  As in the proof of the previous lemma, we may $p$-adically approximate $x$ by a series of points $x_n\in\Sp(T_n)$.  As before, $x(p^kf)\equiv (\widehat{\Z_p^{\nr}}\htimes_{\Z_p}x_n)(p^kf)\mod{p^n}$.  But $\lvert x(p^kf)\rvert_{\sup}=1$ (because $\lvert p^kf\rvert_{\sup}=1$) whereas $\lvert(\widehat{\Z_p^{\nr}}\htimes_{\Z_p}x_n)(p^kf)\rvert_{\sup}\leq p^{-1}$ (by our assumption on $f$ at points over points of $\Sp(A)$), which is a contradiction.  Thus, $k=0$ and we are done.
\end{proof}

We can think of these lemmas as saying that ``classical points, i.e., those lying above points of $\Spf(R)^{\rig}$, are dense in $\Spf(\Rt)^{\rig}$''.


 \subsection{Construction of $\Theta$ over \texorpdfstring{$\mathcal{K}_{\widetilde R}(G)$}{a large ring}}
  \label{sect:construction1}

  Over $\mathcal{K}_{\widetilde R}(G)$, the construction of the isomorphism $\Theta$ is very simple:



  \begin{proposition}
   The regulator $\cL^G_{M, \xi}$ induces an isomorphism
   \[ \Det_{\mathcal{K}_{\widetilde R}(G)}(0) \xrightarrow{\sim} \Det_{\mathcal{K}_{\widetilde R}(G)}\left((\mathcal{K}_{\widetilde R}(G)) \otimes_{\Lambda_{R}(G)} R\Gamma_{\Iw}(K_\infty, M)\right) \cdot \Det_{\mathcal{K}_{\widetilde R}(G)}\left((\mathcal{K}_{\widetilde R}(G)) \otimes_{R^{\rig}} \Dcris(M^\rig)\right).\]
  \end{proposition}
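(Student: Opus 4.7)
My strategy is to reduce the statement to two facts over $\cK_{\widetilde R}(G)$: the vanishing of the second Iwasawa cohomology and the bijectivity of the regulator after base change. The determinant isomorphism will then be obtained essentially as $\det(\cL^G_{M,\xi})$, rewritten via the cohomological determinant formula.

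First, since $R\Gamma_{\Iw}(K_\infty,M)$ is a perfect complex of $\Lambda_R(G)$-modules concentrated in degrees $1$ and $2$ (Proposition \ref{prop:HIwrk}), after base change to $\cK_{\widetilde R}(G)$ its determinant decomposes as
\[
\Det_{\cK_{\widetilde R}(G)}\bigl(\cK_{\widetilde R}(G)\otimes_{\Lambda_R(G)} R\Gamma_{\Iw}(K_\infty,M)\bigr) \;=\; \Det\bigl(\cK_{\widetilde R}(G)\otimes H^1_{\Iw}\bigr)^{-1}\cdot \Det\bigl(\cK_{\widetilde R}(G)\otimes H^2_{\Iw}\bigr).
\]
I next want to show that $\cK_{\widetilde R}(G)\otimes_{\Lambda_R(G)} H^2_{\Iw}(K_\infty,M)$ vanishes. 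By (the proof of) Proposition \ref{prop:HIwrk}, $H^2_{\Iw}(K_\infty,M)\cong M(-1)_G$ as an $R$-module, so its $\Lambda_R(G)$-structure factors through the augmentation map $\Lambda_R(G)\twoheadrightarrow R$. The augmentation ideal contains elements $\gamma-1$ for any topological generator $\gamma$ of the $\Z_p$-part of $G$; such elements are non-zero-divisors in $\cH_{\Q_p}(G)$ and hence, by flatness, in $\cH_{\widetilde R}(G)=\cH_{\Q_p}(G)\htimes\widetilde R^{\rig}$. They therefore become units in the total quotient ring $\cK_{\widetilde R}(G)$, which forces the vanishing of the base-changed $H^2$.

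The desired isomorphism is thus reduced to producing a canonical isomorphism
\[
\Det_{\cK_{\widetilde R}(G)}\bigl(\cK_{\widetilde R}(G)\otimes H^1_{\Iw}\bigr) \xrightarrow{\sim} \Det_{\cK_{\widetilde R}(G)}\bigl(\cK_{\widetilde R}(G)\otimes_{R^{\rig}}\D_{\cris}(M^{\rig})\bigr),
\]
which I will define as the determinant of $\cK_{\widetilde R}(G)\otimes\cL^G_{M,\xi}$. It therefore suffices to show that this base-changed regulator is an isomorphism of $\cK_{\widetilde R}(G)$-modules. The ring $\cK_{\widetilde R}(G)$ is a finite product of fields, so the two sides are free of finite rank, and an injective map of free modules of equal rank over a product of fields is automatically an isomorphism. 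Proposition \ref{prop:HIwrk} shows that $H^1_{\Iw}[1/p]$ has generic rank $d=\mathrm{rk}_R M$ over $\Lambda_R(G)[1/p]$, hence its base change has rank $d$ at every component of $\cK_{\widetilde R}(G)$; and $\D_{\cris}(M^{\rig})$ is a locally free $R^{\rig}$-module of rank $d$ by the crystallinity hypothesis and Corollary \ref{DcrisMandMrig}, so the target has the same rank.

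Injectivity of $\cK_{\widetilde R}(G)\otimes\cL^G_{M,\xi}$ is the main technical point. Theorem \ref{thm:specialization}(1) already gives that $\cL^G_{M,\xi}$ itself is injective; to promote this to injectivity after tensoring with $\cK_{\widetilde R}(G)$, I plan to use a density argument, specializing along closed points of $\Spec R[1/p]$ and invoking the classical injectivity of Loeffler–Zerbes together with the density statements packaged in Lemmas \ref{lemma:qpbar-spec-inj} and \ref{lemma:qpbar-spec-bdd}, which let us detect kernels in $\widetilde R^{\rig}$-modules on the classical locus. I anticipate that verifying injectivity survives base change to the (somewhat large) ring $\cK_{\widetilde R}(G)$ is the main obstacle; the remaining determinant manipulations are then formal and the canonicity of the resulting trivialization is immediate from the canonicity of $\cL^G_{M,\xi}$ and of the identifications above.
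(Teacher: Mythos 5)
Your reduction is the same as the paper's: decompose $\Det_{\cK_{\Rt}(G)}$ of the complex via property \ref{B.h)}, discard $\H^2_{\Iw}$ as torsion over the total quotient ring (your argument via the augmentation action on $M(-1)_G$ and the invertibility of $\gamma-1$ in $\cK_{\Rt}(G)$ is fine and matches Proposition~\ref{prop:HIwrk}), and then reduce to showing that the base-changed regulator is a bijection between two modules of generic rank $d$. Up to that point the proposal is correct.

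The gap is in the one step that actually carries the content: you never prove that $\cK_{\Rt}(G)\otimes\cL^G_{M,\xi}$ is bijective, and the plan you sketch for it points in the wrong direction. You propose to establish \emph{injectivity} by specializing at closed points of $\Spec R[1/p]$ and "detecting kernels on the classical locus." But kernels do not commute with base change: for a map $f$ of finite modules over $\cH_{\Rt}(G)$ there is in general no map $\ker(f)\otimes k(x)\hookrightarrow\ker(f\otimes k(x))$, so injectivity of all the specializations $f\otimes k(x)$ tells you nothing directly about $\ker(f)$; likewise the injectivity of $\cL^G_{M,\xi}$ on $\H^1_{\Iw}(K_\infty,M)$ itself (Theorem~\ref{thm:specialization}) does not pass to the base extension without a flatness input you do not have. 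What \emph{does} commute with specialization is the cokernel, and since source and target are generically free of the same rank $d$ over the reduced ring $\cH_{\Rt}(G)$, torsionness of the kernel is equivalent to torsionness of the cokernel. The paper therefore proves \emph{generic surjectivity}: after specializing at any closed point $x$ of $\Spf(R)^{\rig}$, \cite[Proposition 3.4.1]{loeffler-venjakob-zerbes} shows the image of the regulator contains a basis of $\Frac\cH_{\Qp}(G)\otimes_{\Qp}\Dcris(M_x)$, so the cokernel vanishes generically at a dense set of classical points and is hence torsion, which kills it over $\cK_{\Rt}(G)$. Your argument becomes correct if you replace the injectivity plan by this surjectivity argument; as written, the decisive step is missing.
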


  \begin{proof}
   By Proposition \ref{prop:HIwrk} and property \ref{B.h)} in Appendix \ref{determinants},
   \[ \Det_{\Lambda_{\cO,R}(G)}\left(R\Gamma_{\Iw}(K_\infty, M)\right) \cong \Det_{\Lambda_{\cO,R}(G)}\left(\H^1_{\Iw}(K_\infty, M)\right)^{-1}\]
up to $\Lambda_{\Zp}(G)$-torsion.  We therefore consider the map
\[	\cL^G_{M, \xi}:\cH_{\Rt}(G)\otimes_{\Lambda_R(G)}\H_{\Iw}^1(K_{\infty},M)\rightarrow \mathcal{H}_{\Rt}(G) \otimes_{R^{\rig}} \Dcris(M^\rig).	\]
This map is defined by base extension from a map $\H_{\Iw}^1(K_\infty,M)\rightarrow (\cH_{\Qp}(G)_{\tau_p}\htimes R^{\rig})\otimes_{R^{\rig}}\Dcris(M^{\rig})$; choosing an $\cH_{\Qp}(G)$-basis of $\cH_{\Qp}(G)_{\tau_p}$, it suffices to show that the map of $\cH_{\Qp}(G)\htimes R^{\rig}$-modules
\[	\cL^G_{M, \xi}:\cH_{R}(G)\otimes_{\Lambda_R(G)}\H_{\Iw}^1(K_{\infty},M)\rightarrow (\cH_{\Qp}(G)_{\tau_p}\htimes R^{\rig}) \otimes_{R^{\rig}} \Dcris(M^\rig)	\]
is generically an isomorphism.

Since $\H_{\Iw}^1(K_\infty,M)$ is generically free of rank $d$ and $\Dcris(M^{\rig})$ is projective of rank $d$, it suffices to prove that this map is generically surjective.  But after specializing at any closed point $x$ of $\Spf(R)^{\rig}$, \cite[Proposition 3.4.1]{loeffler-venjakob-zerbes} implies that the image of the regulator map contains a basis of $\Frac\cH_{\Qp}(G)\otimes_{\Qp}\Dcris(M_x)$, so generic surjectivity follows.
  \end{proof}

  \begin{definition}
   \label{def:isooverK}
   Let $\Theta_{\mathcal{K}_{\widetilde R}(G), \xi}(M)$ be the isomorphism
   \[ \Det_{\mathcal{K}_{\Rt}(G)}(0) \xrightarrow{\sim} \Det_{\mathcal{K}_{\Rt}(G)}\left(\mathcal{K}_{\Rt}(G) \otimes_{\Lambda_{R}(G)} R\Gamma_{\Iw}(K_\infty, M)\right) \cdot \Det_{\mathcal{K}_{\Rt}(G)}\left(\mathcal{K}_{\Rt}(G) \otimes_{R^{\rig}} \Dcris(M^\rig)\right)\]
   given by
   \[ \Theta_{\mathcal{K}_{\Rt}(G), \xi}(M) = \ell(M)^{-1} \Det\left(\cL^G_{M, \xi}\right)\]
   where $\ell(M) \in \cH_{\Qp}(G)$ is  defined below.
  \end{definition}

  \begin{definition}\label{def:ellV}
   \mbox{~}
   \begin{enumerate}
    \item Let $\gamma\in\Gamma$ be any non-torsion element.  Then we define
\[	\ell_j:=\frac{\log \gamma}{\log\chi(\gamma)}-j	\]

    \item For $n \in \ZZ$, define the element $\mu_n \in \operatorname{Frac} \cH_{\Qp}(\Gamma)$ by
    \[ \mu_n =
     \begin{cases}
      \ell_0 \cdots \ell_{n-1} & \text{ if $n \ge 1$}\\
      1 & \text{if $n = 0$}\\
      \left(\ell_{-1} \cdots \ell_{n}\right)^{-1} & \text{if $n \le -1$.}
     \end{cases}
    \]
    \item For $V$ a Hodge--Tate representation of $G_{\Qp}$, with Hodge--Tate weights $n_1, \dots, n_d$, let
    \[ \ell(V) = \prod_{i = 1}^d \mu_{n_i}.\]
   \end{enumerate}
      \end{definition}

The $p$-adic Hodge type of $M$ is locally constant on $\Spf(R)^{\rig}$.  Since we assume that $R[1/p]$ is integral, the Hodge--Tate weights are constant.  If the Hodge--Tate weights are $n_1,\ldots, n_d$, we set $\ell(M):=\prod_{i = 1}^d \mu_{n_i}$.  This is an element of $\cH_{\Qp}(G)\subset \cH_{\widetilde R}(G)$.

Since $\Dcris(M^{\rig})=R^{\rig} \otimes_{R}   \Dcris(M)$ by Corollary~\ref{DcrisMandMrig}, we can rewrite the above isomorphism as
\[	\Det_{\mathcal{K}_{\Rt}(G)}(0)\xrightarrow{\sim} \mathcal{K}_{\Rt}(G)\otimes_{\Lambda_{R}(G)}\left\{\Det_{\Lambda_{R}(G)}\left( R\Gamma_{\Iw}(K_\infty, M)\right)\cdot \Det_{\Lambda_{R}(G)}\left(\Lambda_{R}(G) \otimes_{R} \Dcris(M)\right)\right\}	\]


  \subsection{Definition of $\Theta$ over $\Lambda_R(G)[1/p]$}
  \label{sect:alt-const}

We wish to show that the isomorphism $\Theta_{\cK_{\Rt}(G),\xi}(M)$ can be defined over $\Lambda_{\Rt,\Qp}(G)$.  More precisely, we want to show the following:
\begin{proposition}\label{prop:theta1/p}
 There is a trivialization of line bundles
\[	\Theta_{\Lambda_{R,\Qp}(G),\xi}:\Det_{\Lambda_{\widetilde R,\Qp}(G)}(0)\xrightarrow{\sim} \Det_{\Lambda_{\Rt,\Qp}(G)}\left(R\Gamma_{\Iw}(K_\infty,M)[1/p]\right)\cdot\Det_{\Lambda_{\Rt,\Qp}(G)}\left(\Lambda_{\Rt,\Qp}(G)\otimes_{R[1/p]}\Dcris(M)[1/p]\right)	\]
which agrees with $\Theta_{\cK_{\Rt}(G),\xi}(M)$ after extending scalars.  Moreover,   for every specialization $R[1/p]\rightarrow L$, the base change $\Theta_{\Lambda_{R,\Qp}(G),\xi}\otimes_{R[1/p]}L$ agrees with the trivialization
\[	\Theta_{\Lambda_L(G),\xi}:\Det_{\Lambda_{\Lt}(G)}(0)\rightarrow \Det_{\Lambda_{\Lt}(G)}\left(\Lt\otimes_LR\Gamma_{\Iw}(K_\infty,M_L)\right) \cdot \Det_{\Lambda_{\Lt}(G)}\left(\Lambda_{\Lt}(G)\otimes_L\Dcris(M_L\right)	\]
defined in~\cite[Theorem 4.2.1]{loeffler-venjakob-zerbes}.
\end{proposition}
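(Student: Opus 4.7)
The plan is to construct $\Theta_{\Lambda_{R,\Qp}(G),\xi}$ by descending the isomorphism $\Theta_{\cK_{\Rt}(G),\xi}(M)$ of Definition~\ref{def:isooverK} to $\Lambda_{\Rt,\Qp}(G)$, and to verify the descent criterion pointwise using the result of \cite{loeffler-venjakob-zerbes} at classical points. First I would observe that both sides of the proposed trivialization are line bundles over $\Lambda_{\Rt,\Qp}(G)$: the determinant of $R\Gamma_{\Iw}(K_\infty, M)[1/p]$ is well-defined and invertible by Proposition~\ref{prop:HIwrk}, and the determinant of $\Lambda_{\Rt,\Qp}(G)\otimes_{R[1/p]}\Dcris(M)[1/p]$ is invertible by the projectivity of $\Dcris(M)[1/p]$ (via the CM/normal/flat assumptions on $R$ combined with Theorem~\ref{thm:wach}). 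After extension of scalars to $\cK_{\Rt}(G)$, both line bundles become trivial and $\Theta_{\cK_{\Rt}(G),\xi}(M)$ provides a distinguished generator; the task is to show this generator already lies in, and generates, the invertible $\Lambda_{\Rt,\Qp}(G)$-submodule.

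Concretely, I would choose local trivializations of both line bundles on a cover of $\Spec\Lambda_{\Rt,\Qp}(G)$ and read off the transition function $\theta \in \cK_{\Rt}(G)$ defined by $\Theta_{\cK_{\Rt}(G),\xi}(M)$. The goal is then to prove $\theta \in \Lambda_{\Rt,\Qp}(G)^\times$. For this I would specialize at each closed point $x \colon R[1/p] \twoheadrightarrow L$ of $\Spf(R)^{\rig}$: using Theorem~\ref{thm:specialization}(2) (base change for the regulator), Corollary~\ref{DcrisMandMrig} (base change for $\Dcris$), and the spectral sequence \eqref{f:spectralsequence} together with the low-degree exact sequence \eqref{f:lowdegree} (base change for $R\Gamma_{\Iw}$; the pseudo-null discrepancies \eqref{H1modulopseudonull} drop out at the level of determinants), the specialization of $\theta$ coincides with the corresponding transition function for $\Theta_{\Lambda_L(G),\xi}(M_L)$, which by \cite[Theorem 4.2.1]{loeffler-venjakob-zerbes} lies in $\Lambda_{\Lt,\Qp}(G)^\times$. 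The constant $\ell(M)$ is preserved under specialization since Hodge--Tate weights are locally constant on $\Spf(R)^{\rig}$ and the latter is connected (as $R^{\rig}$ is a domain by hypothesis).

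The descent from pointwise integrality to global integrality is effected by extending Lemmas~\ref{lemma:qpbar-spec-inj} and~\ref{lemma:qpbar-spec-bdd}: completing the tensor product with $\Lambda_{\Zp}(G)$ yields an injective map
\[
\Lambda_{\Rt,\Qp}(G) \;\hookrightarrow\; \prod_{x} \Lambda_{\widehat{\Qp^\nr}\htimes K_x}(G)[1/p],
\]
and the affinoid boundedness argument of Lemma~\ref{lemma:qpbar-spec-bdd}, applied term by term on coefficients of elements of $\Lambda_{\Zp}(G)$ (viewed as an orthonormalizable Banach module), ensures that elements which are integral at every classical specialization are already integral in $\Lambda_{\Rt,\Qp}(G)$; the analogous statement for units follows by applying the same principle to $\theta^{-1}$. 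This yields $\theta \in \Lambda_{\Rt,\Qp}(G)^\times$, hence a well-defined trivialization $\Theta_{\Lambda_{R,\Qp}(G),\xi}$; the compatibility with $\Theta_{\Lambda_L(G),\xi}$ under specialization is then tautological from the construction.

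The main obstacle is the third step: extending the density/boundedness lemmas from the coefficient ring $\Rt[1/p]$ to the Iwasawa algebra $\Lambda_{\Rt,\Qp}(G)$, which has substantially more closed points than those arising from products of a specialization $x$ with an arithmetic character of $G$. One must verify that the chosen classical points still suffice to detect integrality and to detect units, which ultimately reduces to the Frobenius equivariance recorded in Theorem~\ref{thm:specialization}(3) (to handle the $\widehat{\Q_p^\nr}$-twist) together with the orthonormalizability of $\Lambda_{\Zp}(G)$ as a $\Zp$-module, so that boundedness on all classical points transports coefficientwise.
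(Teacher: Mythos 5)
Your overall strategy---descend $\Theta_{\cK_{\Rt}(G),\xi}(M)$ by reading off a transition function $\theta$, checking its integrality at classical specializations via \cite{loeffler-venjakob-zerbes}, and then invoking a boundedness/density statement for classical points together with normality of $\Lambda_{\Rt}(G)$---is the same as the paper's. However, there is a genuine gap in how you treat the locus where base change fails. The regulator is a map out of $\H^1_{\Iw}(K_\infty,M)$, and your pointwise comparison at a classical point $x$ requires that the chosen integral generator of $\Det_{\Lambda_R(G)}R\Gamma_{\Iw}(K_\infty,M)$ specialize to the generator used in the classical computation; by \eqref{f:lowdegree} this needs $\H^1_{\Iw}$ to commute with base change, which fails precisely on the support of $\H^2_{\Iw}(K_\infty,M)$, and there $\Det R\Gamma_{\Iw}$ is not canonically $(\Det \H^1_{\Iw})^{-1}$ either. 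Saying that the pseudo-null discrepancies ``drop out at the level of determinants'' does not repair the pointwise argument at such $x$. The paper's proof is structured around exactly this: Lemma~\ref{lemma:descent-affine-hiw2-0} runs the specialization argument only on open affines where $\H^2_{\Iw}=0$, and Lemma~\ref{lemma:affine-descent} then glues the resulting sections over the complement of the (codimension $\geq 2$) support of $\H^2_{\Iw}$ and extends across it using that $\Lambda_{\Rt}(G)$ is Cohen--Macaulay. Your plan never invokes the Cohen--Macaulay hypothesis, which is precisely what makes this extension step work; without it the descent is incomplete.

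A second gap is the unit statement. Applying the boundedness principle to $\theta^{-1}$ requires a bound on the specializations that is uniform in the classical point: \cite[Corollary 4.3.8]{loeffler-venjakob-zerbes} gives $p^{d\alpha(h)}\theta(x)\in\Lambda_{\cOt_L}(G)$ with an exponent independent of $x$, but for $\theta(x)^{-1}$ one only knows it is a unit of $\Lambda_{\cOt_L}(G)[1/p]$, i.e.\ integral up to a power of $p$ that a priori depends on $x$, so Lemma~\ref{lemma:qpbar-spec-bdd} does not apply to it. The paper instead first establishes $\theta\in\Lambda_{\Rt,\Qp}(G)$ and only then deduces that it is a unit, by comparing with the classical case and using the injectivity of $K_1(\Lambda_{\Rt})\to\prod K_1(\Lambda_{\cOt})$ from Lemma~\ref{lemma:specialization}. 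Your third step (transporting boundedness coefficientwise via orthonormalizability of $\Lambda_{\Zp}(G)$) is a workable variant of the paper's observation that every point of $\Spf(\Lambda_{\Rt}(G))^{\rig}$ lies over a closed point of $\Spec R[1/p]$, but it only yields integrality, not invertibility.
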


We first recall that $\Theta_{\cK_{\Rt}(G),\xi}(M)$ is defined via base extension from an isomorphism over the total ring of fractions of $\cH_{\Qp}(G)_{\tau_p}\htimes R^{\rig}$.  We therefore descend the latter isomorphism to an isomorphism over $\Lambda_{\Qp}(G)_{\tau_p}\htimes R[1/p]$ (using the fact that $\Lambda_{\Qp}(G)_{\tau_p}$ is free of rank $1$ over $\Lambda_{\Qp}(G)$).

To construct a homomorphism of line bundles
\[	\Det_{\Lambda_{\Rt,\Qp}(G)}R\Gamma(K_\infty,M)[1/p]^{-1}\rightarrow \Det_{\Lambda_{\Rt,\Qp}(G)}\left(\Lambda_{\Rt,\Qp}(G)\otimes_{R[1/p]}\Dcris(M)[1/p]\right)	\]
it suffices to construct homomorphisms on an affine cover and check that they agree on overlaps.  Thus, our strategy is to work locally on $\Spec \Lambda_{\Rt,\Qp}(G)$ (to trivialize $\Det_{\Lambda_{\Rt,\Qp}(G)}\left(R\Gamma(K_\infty,M)[1/p]\right)$ and $\Det_{\Lambda_{\Rt,\Qp}(G)}\left(\Lambda_{\Rt,\Qp}(G)\otimes_{R[1/p]}\Dcris(M)[1/p]\right)$), choose integral generators on both sides, and calculate that the determinant actually lands in $\Lambda_{\Rt,\Qp}(G)$.

However, we will have to be a bit careful, for two reasons: First, our previous computations concerned $\Det_{\Lambda_{\Rt,\Qp}(G)}\H_{\Iw}^1(K_\infty,M)[1/p]$, not $\Det_{\Lambda_{\Rt,\Qp}(G)}\left(R\Gamma_{\Iw}(K_\infty,M)[1/p]\right)^{-1}$, and they are only canonically isomorphic when $\H_{\Iw}^2(K_\infty,M)[1/p]=0$.  Second, we can only compute with $\Det_{\Lambda_{\Rt,\Qp}(G)}\H_{\Iw}^1(K_\infty,M)$, and it is only compatible with base change on $R$ when $\H_{\Iw}^2(K_\infty,M)=0$.  Thus, we will first work away from the support of $\H_{\Iw}^2(K_\infty,M)$ and then use the Cohen--Macaulay-ness of $R$ to extend over this locus.

First of all, $\det\cL_{M,\xi}^G$ is defined by a homomorphism between modules over $\mathcal{H}_{\Rt}(G)$.  If we can show that the valuation of $\ell(V)^{-1}\det\cL_{M,\xi}^G$ is bounded (with respect to suitable choices of local generators), then normality of $\Lambda_{\Rt}(G)$ will imply that it actually lives in $\Lambda_{\Rt,\Qp}(G)$, as desired.  We therefore study specializations of $\ell(V)^{-1}\det\cL_{M,\xi}^G$.

We first work over open affine subspaces $U=\Spec A_U\subset \Spec \Lambda_{\Rt}(G)$ where $\H_{\Iw}^2(K_\infty,M)$ vanishes; on such subspaces, $\Det_{A_U}R\Gamma_{\Iw}(K_\infty,M)\otimes_RA_U\cong\Det_{A_U}\left(\H_{\Iw}^1(K_\infty,M)\right)^{-1}\otimes_RA_U$ by property \ref{B.h)} in Appendix \ref{determinants}.

Over these subspaces, we will consider specializations of the determinant of $\cL^G_{M, \xi} $ along maps $R[\frac{1}{p}]\to L$, where $L/\Q_p$ is a finite field extension.  A priori, such base changes involve derived tensor products and thus higher $\Tor$ groups.  However, Theorem~\ref{thm:specialization} (2) together with the spectral sequence \eqref{f:spectralsequence} (for $Y=L$) shows that when $\H_{\Iw}^2=0$, these $\Tor$ groups vanish.

In general, we will use \eqref{H1modulopseudonull} to check that no higher $\Tor$-groups are involved in the descent calculation. Indeed, for a Cohen-Macaulay ring pseudo-null modules possess a canonical trivialisation by the same proof as for \cite[Lemma 2.2]{venjakob11}. From this comment and by construction below, it will be clear that --- once $ \Theta_{\Lambda_{R,\Qp}(G),\xi}$ exists --- it satisfies the desired compatibility with specialisations.


\begin{lemma}\label{lemma:descent-affine-hiw2-0}
Let $U=\Spec A_U\subset \Spec \Lambda_{R}(G)$ be an open affine subspace where $\H_{\Iw}^2(K_\infty,M)=0$ and $\Det_{\Lambda_{R}(G)}\left(\H_{\Iw}^1(K_\infty,M)\right)$ and $\Det_{\Lambda_{R}(G)}\left(\Lambda_{R}(G)\otimes_{R}\Dcris(M)\right)$ are trivial.  Then $\ell(V)^{-1}\det\cL_{M,\xi}^G$ can be defined over the pre-image of $U$ in $\Spec\Lambda_{\Rt}(G)$.
\end{lemma}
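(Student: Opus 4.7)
The strategy is to descend $\ell(V)^{-1}\det\cL^G_{M,\xi}$ from $\cK_{\Rt}(G)$ to $\Lambda_{\Rt,\Qp}(G)$ by specializing at classical points, invoking the integrality result of \cite{loeffler-venjakob-zerbes}, and then propagating this integrality via the density statements of Lemmas~\ref{lemma:qpbar-spec-inj} and~\ref{lemma:qpbar-spec-bdd}.

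First I would set up local trivializations. The vanishing hypothesis $\H^2_{\Iw}(K_\infty,M) \otimes_{\Lambda_R(G)} A_U = 0$, together with property~\ref{B.h)} of Appendix~\ref{determinants}, gives a canonical identification
\[
\Det_{A_U}\!\bigl(A_U \otimes_{\Lambda_R(G)}R\Gamma_{\Iw}(K_\infty,M)\bigr) \;\cong\; \Det_{A_U}\!\bigl(A_U \otimes_{\Lambda_R(G)}\H^1_{\Iw}(K_\infty,M)\bigr)^{-1}.
\]
Combined with the assumed trivializations, fixed $A_U$-bases of $A_U \otimes_{\Lambda_R(G)} \H^1_{\Iw}(K_\infty,M)$ and of $A_U \otimes_R \Dcris(M)$ convert $\det\cL^G_{M,\xi}$ into a concrete element of $\cK_{\Rt}(G)$ localized at the preimage of $U$; our task becomes to show that this element lies in the corresponding localization of $\Lambda_{\Rt,\Qp}(G)$.

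Next I would specialize at classical points. For each ring homomorphism $x \colon R \to \cO_{L_x}$ with $L_x/\Qp$ finite, Theorem~\ref{thm:specialization}(2) (functoriality of $\cL^G_{M,\xi}$ in the coefficients) together with the vanishing of $\H^2_{\Iw}$ on $U$---which kills the higher Tor terms in the spectral sequence~\eqref{f:spectralsequence}, cf.~\eqref{H1modulopseudonull}---shows that our element specializes to $\ell(V)^{-1}\det\cL^G_{M_x,\xi}$. Here $\ell(V)$ itself is unchanged by $x$, because the Hodge--Tate weights of $M$ are constant on $\Spf(R)^{\rig}$ (which is connected since $R^{\rig}$ is an integral domain). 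By \cite[Theorem~4.2.1]{loeffler-venjakob-zerbes}, each such specialization actually lies in $\Lambda_{\widetilde{\cO}_{L_x},\Qp}(G)$, i.e.\ is both pole-free and $p$-adically bounded.

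Finally I would descend. Lemma~\ref{lemma:qpbar-spec-inj} shows that $\Rt[1/p]$ embeds into the product of its classical specializations, so no information is lost by evaluating pointwise; and Lemma~\ref{lemma:qpbar-spec-bdd}, applied to the affinoids exhausting the character space of $G$ over $\Spf(\Rt)^{\rig}$, converts the pointwise integrality of the preceding step into a uniform bound on the preimage of $U$. Combined with the normality of $\Lambda_{\Rt}(G)$ (inherited from normality of $R$ and regularity of $\Lambda_{\Zp}(G)$), this boundedness forces $\ell(V)^{-1}\det\cL^G_{M,\xi}$ to lie in $\Lambda_{\Rt,\Qp}(G)$ localized at the preimage of $U$, as required. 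The principal technical obstacle is exactly this last step: classical points form a sparse subset of $\Spec\Lambda_{\Rt,\Qp}(G)$, so passing from integrality at classical specializations to integrality on the full character variety is nontrivial, and is precisely the content of Lemmas~\ref{lemma:qpbar-spec-inj} and~\ref{lemma:qpbar-spec-bdd}.
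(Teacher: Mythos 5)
Your route is the same as the paper's: trivialize the determinants on $U$ using $\H^2_{\Iw}=0$ and property \ref{B.h)}, reduce to an element of $\cH_R(G)$ (after writing $A_U=\Lambda_R(G)[1/f_U]$ and clearing $f_U^k$), bound it at classical specializations via the result of \cite{loeffler-venjakob-zerbes}, propagate the bound to all of $\Spf(\Lambda_{\Rt}(G))^{\rig}$ by Lemma~\ref{lemma:qpbar-spec-bdd}, and conclude by normality. Two points in your write-up need repair, though.

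First, your claim that the specialization at $x:R\to\cO_{L_x}$ ``actually lies in $\Lambda_{\widetilde{\cO}_{L_x},\Qp}(G)$'' conflates two integral structures. The integrality statement of \cite{loeffler-venjakob-zerbes} is with respect to generators of $\H^1_{\Iw}(K_\infty,M\otimes_R\cO_{L_x})$ and $\Dcris(M\otimes_R\cO_{L_x})$, whereas your element $a_U$ is computed with respect to generators chosen over $\Lambda_R(G)$. The base-change map $\Det_R\Dcris(M)\to\Det_{\cO_{L_x}}\Dcris(M\otimes_R\cO_{L_x})$ is not surjective in general; its cokernel is only annihilated by $p^{d\alpha(h)}$ (Theorem~\ref{thm:wach}(7)). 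So what you actually get at each classical point is $\lvert p^{d\alpha(h)}f_U(x)^k a_U(x)\rvert\le 1$, not $\lvert f_U(x)^k a_U(x)\rvert\le 1$. The argument survives because the defect $p^{d\alpha(h)}$ is \emph{uniform} in $x$, and a uniform bound by any fixed power of $p$ still yields membership in $\Lambda_{\Rt}(G)[1/p]$ after applying Lemma~\ref{lemma:qpbar-spec-bdd} and normality; but as written your step silently assumes the integral structures match under specialization.

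Second, you only establish that $\ell(V)^{-1}\det\cL^G_{M,\xi}$ \emph{lies in} the appropriate localization of $\Lambda_{\Rt,\Qp}(G)$; you do not check that it is a unit there, i.e.\ that the descended map is still an isomorphism of line bundles. This is needed (the element could a priori acquire zeroes on $U[1/p]$), and it is settled exactly as you settled boundedness: compatibility with specialization plus the classical case shows $a_U$ cannot vanish at any point lying over a classical point, and the density/injectivity statement then forces non-vanishing everywhere. Incidentally, for the boundedness step itself only Lemma~\ref{lemma:qpbar-spec-bdd} is used; Lemma~\ref{lemma:qpbar-spec-inj} enters for identities between sections (such as the gluing and the comparison with $\Theta_{\cK_{\Rt}(G),\xi}$), not for the bound.
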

\begin{proof}
Without loss of generality, we may write $A_U=\Lambda_R(G)[1/f_U]$ for some $f_U\in\Lambda_R(G)$.  Choosing generators of $\Det_{\Lambda_{\Rt}(G)}\left(\H_{\Iw}^1(K_\infty,M)\right)|_U$ and $\Det_{\Lambda_{\Rt}(G)}\left(\Lambda_{\Rt}(G)\otimes_{R}\Dcris(M)\right)|_U$, we may view $\ell(V)^{-1}\det\cL_{M,\xi}^G$ as multiplication by an element $a_U:=a'_U/f_U^k$, where $a'_U\in \cH_{\Rt}(G)$.  In fact, since $\cL_{M,\xi}^G$ is the base extension of a morphism of $\cH_R(G)$-modules
\[	\cH_R(G)\otimes_{\Lambda_R(G)}\H_{\Iw}^1(K_\infty,M)\rightarrow (\cH_{\Qp}(G)_{\tau_p}\htimes R^{\rig})\otimes_{R^{\rig}}\Dcris(M^{\rig})	\]
of the same rank, we may assume that $a'_U\in\cH_R(G)$.

The formation of $\H_{\Iw}^1(K_\infty,M)$ commutes with specialization on $U$, since $\H_{\Iw}^2(K_\infty,M)|_U=0$ and $R\Gamma_{\Iw}(K_\infty,M) $ does.  Let $L/\Qp$ be a finite extension with ring of integers $\cO_L$, and let $x:R[1/p]\rightarrow L$ be a closed point of $\Spec R[1/p]$ such that $A_U[1/p]\otimes_{R[1/p]}L$ is non-zero.  Then the natural homomorphism $R\rightarrow \cO_L$ induces a map $\Dcris(M)\rightarrow \Dcris(M\otimes_R\cO_L)$ with cokernel annihilated by $p^{\alpha(h)}$.  Therefore, there is a natural homomorphism $\Det_R\Dcris(M)\rightarrow \Det_{\cO_L}\Dcris(M\otimes_R\cO_L)$ with cokernel annihilated by $p^{d\alpha(h)}$.  Now~\cite[Corollary 4.3.8]{loeffler-venjakob-zerbes} implies that the specialization of $\ell(V)^{-1}\det\cL_{M,\xi}^G$ at $x$ is given by multiplication by an element of $\Lambda_{\cOt_L}(G)$ with respect to generators of $\H_{\Iw}^1(K_\infty,M\otimes_R\cO_L)$ and $\Dcris(M\otimes_R\cO_L)$.

This implies that for every such $x$, $p^{d\alpha(h)}f_U(x)^ka_U(x)\in \Lambda_{\cOt_L}(G)$.  But if $x$ is a closed point of $\Spec R[1/p]$ where $A_U[1/p]\otimes_{R[1/p]}L$ is zero, then $f_U(x)=0$ and $p^{d\alpha(h)}f_U(x)^ka_U(x)\in \Lambda_{\cOt_L}(G)$.  Thus, for every point $x'\in\Spf(\Lambda_{R}(G))^{\rig}$,
\[	\lvert p^{d\alpha(h)}a'_U(x')\rvert = \lvert p^{d\alpha(h)}f_U(x')^ka_U(x')\rvert\leq 1	\]
Now by Lemma~\ref{lemma:qpbar-spec-bdd} (and exhausting $\Spf(R)^{\rig}$ by affinoid subdomains)
\[	\lvert p^{d\alpha(h)}a'_U(x')\rvert = \lvert p^{d\alpha(h)}f_U(x')^ka_U(x')\rvert\leq 1	\]
for all points $x\in\Spf(\Lambda_{\Rt}(G))^{\rig}$.
Since $R$ is assumed normal, Proposition~\cite[Proposition 7.3.6]{dejong} implies that $a'_U\in \Lambda_{\Rt,\Qp}(G)$.

It remains to check that $\ell(V)^{-1}\det\cL_{M,\xi}^G|_{U[1/p]}$ is an isomorphism, i.e., that $a_U$ has no zeroes.  But by construction, $\ell(V)^{-1}\det\cL_{M,\xi}^G|_{U[1/p]}$ is compatible with specialization on $R[1/p]$, so comparison with the classical case shows that it cannot vanish.
\end{proof}

If $U,U'\subset\Spec\Lambda_{\Rt}(G)$ are open affine subspaces as above, then $\ell(V)^{-1}\det\cL_{M,\xi}^G|_U$ and $\ell(V)^{-1}\det\cL_{M,\xi}^G|_{U'}$ agree on $U\cap U'$, because they agree after extending scalars to $\cK_{\Rt}(G)$.

\begin{lemma}\label{lemma:affine-descent}
Let $U=\Spec A_U\subset\Spec \Lambda_{R}(G)$ be an open affine subspace where $\Det_{\Lambda_{R}(G)}\left(R\Gamma_{\Iw}(K_\infty,M)\right)$ and $\Det_{\Lambda_{R}(G)}\left(\Lambda_{R}(G)\otimes_{R}\Dcris(M)\right)$ are trivial.  Then $\ell(V)^{-1}\det\cL_{M,\xi}^G$ can be defined over the pre-image $\widetilde U=\Spec A_{\widetilde U}$ of $U$ in $\Spec \Lambda_{\Rt}(G)$.
\end{lemma}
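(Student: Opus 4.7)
My plan is to first define the desired trivialization on the open locus $\widetilde{U}' \subset \widetilde{U}$ where $\H_{\Iw}^2(K_\infty, M)$ vanishes, using the previous Lemma~\ref{lemma:descent-affine-hiw2-0}, and then to extend to all of $\widetilde{U}$ by a Hartogs-type argument exploiting the Cohen--Macaulay hypothesis on $R$ and the fact that the complement $\widetilde{U} \setminus \widetilde{U}'$ has codimension at least $2$.

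The key preliminary step is the codimension computation. By Proposition~\ref{prop:HIwrk}, $\H_{\Iw}^2(K_\infty, M)$ is finite as an $R$-module and is annihilated by a non-zero ideal $I$ of $\Lambda_{\Z_p}(G)$. The $R$-finiteness implies that $\Lambda_R(G)/\ann(\H_{\Iw}^2(K_\infty, M))$ is finite over $R$, so the support of $\H_{\Iw}^2(K_\infty, M)$ in $\Spec \Lambda_R(G)$ has dimension at most $\dim R$. Since $\Lambda_R(G)$ has Krull dimension $\dim R + \dim \Lambda_{\Z_p}(G) \geq \dim R + 2$, this support has codimension at least $2$, and the same holds for its preimage in $\Spec \Lambda_{\Rt}(G)$. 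On the open complement $\widetilde{U}'$, the canonical identification $\Det(R\Gamma_{\Iw}) \cong \Det(\H_{\Iw}^1)^{-1}$ holds, so by choosing a finite affine cover of $U'$ trivializing $\Det(\H_{\Iw}^1)$ and $\Det(\Lambda_R(G) \otimes_R \Dcris(M))$, applying Lemma~\ref{lemma:descent-affine-hiw2-0} on each piece, and gluing along intersections (which is possible since all local trivializations coincide after extending scalars to $\cK_{\Rt}(G)$), we obtain the desired trivialization over $\widetilde{U}'$.

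To extend from $\widetilde{U}'$ to $\widetilde{U}$, I would use that $\Lambda_{\Rt}(G)$ is Cohen--Macaulay (it is a power-series ring in two variables over $\Rt[\Delta]$, and $\Rt = \widehat{\Z_p^{\nr}} \htimes R$ is faithfully flat over the Cohen--Macaulay ring $R$), hence satisfies Serre's condition $S_2$, and this is preserved after inverting $p$. Therefore both sides of the desired trivialization are reflexive coherent sheaves on $\widetilde{U}$, and the restriction map from global sections of a line bundle on $\widetilde{U}$ to those on the complement of a codimension $\geq 2$ closed subscheme is a bijection. Applying this to the $\Hom$-line bundle between the two determinants, the trivialization defined on $\widetilde{U}'$ extends uniquely to $\widetilde{U}$, and the extension is automatically an isomorphism because it becomes one after localising at any height-one prime and after extending scalars to $\cK_{\Rt}(G)$.

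The main obstacle I expect is not the extension argument itself, which is fairly formal once one has the Cohen--Macaulay and codimension-$\geq 2$ inputs, but rather ensuring that these inputs are genuinely available: one must verify that the pseudo-null locus really has codimension $\geq 2$ (not merely $\geq 1$), and one must confirm that the resulting extension is compatible both with $\Theta_{\cK_{\Rt}(G),\xi}(M)$ after base change and with specialisations at closed points of $\Spec R[1/p]$. The latter compatibility is subtle because such specialisations may land in the ``bad'' locus where $\H_{\Iw}^2 \neq 0$; to handle this one would use the spectral sequence \eqref{f:spectralsequence} together with the canonical trivialisation of pseudo-null modules over a Cohen--Macaulay ring (as in \cite[Lemma 2.2]{venjakob11}), which is exactly the mechanism flagged in the remarks preceding Lemma~\ref{lemma:descent-affine-hiw2-0}.
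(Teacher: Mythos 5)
Your proposal is correct and follows essentially the same route as the paper: restrict to the locus where $\H_{\Iw}^2(K_\infty,M)$ vanishes, glue the local trivializations from Lemma~\ref{lemma:descent-affine-hiw2-0} (they agree on overlaps because they agree over $\cK_{\Rt}(G)$), and then extend across the codimension-$\geq 2$ support of $\H_{\Iw}^2$ using the Cohen--Macaulay (hence $S_2$) property of $\Lambda_{\Rt}(G)$. Your justification of the codimension bound is a welcome elaboration of what the paper merely asserts (note only that $\dim\Lambda_R(G)=\dim R+2$, not $\dim R+\dim\Lambda_{\Z_p}(G)$; the inequality you actually use is the correct one). The single place where you genuinely diverge is the final verification that the extended section $a_U$ is a unit: the paper specializes at closed points of $\Spec R[1/p]$, invokes the classical case of \cite{loeffler-venjakob-zerbes}, and concludes via the injectivity statement of Lemma~\ref{lemma:specialization}, whereas you argue that $a_U$ is a unit in every localization at a height-one prime (all of which lie in the good locus) and appeal to normality of $\Lambda_{\Rt}(G)[1/p]$ to get $a_U^{-1}$ globally. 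Both arguments work under the paper's standing hypotheses ($R$ normal and Cohen--Macaulay); yours is more purely commutative-algebraic and avoids re-invoking the specialization machinery, while the paper's has the advantage of simultaneously recording the compatibility with specializations that is needed later.
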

\begin{proof}
Choose bases of $\Det_{\Lambda_{R}(G)}\left(R\Gamma_{\Iw}(K_\infty,M)\right)^{-1}$ and $\Det_{\Lambda_{R}(G)}\left(\Lambda_{R}(G)\otimes_{R}\Dcris(M)\right)$ as in the proof of the previous lemma; an isomorphism
\[	\Det_{A_{\widetilde U}[1/p]}\left(A_{\widetilde U}[1/p]\otimes_{\Lambda_{R}(G)}R\Gamma_{\Iw}(K_\infty,M)\right)^{-1}\xrightarrow{\sim} \Det_{A_{\widetilde U}[1/p]}\left(A_{\widetilde U}[1/p]\otimes_{R}\Dcris(M)\right)	\]
corresponds to multiplication by a unit of $A_{\widetilde U}[1/p]$.  The support of $\H_{\Iw}^2(K_\infty,M)$ is a closed subscheme $V\subset U$ of codimension at least $2$; for each open affine subspace $U'=\Spec A_{U'}\subset U\smallsetminus V$ we obtain a unit $a_{U'}\in A_{U'}[1/p]$ by Lemma~\ref{lemma:descent-affine-hiw2-0} and these units agree on overlaps.  Therefore, the $a_{U'}$ glue to a section $a_U'$ on all of $(U\smallsetminus V)[1/p]$.  But $\Spec \Lambda_{\Rt}(G)$ is Cohen--Macaulay, so $a_U'$ extends to a section $a_U$ over all of $U[1/p]$.

It remains to check that $a_U\in A_U[1/p]^\times$.  Let $L/\Qp$ be a finite extension with ring of integers $\cO_L$, and let $x:R[1/p]\rightarrow L$ be a closed point of $\Spec R[1/p]$ such that $A_U[1/p]\otimes_{R[1/p]}L$ is non-zero.  Then we know that the image of $a_U$ in $A_U[1/p]\otimes_{R[1/p]}L$ extends to a unit of $\Lambda_{\Lt}(G)$ by the classical case, since the formation of $\Det_{\Lambda_R(G)}\left(R\Gamma_{\Iw}(K_\infty,M)\right)^{-1}$ commutes with specialization on $R$ and $\Dcris(M)[1/p]$ commutes with specialization on $R[1/p]$.  Thus the claim follows from Lemma \ref{lemma:specialization}.
\end{proof}

It is again clear that if $U,U'\subset \Spec \Lambda_{\Rt}(G)$ are open affine subspaces as above, then $\ell(V)^{-1}\det\cL_{M,\xi}^G|_U$ and $\ell(V)^{-1}\det\cL_{M,\xi}^G|_{U'}$ agree on $U\cap U'$. This proves Proposition \ref{prop:theta1/p}.

\begin{remark}
Although the proofs of Lemma~\ref{lemma:descent-affine-hiw2-0} and Lemma~\ref{lemma:affine-descent} construct isomorphisms over affine subspaces of $\Lambda_R(G)[1/p]$, we only claim to have descended $\ell(M)^{-1}\cL_{M,\xi}^G$ to $\Lambda_{\Rt}(G)[1/p]$.  This is because the proofs implicitly involved making a non-canonical choice of a generator of $\Lambda_{\Z_p}(G)_{\tau_p}$ over $\Lambda_{\Z_p}(G)$.
\end{remark}

\subsection{Definition of the epsilon-isomorphism}


Recall from~\cite[\textsection 2.3,\textsection 2.4]{loeffler-venjakob-zerbes} that if $V$ is a $d$-dimensional $L$-linear crystalline representation of $G_{\Qp}$, then the $\varepsilon$-factor of $\D_{\pst}(V)$ is $1$.  Further, multiplication by $t^{-m(V)}\varepsilon(\D_{\pst}(V),\xi)$ defines an isomorphism $\widetilde{L}\otimes_L\Det_L(\D_{\dR}(V))\rightarrow \widetilde{L}\otimes_L\Det_LV$, where $m(V)=r_1+\ldots +r_d$ is the sum of the opposites of the Hodge--Tate weights.  This multiplication takes place inside the canonical isomorphism $\B_{\dR}\otimes_{\Qp}\Det_L\D_{\dR}(V)\xrightarrow{\sim}\B_{\dR}\otimes_{\Qp}\Det_LV$.

If $M$ is a crystalline family of Galois representations over $R$, then $\D_{\dR}(M^{\rig})$ and $\D_{\cris}(M^{\rig})$ are well-defined and the Hodge--Tate weights are locally constant on $\Spf(R)^{\rig}$.

  \begin{theorem}\label{thm:Dcris-M}
   Let $M$ be a crystalline family of  Galois representations over $R$. Then there is a unique isomorphism
   \[ \varepsilon_{\Rt,\Qp,\xi, \dR}(M): \Det_{\Rt[1/p]}(\Rt[1/p]\otimes_R \Dcris(M)) \xrightarrow{\sim} \Det_{\Rt[1/p]}(\Rt[1/p]\otimes_{R} M) \]
   whose image under specialization $x:R\rightarrow \cO_L$ is the isomorphism defined above.
  \end{theorem}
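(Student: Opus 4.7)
\emph{Uniqueness} is immediate from Lemma \ref{lemma:qpbar-spec-inj}: any two such trivialisations differ by an element of $\Rt[1/p]^\times$ whose image in every $\Lt^\times$ (attached to a specialisation $x\colon R\to\cO_L$) is $1$, hence is itself $1$.

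\emph{Existence.} The strategy is to build the candidate over a large ring in which the Wach-module and the de Rham comparisons both live, then descend to $\Rt[1/p]$. Concretely, Corollary \ref{cor:det-dcris-nm-bdd} provides an isomorphism of line bundles
\[
(\pi/t)^{m(M)}\cdot\det(\mathrm{inc}_M)\colon\;
\Det_{R[1/p]\htimes\B_{\rig,F}^+}\!\bigl((R[1/p]\htimes\B_{\rig,F}^+)\otimes_R\Dcris(M)\bigr)
\xrightarrow{\sim}
\Det_{R[1/p]\htimes\B_{\rig,F}^+}\!\bigl((R[1/p]\htimes\B_{\rig,F}^+)\otimes_{R\htimes\A_F^+}\NN(M)\bigr),
\]
and Corollary \ref{cor:det-wach-rep} provides an isomorphism of line bundles
\[
\pi^{-m(M)}\cdot\det(\mathrm{inclusion})\colon\;
\Det_{(R\htimes\A^+)[1/p]}\!\bigl((R\htimes\A^+)[1/p]\otimes_{R\htimes\A_F^+}\NN(M)\bigr)
\xrightarrow{\sim}
\Det_{(R\htimes\A^+)[1/p]}\!\bigl((R\htimes\A^+)[1/p]\otimes_R M\bigr).
\]
Base-changing both to a common period ring $\mathscr{B}$ containing $\B_{\rig,F}^+$ and $\A^+$ (for instance the appropriate variant of $\BdR^+$ into which both naturally embed), the composition is multiplication by $t^{-m(M)}$ and gives a canonical isomorphism of line bundles
\[
\Det_{R[1/p]\htimes\mathscr{B}}\!\bigl((R[1/p]\htimes\mathscr{B})\otimes_R\Dcris(M)\bigr)
\xrightarrow{\sim}
\Det_{R[1/p]\htimes\mathscr{B}}\!\bigl((R[1/p]\htimes\mathscr{B})\otimes_R M\bigr).
\]
This is the ``universal'' de Rham comparison isomorphism in families, multiplied by $t^{-m(M)}$.

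\emph{Descent to $\Rt[1/p]$.} Choose local trivialisations of $\Det_{\Rt[1/p]}(\Rt[1/p]\otimes_R\Dcris(M))$ and $\Det_{\Rt[1/p]}(\Rt[1/p]\otimes_R M)$ over affine opens of $\Spec\Rt[1/p]$; these exist because $\Dcris(M)[1/p]$ is projective over $R[1/p]$ and $M$ is free. With respect to these trivialisations the candidate isomorphism is multiplication by some element $e$ of the big ring. For each specialisation $x\colon R\to\cO_L$, the compatibility of $\Dcris(-)$ and the Wach-module with base change together with the classical result recalled before the theorem shows that the image of $e$ in the corresponding factor $\Lt$ equals $t^{-m(M_x)}\varepsilon(\Dpst(M_x),\xi)$ (up to the unit arising from the change of basis), and hence lies in $\Lt^\times$ because $\varepsilon(\Dpst(M_x),\xi)=1$ for crystalline $M_x$. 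Applying the boundedness criterion of Lemma \ref{lemma:qpbar-spec-bdd} in the same spirit as the proof of Lemma \ref{lemma:descent-affine-hiw2-0}, together with the fact that $\Rt$ is normal (since $R$ is normal and $\Zp\to\widehat{\Zp^{\nr}}$ is faithfully flat with regular fibres), we conclude that $e\in\Rt[1/p]^\times$. These local descents glue, since after base change to $\mathscr{B}$ they come from a single global section, and yield the desired global isomorphism over $\Rt[1/p]$.

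The main obstacle will be organising the ``big ring'' $\mathscr{B}$ so that the Wach-module comparison of Corollary \ref{cor:det-dcris-nm-bdd} and the de Rham comparison of Corollary \ref{cor:det-wach-rep} become compatible on the nose (in particular, so that the factor $(\pi/t)^{m}\cdot\pi^{-m}=t^{-m}$ really appears as the product of the two determinants); once this is arranged, the descent step is parallel to the argument in Section \ref{sect:alt-const}.
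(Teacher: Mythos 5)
Your construction of the candidate isomorphism is the same as the paper's: after twisting to the positive case, compose the isomorphism $(\pi/t)^{m(M)}\det(\mathrm{inc}_M)$ of Corollary \ref{cor:det-dcris-nm-bdd} with the isomorphism $\pi^{-m(M)}\det$ of Corollary \ref{cor:det-wach-rep} inside a common period ring (the paper uses $R^{\rig}\htimes\Bmax$), so that the determinant of $\mathrm{can}_M$ lands in $t^{m(M)}(R\htimes\A^+)[1/p]^\times$. Your uniqueness argument via Lemma \ref{lemma:qpbar-spec-inj} is also fine.

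The gap is in your descent step. At that point the determinant is only known to be a unit of the ring $(R\htimes\A^+)[1/p]$ (times $t^{m(M)}$), and the problem is to show it lies in the much smaller subring $\Rt[1/p]=(\widehat{\Zp^{\nr}}\htimes R)[1/p]$. Lemma \ref{lemma:qpbar-spec-bdd} does not do this: it applies to an element \emph{already known} to lie in $\widehat{\Qp^{\nr}}\htimes A$ and controls its sup norm via classical points; it says nothing about an element of $(R\htimes\A^+)[1/p]$ whose specializations along $R[1/p]\to L$ happen to land in $\Lt\subset(\cO_L\otimes\A^+)[1/p]$. Density of classical points controls only the $R$-direction, not the period-ring direction, and the statement ``fiberwise membership in $W(\overline{\F}_p)\otimes L$ implies global membership in $\widehat{\Zp^{\nr}}\htimes R[1/p]$'' is exactly what needs proof and is not supplied by any lemma in Section \ref{sect:constructionG} or the appendix. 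The paper closes this gap differently: it reduces to a rank-one family by taking determinants, twists to make $M$ unramified so that $\varphi(\det(\mathrm{can}_M))=r\cdot\det(\mathrm{can}_M)$ for a unit $r\in R$, and then argues modulo successive powers of $\mathfrak{m}_R$ that a $\varphi^s$-fixed element of $(R/\mathfrak{m}_R^n)\otimes_{\Zp}\widetilde{\A}^+$ lies in $(R/\mathfrak{m}_R^n)\htimes W(\F_{p^s})$, passing to the limit to conclude $\det(\mathrm{can}_M)\in t^{m(M)}\Rt[1/p]^\times$. Some such Frobenius fixed-point (or equivalent) argument is needed; the specialization-density machinery alone does not suffice here.
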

\begin{remark}
We cannot hope to construct $\varepsilon_{\Rt,\xi, \dR}(M)$ such that its \emph{integral} specializations agree with the isomorphisms $\varepsilon_{\cO_L,\xi, \dR}(M)$ constructed in~\cite{loeffler-venjakob-zerbes}, because $M$ commutes with base change on $R$ but $\Dcris(M)$ does not.
\end{remark}

  \begin{proof}
   By \cite[thm.\ 1.1.14]{bellovin13} and Lemma \ref{lemma:rig} there is a canonical isomorphism
\[\mathrm{can}_M: ( R^{\rig}\htimes_{\mathbb{Q}_p}\Bmax)\otimes_{ R^{\rig}} \Dcris(M^{\rig})\cong ( R^{\rig}\htimes_{\mathbb{Q}_p}\Bmax)\otimes_{ R^{\rig}} \Gamma(\Spf(R)^{\rig},M^{\rig})=( R^{\rig}\htimes_{\mathbb{Q}_p}\Bmax)\otimes_{R}M.\]

Both sides have natural $R$-linear structures, given by $\Dcris(M)$ and $M$, respectively.  We wish to show that the determinant of $\mathrm{can}_M$ with respect to these $R$-modules is an element of $t^{m(M)}\Rt[1/p]^\times$.  For this, we work locally on $\Spec R[1/p]$ to trivialize $M[1/p]$ and $\Dcris(M)[1/p]$ and show that for any $R[1/p]$-bases of $M[1/p]$ and of $\Dcris(M)[1/p]$, the matrix of $\mathrm{can}_M$ has determinant in $t^{m(M)}\Rt[1/p]^\times$ (viewed as a subgroup of $(R^\rig\htimes\Bmax)^\times$).

In order to show this we follow the argument of~\cite[Proposition V.1.2]{berger04}.  After twisting, we may assume that $M$ is positive.


By Corollary~\ref{cor:det-dcris-nm-bdd} and Proposition~\ref{partial-localize-units}, locally on $\Spec R[1/p]$ the determinant of the (injective) map $(\B_{\rig,\Qp}^+\widehat\otimes  R^{\rig})\otimes_{ R^{\rig}}\D_{\cris}(M^{\rig})\rightarrow \NN(M)^{\rig}$ (with respect to any choice of bases) is an element of $(t/\pi)^{m(M)}(R[1/p]\htimes\B_{\rig,\Qp}^+)^{\times}=(t/\pi)^{m(M)}(R\htimes\A_{\Qp}^+)[1/p]^{\times}$.

Next, we have an inclusion $(R\widehat\otimes\A^+)\otimes_{R\widehat\otimes\A_{\Qp}^+}\NN(M)\subset (R\widehat\otimes\A^+)\otimes_RM$.  Then Corollary~\ref{cor:det-wach-rep} tells us that after inverting $p$, locally on $\Spec R[1/p]$ the determinant of the inclusion is the ideal $(\pi^{m(M)})\subset (R\widehat\otimes\A^+)[1/p]$.

Extending scalars to $R^{\rig}\htimes\B_{\max}$ and composing the two maps, we see that the determinant of the canonical isomorphism $\operatorname{can}_M$ is an element of $t^{m(M)}(R\htimes\AA^+)[1/p]^\times$.  We wish to show that it actually lives in $t^{m(M)}\Rt[1/p]^\times$.

Since the formation of $\D_{\cris}(M^{\rig})$ is compatible with taking exterior powers, we may replace $M$ with $\det(M)$ and consider only families of characters over $R$.  Furthermore, by twisting, we may assume that $M$ is unramified (so has Hodge--Tate weight $0$), so that the determinant of $\mathrm{can}_M$ is $\varphi$-invariant up to a unit of $R$, and we may assume $\det(\mathrm{can}_M))\in R\htimes\A^+$.  We consider the equation $\varphi(\det(\mathrm{can}_M))=r\cdot \det(\mathrm{can}_M)$ modulo successive powers of $\mathfrak{m}_R$.  For each $n$, there is some $s$ such that $\varphi^s(\det(\mathrm{can}_M))=\det(\mathrm{can}_M)$ modulo $\mathfrak{m}_R^n$; viewing this as an equation inside $(R/\mathfrak{m}_R^n)\otimes_{\Z_p}\widetilde{\A}^+$ implies that $\det(\mathrm{can}_M)\in (R/\mathfrak{m}_R^n)\htimes W(\FF_{p^s})$.  Passing to the limit yields the desired result.
\end{proof}

  \begin{definition}\label{def:epsilon}
   We define
   \[
    \varepsilon_{\Lambda_{R,\Qp}(G), \xi}(M): \Det_{\Lambda_{\Rt,\Qp}(G)}(0) \xrightarrow{\sim}
    \left[ \Det_{\Lambda_{\Rt,\Qp}(G)} \left(\Lambda_{\Rt,\Qp}(G)\otimes_{\Lambda_R(G)} R\Gamma_{\Iw}(K_\infty, M)\right)\right]
    \left[ \Det_{\Lambda_{\Rt,\Qp}(G)} \left(\Lambda_{\Rt,\Qp}(G) \otimes_R M\right)\right]
   \]
   to be the isomorphism given by
   \[ (-\gamma_{-1})^d (-1)^{m(M)} \cdot \Theta_{\Lambda_{\Rt,\Qp}(G), \xi}(M) \cdot \varepsilon_{\Rt, \Qp,\xi, \dR}(M).\]
  \end{definition}
Here we regard $\varepsilon_{\Rt, \Qp,\xi, \dR}(M)$ as an isomorphism
   \[ \Det_{\Lambda_{\Rt,\Qp}(G)}(\Lambda_{\Rt,\Qp}(G) \otimes_{R} \Dcris(M)) \xrightarrow{\sim} \Det_{\Lambda_{\Rt,\Qp}(G)}(\Lambda_{\Rt,\Qp}(G) \otimes_{R} M)\]
   via base-extension.  Recall also that $\chi(\gamma_{-1})=-1\in\Z_p^\times$.

\begin{theorem}\label{thm:conclusion}
There is an isomorphism of line bundles
\[	\varepsilon_{\Lambda_{R}(G), \xi}(M):\Det_{\Lambda_{\Rt}(G)}(0) \xrightarrow{\sim}
    \left[ \Det_{\Lambda_{\Rt}(G)} \left(\Lambda_{\Rt}(G)\otimes_{\Lambda_R(G)} R\Gamma_{\Iw}(K_\infty, M)\right)\right]
    \left[ \Det_{\Lambda_{\Rt}(G)} \left(\Lambda_{\Rt}(G) \otimes_R M\right)\right]	\]
such that $\varepsilon_{\Lambda_{R}(G), \xi}(M)$ agrees with
$\varepsilon_{\Lambda_{R,\Qp}(G), \xi}(M)$ after inverting $p$.  The specializations $\varepsilon_{\Lambda_R(G),\xi}(M)\otimes_R\cO_L$ agree with the isomorphisms $\varepsilon_{\Lambda_{\cO_L}(G), \xi}(M\otimes_R\cO_L)$ of~\cite{loeffler-venjakob-zerbes} by construction.
\end{theorem}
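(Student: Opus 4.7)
The plan is to show that the isomorphism $\varepsilon_{\Lambda_{R,\Qp}(G), \xi}(M)$ of Definition \ref{def:epsilon}, which is a priori only defined over $\Lambda_{\Rt,\Qp}(G)=\Lambda_{\Rt}(G)[1/p]$, actually descends to an isomorphism of line bundles over the integral Iwasawa algebra $\Lambda_{\Rt}(G)$. The strategy mirrors the descent of $\Theta_{\Lambda_{R,\Qp}(G),\xi}$ carried out in Section~\ref{sect:alt-const} and rests on the same three ingredients: integrality at classical $\cO_L$-valued specializations from \cite{loeffler-venjakob-zerbes}, the density statement of Lemma~\ref{lemma:qpbar-spec-bdd}, and the normality plus Cohen--Macaulayness of $R$ (which pass to $\Lambda_{\Rt}(G)$).

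First, I would work Zariski-locally on $\Spec \Lambda_{\Rt}(G)$. Choose an open affine cover $\{U = \Spec A_U\}$ over which the target line bundle $\Det_{\Lambda_{\Rt}(G)}(\Lambda_{\Rt}(G)\otimes_{\Lambda_R(G)} R\Gamma_{\Iw}(K_\infty, M))\cdot \Det_{\Lambda_{\Rt}(G)}(\Lambda_{\Rt}(G) \otimes_R M)$ is trivial, and pick integral generators. With respect to these, $\varepsilon_{\Lambda_{R,\Qp}(G), \xi}(M)|_U$ becomes multiplication by a unit $e_U \in A_U[1/p]^\times$. Restricting to the open subspace $U^\circ \subset U$ where $\H_{\Iw}^2(K_\infty,M)$ vanishes, each of the three factors in Definition~\ref{def:epsilon} is compatible with base change by Theorem~\ref{thm:specialization}(2) (combined with the spectral sequence \eqref{f:spectralsequence}) for $\Theta$, by Theorem~\ref{thm:Dcris-M} for $\varepsilon_{\Rt,\Qp,\xi,\dR}(M)$, and trivially for the sign-and-twist factor. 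Hence for every $\cO_L$-valued classical point $x$ of $\Spec R[1/p]$ with $A_U[1/p]\otimes_{R[1/p]}L \neq 0$, the specialization $e_U(x)$ agrees, up to the base-change defects (controlled by $p^{\alpha(h)}$ via Theorem~\ref{thm:wach}(7)) in the integral generators of $\Dcris(M)$ and of $R\Gamma_{\Iw}(K_\infty,M)$, with the classical $\varepsilon_{\Lambda_{\cO_L}(G),\xi}(M\otimes_R\cO_L)$, which lives in $\Lambda_{\cOt_L}(G)^\times$. Thus there is a fixed integer $N$ (depending only on $d$ and $\alpha(h)$) such that $p^N e_U$ specializes to an element of $\Lambda_{\cOt_L}(G)$ at every classical point.

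Applying Lemma~\ref{lemma:qpbar-spec-bdd} to $p^N e_U$ (after clearing the denominator $f_U$ as in the proof of Lemma~\ref{lemma:descent-affine-hiw2-0}) gives a uniform bound $|p^N e_U| \le 1$ on all of $\Spf(\Lambda_{\Rt}(G))^{\rig}$; the normality of $\Lambda_{\Rt}(G)$, inherited from $R$, then forces $p^N e_U \in A_U^\circ$ by \cite[Proposition~7.3.6]{dejong}. This constructs the descended section over each $U^\circ$, and the sections glue on overlaps because they already agreed over $\cK_{\Rt}(G)$. To extend across the closed subset $V \subset \Spec \Lambda_{\Rt}(G)$ where $\H_{\Iw}^2(K_\infty,M)\neq 0$, which has codimension at least $2$ by Proposition~\ref{prop:HIwrk}, I invoke Hartogs's principle, using that $\Lambda_{\Rt}(G)$ is Cohen--Macaulay (inherited from $R$). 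Finally, the fact that the resulting integral section is a unit, and the removal of the auxiliary factor $p^N$ (absorbed into the choice of local bases), follow from Lemma~\ref{lemma:specialization} combined with the fact that at every classical $\cO_L$-point the specialization is already an isomorphism over $\Lambda_{\cOt_L}(G)$. The compatibility with specialization asserted in the last sentence of the theorem is then built into the construction.

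The main obstacle is bookkeeping: keeping careful track of the uniform power of $p$ that corrects the failure of $\NN(M)$, $\Dcris(M)$, and $R\Gamma_{\Iw}(K_\infty,M)$ to strictly commute with base change (these only hold up to annihilation by $p^{\alpha(h)}$ or up to pseudo-null modules via \eqref{H1modulopseudonull}), and verifying that the uniform bound really does transfer from classical specializations to all of $\Spf(\Lambda_{\Rt}(G))^{\rig}$ through Lemma~\ref{lemma:qpbar-spec-bdd}. Once those compatibilities are in place, the descent of $\varepsilon$ is a formal consequence of the descent of $\Theta$ already established in Proposition~\ref{prop:theta1/p} together with Theorem~\ref{thm:Dcris-M}.
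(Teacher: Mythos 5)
Your argument is correct and uses exactly the mechanism of the paper's proof: integrality of the local generator $e_U$ and of its inverse follows from integrality of all classical specializations (via \cite[Corollary 4.3.8]{loeffler-venjakob-zerbes}), the density/boundedness Lemma~\ref{lemma:qpbar-spec-bdd}, and normality of $\Lambda_{\Rt}(G)$. The only remark worth making is that most of your local analysis (the $p^{\alpha(h)}$ correction factors, the restriction to the locus where $\H^2_{\Iw}$ vanishes, and the Cohen--Macaulay/Hartogs extension) duplicates work already done in Lemmas~\ref{lemma:descent-affine-hiw2-0} and \ref{lemma:affine-descent} en route to Proposition~\ref{prop:theta1/p}; once that proposition and Theorem~\ref{thm:Dcris-M} are in hand, the final target involves $\Det_{\Lambda_{\Rt}(G)}(\Lambda_{\Rt}(G)\otimes_R M)$ and $\Det R\Gamma_{\Iw}(K_\infty,M)$, both of which commute with base change on the nose, so the paper's proof reduces to the two-line observation that some $p^m\varepsilon$ is integral and that every classical specialization is already an isomorphism of integral line bundles, forcing $m=0$.
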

\begin{proof}
There is some integer $m$ such that $p^m\varepsilon_{\Lambda_{R,\Qp}(G), \xi}(M)$ is a homomorphism of line bundles over $\Lambda_{\Rt}(G)$.  But the construction of $\varepsilon_{\Lambda_{R,\Qp}(G), \xi}(M)$ is compatible with base change $R[1/p]\rightarrow L$, and $\varepsilon_{\Lambda_{R,\Qp}(G), \xi}(M)\otimes_R\cO_L$ is an isomorphism of line bundles over $\Lambda_{\cOt_L}(G)$ by~\cite[Corollary 4.3.8]{loeffler-venjakob-zerbes}.  This implies that $m=0$ and $\varepsilon_{\Lambda_{R,\Qp}(G), \xi}(M)$ is an isomorphism of line bundles over $\Lambda_{\Rt}(G)$.
\end{proof}

\subsection{Properties of the epsilon-isomorphism} \label{properties}

  In the following we are going to apply a principle of specialization based on:

\begin{lemma} \label{lemma:specialization}
The canonical map
 \begin{equation*}
        K_1(\Lambda_{\widetilde{R}})\to \prod_{R\to \cO} K_1(\Lambda_{\widetilde{\cO}}) ,
 \end{equation*}
where $R\to \cO$ runs through all $\Zp$-algebra homomorphisms with $\cO$ the valuation ring of any finite extension of $\Qp$ (and such that the image has the same quotient field as $\cO$) is injective.
\end{lemma}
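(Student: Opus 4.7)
The plan is to reduce the statement about $K_1$ to injectivity of an underlying ring homomorphism, and then to a scalar statement about $\widetilde{R}$ that is precisely the content of Lemma~\ref{lemma:qpbar-spec-inj}.

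First I would use that $G \cong \Zp^2 \times \Delta$ (with $\Delta$ finite) is abelian, so both $\Lambda_{\widetilde{R}}(G)$ and $\Lambda_{\widetilde{\cO}}(G)$ are commutative, complete, noetherian, semi-local Iwasawa algebras. For such rings $K_1$ coincides with the group of units: $SK_1$ vanishes for commutative semi-local rings; alternatively, after decomposing along characters of the prime-to-$p$ part of $\Delta$ one reduces to a finite product of complete local rings for which $K_1$ is visibly the units. The map in the lemma therefore takes the form
\[
\Lambda_{\widetilde{R}}(G)^{\times} \longrightarrow \prod_{R \to \cO} \Lambda_{\widetilde{\cO}}(G)^{\times},
\]
and any $u$ in its kernel satisfies $u - 1 \in \ker\!\bigl(\Lambda_{\widetilde{R}}(G) \to \prod_{R\to\cO} \Lambda_{\widetilde{\cO}}(G)\bigr)$. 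Hence it suffices to prove that this ring homomorphism is injective.

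Next I would unwind the structure of the Iwasawa algebras. Choosing topological generators $X_1, X_2$ of the $\Zp^2$-part of $G$ gives $\Lambda_{\widetilde{R}}(G) \cong \widetilde{R}[\Delta][\![X_1, X_2]\!]$, and the map to $\Lambda_{\widetilde{\cO}}(G) \cong \widetilde{\cO}[\Delta][\![X_1, X_2]\!]$ is obtained by applying $\widetilde{R} \to \widetilde{\cO}$ to each power-series coefficient. Since $\widetilde{R}[\Delta] = \bigoplus_{\delta \in \Delta} \widetilde{R}\cdot\delta$ is $\widetilde{R}$-free, an element in the kernel has every $\widetilde{R}$-component of every coefficient lying in $\ker(\widetilde{R} \to \widetilde{\cO})$ for each admissible $R \to \cO$. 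Thus the ring map is injective if and only if the scalar map $\widetilde{R} \to \prod_{R\to\cO} \widetilde{\cO}$ is.

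For the last step I would appeal directly to Lemma~\ref{lemma:qpbar-spec-inj}. Because $R$ is flat over $\Zp$, so is $\widetilde{R}$, and injectivity may be checked after inverting $p$. The closed points $x$ of $\Spec R[1/p]$ correspond precisely to the $\Zp$-algebra maps $R \to \cO$ ranged over in the product (the requirement that the image generate the fraction field of $\cO$ cuts out exactly these), and the identification $\widetilde{\cO}[1/p] \cong \widehat{\Qp^{\nr}} \htimes_{\Qp} K_x$ turns the scalar map into
\[
\widetilde{R}[1/p] \longrightarrow \prod_{x} \widehat{\Qp^{\nr}} \htimes_{\Qp} K_x,
\]
which Lemma~\ref{lemma:qpbar-spec-inj} asserts is injective. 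The one substantive ingredient is that lemma (whose proof exploits the reducedness of $R[1/p]$ together with the structure of $\widehat{\Qp^{\nr}}$-Banach spaces), and it is already in hand; the remaining reductions are formal, so no serious obstacle is anticipated.
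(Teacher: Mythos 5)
Your proof is correct, but it reaches the key lemma by a genuinely different reduction than the paper's. Both arguments begin with the same $SK_1$-vanishing step and both therefore must establish injectivity of the underlying ring map $\Lambda_{\widetilde{R}}\to\prod_{R\to\cO}\Lambda_{\widetilde{\cO}}$, and both rest ultimately on Lemma~\ref{lemma:qpbar-spec-inj}. You then observe that this ring map is obtained by applying $\widetilde{R}\to\widetilde{\cO}$ coefficient-by-coefficient to the free $\widetilde{R}$-module $\widetilde{R}[\Delta][\![X_1,X_2]\!]$, so that injectivity reduces to the scalar statement that $\widetilde{R}\hookrightarrow\prod\widetilde{\cO}$, which is Lemma~\ref{lemma:qpbar-spec-inj} applied to $R$ itself. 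The paper instead applies Lemma~\ref{lemma:qpbar-spec-inj} with $\Lambda_R$ playing the role of $R$ (after decomposing the semi-local ring $\Lambda_R$ into its local factors), obtaining an injection $\Lambda_{\widetilde{R}}\hookrightarrow\prod_x\widehat{\Qp^{\nr}}\htimes_{\Qp}K_x$ indexed by all closed points $x$ of $\Spec\Lambda_R[1/p]$, and then checks that this injection factors through $\prod_{R\to\cO}\Lambda_{\widetilde{\cO}}$ because every $\Zp$-algebra map $\Lambda_R\to K_x$ factors through some $\Lambda_{\cO_x}$. Your coefficient-wise route has the modest advantage of invoking Lemma~\ref{lemma:qpbar-spec-inj} only for $R$, so you need not verify that the local factors of $\Lambda_R$ satisfy the hypotheses under which that lemma was proved (reducedness of the generic fibre, etc.), and you avoid the factorization step; the paper's route, by contrast, treats the full family of specializations of $\Lambda_R$ uniformly, which is consistent with the density-of-classical-points philosophy used elsewhere in the descent arguments. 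Both are sound, and both could plausibly have appeared as the paper's proof.
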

 Cp.\ also Corollary \ref{cor:injectivityOfSpecializations} below for a similar statement.

\begin{proof}
Since $SK_1(\Lambda_{\widetilde{R}})= SK_1(\Lambda_{\widetilde{\cO}}) =1$ the statement is equivalent to the injectivity of
\begin{equation*}
     \Lambda_{\widetilde{R}}^\times\to \prod_{R\to \cO} \Lambda_{\widetilde{\cO}}^\times,
\end{equation*}
or - because $\bigcap_{R\to \cO}\left( 1+ \ker\left(\Lambda_{\widetilde{R}} \to   \Lambda_{\widetilde{\cO}} \right)\right) =1+ \bigcap_{R\to \cO} \ker\left(\Lambda_{\widetilde{R}} \to   \Lambda_{\widetilde{\cO}} \right) $ - to the injectivity of
 \begin{equation}
     \label{f:specializationadd}
       \Lambda_{\widetilde{R}} \to \prod_{R\to \cO} \Lambda_{\widetilde{\cO}}.
\end{equation}
Since $\Lambda_{\widetilde{R}}$ is a product of rings of the type of $R$ dealt with in Lemma \ref{lemma:qpbar-spec-inj} and as $\widetilde{\Lambda_{R}}=\Lambda_{\widetilde{R}},$   we obtain an injective map  $\Lambda_{\widetilde{R}}\hookrightarrow\prod_x \widehat{\Qp^{\nr}}\htimes_{\Qp} K_x$, where the closed points $x$ corresponds to $\Zp$-algebra homomorphisms $\Lambda_{R}\to K_x$ (such that the image has   quotient field $K_x$). One easily sees that this map factorises as
\[
\xymatrix{
                & {\prod_{R\to \cO} \Lambda_{\widetilde{\cO}}} \ar[dr]^{ }             \\
{\Lambda_{\widetilde{R}}} \ar[ur]^{ } \ar@{^(->}[rr]^{ } & &    { \prod_x \widehat{\Qp^{\nr}}\htimes_{\Qp} K_x , }      }
\]
because each $\Zp$-algebra homomorphisms $\Lambda_{R}\to K_x$ factorizes as $\Lambda_{R}\to \Lambda_{\cO_x}\to K_x$ with the first map being induced on the coefficients by a $\Zp$-algebra homomorphisms $R\to \cO_x$ where $\cO_x$ is the valuation ring of $K_x$. The injectivity of \eqref{f:specializationadd}, whence the claim, follows.
\end{proof}

%
%
%
%
%

  We note for later use some properties of the isomorphisms 
  $\varepsilon_{\Lambda_{R}(G), \xi}(M)$:\\


  \begin{proposition}[Compatibility with short exact sequences]
   \label{prop:regulatorSEScompatible}
   Let
   \[ 0 \rTo M' \rTo M \rTo M''\rTo 0\]
   be a short exact sequence of $R$-linear crystalline representations of $G_{\Qp}$. Then
   \[\varepsilon_{\Lambda_{R}(G), \xi}(M) = \varepsilon_{\Lambda_{R}(G), \xi}(M')\cdot \varepsilon_{\Lambda_{R}(G), \xi}(M'').\]
  \end{proposition}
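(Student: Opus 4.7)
The plan is to reduce the multiplicativity under short exact sequences to the analogous statement in the classical case $R=\cO_L$, which is established in~\cite{loeffler-venjakob-zerbes}, via the specialization principle of Lemma~\ref{lemma:specialization}.

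First I would observe that both sides of the asserted identity are isomorphisms between the \emph{same} line bundle on $\Spec \Lambda_{\widetilde R}(G)$. On the cohomological side, the exact triangle
\[ R\Gamma_{\Iw}(K_\infty, M') \to R\Gamma_{\Iw}(K_\infty, M) \to R\Gamma_{\Iw}(K_\infty, M'') \to \]
yields a canonical isomorphism of determinants, and on the coefficient side the short exact sequence of finite projective $R$-modules $0 \to M' \to M \to M'' \to 0$ gives a canonical isomorphism
\[\Det_{\Lambda_{\widetilde R}(G)}(\Lambda_{\widetilde R}(G) \otimes_R M) \cong \Det_{\Lambda_{\widetilde R}(G)}(\Lambda_{\widetilde R}(G) \otimes_R M') \cdot \Det_{\Lambda_{\widetilde R}(G)}(\Lambda_{\widetilde R}(G) \otimes_R M'').\]
Consequently the ratio $u$ of $\varepsilon_{\Lambda_{R}(G),\xi}(M)$ and $\varepsilon_{\Lambda_{R}(G),\xi}(M')\cdot \varepsilon_{\Lambda_{R}(G),\xi}(M'')$ is a well-defined element of $\Lambda_{\widetilde R}(G)^\times$, and it suffices to prove that $u = 1$.

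Second I would verify, factor by factor in Definition~\ref{def:epsilon}, that each ingredient of $\varepsilon_{\Lambda_R(G),\xi}(-)$ is compatible with short exact sequences. The sign $(-\gamma_{-1})^d(-1)^{m(M)}$ is multiplicative because both the rank $d$ and the sum $m(M)$ of opposite Hodge--Tate weights are additive in short exact sequences. The de Rham comparison $\varepsilon_{\widetilde R,\Qp,\xi,\dR}(M)$ is multiplicative because it is built from the canonical comparison isomorphism $(R^\rig\htimes \Bmax)\otimes \Dcris(M^\rig)\xrightarrow{\sim} (R^\rig\htimes \Bmax)\otimes M$ of Theorem~\ref{thm:Dcris-M}, which is a morphism of exact functors of $M$ (using exactness of $\Dcris$ on crystalline families). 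Finally, the piece $\Theta_{\Lambda_{\widetilde R,\Qp}(G),\xi}(M)$ is built from the determinant of the regulator $\cL^G_{M,\xi}$ and the factor $\ell(M)$; both are additive under short exact sequences, the first by functoriality of the regulator construction of Section~\ref{sec:regulator} and the additivity of determinants, the second because the Hodge--Tate weights add with multiplicity. Thus $u$ is a product of terms, each of which should be $1$; the genuine content is that the resulting equality holds \emph{integrally} in $\Lambda_{\widetilde R}(G)^\times$ rather than merely in a larger ring.

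Third, I would exploit Theorem~\ref{thm:conclusion}: for every $\Z_p$-algebra homomorphism $R \to \cO$ into the ring of integers of a finite extension $L/\Qp$, the specialization of $\varepsilon_{\Lambda_R(G),\xi}(-)$ coincides with the $\varepsilon$-isomorphism of~\cite{loeffler-venjakob-zerbes}. Combining this with base-change compatibility of $R\Gamma_{\Iw}(K_\infty,-)$ (via the spectral sequence~\eqref{f:spectralsequence}, which degenerates on pseudo-null modules and thus commutes with the canonical determinant isomorphisms of the exact triangle) and of $\Dcris(-)$ (Corollary~\ref{DcrisMandMrig}), the image of $u$ in $\Lambda_{\widetilde\cO}(G)^\times$ is precisely the ratio of the $\varepsilon$-isomorphisms in the classical setting, which is $1$ by the corresponding property proved in~\cite{loeffler-venjakob-zerbes}. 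Applying Lemma~\ref{lemma:specialization}, the injection
\[ K_1(\Lambda_{\widetilde R}(G)) \hookrightarrow \prod_{R\to\cO} K_1(\Lambda_{\widetilde\cO}(G)) \]
then forces $u = 1$, which is the desired identity.

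The main obstacle is step three, specifically ensuring that the canonical determinant isomorphisms arising from the exact triangle on Iwasawa cohomology and from the short exact sequences $0 \to \Dcris(M') \to \Dcris(M) \to \Dcris(M'') \to 0$ truly commute with specialization to $\cO$. For $\Dcris$ this requires the exactness of $0 \to \Dcris(M'^{\rig}) \to \Dcris(M^{\rig}) \to \Dcris(M''^{\rig}) \to 0$ and flatness (after inverting $p$) of the base change; for $R\Gamma_{\Iw}$ it requires that the higher $\Tor$-contributions identified in~\eqref{H1modulopseudonull} are annihilated by the canonical pseudo-null trivializations (as in~\cite[Lemma 2.2]{venjakob11}) and hence do not obstruct the comparison. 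Once these compatibilities are tracked carefully, the specialization principle closes the argument.
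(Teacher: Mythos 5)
Your proposal is correct and follows essentially the same route as the paper: both arguments observe that the two sides trivialize the same (canonically trivialized) ratio of determinants, so their discrepancy is a unit, and then kill that unit by specializing along all maps $R\to\cO$, invoking the multiplicativity of the classical $\varepsilon$-isomorphisms of \cite{loeffler-venjakob-zerbes} together with the injectivity of Lemma~\ref{lemma:specialization}. Your intermediate factor-by-factor discussion of Definition~\ref{def:epsilon} is not needed (the paper skips it entirely), but it does no harm.
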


  \begin{proof}
We first observe that the determinants
\[	\left[ \Det_{\Lambda_{R,\Qp}(G)} \left(R\Gamma_{\Iw}(K_\infty, M)\right)\right] \cdot \left[ \Det_{\Lambda_{R,\Qp}(G)} \left(R\Gamma_{\Iw}(K_\infty, M')\right)\right]^{-1}	\cdot\left[ \Det_{\Lambda_{R,\Qp}(G)} \left(R\Gamma_{\Iw}(K_\infty, M'')\right)\right]^{-1}	\]
and
\[	\left[ \Det_{\Lambda_{\Rt,\Qp}(G)} \left(\Lambda_{\Rt,\Qp}(G) \otimes_R M\right)\right] \cdot \left[ \Det_{\Lambda_{\Rt,\Qp}(G)} \left(\Lambda_{\Rt,\Qp}(G) \otimes_R M'\right)\right]^{-1} \cdot\left[\Det_{\Lambda_{\Rt,\Qp}(G)} \left(\Lambda_{\Rt,\Qp}(G) \otimes_R M''\right)\right]^{-1} \cdot	\]
have canonical trivializations, so their product (with scalars extended to $\Lambda_{\Rt,\Qp}(G)$) does, as well.

On the other hand, the product $\varepsilon_{\Lambda_R(G),\xi}(M)\cdot \varepsilon_{\Lambda_R(G),\xi}(M')^{-1}\cdot\varepsilon_{\Lambda_R(G),\xi}(M'')^{-1}$ provides another trivialization of the product of these determinants.  But since
\[	\varepsilon_{\Lambda_R(G),\xi}(\cO\otimes_RM)=\varepsilon_{\Lambda_R(G),\xi}(\cO\otimes_RM')\varepsilon_{\Lambda_R(G),\xi}(\cO\otimes_RM'')	\]
for every map $R\rightarrow \cO$ where $\cO$ is finite flat over $\Z_p$, the proposition follows.
  \end{proof}

  \begin{proposition}[Change of coefficient ring]
   \label{prop:regulatorbaseextension}
   Let $R'$ be an $R$-algebra which is again a complete local noetherian $\Z_p$-algebra which is Cohen--Macaulay, normal, and $\Z_p$-flat with finite residue field. Then
   \[ \varepsilon_{\Lambda_{R'}(G), \xi}(R' \otimes_R M) = R' \otimes_R \varepsilon_{\Lambda_{R'}(G), \xi}(M).\]
  \end{proposition}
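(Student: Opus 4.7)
The plan is to reduce the equality to the analogous equality at classical points via specialization, using that both sides are trivializations of the same line bundle over $\Lambda_{\widetilde{R'}}(G)$.

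Unwinding Definition~\ref{def:epsilon}, both $\varepsilon_{\Lambda_{R'}(G), \xi}(R' \otimes_R M)$ and $R' \otimes_R \varepsilon_{\Lambda_{R}(G), \xi}(M)$ are morphisms
\[
\Det_{\Lambda_{\widetilde{R'}}(G)}(0) \xrightarrow{\sim}
    \bigl[ \Det_{\Lambda_{\widetilde{R'}}(G)} \bigl(\Lambda_{\widetilde{R'}}(G)\otimes_{\Lambda_{R'}(G)} R\Gamma_{\Iw}(K_\infty, R'\otimes_R M)\bigr)\bigr]\cdot\bigl[ \Det_{\Lambda_{\widetilde{R'}}(G)} \bigl(\Lambda_{\widetilde{R'}}(G) \otimes_{R'} (R'\otimes_R M)\bigr)\bigr],
\]
where for the right-hand side we use that $R\Gamma_{\Iw}(K_\infty, -)$ is compatible with base change (via the spectral sequence \eqref{f:spectralsequence}), that the prefactor $(-\gamma_{-1})^d(-1)^{m(M)}$ depends only on the rank $d$ and on the sum $m(M)$ of opposites of Hodge--Tate weights of $M$, and that both quantities are preserved by the base change $R \to R'$ (the Hodge--Tate weights of the family $R'\otimes_R M$ are constant and equal to those of $M$ since $R[1/p]$ and $R'[1/p]$ are integral domains and $R\to R'$ is continuous for the profinite topology). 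Hence the ratio of the two isomorphisms is a well-defined element of $\Lambda_{\widetilde{R'}}(G)^{\times}$, equivalently a class in $K_1(\Lambda_{\widetilde{R'}}(G))$.

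Next, I would invoke the injectivity statement of Lemma~\ref{lemma:specialization} (which applies verbatim to $\Lambda_{\widetilde{R'}}(G) = \Lambda_{\Z_p}(G) \htimes_{\Z_p} \widetilde{R'}$ in place of $\Lambda_{\widetilde{R'}}$, since the argument only uses the structure of $\widetilde{R'}$ as a tensor product with $\widehat{\Z_p^{\nr}}$, which is preserved after smashing with $\Lambda_{\Z_p}(G)$): to show the ratio is $1$, it suffices to show that its image in $\Lambda_{\widetilde{\cO}}(G)^{\times}$ is trivial for every $\Z_p$-algebra homomorphism $R' \to \cO$ to the ring of integers of a finite extension of $\Q_p$.

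Finally, fix such a specialization $\alpha\colon R' \to \cO_L$. By functoriality of the $\varepsilon$-isomorphism under change of coefficients from $R'$ to $\cO_L$ (which is exactly the analogous proposition in the $\cO_L$-coefficient setting; this is the content of the last sentence of Theorem~\ref{thm:conclusion}), the specialization of $\varepsilon_{\Lambda_{R'}(G), \xi}(R' \otimes_R M)$ at $\alpha$ equals $\varepsilon_{\Lambda_{\cO_L}(G), \xi}(\cO_L \otimes_{R'}(R'\otimes_R M)) = \varepsilon_{\Lambda_{\cO_L}(G), \xi}(\cO_L \otimes_R M)$. On the other hand, the specialization of $R' \otimes_R \varepsilon_{\Lambda_{R}(G), \xi}(M)$ at $\alpha$ is just the specialization of $\varepsilon_{\Lambda_{R}(G), \xi}(M)$ along the composite $\Z_p$-algebra map $R \to R' \xrightarrow{\alpha} \cO_L$, which again by Theorem~\ref{thm:conclusion} equals $\varepsilon_{\Lambda_{\cO_L}(G), \xi}(\cO_L \otimes_R M)$. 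The two specializations therefore coincide, and the proposition follows. The only nontrivial point is the applicability of Lemma~\ref{lemma:specialization} to the Iwasawa algebra with group coefficients; this is expected to be routine but is the one place where the assumption that $R'$ is normal, Cohen--Macaulay, and $\Z_p$-flat is used.
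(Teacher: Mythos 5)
Your proposal is correct. The paper's own proof of this proposition is literally the single word ``Clear.'' --- the authors regard the identity as immediate from the functoriality in $R$ of every step of the construction (base change of the Wach module, Theorem~\ref{thm:specialization}(2) for the regulator, base change of $R\Gamma_{\Iw}$, and the descent in \textsection\ref{sect:alt-const}, which is already carried out compatibly with specialization). What you do instead is bypass the step-by-step functoriality check and run the rigidity argument: both sides trivialize the same determinant over $\Lambda_{\widetilde{R'}}(G)$, their ratio lies in $K_1(\Lambda_{\widetilde{R'}}(G))$, and Lemma~\ref{lemma:specialization} together with the specialization statement in Theorem~\ref{thm:conclusion} forces the ratio to be $1$. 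This is precisely the technique the paper itself deploys for the sibling statements (Propositions~\ref{prop:regulatorSEScompatible} and~\ref{prop:twistinvariance}), so while your route is more elaborate than the authors' one-word dismissal, it is squarely within the paper's toolkit and is arguably the more robust justification, since it only requires knowing that specializations to $\cO_L$-points behave correctly rather than tracking compatibility through each intermediate object. Your closing caveat about applying Lemma~\ref{lemma:specialization} to the Iwasawa algebra is already resolved in the paper: the $\Lambda_{\widetilde R}$ of that lemma is the Iwasawa algebra $\Lambda_{\widetilde R}(G)$ (the proof invokes $\widetilde{\Lambda_R}=\Lambda_{\widetilde R}$ and factors points of $\Spec\Lambda_R(G)[1/p]$ through coefficient specializations $R\to\cO$), and it applies to $R'$ since normality gives reducedness of $R'[1/p]$.
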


  \begin{proof}
Clear.
  \end{proof}

  The next compatibility property takes a little more notation to state. For brevity let us write $\Lambda$ for $\Lambda_{R}(G)$. For $\eta$ a continuous $R$-valued   character of $G$, we have a twisting homomorphism $\tw_\eta: \Lambda \to \Lambda$ which maps a group element $g \in G$ to $\eta(g) g$. Hence we obtain a pullback functor $(\tw_\eta)^*$ from the category of $\Lambda$-modules to itself:
  \[ (\tw_\eta)^* \mathfrak{M} := \Lambda \otimes_{\Lambda, \tw_\eta} \mathfrak{M}.\]
  This can also be described in terms of tensoring with the $\Lambda$-bimodule $\Lambda \otimes_{\Lambda, \tw_\eta} \Lambda$, which is free of rank one as a $\Lambda$-module.  Hence the twisting functor extends to a functor from the category $\underline{\Det}(\Lambda)$ to itself, and is compatible with the functor $\Det$.

  Note that we have an isomorphism
  \[ (\tw_\eta)^* (\Lambda \otimes_{R} M) \cong \Lambda \otimes_{R} M(\eta^{-1}),
   \; a\otimes b\otimes v\mapsto a \operatorname{Tw}_{\eta}(b)\otimes (v \otimes m_{\eta^{-1}})\]
  as $(\Lambda, G_{\Qp})$-modules, if $\Lambda$ acts on $\Lambda \otimes_{\cO} T$ via left multiplication on the left factor, while $g \in G_{\Qp}$ sends $\lambda \otimes v$ to $\lambda \bar{g}^{-1} \otimes gv$ where $\bar{g}$ denotes the image of $g$ in $G$ (and analogously for the action on $\Lambda \otimes_{R} M(\eta^{-1})$). Here $m_{\eta^{-1}} $ a (fixed) basis of  $M(\eta^{-1}). $

  We clearly have $(\tw_\eta)^* \circ (\tw_{\eta^{-1}})^* = \id$. Similar definitions apply to other coefficient rings than $\Lambda_{\cO}(G)$, including $\Lambda_L(G)$, $\cH_L(G)$ or $\Lambda_{\Rt}(G)$.

  Finally note that for a $\Lambda$-module $\mathfrak{M}$ we have a canonical isomorphism
  \[\Lambda \otimes_{\Lambda, \operatorname{Tw}_{\eta^{-1}} } \mathfrak{M} = \mathfrak{M}\otimes_R R m_\eta,\; \lambda\otimes m\mapsto \tw_\eta(\lambda) m \otimes m_\eta,\]
  of $\Lambda$-modules, where the $\Lambda$-module structure on the right hand side is induced by the diagonal action of $G$ upon it.

  \begin{proposition}[Invariance under crystalline twists]\label{prop:twistinvariance}
   If $\eta$ is a crystalline character with values in $R$, then
   \[\operatorname{Tw}_{\eta^{-1}}\left(\varepsilon_{\Lambda_{R}(G), \xi}(M)\right)\cong \varepsilon_{\Lambda_{R}(G), \xi}(M(\eta))\]
  \end{proposition}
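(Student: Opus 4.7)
The strategy is to reduce to the finite coefficient case by the specialization principle. Both sides of the asserted identity are trivializations of the same pair of determinant line bundles over $\Lambda_{\widetilde R}(G)$: indeed, using that $\operatorname{Tw}_{\eta^{-1}}$ is induced by tensoring with the rank-one $\Lambda$-bimodule $\Lambda\otimes_{\Lambda,\tw_{\eta^{-1}}}\Lambda$, one has canonical isomorphisms
\[	\operatorname{Tw}_{\eta^{-1}}(\Lambda_{\widetilde R}(G)\otimes_R M)\cong \Lambda_{\widetilde R}(G)\otimes_R M(\eta)	\]
as $(\Lambda_{\widetilde R}(G),G_{\Qp})$-bimodules, and similarly $\operatorname{Tw}_{\eta^{-1}}R\Gamma_{\Iw}(K_\infty,M)\cong R\Gamma_{\Iw}(K_\infty,M(\eta))$ (a Shapiro-type identification following from the change-of-action formula for $\TT(-)$). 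Hence the ratio of the two isomorphisms lies in $K_1(\Lambda_{\widetilde R}(G))=\Lambda_{\widetilde R}(G)^\times$, since $SK_1$ vanishes for these semi-local rings.

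Next, by Lemma~\ref{lemma:specialization}, to show this ratio is trivial it suffices to check its image is trivial under every specialization $\Lambda_{\widetilde R}(G)\to\Lambda_{\widetilde{\cO}}(G)$ induced by a $\Z_p$-algebra homomorphism $R\to\cO$ with $\cO$ the ring of integers of a finite extension $L/\Qp$. For such a homomorphism the character $\eta$ specializes to a character $\eta_\cO:G_{\Qp}\to\cO^\times$, which is crystalline by Proposition~\ref{prop:crisfam}. By Proposition~\ref{prop:regulatorbaseextension} (base change) combined with the built-in functoriality of $\operatorname{Tw}$ under ring homomorphisms and with the defining specialization property of $\varepsilon_{\Lambda_R(G),\xi}$ asserted in Theorem~\ref{thm:conclusion}, the specialization of the identity to be proved reduces to
\[	\operatorname{Tw}_{\eta_\cO^{-1}}\bigl(\varepsilon_{\Lambda_{\cO}(G),\xi}(M_\cO)\bigr)\cong\varepsilon_{\Lambda_{\cO}(G),\xi}\bigl(M_\cO(\eta_\cO)\bigr),	\]
which is exactly the crystalline twist-invariance of the $\varepsilon$-isomorphism in the finite coefficient setting, proved in \cite{loeffler-venjakob-zerbes} (as a consequence of their Theorem~4.2.1 and the explicit formulas in \S4 of \emph{loc.~cit.}).

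The main obstacle is purely bookkeeping: one must verify that the individual ingredients making up $\varepsilon_{\Lambda_R(G),\xi}(M)$ in Definition~\ref{def:epsilon} behave compatibly with $\operatorname{Tw}_{\eta^{-1}}$. Concretely, the regulator map $\cL^G_{M,\xi}$ and the character $\ell(M)$ twist in a controlled way (reflecting the shift in Hodge--Tate weights $m(M(\eta))=m(M)+d\cdot m(\eta)$), as does $\varepsilon_{\widetilde R,\Qp,\xi,\dR}(M)$ via the isomorphism $\Dcris(M(\eta))\cong\Dcris(M)\otimes\Dcris(\eta)$, and the sign factor $(-\gamma_{-1})^d(-1)^{m(M)}$ changes accordingly. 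However, each of these compatibilities already holds after a single specialization — they follow from the corresponding compatibilities at finite level — so there is nothing to verify beyond the appeal to \cite{loeffler-venjakob-zerbes} once the specialization reduction is in place.
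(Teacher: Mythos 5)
Your proposal is correct and follows essentially the same route as the paper: both sides are trivializations of the same determinant line over $\Lambda_{\Rt,\Qp}(G)$, so their ratio lies in $K_1(\Lambda_{\Rt}(G))^{}\cong\Lambda_{\Rt}(G)^\times$, and by the specialization injectivity of Lemma~\ref{lemma:specialization} it suffices to check the identity after every map $R\to\cO$ with $\cO$ finite flat over $\Z_p$, where it is the twist-invariance established in \cite{loeffler-venjakob-zerbes}. The paper's own proof is just a terser version of this argument, so no further comparison is needed.
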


  \begin{proof}
This again follows by noting that $\operatorname{Tw}_{\eta^{-1}}\left(\varepsilon_{\Lambda_{R}(G), \xi}(M)\right)$ and $\varepsilon_{\Lambda_{R}(G), \xi}(M(\eta))$ both provide trivializations of
\[	\left[ \Det_{\Lambda_{\Rt,\Qp}(G)} \left(\Lambda_{\Rt,\Qp}(G)\otimes_{\Lambda_R(G)} R\Gamma_{\Iw}(K_\infty, M(\eta))\right)\right]
    \left[ \Det_{\Lambda_{\Rt,\Qp}(G)} \left(\Lambda_{\Rt,\Qp}(G) \otimes_R M(\eta)\right)\right]	\]
Since they agree after extending scalars along $R\rightarrow\cO$ when $\cO$ is finite flat over $\Z_p$, they are the same.
  \end{proof}

%
%
We use these results to extend the definition of $\varepsilon_{\Lambda_{R}(G), \xi}(M)$   to lattices $M$ a crystalline family of Galois representations with arbitrary (bounded) Hodge-Tate weights, by tensoring the corresponding maps for $M(j)$ with $R(-j)$, where $j \gg 0$ is such that $M(j)$ has non-negative Hodge--Tate weights. Clearly, $\varepsilon_{\Lambda_{R}(G), \xi}(M)$ is again compatible with specializations in $R.$

\subsection{Epsilon-isomorphisms for more general modules}

As before we let $L$ denote   a finite extension of $\Qp.$ We recall the following definition from \cite[\S1.4]{fukayakato06}.

  \begin{definition}
   A ring $A$ is of
   \begin{description}
     \item[(type 1)] if there exists a two sided ideal $I$ of $A$ such that $A/I^n$ is finite of order a power of
     $p$ for any $n \geq 1$, and such that $A \cong \varprojlim_n A/I^n$.
     \item[(type 2)] if $A$ is the matrix-algebra $M_n(E)$ of some finite extension $E$ over $\Qp$ and some dimension $n\geq 1$.
   \end{description}
  \end{definition}

  By Lemma 1.4.4 in (loc.\ cit.), $A$ is of type 1 if and only if the defining condition above holds with $I$ equal to the Jacobson ideal $J=J(A)$. Such rings are always semi-local and $A/J$ is a finite product of matrix algebras over finite fields. For a ring $A$ of type (1) or (2) we define
  \[ \At := \widehat{\ZZ_p^{\nr}} \htimes_{\Zp} A\]
  where $\widehat{\mathbf{Z}_p^{\nr}}$ denotes the completion of the ring of integers of the maximal unramified extension of $\Qp$.

  Now let $M$ be a  crystalline family of representation of $G_{\Qp}$ with coefficients in $R$. We set $\TT(M) := \Lambda_{R}(G)\otimes_{R} M$, which we consider as $\Lambda(G)$-module by multiplication on the left tensor factor and as $G_{\Qp}$-module via $g(\lambda\otimes t)=\lambda \bar{g}^{-1}\otimes gt$. The following isomorphism (essentially a version of Shapiro's lemma) is well known:

  \begin{proposition}
   We have
   \[ R\Gamma(\Qp, \TT(M)) \cong R\Gamma_{\Iw}(K_\infty, M)\]
   as $\Lambda(G)$-modules.
  \end{proposition}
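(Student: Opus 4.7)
The plan is to reduce to the classical Shapiro isomorphism at each finite level and then pass to the inverse limit. Exhaust $K_\infty$ by finite Galois subextensions $\Qp \subset K_n \subset K_\infty$ with Galois groups $G_n := \Gal(K_n/\Qp)$, so that $\Lambda_R(G) = \varprojlim_n R[G_n]$ and
\begin{equation*}
\TT(M) = \Lambda_R(G)\otimes_R M = \varprojlim_n \bigl(R[G_n]\otimes_R M\bigr),
\end{equation*}
the latter inverse limit being taken with respect to the surjective transition maps induced by $R[G_{n+1}]\twoheadrightarrow R[G_n]$. At each finite level, the twisted $G_{\Qp}$-action $g(\lambda\otimes m)=\lambda\bar g^{-1}\otimes gm$ identifies $R[G_n]\otimes_R M$ with the induced module $\mathrm{Ind}_{G_{K_n}}^{G_{\Qp}}\! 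M$, and classical Shapiro's lemma supplies a canonical, $R[G_n]$-equivariant isomorphism of complexes
\begin{equation*}
R\Gamma(\Qp, R[G_n]\otimes_R M) \xrightarrow{\sim} R\Gamma(K_n, M).
\end{equation*}

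Next I would pass to the inverse limit. The transition maps on $R\Gamma(K_n,M)$ are given by corestriction, and on the level of cohomology groups these form Mittag--Leffler systems (each $H^i(K_n,M)$ is finitely generated over the noetherian ring $R$, and the cohomology vanishes outside degrees $0,1,2$), so that $R\Gamma$ commutes with the sequential inverse limit. This yields
\begin{equation*}
R\Gamma(\Qp,\TT(M)) \cong \varprojlim_n R\Gamma(\Qp, R[G_n]\otimes_R M) \cong \varprojlim_n R\Gamma(K_n,M),
\end{equation*}
and the right-hand side is by construction $R\Gamma_{\Iw}(K_\infty,M)$ (this being the derived-functor interpretation underlying the definition $\H^i_{\Iw}(K_\infty,M) = \H^i(\Qp,\TT(M))$ adopted in the text).

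Finally, one verifies that the identification is $\Lambda_R(G)$-equivariant. The $\Lambda_R(G)$-structure on the source arises from left multiplication on the first tensor factor of $\TT(M)$, which commutes with the twisted Galois action by inspection; on the target it arises by transport of structure through the Shapiro identifications at each level. Since these two structures coincide modulo every open ideal of $\Lambda_R(G)$ (compatibly in $n$), they agree in the limit.

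The main obstacle is really just technical bookkeeping with the inverse limit: checking that $R\Gamma(\Qp,-)$ commutes with the completed tensor product defining $\TT(M)$. This is handled by the Mittag--Leffler argument together with the finiteness of the cohomology of $M_{A/\mathfrak{m}_R^k}$ for each finite quotient, so no new input is needed beyond the classical Shapiro lemma.
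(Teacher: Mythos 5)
Your argument is correct and is essentially the standard proof: the paper itself simply cites Nekov\'a\v{r} [8.4.4.2], whose argument is exactly the one you give (Shapiro's lemma at each finite level $K_n$, identification of $R[G_n]\otimes_R M$ with the induced module via the twisted action, and passage to the inverse limit using surjectivity of the transition maps and Mittag--Leffler for the finitely generated cohomology groups). No gaps.
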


  \begin{proof}
   See e.g.~\cite[8.4.4.2 Proposition]{nekovar06}.
  \end{proof}

  Let $\Lambda = \Lambda_{R}(G)$, which is a ring of type 1, with $\tilde\Lambda = \Lambda_{\Rt}(G)$. Then we have constructed an isomorphism
  \[ \varepsilon_{\Lambda, \xi}(M): \Det_{\tilde\Lambda}(0) \rTo^\cong
    \tilde\Lambda \otimes_{\Lambda} \left\{ \Det_{\Lambda} R\Gamma(\Qp, \TT(M)) \cdot \Det_{\Lambda} \TT(M)\right\}.
  \]
  We shall establish that this satisfies the properties predicted by \cite[Conjecture 3.4.3]{fukayakato06} for the module $\TT(M)$. For that purpose it is convenient to write also $\varepsilon_{\Lambda, \xi}(\TT(M))$ for the above $\varepsilon$-isomorphism, and to extend it to a slightly more general class of modules.

  We consider quadruples $(A, Y, M, \xi)$ where
  \begin{itemize}
   \item $A$ is a $p$-torsion-free $R$-algebra which is also a ring of type (1) or (2) above,
   \item $\xi$ is a compatible system of $p^n$-th roots of unity (as before),
   \item $M$ is a crystalline family of representation of $G_{\Qp}$ with coefficients in $R$,
   \item $Y$ is a finitely-generated projective left $A$-module, equipped with a continuous $A$-linear action of $G$.
  \end{itemize}

  Given such a quadruple, we define $\TT = Y \otimes_{R} M$, which we equip with the obvious left $A$-module structure and an action of $G_{\Qp}$ via $g \cdot (y \otimes t) = y\bar{g}^{-1} \otimes g t$. Then $(A, \TT, \xi)$ is a triple satisfying the conditions of \cite[\S 3.4.1]{fukayakato06}. Moreover, the action of $G$ on $Y$ extends to a $\Lambda$-module structure, and we have
  \[ \TT = Y \otimes_{\Lambda} \TT(M)\]
  where $\TT(M)$ is as above. So we may define
  \[ \varepsilon_{A, \xi}(\TT) := Y\otimes_{\Lambda} \varepsilon_{\Lambda, \xi}(\TT(M)),\]
  which is an isomorphism
  \[
    \Det_{\At}(0) \xrightarrow{\sim} \At \otimes_{A} \left\{ \Det_A R\Gamma(\Qp, \TT) \cdot \Det_A \TT \right\};
  \]
  here we have used the fact that
  \[ Y\otimes^{\mathbf{L}}_{\Lambda} R\Gamma(\Qp, \TT(M))\cong R\Gamma(\Qp, Y\otimes_{\Lambda} \TT(M))\]
  by \cite[1.6.5]{fukayakato06}.

  \begin{remark}
   Note that $A$ need not be commutative, and $Y$ need not be either projective or finitely-generated as a $\Lambda$-module.
  \end{remark}

  \begin{proposition}
   \label{prop:compat2}
   Suppose $A = \cO_E$ for some finite extension $E / \Qp$, and the finite-dimensional $E$-vector space $E \otimes_{A} Y$ is de Rham as a representation of $G$. Then $E \otimes_{\cO_E} \TT$ is also de Rham, and $E \otimes_{\cO_E}\varepsilon_{A, \xi}(\TT)$ coincides with the canonical isomorphism $\varepsilon_{E, \xi}(E \otimes_{\cO_E} \TT)$ of \cite[\S 2.4]{loeffler-venjakob-zerbes}, the 'standard trivialisation'.
  \end{proposition}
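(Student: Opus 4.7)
The plan is to reduce the statement, via a dévissage in the auxiliary module $Y$, to the crystalline specialization compatibility established in Theorem~\ref{thm:conclusion}. First, by Proposition~\ref{prop:regulatorbaseextension} and the analogous base-change property of the standard trivialization, I would replace $R$ by $A = \cO_E$ and henceforth assume $R = A = \cO_E$; the task then becomes to show that $Y \otimes_{\Lambda_{\cO_E}(G)} \varepsilon_{\Lambda_{\cO_E}(G), \xi}(M)$, after inverting $p$, coincides with the standard trivialization of $E \otimes_{\cO_E}(Y \otimes_{\cO_E} M)$.

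Next, I would exploit multiplicativity in short exact sequences $0 \to Y' \to Y \to Y'' \to 0$ of $G$-stable $\cO_E$-direct summands: on our side this follows from the additivity of determinants of perfect complexes in distinguished triangles, combined with the defining formula $\varepsilon_{A,\xi}(\TT) = Y \otimes_{\Lambda} \varepsilon_{\Lambda,\xi}(\TT(M))$; on the LVZ side it is the multiplicativity built into the standard trivialization in \cite[\S 2.4]{loeffler-venjakob-zerbes}. Since $G = U \times \Gamma$ is abelian and $E \otimes Y$ is de Rham, after a harmless enlargement of $E$ it admits a composition series with one-dimensional (automatically de Rham) quotients; saturating yields a $G$-stable $\cO_E$-lattice filtration with character quotients. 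Induction on rank reduces to the case $Y = \cO_E(\eta)$ for a continuous de Rham character $\eta : G \to \cO_E^\times$.

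A de Rham character of $G = U \times \Gamma$ factors as $\eta = \eta_{\mathrm{cryst}} \cdot \psi$, where $\eta_{\mathrm{cryst}}$ is crystalline (continuous characters of $U$ inflate to unramified, hence crystalline, characters of $G_{\Qp}$, and de Rham characters of $\Gamma$ are locally algebraic, thus a product of a power of $\chi_{\mathrm{cyc}}$ and a finite-order character) and $\psi$ is a finite-order character of $\Gamma$. Combining the tensor decomposition $\cO_E(\eta) \cong \cO_E(\eta_{\mathrm{cryst}}) \otimes_{\cO_E} \cO_E(\psi)$ with the crystalline-twist invariance of both constructions (Proposition~\ref{prop:twistinvariance} on our side and the analogous property of the standard trivialization on the LVZ side) further reduces to $Y = \cO_E(\psi)$ with $\psi$ finite order.

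The main obstacle lies in this last case. Choosing $n$ such that $\psi$ factors through $\Gal(\Qp(\mu_{p^n})/\Qp)$, the twisted representation $M(\psi)$ restricted to $G_{\Qp(\mu_{p^n})}$ is crystalline, being isomorphic there to $M|_{G_{\Qp(\mu_{p^n})}}$. Applying the change-of-field compatibility of the regulator (Theorem~\ref{thm:specialization}(1)) on our side, together with the analogous descent compatibility of the standard trivialization, descends the comparison to the subextension $K_\infty / \Qp(\mu_{p^n})$, where both isomorphisms are governed by the crystalline theory and Theorem~\ref{thm:conclusion} applies. The delicate technical point is to verify that this descent produces exactly the local $\varepsilon$-factor attached to $\psi$ (as predicted by Deligne--Langlands) on the LVZ side, and precisely the same factor on ours; this is a concrete but intricate calculation tracing $\psi$-isotypic projectors through Fontaine's isomorphism together with the explicit formulas for $\varepsilon$-factors of tame characters of $\Gamma$.
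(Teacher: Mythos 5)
Your dévissage down to a one-dimensional $Y=\cO_E(\eta)$ with $\eta$ de Rham is exactly the first half of the paper's argument (the paper enlarges $E$ so that the Jordan--H\"older constituents of $E\otimes_A Y$ are characters and invokes additivity). The divergence, and the problem, is what happens after that. The paper at this point does \emph{not} redo any $\varepsilon$-factor computation: it writes $E\otimes_{\cO_E}\varepsilon_{A,\xi}(\TT)$ as the specialization of $\varepsilon_{\Lambda_{\cO_E}(G),\xi}(\TT(\cO_E\otimes_R M))$ at the de Rham character $\eta$, using Theorem~\ref{thm:conclusion}, and then cites \cite[Prop.\ 4.6.4]{loeffler-venjakob-zerbes} verbatim for the statement that this specialization equals the standard de Rham trivialization. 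That cited proposition is precisely the interpolation property at de Rham (including non-crystalline, finite-order-twisted) characters, and it is where the Deligne--Langlands $\varepsilon$-factors (Gauss sums) are matched; it rests on explicit reciprocity-law computations already carried out in that reference.

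You instead propose to reprove this one-dimensional case from scratch: split $\eta=\eta_{\mathrm{cryst}}\cdot\psi$, dispose of $\eta_{\mathrm{cryst}}$ by Proposition~\ref{prop:twistinvariance}, and then handle the finite-order $\psi$ by "descending to $K_\infty/\Qp(\mu_{p^n})$" and "tracing $\psi$-isotypic projectors." This is where the gap is. First, Theorem~\ref{thm:specialization}(1) is a compatibility for finite \emph{unramified} subextensions $E\subset F_\infty$; it says nothing about passing to $\Qp(\mu_{p^n})$, so the descent step you lean on is not supplied by anything in the paper. Second, and more seriously, the step you defer as "a concrete but intricate calculation" --- verifying that the specialization at $\psi$ produces exactly the local $\varepsilon$-factor $\varepsilon(\D_{\pst}(M(\psi)),\xi)$ demanded by the standard trivialization --- is the entire mathematical content of the proposition once the dévissage is done. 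Without either performing that Gauss-sum computation or citing \cite[Prop.\ 4.6.4]{loeffler-venjakob-zerbes} (or an equivalent interpolation result), the proof is not complete.
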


  \begin{proof}
  We may  assume that $E$ is sufficiently large that all the Jordan--H\"older constituents of $E \otimes_{A} Y$ are one-dimensional. By the compatibility with short exact sequences, it suffices to assume $E \otimes_{A} Y$ is itself one-dimensional, so $E \otimes_{A} Y = E(\eta)$ for a de Rham character $\eta$ of $G.$ Moreover the $R$-algebra structure of $A$ provides a homomorphism $R\to \cO_E$, whence $\Lambda_R\to \Lambda_{\cO_E},$ such that
  \begin{eqnarray*}
 E \otimes_{\cO_E}\varepsilon_{A, \xi}(\TT)&=&E \otimes_{\cO_E}Y\otimes_{\Lambda_R} \varepsilon_{\Lambda_R, \xi}(\TT(M))\\
 &\cong&E \otimes_{\cO_E}Y\otimes_{\Lambda_{\cO_E}} \Lambda_{\cO_E}\otimes_{\Lambda_R}\varepsilon_{\Lambda_R, \xi}(\TT(M))\\
 &\cong&E \otimes_{\cO_E}Y\otimes_{\Lambda_{\cO_E}} \varepsilon_{\Lambda_{\cO_E}, \xi}(\Lambda_{\cO_E}\otimes_{\Lambda_R}\TT(M))\\
 &\cong&E \otimes_{\cO_E}Y\otimes_{\Lambda_{\cO_E}} \varepsilon_{\Lambda_{\cO_E}, \xi}(\TT( {\cO_E}\otimes_{ R}M))\\
  &\cong&E \otimes_{\cO_E} \varepsilon_{A, \xi}( \TT )\\
  &\cong&\varepsilon_{E, \xi}(E \otimes_{\cO_E} \TT).
  \end{eqnarray*}

  Here the first identity is the definition of $\varepsilon_{A, \xi}(\TT) $, the second comes from the fact that $\Lambda_R\to  {\cO_E}$ factors through $\Lambda_R\to \Lambda_{\cO_E},$ the third is the compatibility from Theorem \ref{thm:conclusion} (and its extension to arbitrary crystalline families after Proposition \ref{prop:twistinvariance}) of $\varepsilon_{\Lambda_R, \xi}(\TT(M)) $ with respect to specialization in $R$, the fourth follows from the functoriality of the definition of $\TT(M), $ while the fifth uses the definition of $\varepsilon_{A, \xi}( \TT )$ and the fact that
  \[  \TT =Y\otimes_{\Lambda_R}\TT(M)\cong  Y\otimes_{\Lambda_{\cO_E}}\TT({\cO_E}\otimes_{ R}M). \] Finally, the last identification is just \cite[Prop.\ 4.6.4]{loeffler-venjakob-zerbes}.
  \end{proof}

  \begin{corollary}\label{cor:injectivityOfSpecializations}
   Suppose that the pair $(A, \TT)$ satisfies the following condition:
   \begin{itemize}
    \item if $\Phi_{\TT}$ is the set of all $R$-algebra homomorphisms $\rho: A \to M_n(E)$ (where $E/L$ is a finite extension and $n$ an integer, both depending on $\rho$) such that $E^n \otimes_{A, \rho} \TT$ is de Rham, then
    \[ K_1(A) \to \prod_{\rho \in \Phi_{\TT}} E^\times\]
    is injective.
   \end{itemize}
   Then $\varepsilon_{A, \xi}(\TT)$ depends only on $\xi$ and on the isomorphism class of $\TT$ as an $A[G_{\Qp}]$-module.
  \end{corollary}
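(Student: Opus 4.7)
The plan is to interpret the conclusion as a uniqueness/rigidity statement for the $\varepsilon$-isomorphism and reduce it to an injectivity assertion for $K_1$, via compatibility with de Rham specializations. Concretely, suppose $\TT$ can be presented in two ways as $Y\otimes_R M$ and $Y'\otimes_{R'}M'$ for quadruples $(A,Y,M,\xi)$ and $(A,Y',M',\xi)$ giving rise to a priori distinct isomorphisms $\varepsilon_{A,\xi}(\TT)$ and $\varepsilon'_{A,\xi}(\TT)$ (or equivalently suppose two $A[G_{\Qp}]$-isomorphic $\TT$'s produce different $\varepsilon$'s). Both are trivializations of the same target line bundle $\At\otimes_A\{\Det_AR\Gamma(\Qp,\TT)\cdot\Det_A\TT\}$, so their ratio is an element $u\in K_1(\At)$ whose triviality we must establish.

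First I would verify that for every $\rho\in\Phi_{\TT}$ the specialization of $u$ along $\rho:A\to M_n(E)$ is trivial in $K_1(\widetilde{M_n(E)})\cong\tilde E^\times$. The key input is a Morita-equivalent enhancement of Proposition~\ref{prop:compat2}: once we know $\rho$ lies in $\Phi_{\TT}$, the representation $E^n\otimes_{A,\rho}\TT$ is de Rham, and the same chain of compatibilities worked out in the proof of Proposition~\ref{prop:compat2} (functoriality of $\varepsilon_{\Lambda_R,\xi}(\TT(M))$ under specializations in $R$, combined with Theorem~\ref{thm:conclusion}) shows that both $\rho$-specializations of $\varepsilon$ and $\varepsilon'$ coincide with the standard trivialization $\varepsilon_{E,\xi}(E^n\otimes_A\TT)$ of \cite[\S 2.4]{loeffler-venjakob-zerbes}. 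Since the latter depends only on the de Rham $E[G_{\Qp}]$-module structure of $E^n\otimes_A\TT$, the $\rho$-specialization of $u$ is $1$.

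Second, I need to pass from triviality of each specialization of $u$ to triviality of $u$ itself, using the hypothesis on $K_1(A)$. The idea is to adapt the argument of Lemma~\ref{lemma:specialization}: the reduction $K_1(\At)\hookrightarrow\At^\times\cdot SK_1(\At)$, together with the Frobenius-equivariance of $\At=\widehat{\Zp^{\mathrm{nr}}}\htimes_{\Zp}A$ relative to $A$, should allow one to factor
\[
K_1(\At)\longrightarrow\prod_{\rho\in\Phi_{\TT}}K_1(\widetilde{M_n(E)})
\]
through the given injective map $K_1(A)\hookrightarrow\prod_{\rho\in\Phi_{\TT}}E^\times$, exactly as in the proof of Lemma~\ref{lemma:specialization} where injectivity of $K_1(\Lambda_{\Rt})\to\prod K_1(\Lambda_{\tilde\cO})$ is reduced to injectivity of $\Lambda_R\to\prod\Lambda_{\cO}$ followed by completion along $\widehat{\Zp^{\mathrm{nr}}}$. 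Combined with the previous step, this forces $u=1$.

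The main obstacle will be the last step: setting up the descent from $\At$ to $A$ cleanly for an arbitrary type-(1) or type-(2) ring $A$. For type-(2) rings this is essentially a direct application of Morita theory together with the classical fact that $\widehat{\Qp^{\mathrm{nr}}}/\Qp$ is faithfully flat with dense classical points; for type-(1) rings one must carry out the same inverse-limit argument over the Jacobson filtration that underlies Lemma~\ref{lemma:specialization}, and check that the Frobenius-equivariant elements in $\At$ are exactly $A$ in enough generality to conclude.
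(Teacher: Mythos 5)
Your proposal is correct and follows essentially the same route as the paper, whose proof is the one-line observation that any two candidate trivializations must both agree with the de Rham trivializations $\varepsilon_{E,\xi}(E^n\otimes_{A,\rho}\TT)$ of Proposition~\ref{prop:compat2}, which are determined by $(A,\TT,\xi)$ alone, so the discrepancy class is killed by the injective map in the hypothesis. The only point worth adding is that the descent from $K_1(\At)$ to $K_1(A)$ that you single out as the main obstacle is already supplied by the Galois-equivariance property (iv): both candidates lie in the same contracted product of a $K_1(A)$-torsor with $\{x\in K_1(\At):\vp(x)=[\TT,\sigma_p]^{-1}x\}$, so their ratio is Frobenius-fixed and hence lies in $K_1(A)$ (using $K_1(\At)^{\vp=1}=K_1(A)$, which follows from $(\widehat{\Zp^{\nr}})^{\vp=1}=\Zp$ along the Jacobson filtration), with no need to rerun the inverse-limit argument of Lemma~\ref{lemma:specialization}.
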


  \begin{proof}
   Clear from the preceding proposition, since the isomorphism $\varepsilon_{A, \xi}(\TT)$ must be consistent with the de Rham $\varepsilon$-isomorphisms $\varepsilon_{E, \xi}(E^n \otimes_{A, \rho} \TT)$, which are uniquely determined by $(A, \TT, \xi)$.
  \end{proof}

  \begin{remark}
   We suspect that the uniqueness statement of the corollary is true for arbitrary type 1 $R$-algebras $A$, but this is much more difficult to prove in general. For instance, if $M_1, M_2$ are two   crystalline families of representations over $R$ such that $M_1 / p^n \cong M_2 / p^n$ for some $n \ge 1$, then on taking $A = R / p^n$ this would imply that the $\varepsilon$-isomorphisms for $M_1$ and $M_2$ are congruent modulo $p^n$. This should certainly be true, and actually one motivation for this paper is to show it at least for $R=\cO_E$ with $E$ any finite extension of $\Qp,$ see section \ref{sec:deformation} below.
  \end{remark}

  We shall now show that the association $(A, Y, M, \xi) \to \varepsilon_{A, \xi}(\TT)$ satisfies properties corresponding to conditions (i)---(iv) and (vi) of \cite[Conjecture 3.4.3]{fukayakato06}.

  \subsubsection*{Property (i) (additivity)} The first condition of \emph{op.cit.} states that for any three triples $(A, \TT_i, \xi)$, $i = 1,2,3$, with common $A$ and $\xi$, and an exact sequence
  \[ 0 \rTo \TT_1 \rTo \TT_2 \rTo \TT_3 \rTo 0, \]
  we have
  \[ \varepsilon_{A, \xi}(\TT_2) = \varepsilon_{A, \xi}(\TT_1)\varepsilon_{A, \xi}(\TT_3).\]
  By assumption our $\TT_i$ are of the form $Y_i \otimes M_i$, for crystalline $R$-representations $M_i$ and $A$-modules $Y_i$ with $G$-action. We shall consider only the cases when the exact sequence arises from an exact sequence of $Y_i$'s with a common $M$, or an exact sequence of $M_i$'s with a common $Y$. The first case is obvious from the construction of $\varepsilon_{A, \xi}(-)$. The latter case follows from the additivity of the standard trivialisation (see \cite[Prop. 2.4.3]{loeffler-venjakob-zerbes})  or from the additivity statement (property (i) in \S 4.6 in (loc.~cit.)).

  \subsubsection*{Property (ii) (base change)} The second condition is a compatibility with base-change in $A$; this is immediate from our construction.

  \subsubsection*{Property (iii) (change of \texorpdfstring{$\xi$}{xi})} Let $c \in \Zp^\times$, and let $\gamma_c$ be any element of $G_{\Qp}$ acting trivially on $\Qpnr$ and such that $\chi(\gamma_c) = c$. Then we must show that
  \[ \varepsilon_{A, c\xi}(\TT) = [M, \gamma_c] \varepsilon_{A, \xi}(\TT), \]
  where $[M, \gamma_c]$ is the class in $K_1(A)$ of the $A$-linear automorphism of $\TT$ given by $\gamma_c$. (This is well-defined, as $\gamma_c$ is uniquely determined up to conjugation in $G_{\Qp}$.) It suffices to check this when $A = \Lambda$ and $\TT = \TT(M)$; but this is immediate from the corresponding property of the specializations (property (iii) in \S 4.6 in (loc.~cit.)).

  \subsubsection*{Property (iv) (Galois equivariance)} Let $\vp$ denote the arithmetic Frobenius automorphism of $\widehat{\Zp^{nr}}$. Then we must show that
  \[
   \varepsilon_{A, \xi}(\TT) \in \Isom\left(\Det_A(0), \Det_A R\Gamma(\Qp, \TT) \cdot \Det_A \TT\right) \times^{K_1(A)} \left\{ x \in K_1(\At) : \vp(x) = [\TT, \sigma_p]^{-1} x\right\}
  \]
  where $\sigma_p$ is the arithmetic Frobenius element of $\Gal(\Qp^{ab} / \Qpi)$. Again, it suffices to assume $(A, \TT) = (\Lambda, \TT(M))$ and the result is now clear from the Galois-equivariance properties of the specializations (property (iv) in \S 4.6 in (loc.~cit.)).

  \subsubsection*{Property (v) (compatibility with de Rham $\varepsilon$-isomorphisms)}

   If $A$ is the ring of integers of a finite extension $F / L$, and $F \otimes_{A} \TT$ is de Rham, we must check that $\varepsilon_{A, \xi}(\TT)$ is consistent with the standard trivialisation. This is exactly Proposition~\ref{prop:compat2} above.

  \subsubsection*{Property (vi) (local duality)}

  Let $\TT$ be a free $A$-module with compatible $G_{\Qp}$-action as above. Then
  \[ \TT^* := \Hom_{A}(\TT,A) \]
  is a free $A^\circ$-module -- for the action $h\mapsto h(-)r,$ $r$ in the opposite ring $A^\circ$ of $A$ -- with compatible $G_{{\Qp}}$-action
  given by $h\mapsto h\circ \sigma^{-1}$. Recall that in Iwasawa theory we have the canonical involution $\iota:\Lambda^\circ\to \Lambda$, induced by $g\mapsto g^{-1},$ which allows to consider (left) $\Lambda^\circ$-modules again as (left) $\Lambda$-modules, e.g.\ one has $\TT^*(M)^\iota\cong \TT(M^*)$ as $(\Lambda, G_{{\Qp}})$-module, where $M^\iota := \Lambda\otimes_{\iota,\Lambda^\circ}M$ denotes the $\Lambda$-module with underlying abelian group $M,$ but on which $g\in G$ acts as $g^{-1}$ for any $\Lambda^\circ$-module $M$.

  Given ${\varepsilon}_{A^\circ,-\xi}(\TT^*(1))$ we may apply the dualising functor $-^*$ to obtain an isomorphism
  \[
   \varepsilon_{A^\circ,-{\xi}}(\TT^*(1))^* : (\Det_{A^\circ}(R\Gamma({\Qp},\TT^*(1) ))_{\widetilde{A^\circ}})^* (\Det_{A^\circ}(\TT^*(1) )_{\widetilde{{A^\circ}}})^* \to \u_{\widetilde{{A^\circ}}},
  \]
  while the local Tate duality isomorphism \cite[\S 1.6.12]{fukayakato06}
  \[\psi(\TT): R\Gamma({\Qp},\TT)\cong R\Hom_{A^\circ}(R\Gamma({\Qp},\TT^*(1)),A^\circ)[-2]\]
  induces an isomorphism
  \begin{align*}
   \overline{\Det_A(\psi(\TT))_{\widetilde{A }}}^{-1}:&
   \left((\Det_{A^\circ}(R\Gamma({\Qp},\TT^*(1)))_{\widetilde{A^\circ}})^*\right)^{-1}\cong\\
   &\Det_A(R\Hom_{A^\circ}(R\Gamma({\Qp},\TT^*(1)),A^\circ))_{\widetilde{A }}^{-1} \to
   \Det_A(R\Gamma({\Qp},\TT))_{\widetilde{A }}^{-1}.\notag
  \end{align*}
  Here, for a map $f:A\to B$ in $\underline{\Det}(A),$ we write $\overline{f}:B\to A$ for its
inverse with respect to composition, while $f^{-1}=:\overline{\id_{B^{-1}}\cdot f\cdot
\id_{A^{-1}}}:A^{-1}\to B^{-1}$ for its inverse with respect to the multiplication in
$\underline{\Det}(A),$ i.e., $f\cdot f^{-1}=\id_{\Det_A(0)}$.

  Consider the product
  \[
   {\varepsilon}_{A,{\xi}}(\TT)\cdot {\varepsilon}_{A^\circ,-{\xi}}(\TT^*(1))^*\cdot \overline{\Det_A(\psi(\TT))_{\widetilde{A }}}^{-1}:
   \Det_A(\TT(-1))_{\widetilde{A}}\cong\Det_A(\TT^*(1)^*)_{\widetilde{A }}\to\Det_A(\TT)_{\widetilde{A}}
  \]
  and the isomorphism $\xymatrix{ {\TT(-1)} \ar[r]^{\cdot{\xi}} & \TT }$ which sends $t\otimes {\xi}^{\otimes -1}$ to $t$.

  \begin{proposition}[Duality]\label{duality}
   Let $\TT$ be as above such that $\TT\cong Y\otimes_\Lambda\TT(M)$ for some $(A,\Lambda)$-bimodule $Y$, which is projective as $A$-module. Then
   \[{\varepsilon}_{A,{\xi}}(\TT)\cdot {\varepsilon}_{A^\circ,-{\xi}}(\TT^*(1))^*\cdot \overline{\Det_A(\psi(\TT))_{\widetilde{A }}}^{-1}=
   \Det_A\left(\xymatrix{{\TT(-1) } \ar[r]^(0.6){\cdot{\xi}} & \TT }\right)_{\widetilde{A}}.\]
  \end{proposition}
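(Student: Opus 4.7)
The plan is to reduce the identity to the universal case $(A,\TT)=(\Lambda_R(G),\TT(M))$ and then deduce it by specialization from the classical duality statement established as property (vi) of \cite[\S 4.6]{loeffler-venjakob-zerbes}.

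First I would carry out the reduction to the universal case, using the definition $\varepsilon_{A,\xi}(\TT) = Y\otimes_{\Lambda}\varepsilon_{\Lambda_R(G),\xi}(\TT(M))$ and observing that the other ingredients of the identity — the local Tate duality isomorphism $\psi$, the formation of $\TT^*(1)$, the map ``multiplication by $\xi$'', and the dualisation $(-)^{*}$ on $\underline{\Det}$ — all commute with tensor product by the $(A,\Lambda)$-bimodule $Y$ in a precise sense. This reduction is formal, following the pattern of \cite[\S 3.4]{fukayakato06}; see in particular 1.6.12 of \emph{loc.\ cit.} for the compatibility of local Tate duality under base change.

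Second, in the universal case the claimed equality asserts that a certain automorphism of the line bundle $\Det_{\Lambda_{\widetilde R}(G)}(\TT(M))_{\widetilde\Lambda}$, equivalently an element of $K_1(\Lambda_{\widetilde R}(G))$, equals the identity. By Lemma~\ref{lemma:specialization} the natural map
\[K_1(\Lambda_{\widetilde R}(G))\longrightarrow \prod_{R\to \cO}K_1(\Lambda_{\widetilde{\cO}}(G))\]
(where $R\to \cO$ ranges over $\Zp$-algebra homomorphisms into rings of integers $\cO$ of finite extensions of $\Qp$) is injective, so it suffices to check the identity after each such specialization.

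Third, for each specialization $R\to \cO_L$, the image of the identity becomes exactly the duality statement for $\varepsilon_{\Lambda_{\cO_L}(G),\xi}(M\otimes_R\cO_L)$, which is property (vi) of \cite[\S 4.6]{loeffler-venjakob-zerbes}. To actually effect this reduction one uses: the specialization compatibility $\varepsilon_{\Lambda_R(G),\xi}(M)\otimes_R\cO_L = \varepsilon_{\Lambda_{\cO_L}(G),\xi}(M\otimes_R\cO_L)$ from Theorem~\ref{thm:conclusion} (and the analogue for $M^{*}(1)$, which is again crystalline since crystalline representations are stable under dualisation and Tate twists); the base-change behavior of $R\Gamma(\Qp,\TT(M))$ under $R\to \cO_L$ controlled by the spectral sequence \eqref{f:spectralsequence}; and the functoriality of the isomorphism $\psi(\TT)$ under base change on $R$. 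The main obstacle will be the careful bookkeeping of bimodule structures and of the involution $\iota\colon \Lambda^\circ\to\Lambda$ when identifying $\TT(M)^{*}(1)^\iota$ with $\TT(M^{*}(1))$, and verifying that the dualisation functor $(-)^{*}$ on $\underline{\Det}(\Lambda)$ commutes with specialization in the way required; once those compatibilities are set up, the specialization argument itself is immediate from Lemma~\ref{lemma:specialization} and the established classical case.
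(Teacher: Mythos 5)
Your proposal is correct and follows essentially the same route as the paper's own proof: reduce to the universal case $(A,\TT)=(\Lambda_R(G),\TT(M))$ via compatibility of all ingredients with the bimodule tensor product (the paper invokes functoriality of local Tate duality together with the lemma immediately following the proposition), and then conclude by specialization through Lemma~\ref{lemma:specialization} to the classical duality statement, property (vi) of \S 4.6 of \cite{loeffler-venjakob-zerbes}. The bookkeeping issues you flag (the involution $\iota$ and the identification $\TT(M)^*(1)^\iota\cong\TT(M^*(1))$) are exactly the points the paper handles via its stated conventions and the accompanying lemma.
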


  \begin{proof}
   First note that the statement is stable under applying $Y'\otimes_A-$, for some $(A',A)$-bimodule $Y'$ which is projective as a $A'$-module, by the functoriality of local Tate duality and the lemma below. Thus we are reduced to the case $(A, \TT)=(\Lambda, \TT(M))$ where $M$ is a crystalline family of representation of $G_{\Qp}$ with coefficients in $R$. Thus the claim follows again by specialisation and (property (vi) in \S 4.6 in (loc.~cit.))  using Lemma \ref{lemma:specialization}.

  \end{proof}

  \begin{lemma}
  Let $Y$ be a $(A',A)$-bimodule such that $Y\otimes_A\TT\cong \TT'$ as $(A',G_{{\Qp}})$-module and let
  $Y^*=\Hom_{A'}(Y,A')$ the induced $(A'^\circ,A^\circ)$-bimodule. Then there is a natural
  \begin{enumerate}
   \item equivalence of functors \[Y\otimes_A\Hom_{A^\circ}(-, A^\circ)\cong \Hom_{A'^\circ}(Y^*\otimes_{A^\circ} -,A'^\circ)\] on $P(A^\circ)$;
   \item isomorphism $Y^*\otimes_{A^\circ} \TT^*\cong (\TT')^*$ of $(A'^\circ,G_{{\Qp}})$-modules.
  \end{enumerate}
  \end{lemma}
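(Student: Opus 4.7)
Both statements are formal consequences of the standard tensor-Hom adjunction, once the various bimodule structures are unpacked. The plan is to prove (1) first, by exhibiting an explicit natural transformation and reducing to the free case by dévissage, and then to deduce (2) either directly from the adjunction or as a special case of (1).

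For (1), recall that if $Y$ is an $(A',A)$-bimodule which is projective as a left $A'$-module, then $Y^{\ast}=\Hom_{A'}(Y,A')$ is an $(A'^{\circ},A^{\circ})$-bimodule; and for any $N\in P(A^{\circ})$, the module $\Hom_{A^{\circ}}(N,A^{\circ})$ carries the naturally induced $A$-action on the right. The natural transformation
\[
\Psi_{N}\colon Y\otimes_{A}\Hom_{A^{\circ}}(N,A^{\circ})\longrightarrow \Hom_{A'^{\circ}}\!\bigl(Y^{\ast}\otimes_{A^{\circ}}N,\,A'^{\circ}\bigr)
\]
I would define is the map sending a simple tensor $y\otimes f$ to the $A'^{\circ}$-linear functional $\phi\otimes n\longmapsto \phi(y\cdot f(n))$, where $y\cdot f(n)\in Y$ uses the right $A$-action on $Y$. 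A direct check using the $A$-bilinearity of $\otimes_{A}$ on the left and the defining $A^{\circ}$-relation on the right shows that $\Psi_{N}$ is well defined and natural in $N$. Both sides are additive and right-exact in $N$ and commute with finite direct sums, and when $N=A^{\circ}$ both sides reduce canonically to $Y$ (using reflexivity $Y^{\ast\ast}\cong Y$, which follows from $A'$-projectivity of $Y$). Since every object of $P(A^{\circ})$ is a direct summand of a finitely generated free $A^{\circ}$-module, $\Psi_{N}$ is an isomorphism for all such $N$.

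For (2), the cleanest route is via tensor-Hom adjunction applied to the $(A',A)$-bimodule $Y$. Namely, since $\mathbb{T}'=Y\otimes_{A}\mathbb{T}$ as $A'$-modules, one has
\[
(\mathbb{T}')^{\ast}=\Hom_{A'}(Y\otimes_{A}\mathbb{T},A')\;\cong\;\Hom_{A}\!\bigl(\mathbb{T},\Hom_{A'}(Y,A')\bigr)=\Hom_{A}(\mathbb{T},Y^{\ast}),
\]
and since $\mathbb{T}$ is finitely generated projective over $A$, the evaluation map
\[
Y^{\ast}\otimes_{A^{\circ}}\mathbb{T}^{\ast}\longrightarrow \Hom_{A}(\mathbb{T},Y^{\ast}),\qquad \phi\otimes t^{\ast}\longmapsto \bigl(t\mapsto t^{\ast}(t)\cdot\phi\bigr),
\]
is an isomorphism of right $A'$-modules. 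Composing the two isomorphisms yields the desired identification as $A'^{\circ}$-modules. It remains to check $G_{\mathbf{Q}_{p}}$-equivariance. By construction the Galois actions on $\mathbb{T}^{\ast}$ and $(\mathbb{T}')^{\ast}$ are obtained by transport along $\sigma\mapsto(\phi\mapsto\phi\circ\sigma^{-1})$, and the action on $Y^{\ast}$ is inherited from $Y$ in the same way; the two isomorphisms above are defined by purely functorial formulas, so they commute with these $\sigma$-actions, a verification that is completely mechanical.

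The only technical subtlety is keeping the left/right/opposite conventions consistent throughout; there is no substantive obstacle, and no part of the argument uses more than the $A'$- and $A$-projectivity hypotheses together with the standard adjunction.
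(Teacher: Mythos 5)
Your argument is correct and is exactly the route the paper takes: its entire proof reads ``This is easily checked using the adjointness of $\Hom$ and $\otimes$,'' and your write-up simply supplies the details (the explicit natural transformation, reduction to the free case, the evaluation isomorphism for finitely generated projectives, and the functoriality argument for Galois equivariance). No discrepancy to report.
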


  \begin{proof}
  This is easily checked using the adjointness of $\Hom$ and $\otimes$.
  \end{proof}

\section{Application to deformation rings}\label{sec:deformation}

Consider the following situation: For $i=1,2$ let $T_i$ be a crystalline $G_F$-representation over $\cO$ with Hodge-Tate weights in  $[a,b]$ (with integers $a,b$), such that there exist some $n_0\geq 1$ such that
\[T_1/\varpi^n T_1\cong T_2 /\varpi^n T_2=: T_{(n)}\]
for $n_0\geq n\geq 1.$  Now let $R_{T_{(1)}}^{\square,\cris,[a,b]}$ be the universal framed deformation ring parameterising crystalline lifts $T$ of $T_{(1)}$ over $\cO$-algebras $R$ with Hodge-Tate weights in $[a,b]$.  This ring is constructed (as a quotient of the universal framed deformation ring $R_{T_{(1)}}^\square$) in~\cite[Theorem 2.5.5]{kisin-pst-def-rings}.  It is equipped with  the universal crystalline lift $T_{T_{(1)}}^{\square,\cris,[a,b]}$.

\begin{proposition}
If $R_{T_{(1)}}^{\square,\cris,[a,b]}$ is a $\Zp$-flat Cohen-Macaulay ring such that $R_{T_{(1)}}^{\square,\cris,[a,b]}[\frac{1}{p}]$ is reduced and $R_{T_{(1)}}^{\square,\cris,[a,b]}$ is normal in $R_{T_{(1)}}^{\square,\cris,[a,b]}[\frac{1}{p}]$, then for all $n_0\geq n\geq 0,$ we have
\[\cO/\varpi^n\cO\otimes_{\cO} \varepsilon_{\cO,\xi}(T_1)=\cO/\varpi^n\cO\otimes_{\cO} \varepsilon_{\cO,\xi}(T_2).\]
In particular, this allows us to define $\varepsilon_{p,\cO/\varpi^n\cO}(T_{(n)}) .$
\end{proposition}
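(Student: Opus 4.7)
The plan is to realize both $\varepsilon$-isomorphisms $\varepsilon_{\cO,\xi}(T_i)$ as specializations of a single universal $\varepsilon$-isomorphism constructed over $R := R_{T_{(1)}}^{\square,\cris,[a,b]}$. The hypotheses on $R$ are exactly those required in Theorem~\ref{thm:conclusion}: $R$ is complete local noetherian with finite residue field, $\Zp$-flat, Cohen--Macaulay, normal (hence integrally closed in $R[1/p]$), with $R[1/p]$ reduced. Letting $M := T_{T_{(1)}}^{\square,\cris,[a,b]}$ denote the universal framed crystalline lift over $R$, Theorem~\ref{thm:conclusion} then produces an $\varepsilon$-isomorphism
\[ \varepsilon_{\Lambda_R(G),\xi}(M):\Det_{\Lambda_{\Rt}(G)}(0) \xrightarrow{\sim} \Det_{\Lambda_{\Rt}(G)}\bigl(\Lambda_{\Rt}(G)\otimes_{\Lambda_R(G)}R\Gamma_{\Iw}(K_\infty,M)\bigr)\cdot\Det_{\Lambda_{\Rt}(G)}\bigl(\Lambda_{\Rt}(G)\otimes_R M\bigr) \]
whose specialization along any $\Zp$-algebra homomorphism $R \to \cO_L$ into the ring of integers of a finite extension $L$ of $\Qp$ coincides with the $\varepsilon_{\Lambda_{\cO_L}(G),\xi}(M\otimes_R\cO_L)$ of \cite{loeffler-venjakob-zerbes}.

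Next I would invoke the universal property of $R$: the framed crystalline lifts $T_1$ and $T_2$ correspond to $\Zp$-algebra homomorphisms $x_i : R \to \cO$ satisfying $T_i \cong \cO\otimes_{R,x_i}M$, and the specialization compatibility of Theorem~\ref{thm:conclusion} (combined with the change-of-coefficients property Proposition~\ref{prop:regulatorbaseextension}) yields
\[ x_i^{*}\,\varepsilon_{\Lambda_R(G),\xi}(M) \;=\; \varepsilon_{\Lambda_\cO(G),\xi}(T_i), \qquad i = 1,2. \]
The key remaining point is to show that the composite maps $\bar{x}_i : R \xrightarrow{x_i} \cO \twoheadrightarrow \cO/\varpi^n$ coincide for $1 \le n \le n_0$. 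Since both $\bar{x}_1$ and $\bar{x}_2$ classify the common framed crystalline lift $T_{(n)}$ of $T_{(1)}$ to $\cO/\varpi^n$ (after, if necessary, adjusting the framing on $T_2$ by an element of $\ker(\GL_d(\cO)\to\GL_d(\F))$, which by equivariance of the $\varepsilon$-isomorphism under a change of basis does not alter the isomorphism class of $\varepsilon_{\Lambda_\cO(G),\xi}(T_2)$), the universal property of the framed crystalline deformation ring gives $\bar{x}_1 = \bar{x}_2$. Base-changing $\varepsilon_{\Lambda_R(G),\xi}(M)$ along this common map then delivers the claimed congruence modulo $\varpi^n$.

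For the ``in particular'' statement I would simply \emph{define} $\varepsilon_{p,\cO/\varpi^n\cO}(T_{(n)}) := \bar{x}^{*}\varepsilon_{\Lambda_R(G),\xi}(M)$, where $\bar{x}:R\to\cO/\varpi^n$ is the (now unambiguous) classifying map of $T_{(n)}$; its well-definedness independent of the choice of characteristic-zero lift is precisely the preceding congruence.

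The step I expect to be the main obstacle is not in this proposition itself but is already encapsulated in Theorem~\ref{thm:conclusion}: the construction of the integral universal $\varepsilon$-isomorphism over the deformation ring $R$ (where $\Zp$-flatness, normality, and Cohen--Macaulayness are used crucially to descend from the distribution algebra and to extend across the support of $\H^2_{\Iw}$) and the verification that its $\cO_L$-specializations match the construction of \cite{loeffler-venjakob-zerbes}. Once that is available, the present argument is a purely formal consequence of the universal property of the framed crystalline deformation ring, with the only minor point being the framing adjustment needed to pass from abstract isomorphism of representations mod $\varpi^n$ to equality of classifying homomorphisms into $R$.
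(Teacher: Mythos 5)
Your proposal is correct and follows essentially the same route as the paper: both realize $\varepsilon_{\cO,\xi}(T_1)$ and $\varepsilon_{\cO,\xi}(T_2)$ as specializations of the universal $\varepsilon$-isomorphism over $R_{T_{(1)}}^{\square,\cris,[a,b]}$ furnished by Theorem~\ref{thm:conclusion}, and observe that the two classifying maps $R_{T_{(1)}}^{\square,\cris,[a,b]}\to\cO$ agree after composing with $\cO\twoheadrightarrow\cO/\varpi^n$ because both factor through the map classifying $T_{(n)}$. Your explicit remark about adjusting the framing on $T_2$ to make the two mod-$\varpi^n$ classifying homomorphisms literally equal is a point the paper's proof leaves implicit, and is a welcome extra care rather than a deviation.
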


\begin{proof}
 Since $T_i$ and $T_{(n)}$ are lifts of $T_{(1)}$, for $i=1,2,$ they corresponds to  $\cO$-algebra homomorphisms $\pi_i:R_{T_{(1)}}^{\square,\cris,[a,b]}\to \cO$ and $\pi_{(n)}:R_{T_{(1)}}^{\square,\cris,[a,b]}\to \cO/\varpi^n\cO $, respectively, giving rise to a commutative diagram for each $i$
 \[ \xymatrix{
  R_{T_{(1)}}^{\square,\cris,[a,b]} \ar[dr]_{\pi_{(n)}} \ar[r]^{\pi_i}
                & {\cO} \ar[d]^{pr}  \\
                & {\cO/\varpi^n\cO}.             }\]
 Thus we obtain for $i=1,2$
 \begin{align*}
  {\cO/\varpi^n\cO}\otimes_{\cO}\varepsilon_{\cO,\xi}(T_i)&= {\cO/\varpi^n\cO}\otimes_{\cO}\left(\cO\otimes_{R_{T_{(1)},\pi_i}^{\square,\cris,[a,b]}}\varepsilon_{R_{T_{(1)}}^{\square,\cris,[a,b]},\xi}(T_{T_{(1)}}^{\square,\cris,[a,b]})\right)\\
  &={\cO/\varpi^n\cO}\otimes_{R_{T_{(1)},\pi_{(n)}}^{\square,\cris,[a,b]}}\varepsilon_{R_{T_{(1)}}^{\square,\cris,[a,b]},\xi}(T_{T_{(1)}}^{\square,\cris,[a,b]}),
 \end{align*}
 whence the claim.
\end{proof}

\begin{remark}
\begin{enumerate}
\item  It follows from~\cite[\textsection 2.4]{CHT} that $R_{T_{(1)}}^{\square,\cris,[a,b]}$ satisfies the conditions of the proposition, if $b-a<p-1$. Indeed, in that case it is isomorphic to a power series ring over $\cO.$
\item A similar statement holds also for $\varepsilon_{\Lambda_{\cO},\xi}(\TT(T_i)),$ of course.
\end{enumerate}

\end{remark}


\section{Relation to Nakamura's work}\label{sec:nakamura}

Let $M$ be a  crystalline family of representations of $G_{\Qp}$ with coefficients in $R$ and let $\TT(M) := \Lambda_{R}(\Gamma)\otimes_{R} M$, with $G_{\Qp}$ action given by $g(x\otimes y):=g(x)\otimes[\overline g]^{-1}y$.  Then the generic fiber $M^{\rig}$ is a family of Galois representations over $Y:=\Spf(R)^{\rig}$, and by~\cite[Thm.\ 3.11]{kedlaya-liu08} there is a $(\varphi,\Gamma)$-module $\D_{\rig}(M^{\rig})$ over the relative Robba ring $\mathcal{R}_Y$ attached to $M^\rig$.  We also define its universal cyclotomic deformation $\mathcal{M}:=\D_{\rig}(M^{\rig})\htimes_{R^{\rig}}\Lambda_R(\Gamma)^{\rig}$, where $\varphi$ is taken to act trivially on $\Lambda_R(\Gamma)^{\rig}$ and $\Gamma$ acts via $\gamma(d\otimes x)=\gamma(d)\otimes [\gamma]^{-1}x$.  Since $M^{\rig}$ is a crystalline family, it is trianguline after making a finite extension of $R$, so $\mathcal{M}$ is trianguline as well; let $\mathcal{F}$ denote a triangulation of $\mathcal{M}$.  In this situation, Nakamura \cite[Cor.\ 3.12]{nak13} has constructed an $\varepsilon$-isomorphism
\begin{equation}
 \varepsilon_{\mathcal{F},\cH_R(\Gamma), \xi}^N(\mathcal{M}): \Det_{\cH_R(\Gamma)}(0) \xrightarrow{\sim} \Delta_{\Lambda_R(\Gamma)^{\rig}}(\mathcal{M})
\end{equation}
where $\Delta_{\Lambda_R(\Gamma)^{\rig}}(\mathcal{M})$ is a (graded) invertible $\Spf(\Lambda_R(\Gamma))^{\rig}$-module such that
\begin{equation}\label{f:Delta}
 \cH_{\tilde R}(\Gamma)\otimes_{\cH_R(\Gamma)}(\Delta_{\Lambda_R(\Gamma)^{\rig}}(\mathcal{M}))\cong \cH_{\tilde R}(\Gamma)\otimes_{\Lambda_{R}(\Gamma)} \left\{ \Det_{\Lambda_{R}(\Gamma)} R\Gamma(\Qp, \TT(M)) \cdot \Det_{\Lambda_{R}(\Gamma)} \TT(M)\right\}
\end{equation}
by Cor.\ 3.2 in (loc.\ cit.).

\begin{corollary}\label{cor:nakamura} Under the isomorphism \eqref{f:Delta} we have
\[\cH_{\Rt}(\Gamma)\otimes_{\Lambda_{\Rt}(G)}\varepsilon_{\Lambda_{R}(G), \xi}(M)= \cH_{\tilde R}(\Gamma)\otimes_{\cH_R(\Gamma)}\varepsilon_{\mathcal{F},\cH_R(\Gamma), \xi}^N(\mathcal{M}).\]
In particular, the isomorphism $\varepsilon_{\mathcal{F},\cH_R(\Gamma), \xi}^N(\mathcal{M}) $ does not depend on the choice of $\mathcal{F}.$
\end{corollary}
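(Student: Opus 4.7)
The plan is to show that the ratio of the two $\varepsilon$-isomorphisms is a unit of $\cH_{\Rt}(\Gamma)$ which specializes to $1$ at every $\Zp$-algebra homomorphism $R\to\cO_L$ with $\cO_L$ the ring of integers of a finite extension $L/\Qp$, and then to invoke an injectivity-of-specializations argument in the spirit of Lemma~\ref{lemma:specialization} and Corollary~\ref{cor:injectivityOfSpecializations}.

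First I would use the isomorphism \eqref{f:Delta} to identify both $\cH_{\Rt}(\Gamma)\otimes_{\Lambda_{\Rt}(G)}\varepsilon_{\Lambda_R(G),\xi}(M)$ and $\cH_{\Rt}(\Gamma)\otimes_{\cH_R(\Gamma)}\varepsilon_{\mathcal{F},\cH_R(\Gamma),\xi}^N(\mathcal{M})$ as trivializations of the same graded invertible module over $\cH_{\Rt}(\Gamma)$. Their ratio $u$ is therefore a unit in $\cH_{\Rt}(\Gamma)$, and the goal is to prove $u=1$.

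Second I would specialize along an arbitrary homomorphism $R\to\cO_L$ into the ring of integers of a finite extension $L/\Qp$. On the one hand, Theorem~\ref{thm:conclusion} (together with its extension after Proposition~\ref{prop:twistinvariance} to arbitrary Hodge--Tate weights) gives $\varepsilon_{\Lambda_R(G),\xi}(M)\otimes_R\cO_L=\varepsilon_{\Lambda_{\cO_L}(G),\xi}(M\otimes_R\cO_L)$, which by \cite{loeffler-venjakob-zerbes} is the classical $\varepsilon$-isomorphism of the crystalline $\cO_L$-representation $T:=M\otimes_R\cO_L$. On the other hand, Nakamura's compatibility with de Rham specializations (cf.\ \cite[Thm.\ 3.11]{nak13}) asserts that $\varepsilon_{\mathcal{F},\cH_R(\Gamma),\xi}^N(\mathcal{M})$ interpolates the classical de Rham trivialization at each de Rham specialization of $\mathcal{M}$. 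Since specialization of $M$ at $R\to\cO_L$ produces a crystalline — hence in particular de Rham — representation, and since the classical de Rham trivialization is canonical and triangulation-independent, both specializations coincide. Hence $u$ maps to $1$ under every such specialization.

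Third I would conclude by injectivity. By Lemma~\ref{lemma:specialization} the map $\Lambda_{\Rt}(G)^\times\to\prod_{R\to\cO}\Lambda_{\cOt}(G)^\times$ is injective; combined with the embedding of $\cH_{\Rt}(\Gamma)$ into the product of its completed stalks at points of $\Spf(\Lambda_R(\Gamma))^{\rig}$ (an argument analogous to Lemma~\ref{lemma:qpbar-spec-inj} applied to an admissible affinoid cover, together with standard density of $\Lambda_R(\Gamma)[1/p]$ inside $\cH_R(\Gamma)$) one obtains injectivity of the map from $\cH_{\Rt}(\Gamma)^\times$ to the product of its specializations at characters $R\to\cO_L$ twisted by continuous characters of $\Gamma$. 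Since $u$ lies in the kernel, we deduce $u=1$. The final assertion is then automatic: the left-hand side of the displayed equality does not involve a triangulation, so neither does the right-hand side.

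The main obstacle I expect is the last step, namely promoting the injectivity of specializations from the Iwasawa algebra $\Lambda_{\Rt}(G)$ (Lemma~\ref{lemma:specialization}) to the distribution algebra $\cH_{\Rt}(\Gamma)$, and ensuring that enough de Rham specializations — obtained by varying both the $R$-point and the weight of a character of $\Gamma$ — really separate units of $\cH_{\Rt}(\Gamma)$. The bookkeeping between the groups $G=U\times\Gamma$ and the cyclotomic quotient $\Gamma$ needed to align the two constructions (via the projection $\Lambda_{\Rt}(G)\to\cH_{\Rt}(\Gamma)$) will require some care, but no new ideas beyond those already developed in \S\ref{sect:constructionG}.
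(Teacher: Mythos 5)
Your proposal is correct and follows essentially the same route as the paper: both sides are compared at de Rham specializations of $\mathcal{M}$ (varying the point of $\Spf(\Rt)^{\rig}$ \emph{and} the character of $\Gamma$), where each reduces to Fukaya--Kato's canonical, triangulation-independent de Rham trivialization, and one concludes by density plus Lemma~\ref{lemma:qpbar-spec-inj}. The step you flag as the main obstacle is resolved in the paper exactly as you anticipate, by the Zariski density of de Rham characters in $\Spf(\Lambda_{\Zp}(\Gamma))^{\rig}$.
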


\begin{proof}
Recall that $\Spf(\Lambda_{\Rt}(\Gamma))^{\rig}\xrightarrow{\sim}\Spf(\Rt)^{\rig}\times\Spf(\Lambda_{\widetilde{\Zp}}(\Gamma))^{\rig}$, so that a point of $\Spf(\Lambda_{\Rt}(\Gamma))^{\rig}$ is a pair $(x,\eta)$, where $x\in\Spf(\Rt)^{\rig}$ and $\eta$ corresponds to a character $G\rightarrow \overline{\widehat{\Qp^{\nr}}}^{\times}$ (which we also write $\eta$).  If $(x,\eta)$ lies over a point $(x',\eta')\in\Spf(\Lambda_R(\Gamma))^{\rig}$ and $\eta'$ is a de Rham character, we can compute the specializations of both sides at $(x,\eta)$.  Indeed, both sides specialize to $\widehat{\Qp^{\nr}}\otimes_{\Qp}\varepsilon_{\kappa(x'),\xi}(M\otimes_R\kappa(x')^{\circ})[1/p](\eta'^{-1})$, that is, the base-change to $\widetilde{\kappa(x')}$ of Fukaya--Kato's $\varepsilon$-isomorphism for de Rham representations.

Points of $\Spf(\Lambda_{\Zp}(\Gamma))^{\rig}$ corresponding to de Rham characters are Zariski dense in the rigid analytic space.  Combined with Lemma~\ref{lemma:qpbar-spec-inj}, we conclude that the two maps agree.
\end{proof}

%

\appendix

\section{Generic fibers}

\subsection{Quasi-Stein spaces}\label{sec:app1}

\begin{definition}
A rigid analytic space $Y$ over $K$ is said to be \emph{quasi-Stein} if it admits an admissible covering by a rising union of affinoid subdomains $Y_0\subset Y_1\subset\cdots$ such that the transition maps $\Gamma(Y_{m+1},\mathcal{O}_{Y_{m+1}})\rightarrow \Gamma(Y_m,\mathcal{O}_{Y_m})$ are flat with dense image.
\end{definition}

\begin{example}
Fix $s>0$, and let $X$ be the coordinate on the closed unit disk.  Then the half-open annulus $0<v_p(X)\leq 1/s$ is a quasi-Stein space, as it is the rising union of the closed annuli $1/s'\leq v_p(X)\leq 1/s$ as $s'\rightarrow\infty$.
\end{example}

By Kiehl's results on coherent sheaves on rigid analytic spaces, a coherent sheaf $\mathscr{F}$ on $Y$ is simply a compatible system of coherent sheaves $\{\mathscr{F}_m\}$ on $\{Y_m\}$.

Quasi-Stein spaces behave much as affine schemes do in algebraic geometry.  In particular, Kiehl proved the following theorem on the cohomology of coherent sheaves on quasi-Stein spaces.
\begin{theorem}[{\cite[Satz 2.4]{kiehl}}]\label{kiehl}
Let $Y$ be a quasi-Stein space, and let $\mathscr{F}$ be a coherent sheaf on $Y$.  Then
\begin{enumerate}
\item	$\H^i(Y,\mathscr{F})=0$ for $i>0$,
\item	the image of $\mathscr{F}(Y)$ in $\mathscr{F}(Y_m)$ is dense for all $m$,
\item	for every point $y\in Y$, the image of $\mathscr{F}(Y)$ in $\mathscr{F}_y$ generates $\mathscr{F}_y$.
\end{enumerate}
\end{theorem}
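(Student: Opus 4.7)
The plan is to follow Kiehl's original strategy, which combines Tate acyclicity on affinoids with a Mittag--Leffler-style successive approximation argument. First I would set up the basic formalism: under Kiehl's equivalence between coherent sheaves on $Y$ and compatible systems $\{\mathscr{F}_m\}$ of coherent sheaves on $\{Y_m\}$ with $\mathscr{F}_{m+1}|_{Y_m} = \mathscr{F}_m$, one has $\mathscr{F}(Y) = \varprojlim_m \mathscr{F}(Y_m)$. Each $\mathscr{F}(Y_m)$ is a finitely generated module over the affinoid algebra $A_m := \Gamma(Y_m,\mathcal{O}_{Y_m})$ and therefore carries a canonical structure of a Banach $A_m$-module (unique up to equivalence of norms). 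The flatness of $A_{m+1} \to A_m$ together with the fact that $\mathscr{F}(Y_m) = A_m \otimes_{A_{m+1}} \mathscr{F}(Y_{m+1})$ implies, via the density of the image of $A_{m+1} \to A_m$, that the transition map $\mathscr{F}(Y_{m+1}) \to \mathscr{F}(Y_m)$ also has dense image.

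For part~(2), I would argue by successive approximation. Given $s \in \mathscr{F}(Y_m)$ and a neighborhood $U$ of $s$ in the Banach topology, I would construct inductively a sequence $s_k \in \mathscr{F}(Y_{m+k})$, $s_0 := s$, such that $s_{k+1}|_{Y_{m+k}}$ approximates $s_k$ to within $\varepsilon/2^{k+1}$, using the density statement just established. The sequence $\{s_k|_{Y_m}\}$ is Cauchy, hence convergent, and a diagonal argument produces a global section whose restriction to $Y_m$ lies in $U$.

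For part~(1), I would use the Čech complex of the (admissible) covering $\{Y_m\}$. Because the $Y_m$ are nested rather than general overlaps, this complex degenerates: in each degree it becomes $\prod_m \mathscr{F}(Y_m)$ minus the diagonal, and its cohomology is computed by $\varprojlim$ and $\varprojlim{}^1$ on the system $\{\mathscr{F}(Y_m)\}$. Tate acyclicity gives vanishing of higher cohomology on each affinoid $Y_m$, so global cohomology in positive degree reduces to $\varprojlim{}^1 \mathscr{F}(Y_m)$, and this vanishes by the topological Mittag--Leffler criterion since all transition maps have dense image (applied to the Fréchet topology on the inverse limit). For part~(3), I would choose $m$ large enough that $y \in Y_m$, pick finitely many elements of $\mathscr{F}(Y_m)$ generating the stalk $\mathscr{F}_y$, approximate each by a global section via part~(2), and conclude by Nakayama's lemma: sections that approximate closely enough agree modulo the maximal ideal $\mathfrak{m}_y$, hence still generate $\mathscr{F}_y/\mathfrak{m}_y \mathscr{F}_y$.

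The main technical obstacle is the Mittag--Leffler step underlying both~(1) and~(2), specifically verifying that the successive approximation converges in a way compatible with the inverse system of Banach modules; this requires care in choosing compatible norms on the $\mathscr{F}(Y_m)$ induced from a finite presentation and controlling how these norms behave under the restriction maps. Once this topological groundwork is laid, the homological and Nakayama-type steps are essentially formal.
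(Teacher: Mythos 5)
Your outline is correct and is essentially Kiehl's original argument; the paper gives no proof of this statement at all, citing it directly as \cite[Satz 2.4]{kiehl}. The only delicate points are exactly the ones you flag yourself, namely the uniform control of the Banach norms in the successive-approximation step for (2) and the identification of the \v{C}ech complex of the nested admissible covering (together with affinoid acyclicity) with the telescope computing $\varprojlim$ and ${\varprojlim}^1$, whose vanishing then follows from the topological Mittag--Leffler criterion --- both handled as in Kiehl's paper.
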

In particular, $A_\infty:=\Gamma(Y,\mathcal{O}_Y)=\varprojlim_m\Gamma(Y_m,\mathcal{O}_{Y_m})$ is a Fr\'echet-Stein algebra and $F_\infty:=\Gamma(Y,\mathscr{F})=\varprojlim_m\Gamma(Y_m,\mathscr{F}_m)$ is a coadmissible module over $A_\infty$, in the sense of~\cite{sch-teit}.  By \cite[\textsection 3]{sch-teit}, the natural morphisms $F_\infty\rightarrow \Gamma(Y_m,\mathscr{F}_m)$ have dense image, and $\R^i\varprojlim_m\Gamma(Y_m,\mathscr{F}_m)=0$ for $i>0$.

There is no {\it a priori} reason for $F_\infty$ to be a finite module over $A_\infty$.  For example, let $X_m=\Sp(\prod_{i=0}^n\Qp(\zeta_{p^i}))$, where $\Sp(A)$ denotes the affinoid space associated to $A,$ and let $\mathscr{F}_m$ be the sheaf on $Y_m$ associated to $\prod_{i=0}^n\Qp(\zeta_{p^i})^{\oplus i}$.  Then $F_\infty$ is not $A_\infty$-finite, because the fiber of $F_\infty$ at $\Sp(\Qp(\zeta_{p^n}))$ is a $\Qp(\zeta_{p^n})$-vector space of dimension $n$.  Happily, this is the only thing that can go wrong.

\begin{lemma}[{\cite[Lemma 2.4.4]{bellovin13}}]\label{fibral-ranks}
Let $\mathscr{F}$ be a coherent sheaf over a finite-dimensional quasi-Stein space $Y$ over $E$.  Then $\H^0(Y,\mathscr{F})$ is finitely generated as an $\H^0(Y,\mathcal{O}_Y)$-module if and only if there is some integer $d$ such that $\dim_{\kappa(y)}\mathscr{F}(y)\leq d$ for all $y\in Y$.
\end{lemma}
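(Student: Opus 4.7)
The plan naturally splits along the two implications.

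For the forward direction, suppose $F_\infty := \H^0(Y,\mathscr{F})$ is generated by $k$ elements over $A_\infty := \H^0(Y,\mathcal{O}_Y)$. Theorem~\ref{kiehl}(3) asserts that the image of $F_\infty$ in any stalk $\mathscr{F}_y$ generates it over $\mathcal{O}_{Y,y}$, so the chosen $k$ generators project onto a generating set of $\mathscr{F}_y$. Reducing modulo $\mathfrak{m}_y$ yields a generating set of $\mathscr{F}(y) = \mathscr{F}_y/\mathfrak{m}_y\mathscr{F}_y$ of size $\leq k$, so $d := k$ suffices.

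For the converse, I would set $n := \dim Y$ (finite by hypothesis) and $N := d+n$. On each affinoid piece $Y_m = \Sp A_m$ of the defining cover, $F_m := \mathscr{F}(Y_m)$ is a finite module over the noetherian Jacobson ring $A_m$ of Krull dimension $\leq n$, whose fiber dimensions are bounded by $d$. Forster's generalization of the basic element theorem then guarantees that $F_m$ can be generated by $N$ elements over $A_m$. The remaining task is to produce $s_1,\dots,s_N \in F_\infty$ whose images in $F_m$ generate $F_m$ for every $m$.

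This is the main obstacle: the transition maps $F_\infty \to F_m$ only have dense image, so generators at finite level do not lift directly. My plan is first to verify that the subset of $F_m^N$ consisting of generating $N$-tuples is open in the Banach topology: by Nakayama this reduces to the non-vanishing of all $N \times N$ minors of a presentation matrix at every maximal ideal, which is an open condition. With this openness established, a Fr\'echet approximation argument---choose a generating $N$-tuple at each level inductively, using density of $F_{m+1} \to F_m$ together with openness to make choices at consecutive levels compatible up to arbitrarily small error---produces $N$ elements of $F_\infty = \varprojlim_m F_m$ that restrict to generating tuples at every level.

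Finally, let $\phi: A_\infty^N \to F_\infty$ send the standard basis to $(s_1,\dots,s_N)$. Its cokernel $C$ is coadmissible, being the cokernel of a map of coadmissible modules, and by construction $A_m \otimes_{A_\infty} C = 0$ for every $m$. The Schneider--Teitelbaum equivalence of categories \cite{sch-teit} between coadmissible $A_\infty$-modules and compatible systems of finite $A_m$-modules then forces $C = 0$, showing that $s_1,\dots,s_N$ generate $F_\infty$ over $A_\infty$ and completing the argument.
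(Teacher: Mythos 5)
The paper does not actually prove this lemma; it is imported verbatim from \cite[Lemma 2.4.4]{bellovin13}, so there is no in-text proof to compare with, and your argument has to stand on its own. Your forward implication is correct: Theorem~\ref{kiehl}(3) plus Nakayama bounds $\dim_{\kappa(y)}\mathscr{F}(y)$ by the number of global generators. The use of Forster's theorem on each $Y_m$ is also sound (you should note that the hypothesis only bounds the fibre rank at maximal ideals, and that upper semicontinuity together with the Jacobson property of affinoid algebras extends the bound to all primes in the support), and the locus $U_m\subset\mathscr{F}(Y_m)^N$ of generating $N$-tuples is indeed open --- though this is more safely proved via the open mapping theorem applied to a surjection $A_m^{\oplus N}\twoheadrightarrow\mathscr{F}(Y_m)$ than via minors ``at every maximal ideal,'' since a condition quantified over all maximal ideals is not automatically open in the Banach topology. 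The closing Schneider--Teitelbaum step is fine.

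The genuine gap is the ``Fr\'echet approximation argument.'' Density of $\H^0(Y,\mathscr{F})\to\mathscr{F}(Y_m)$ together with openness of $U_m$ produces, for each $m$ \emph{separately}, a global tuple generating $\mathscr{F}(Y_m)$; it does not produce one tuple working for all $m$. In the induction you sketch, the correction added at stage $m+1$ must land in a translate of $U_{m+1}$ (to restore generation on $Y_{m+1}$) while simultaneously being small in the seminorms coming from $Y_0,\dots,Y_m$ (otherwise your sequence of tuples does not converge in $\H^0(Y,\mathscr{F})^N$). A translate of $U_{m+1}$ is just some nonempty open set and need not contain any element with small restriction to $Y_m$. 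What is actually required is that the image of $U_{m+1}$ in $\mathscr{F}(Y_m)^N$ accumulates at the prescribed generating tuple of $\mathscr{F}(Y_m)$ --- i.e., that a generating tuple of $\mathscr{F}(Y_{m+1})$ can be chosen whose restriction is arbitrarily close to a given generating tuple of $\mathscr{F}(Y_m)$. This does not follow from density of the restriction maps, and it is the real content of the lemma. Nor can one shortcut via Baire category, because $U_m$ is open but not dense: for $\mathscr{F}=\mathcal{O}_Y$ on the open unit disc and $N=1$, no element of $\Qp\langle T\rangle$ within distance $1$ of $T$ is a unit. To close the gap you must construct such approximating generating tuples by hand, e.g.\ by first matching the given tuple approximately using a matrix over $A_{m+1}$ and then repairing generation on $Y_{m+1}$ with sections that are small on $Y_m$.
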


\begin{corollary}[{\cite[Corollary 2.4.5]{bellovin13}}]\label{flat-fibral-ranks}
Suppose that $\mathscr{F}$ is flat over $\mathcal{O}_Y$, where $Y$ is a finite-dimensional quasi-Stein space.  Then $\H^0(Y,\mathscr{F})$ is projective of rank $d$ over $\H^0(Y,\mathcal{O}_Y)$ if and only if $\dim_{\kappa(y)}\mathscr{F}(y)= d$ for all $y\in Y$.
\end{corollary}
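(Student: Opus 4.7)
The plan is to prove the two implications separately, with nearly all the work going into the converse.

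Write $A_\infty := \H^0(Y,\mathcal{O}_Y) = \varprojlim_m A_m$ with $A_m := \H^0(Y_m,\mathcal{O}_{Y_m})$, and $M_\infty := \H^0(Y,\mathscr{F}) = \varprojlim_m M_m$ with $M_m := \H^0(Y_m,\mathscr{F}|_{Y_m})$. The easy direction is immediate: if $M_\infty$ is a finitely generated projective $A_\infty$-module of rank $d$, then for any $y \in Y$ the fiber $\mathscr{F}(y) = \kappa(y) \otimes_{A_\infty} M_\infty$ is a $\kappa(y)$-vector space of rank $d$, since projectivity of rank $d$ is preserved by arbitrary base change.

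For the converse, assume $\dim_{\kappa(y)}\mathscr{F}(y)=d$ for every $y \in Y$. First, Lemma~\ref{fibral-ranks} (the bound on fibral ranks being trivially satisfied) supplies finite generation of $M_\infty$ over $A_\infty$. Next, restricting to each affinoid $Y_m$: the affinoid algebra $A_m$ is noetherian, $M_m$ is a finitely generated $A_m$-module by coherence of $\mathscr{F}$, it is flat over $A_m$ by hypothesis, and it has constant fiber rank $d$ at every maximal ideal of $A_m$ (since $\mathrm{MaxSpec}(A_m) \subset Y$). Hence $M_m$ is a finitely generated projective $A_m$-module of rank $d$.

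The main obstacle is then to pass from this compatible system of rank-$d$ projectives to global projectivity of $M_\infty$ over $A_\infty$. Choose generators $m_1,\ldots,m_n$ of $M_\infty$, yielding an exact sequence
\[ 0 \to K \to A_\infty^{\,n} \to M_\infty \to 0 \]
of coadmissible $A_\infty$-modules; coadmissibility of $K$ follows by applying the snake lemma levelwise and using that higher $\varprojlim$ of a coadmissible system vanishes (Theorem~\ref{kiehl}(2) combined with \cite{sch-teit}). Each base-changed sequence $0 \to K_m \to A_m^{\,n} \to M_m \to 0$ splits since $M_m$ is projective, and the summand $K_m$ is itself finitely generated projective of rank $n-d$ over $A_m$. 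Thus the sheaf with global sections $K$ is flat and has constant fiber rank $n-d$, so re-running the argument shows that all $K_m$ are rank-$(n-d)$ projectives. Finally, one promotes the levelwise splittings to a splitting in the limit: this may be done either by directly invoking the fact that a finitely generated coadmissible $A_\infty$-module whose every transition $M_m$ is $A_m$-projective is itself $A_\infty$-projective (a standard consequence of the Schneider--Teitelbaum formalism, cf.\ the vanishing of $\mathrm{Ext}^1_{A_\infty}(M_\infty,N)$ on coadmissibles), or by adjusting the compatible system of splittings $s_m : M_m \to A_m^n$ inductively, using that the indeterminacy at stage $m$ lies in the finitely generated $A_m$-module $\mathrm{Hom}_{A_m}(M_m, K_m)$ and that the transition maps have dense image. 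Once a global splitting is produced, $M_\infty$ is a direct summand of $A_\infty^{\,n}$, hence finitely generated projective, and its rank is $d$ by matching fiber dimensions at any point. This second step (producing a coherent splitting in the limit) is the principal subtlety, since $A_\infty$ is typically non-noetherian and standard "finite flat implies projective" arguments over $A_\infty$ are unavailable.
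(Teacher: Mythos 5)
The paper does not actually prove this corollary: it imports it verbatim from \cite[Corollary 2.4.5]{bellovin13}, so there is no in-paper argument to compare against, and your proposal has to stand on its own. On its own terms it is essentially sound. The forward direction, the use of Lemma~\ref{fibral-ranks} to get finite generation, the reduction to each $M_m$ being finite projective of rank $d$ over the noetherian affinoid algebra $A_m$ (where flat $+$ finite $=$ projective, and flatness is genuinely needed since constant fiber rank alone does not suffice over non-reduced rings), and the identification $K_m = K\otimes_{A_\infty}A_m$ via flatness of $A_\infty\to A_m$ are all correct. You have also correctly isolated where the real content lies: promoting the levelwise splittings to a splitting over the non-noetherian ring $A_\infty$.

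On that final step, your two suggested resolutions are not equally solid. Option (a) — citing "vanishing of $\operatorname{Ext}^1_{A_\infty}(M_\infty,N)$ on coadmissibles" as a standard consequence of the Schneider--Teitelbaum formalism — is essentially the statement you are trying to prove and should not be invoked without proof. Option (b) is the right idea and can be made rigorous: the set of splittings at level $m$ is a torsor under $H_m:=\Hom_{A_m}(M_m,K_m)$, these are Banach modules, the transition maps $H_{m+1}\to H_m$ have dense image (because $M_{m+1}$ is finitely generated projective, so $\Hom$ commutes with the base change $A_{m+1}\to A_m$, which has dense image), and the topological Mittag--Leffler theorem gives $\varprojlim^1 H_m=0$, hence a compatible system of splittings exists. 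A cleaner finish worth knowing: since $K$ is also finitely generated (apply Lemma~\ref{fibral-ranks} to the kernel sheaf, whose fibral rank is constantly $n-d$ by flatness), $M_\infty$ is finitely \emph{presented}; for finitely presented modules $\Hom$ commutes with the flat base change $A_\infty\to A_m$, so the cokernel $C$ of $\Hom_{A_\infty}(M_\infty,A_\infty^n)\to\Hom_{A_\infty}(M_\infty,M_\infty)$ is a coadmissible module with $C\otimes_{A_\infty}A_m=0$ for all $m$, hence $C=0$, and $\operatorname{id}_{M_\infty}$ lifts to a splitting directly.
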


An important family of examples of quasi-Stein spaces are those arising as the generic fibers of affine formal schemes of the form $\Spf(R)$ with $R=\mathcal{O}\langle Y_1,\ldots,Y_{d_1}\rangle[\![X_1,\ldots,X_{d_2}]\!]/I$.  Recall that this is a construction due to Berthelot (and exposited in~\cite[\textsection 7]{dejong}).  Briefly, we construct the generic fiber $\Spf(R)^{\rig}$ of $\Spf(R)$ by giving a cover, as follows: If $J$ is an ideal of definition of $R$, then for each $m\geq 0$, we consider the $\varpi$-adic   completion $R_m$ of $R[J^m/\varpi]$ inside $R[1/p]$.  It turns out that $R_m[1/p]$ is $K$-affinoid and $\{\Sp(R_m[1/p])\}$ is a rising union, with $\Sp(R_m[1/p])$ identified with an affinoid subdomain of $\Sp(R_{m+1}[1/p])$.  The global sections of the structure sheaf on $\Spf(R)^{\rig}$ will be the ring $ R^{\rig}:=\varprojlim_m R_m[1/p]$. In general, $ R^{\rig}$ will be non-Noetherian (unless $d_2=0$).

If $R$ is a formal power series ring, this construction corresponds to exhausting the open unit polydisk by closed balls of radius $\lvert\varpi\rvert^{1/m}$.

There is a canonical homomorphism $R\rightarrow  R^{\rig}$, and it follows from~\cite[Lemma 7.1.9]{dejong} that this map is flat.  More precisely, maximal ideals of $R[1/p]$ are in bijection with points of $\Spf(R)^{\rig}$, and the canonical homomorphism induces isomorphisms on complete local rings.  Thus, $R[1/p]\rightarrow  R^{\rig}$ is faithfully flat.

Given a finite $R$-module $M$, we can construct a coherent sheaf $M^{\rig}$ on $\Spf(R)^{\rig}$.  Explicitly, $M^{\rig}_{\Sp(R_m[1/p])}:=M\otimes_RR_m[1/p]$ (in fact, since the homomorphisms $R\rightarrow R_m[1/p]$ factor through $R[1/p]$, we can construct a coherent sheaf on $\Spf(R)^{\rig}$ from any finite $R[1/p]$-module).  This is sufficient to define $M^{\rig}$ on any admissible open subset of $\Spf(R)^{\rig}$.  Moreover, the global sections of $M^{\rig}$ behave exactly as we would hope:
\begin{lemma}\label{lemma:rig}
$\Gamma(\Spf(R)^{\rig},M^{\rig})=M\otimes_R R^{\rig}$.
\end{lemma}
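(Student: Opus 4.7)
The plan is to exploit the finite presentability of $M$ together with the vanishing of coherent cohomology on the quasi-Stein space $\Spf(R)^{\rig}$. Since $R$ is Noetherian in the situation of interest (being a quotient of $\mathcal{O}\langle Y_1,\ldots,Y_{d_1}\rangle[\![X_1,\ldots,X_{d_2}]\!]$), any finite $R$-module $M$ admits a finite presentation
\[ R^a \longrightarrow R^b \longrightarrow M \longrightarrow 0. \]
Applying the construction $(-)^{\rig}$, which on each affinoid piece $\Sp(R_m[1/p])$ is defined by the extension of scalars $M \otimes_R R_m[1/p]$, and using right exactness of tensor product affinoid-locally, yields an exact sequence of coherent sheaves
\[ (R^a)^{\rig} \longrightarrow (R^b)^{\rig} \longrightarrow M^{\rig} \longrightarrow 0 \]
on $\Spf(R)^{\rig}$.

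Next, I would invoke Theorem~\ref{kiehl}(1): the generic fiber $\Spf(R)^{\rig}$ is quasi-Stein by construction (the transition maps $R_{m+1}[1/p] \to R_m[1/p]$ are flat with dense image), and hence $\H^1$ of any coherent sheaf on it vanishes. Taking global sections therefore preserves exactness:
\[ \Gamma\bigl(\Spf(R)^{\rig}, (R^a)^{\rig}\bigr) \longrightarrow \Gamma\bigl(\Spf(R)^{\rig}, (R^b)^{\rig}\bigr) \longrightarrow \Gamma\bigl(\Spf(R)^{\rig}, M^{\rig}\bigr) \longrightarrow 0. \]
For the free module $R^k$ the local sections are $(R_m[1/p])^k$, and since inverse limits commute with finite direct sums one has $\Gamma(\Spf(R)^{\rig}, (R^k)^{\rig}) = (R^{\rig})^k$.

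Finally, tensoring the original presentation with $R^{\rig}$ produces, by right exactness of tensor product, the sequence
\[ (R^{\rig})^a \longrightarrow (R^{\rig})^b \longrightarrow M \otimes_R R^{\rig} \longrightarrow 0, \]
which is compatible termwise with the previous one via the canonical map $M \otimes_R R^{\rig} \to \Gamma(\Spf(R)^{\rig}, M^{\rig})$. Comparing cokernels then identifies $M \otimes_R R^{\rig} \cong \Gamma(\Spf(R)^{\rig}, M^{\rig})$. There is no real obstacle here; the only point demanding a little care is that $R^{\rig}$ itself need not be Noetherian when $d_2 > 0$, so one should exploit the finite presentability of $M$ over the Noetherian base ring $R$ and transport it along the extension of scalars, rather than appealing to any finiteness over $R^{\rig}$ directly.
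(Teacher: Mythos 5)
Your proof is correct and follows essentially the same route as the paper's: reduce to free modules via a finite presentation over the Noetherian ring $R$, use right-exactness of tensor products, and deduce exactness of global sections from a cohomological vanishing on the quasi-Stein space. The paper unpacks that last step by hand (resolving the kernel $K_1$ one step further and using $\R^1\varprojlim = 0$), whereas you invoke Theorem~\ref{kiehl}(1) directly; this is fine provided you note that the kernel and image sheaves appearing in the argument are themselves coherent (true, since each $R_m[1/p]$ is Noetherian), which is exactly what the paper's extra resolution of $K_1$ is supplying.
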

\begin{proof}
We have a natural homomorphism $M\otimes_R R^{\rig}\rightarrow \varprojlim_m(M\otimes_RR_m[1/p])$, and we need to show it is an isomorphism.  This is clear if $M$ is finite free.  Next, we check that if $R^{\oplus n_1}\twoheadrightarrow M$ finitely presented, ${(R^{\rig})}^{\oplus n_1}\twoheadrightarrow \Gamma(\Spf(R)^{\rig},M^{\rig})$.

So suppose we have an exact sequence
\[	0\rightarrow K_1\rightarrow R^{\oplus n_1}\rightarrow M\rightarrow 0	\]
for some $R$-finite module $K$. Since $R\rightarrow R_m[1/p]$ is flat, for each $m$ we have an exact sequence
\[	0\rightarrow K_1\otimes_RR_m[1/p]\rightarrow (R_m[1/p])^{\oplus n_1}\rightarrow M\otimes_RR_m[1/p]\rightarrow 0	\]
But then $\R^1\varprojlim_m(K_1\otimes_RR_m[1/p])=0$, so ${(R^{\rig})}^{\oplus n_1}\twoheadrightarrow \Gamma(\Spf(R)^{\rig},M^{\rig})$.

Now we choose a presentation of $K_1$, yielding an exact sequence
\[	0\rightarrow K_2\rightarrow R^{\oplus n_2}\rightarrow K_1\rightarrow 0	\]
The same argument as above, applied to $K_1$, shows that
\[	0\rightarrow \varprojlim_m K_2\otimes_RR_m[1/p]\rightarrow  {(R^{\rig})}^{\oplus n_2}\rightarrow \varprojlim_m K_1\otimes_RR_m[1/p]\rightarrow 0	\]
remains exact.

In particular, we have an commutative diagram with exact rows
\begin{equation*}
\xymatrix{
 {(R^{\rig})}^{\oplus n_2} \ar[r]\ar[d] &  {(R^{\rig})}^{\oplus n_1} \ar[r]\ar[d] & M\otimes_R R^{\rig} \ar[r]\ar[d] & 0 \\
 {(R^{\rig})}^{\oplus n_2} \ar[r] &  {(R^{\rig})}^{\oplus n_1} \ar[r] &  \Gamma(\Spf(R)^{\rig},M^{\rig}) \ar[r] & 0
}
\end{equation*}
The two left vertical arrows are isomorphisms, so it follows that $M\otimes_R R^{\rig}\xrightarrow{\sim}\Gamma(\Spf(R)^{\rig},M^{\rig})$.
\end{proof}

Now we restrict to the case when $R=\mathcal{O}[\![X_1,\ldots,X_d]\!]$, so that $ R^{\rig}$ is the ring of analytic functions on the open unit $d$-dimensional polydisk.

\begin{definition}
For $0<\rho<1$, the \emph{$\rho$-Gauss norm} on $ R^{\rig}$ is defined by $\lVert \sum_{I\in\ZZ_{\geq 0}^{\oplus d}}a_I\underline X^I\rVert_\rho = \sup_I \lvert a_I\rvert\rho^{\lvert I\rvert}$.
\end{definition}
It turns out that $\lVert f\rVert_\rho = \sup_{x\in B[0,\rho]} \lvert f(x)\rvert$ where the supremum is taken over points $x\in \overline K^{\oplus d}$ with $\lvert x\rvert \leq \rho$, and the supremum is actually attained at some point.  Moreover, the $\rho$-Gauss norms are multiplicative, i.e., $\lVert f_1f_2\rVert_\rho = \lVert f_1\rVert_\rho = \lVert f_2\rVert_\rho$.  This is a special property of polydisks; in general the sup norm on an affinoid is only submultiplicative.

\begin{lemma}\label{bdd-fncns}
Suppose $f\in  R^{\rig}$ and there is a constant $C$ such that $\lvert f(x)\rvert\leq C$ for all $x\in(\overline K)^{\oplus d}$ with $|x|<1$.  Then $f\in R[1/p]$.
\end{lemma}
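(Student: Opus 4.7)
The plan is to exploit the explicit formula for the $\rho$-Gauss norm that was just recalled before the lemma, namely
\[ \lVert f \rVert_\rho \;=\; \sup_{I} \lvert a_I \rvert \rho^{\lvert I \rvert} \;=\; \sup_{\lvert x \rvert \leq \rho} \lvert f(x) \rvert, \]
where the second equality holds because the supremum is actually attained at some point of the closed polydisk of radius $\rho$. Writing $f = \sum_{I \in \ZZ_{\geq 0}^{\oplus d}} a_I \underline{X}^I$, the conclusion $f \in R[1/p]$ amounts to saying the coefficients $a_I \in K$ are uniformly bounded, i.e.\ that there exists $n \geq 0$ with $p^n a_I \in \mathcal{O}$ for all $I$.

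The first step is to fix $\rho \in (0,1) \cap \lvert \overline{K}^\times \rvert$ and observe that by hypothesis every point of the closed polydisk $B[0,\rho] \subset (\overline{K})^{\oplus d}$ satisfies $\lvert x \rvert \leq \rho < 1$, so $\lvert f(x) \rvert \leq C$. Combined with the displayed identity above, this gives $\sup_I \lvert a_I \rvert \rho^{\lvert I \rvert} \leq C$ for every such $\rho$. The second step is simply to let $\rho \to 1^-$ along a cofinal sequence of values in $\lvert \overline{K}^\times \rvert$: for each fixed multi-index $I$ the inequality $\lvert a_I \rvert \leq C \rho^{-\lvert I \rvert}$ passes to $\lvert a_I \rvert \leq C$ in the limit. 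Hence $\sup_I \lvert a_I \rvert \leq C$, so there is some $n$ with $p^n a_I \in \mathcal{O}$ for all $I$, i.e.\ $p^n f \in R$ and therefore $f \in R[1/p]$.

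There is essentially no obstacle here beyond unpacking definitions; the content of the lemma is the multiplicativity and explicit description of the Gauss norm on the polydisk (which is exactly what makes $R = \mathcal{O}[\![X_1,\ldots,X_d]\!]$ special among formal affine algebras, as was noted immediately before the lemma). The only mild care needed is to ensure one takes $\rho$ in the value group $\lvert \overline{K}^\times \rvert$ so that the supremum over $B[0,\rho]$ is attained, but since any $\rho \in (0,1)$ can be approximated from below by such values, this causes no difficulty.
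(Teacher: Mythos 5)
Your proof is correct and takes essentially the same route as the paper's: bound the $\rho$-Gauss norms $\lVert f\rVert_\rho$ by $C$ for all $\rho<1$, let $\rho\to 1^-$ to obtain the uniform bound $\lvert a_I\rvert\leq C$ on the coefficients, and conclude that $p^n f\in R$ for a suitable $n$. The paper simply works with arbitrary $\rho<1$ using the coefficient formula for the Gauss norm directly rather than first restricting to $\rho\in\lvert\overline K^\times\rvert$, but this is a purely cosmetic difference.
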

\begin{proof}
If $\lvert f(x)\rvert$ is bounded by $C$ on the open polydisk, then $\lVert f\rVert_\rho\leq C$ for all $\rho<1$.  Then
\[	\lVert p^{\log_pC}f\rVert_\rho = \sup_{I}\{C^{-1}\lvert a_I\rvert\rho^{\lvert I\rvert} \leq 1	\]
It follows that $\lvert p^{\log_pC}a_I\rvert\rho^{\lvert I\rvert}\leq 1$ for all $I$ and all $\rho <1$, and therefore $\lvert p^{\log_pC}a_I\rvert\leq 1$ for all $I$.  Therefore, $p^{\log_pC}f\in R$.
\end{proof}

Although $ R^{\rig}$ is much bigger than $R[1/p]$, we can exploit the multiplicativity of the $\rho$-Gauss norms to prove that the units of the two rings are the same:
\begin{proposition}\label{units}
${(R^{\rig})}^\times=(R[1/p])^\times$
\end{proposition}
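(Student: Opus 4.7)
The inclusion $(R[1/p])^\times \subseteq (R^{\rig})^\times$ is immediate, since $R[1/p]\hookrightarrow R^{\rig}$ and inverses carry over. The content is the reverse inclusion, and my plan is to exploit the multiplicativity of the Gauss norms together with Lemma \ref{bdd-fncns} to show that both a unit $f\in (R^{\rig})^\times$ and its inverse $g$ are bounded on the open polydisk.

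First, I would observe that the $\rho$-Gauss norm $\|\cdot\|_\rho$ is \emph{monotonically nondecreasing} in $\rho$ on $(0,1)$: for $f=\sum a_I\underline X^I$ and $\rho<\rho'$, we have $|a_I|\rho^{|I|}\leq |a_I|(\rho')^{|I|}$ term by term, so the supremum can only grow. Now take $f\in(R^{\rig})^\times$ with inverse $g\in R^{\rig}$. By the multiplicativity of $\|\cdot\|_\rho$ stated just before the proposition, $\|f\|_\rho\cdot\|g\|_\rho=\|fg\|_\rho=\|1\|_\rho=1$ for every $\rho\in(0,1)$.

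The crucial step is then elementary: two nondecreasing functions on $(0,1)$ whose product is constantly $1$ must themselves be constant. Indeed, if $\|f\|_{\rho_1}<\|f\|_{\rho_2}$ for some $\rho_1<\rho_2$, then $\|g\|_{\rho_1}=1/\|f\|_{\rho_1}>1/\|f\|_{\rho_2}=\|g\|_{\rho_2}$, contradicting monotonicity of $\|g\|_\rho$. Hence $\|f\|_\rho=C$ and $\|g\|_\rho=C^{-1}$ are constants on $(0,1)$.

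Since $\|f\|_\rho=\sup_{|x|\leq\rho}|f(x)|$ and this supremum equals $C$ independently of $\rho$, we conclude $|f(x)|\leq C$ uniformly on the open unit polydisk, and Lemma \ref{bdd-fncns} then places $f$ in $R[1/p]$. The same argument applied to $g$ yields $g\in R[1/p]$, so $f\in (R[1/p])^\times$. I do not anticipate any obstacle beyond verifying the stated monotonicity; the entire argument is a one-line application of the already-established multiplicativity and the bounded-functions lemma.
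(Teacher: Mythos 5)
Your proof is correct and follows essentially the same route as the paper's: both exploit the multiplicativity of the $\rho$-Gauss norms applied to $ff^{-1}=1$ together with the monotonicity of $\rho\mapsto\lVert\cdot\rVert_\rho$ to deduce that $\lVert f\rVert_\rho$ is bounded (the paper shows it is non-increasing, hence bounded by $\lVert f\rVert_{\rho_1}$; you go one step further and note it is constant), and then both invoke Lemma~\ref{bdd-fncns}. No gap.
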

\begin{proof}
We need to show that if $f\in  {(R^{\rig})}^\times$, then $f$ is bounded.  So suppose $f$ is invertible on $\Spf(R)^{\rig}$, and consider a sequence of closed balls with rational radii exhausting $\Spf(R)^{\rig}$, i.e., their radii form a sequence $\{\rho_i\}_{i\geq 1}$ of rational numbers which is strictly increasing and goes to $1$.  Let $c_i:=\lVert f\rVert_{\rho_i}$, so that $c_i$ is the maximum of $\lvert f(x)\rvert$ on the closed ball of radius $\rho_i$ centered at the origin.

Since $f$ is assumed invertible and $ff^{-1}=1$, multiplicativity of the $\rho$-Gauss norm implies that $\lVert f^{-1}\rVert_{\rho_i}=1/c_i$.  But if $i<j$, then $\rho_i<\rho_j$ so
\[	1/c_i=\lVert f^{-1}\rVert_{\rho_i}\leq \lVert f^{-1}\rVert_{\rho_j}=1/c_j	\]
This implies $c_j\leq c_i$.  In particular, $c_j\leq c_1$ for all $j$ and so $f\in R[1/p]$ by Lemma~\ref{bdd-fncns}.
\end{proof}

In fact, we can prove more.
\begin{proposition}\label{localize-units}
Suppose $f,f'\in R^{\rig}$ and $0\neq ff'=g\in R[1/p]$.  Then $f,f'\in R[1/p]$.
\end{proposition}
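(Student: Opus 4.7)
The plan is to exploit the multiplicativity of the $\rho$-Gauss norms on $R^{\rig}$ exactly as in the proof of Proposition~\ref{units}, transferring boundedness from the product $g$ to each factor. The hypothesis $g \neq 0$ is essential: it guarantees that each factor is non-zero, so a single Gauss norm of $f$ (resp.\ $f'$) gives a positive lower bound which can be divided out.

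Concretely, I would first observe that since $g = ff' \neq 0$, neither $f$ nor $f'$ is the zero function on the open polydisc, so there exists a rational $\rho_0 < 1$ such that $\|f\|_{\rho_0} > 0$ and $\|f'\|_{\rho_0} > 0$. Next, I would note that for any $f \in R^{\rig}$ the function $\rho \mapsto \|f\|_\rho$ is non-decreasing on $(0,1)$, since $\|f\|_\rho$ is the supremum of $|f(x)|$ on the closed ball of radius $\rho$, and these balls are nested. In particular, for every rational $\rho$ with $\rho_0 \leq \rho < 1$ we have $\|f\|_\rho \geq \|f\|_{\rho_0}$ and $\|f'\|_\rho \geq \|f'\|_{\rho_0}$.

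Now I would invoke the multiplicativity of the Gauss norm on the polydisc, $\|f\|_\rho \cdot \|f'\|_\rho = \|ff'\|_\rho = \|g\|_\rho$, recalled just before Proposition~\ref{units}. Since $g \in R[1/p]$, Lemma~\ref{bdd-fncns} (or rather its proof) shows that there is a constant $C$ with $\|g\|_\rho \leq C$ for all $\rho < 1$. Combining these, for $\rho_0 \leq \rho < 1$,
\[
\|f'\|_\rho \;=\; \frac{\|g\|_\rho}{\|f\|_\rho} \;\leq\; \frac{C}{\|f\|_{\rho_0}},
\]
and symmetrically $\|f\|_\rho \leq C/\|f'\|_{\rho_0}$. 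Thus $|f(x)|$ and $|f'(x)|$ are uniformly bounded on the open polydisc, and another application of Lemma~\ref{bdd-fncns} gives $f, f' \in R[1/p]$.

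There is no serious obstacle: the only subtle point is justifying $\|f\|_{\rho_0} > 0$ for some $\rho_0 < 1$, and this follows from the identity theorem for analytic functions (a non-zero analytic function on the open polydisc does not vanish identically on any closed polydisc of sufficiently large radius, since closed polydiscs exhaust). Everything else is a direct manipulation with Gauss norms as in Proposition~\ref{units}.
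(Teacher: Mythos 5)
Your proof is correct and follows essentially the same approach as the paper's: normalize $g$ to be bounded, invoke multiplicativity of the Gauss norms to write $\|f\|_\rho \cdot \|f'\|_\rho = \|g\|_\rho$, and combine with monotonicity of $\rho \mapsto \|\cdot\|_\rho$ and boundedness of $\|g\|_\rho$ to get a uniform bound on $\|f\|_\rho$ and $\|f'\|_\rho$ and conclude via Lemma~\ref{bdd-fncns}. The only cosmetic difference is that you bound $\|f'\|_\rho$ directly by dividing out a fixed $\|f\|_{\rho_0}>0$, whereas the paper arrives at an equivalent bound by rearranging the inequality $\|f'\|_\rho \leq \|f'\|_{\rho'}$.
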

\begin{proof}
We may assume that $g\in R$, so that $\lvert g(x)\rvert\leq 1$ for every closed point $x\in\Spec R[1/p]$; let $C:=\sup_{x\in B[0,1)}\lvert g(x)\rvert$, where the supremum is taken over points $x\in {\overline K}^{\oplus d}$ with $\lvert x\rvert <1$.  Let $0<\rho<\rho'<1$ be  rational numbers.

Let $c:=\lVert f\rVert_{\rho}$, so that $c$ is the maximum of $\lvert f(x)\rvert$ on the closed ball of radius $\rho$ centered at the origin, and let $c':=\lVert f\rVert_{\rho'}$.  Then multiplicativity of the $\rho$-Gauss norm implies that $\lVert f'\rVert_{\rho}=\lVert g\rVert_{\rho}/c$ and $\lVert f'\rVert_{\rho'}=\lVert g\rVert_{\rho'}/c'$.  But $\rho<\rho'$ so
\[	\lVert g\rVert_{\rho}/c=\lVert f'\rVert_{\rho}\leq \lVert f'\rVert_{\rho'}=\lVert g\rVert_{\rho'}/c'	\]
This implies that
\[	c'\leq (\lVert g\rVert_{\rho'}/\lVert g\rVert_{\rho})c\leq (C/\lVert g\rVert_{\rho})c	\]
This holds for every $\rho'>\rho$, so $f\in R[1/p]$ by Lemma~\ref{bdd-fncns}.
\end{proof}

\subsection{Partial generic fibers}\label{subsect:partial-gen-fib}

Given two complete local noetherian $\cO$-algebras $R_1$ and $R_2$, we wish to define a ring of functions on the rigid analytic space $\Spf(R_1\widehat\otimes R_2)^{\rig}=\Spf(R_1)^{\rig}\times\Spf(R_2)^{\rig}$ consisting of those functions which are ``bounded along $\Spf(R_2)^\rig$'', but not necessarily ``along $\Spf(R_1)^{\rig}$''.  By way of contrast, $(R_1\widehat\otimes R_2)[1/p]$ roughly consists of bounded functions, whereas the ring of all analytic functions on $\Spf(R_1\widehat\otimes R_2)^{\rig}$ imposes no boundedness conditions.

\begin{definition}
If $I_1\subset R_1$ is the biggest ideal of definition of $R_1$, recall that we defined $R_{1,m}$ to be the $\varpi$-adic completion of $R_1[I^m/\varpi]$ inside $R_1[1/p]$.  If $R_{1,m}\widehat\otimes R_2$ is the completed tensor product with respect to the profinite topology on $R_2$, we define
\[	R_1^{\rig}\widehat\otimes R_2[1/p]:=\varprojlim_m \left(R_{1,m}\widehat\otimes R_2[1/p]\right)	\]
\end{definition}

We will actually only need this construction when $R_1=\cO[\![X_1,\ldots,X_k]\!]$, so we work out a more concrete description of $\cO[\![\underline X]\!]^{\rig}\widehat\otimes R$.

\begin{proposition}\label{prop:partial-gen-fiber-desc}
The ring $\cO[\![\underline X]\!]^{\rig}\widehat\otimes R[1/p]$ defined above is equal to the ring
\[	\{\sum_{\underline{n}\in \Z_{\geq 0}^{\oplus k}}r_{\underline n}\underline{X}^{\underline n}\mid r_{\underline{n}}\in R[1/p]\text{ and for all }\underline{\rho}\in\QQ_{>0}^{\oplus k}, \varpi^{n\underline\rho}r_{\underline n}\in R\text{ for }n\gg0	\}	\]
as subrings of the ring of analytic functions on the generic fiber $\Spf(R[\![\underline X]\!])^{\rig}$.
\end{proposition}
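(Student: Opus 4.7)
The strategy is to give an explicit coefficientwise description of $R_{1,m}\htimes R[1/p]$ for each $m$ in the admissible exhaustion $\{\Sp(R_{1,m}[1/p])\}$ of $\Spf(\cO[\![\underline X]\!])^\rig$, and then pass to the inverse limit. First I would make explicit the structure of $R_{1,m}$ as a subset of $\cO[\![\underline X]\!][1/\varpi]$: since by definition $R_{1,m}$ is the $\varpi$-adic completion of $\cO[\![\underline X]\!][I_1^m/\varpi]$ inside $\cO[\![\underline X]\!][1/\varpi]$ for $I_1 = (\varpi, X_1, \ldots, X_k)$, a direct calculation in the corresponding Tate algebra identifies $R_{1,m}$ with the set of series $\sum a_{\underline n}\underline X^{\underline n}$ in $\cO[\![\underline X]\!][1/\varpi]$ such that $\varpi^{\lceil |\underline n|/m\rceil} a_{\underline n} \in \cO$ for almost all $\underline n$ and $\varpi^{\lceil |\underline n|/m\rceil} a_{\underline n}\to 0$ in the $\varpi$-adic topology.

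Next I would extend this description to $R_{1,m}\htimes R[1/p]$: elements correspond uniquely to formal series $\sum r_{\underline n} \underline X^{\underline n}$ with $r_{\underline n}\in R[1/p]$ satisfying $\varpi^{\lceil |\underline n|/m\rceil} r_{\underline n} \in R$ for $|\underline n|\gg 0$ and $\varpi^{\lceil |\underline n|/m\rceil} r_{\underline n}\to 0$ in the $\mathfrak m_R$-adic topology on $R$.  This follows by computing the levelwise tensor products $R_{1,m}/\varpi^{n_1} \otimes_\cO R/\mathfrak m_R^{n_2}$ (using flatness of $R_{1,m}/\varpi^{n_1}$ over $\cO/\varpi^{n_1}$) and invoking a Mittag--Leffler argument in the two indices $n_1, n_2$, in the same spirit as the remark following the definition of $R\htimes\AA^+$ in Section~\ref{section:rings}.

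Finally I would take the inverse limit over $m$.  Membership in $\cO[\![\underline X]\!]^\rig\htimes R[1/p]$ is equivalent to membership in $R_{1,m}\htimes R[1/p]$ for every $m\geq 1$; and the decay condition ``$\varpi^{\lceil|\underline n|/m\rceil} r_{\underline n}\to 0$ in $\mathfrak m_R$'' at a given $m$ is automatically implied by the integrality condition ``$\varpi^{\lceil|\underline n|/m'\rceil} r_{\underline n} \in R$ for $|\underline n|\gg 0$'' at any $m' > m$, exactly as in the classical case $R=\cO$ where an integral bound on coefficients for every strictly smaller polyradius forces convergence on the chosen polydisk. Hence the conjunction over all $m$ collapses to the purely integral condition ``$\varpi^{|\underline n|/m} r_{\underline n}\in R$ for $|\underline n|\gg 0$, for every $m\geq 1$,'' and by sandwiching an arbitrary polydisk of polyradius $\underline\rho\in\Q_{>0}^k$ between ones of polyradius $(1/m',\ldots,1/m')\leq \underline\rho\leq (1/m,\ldots,1/m)$ this is equivalent to the condition displayed in the proposition.

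The main technical obstacle will be Step~2: the bookkeeping of Mittag--Leffler exactness through both the $\varpi$-adic and $\mathfrak m_R$-adic completions, so as to identify $R_{1,m}\htimes R[1/p]$ with the claimed set of formal series over $R[1/p]$.  Once this is in hand, Step~3 is elementary non-archimedean analysis, and the compatibility of both descriptions as subrings of $\Gamma(\Spf(R[\![\underline X]\!])^\rig,\cO)$ is immediate from the universal property of the completed tensor product.
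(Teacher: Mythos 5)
Your proposal is correct and follows essentially the same route as the paper: identify each stage $R_{1,m}\htimes R[1/p]$ of the inverse limit with power series over $R[1/p]$ satisfying a coefficientwise $\varpi$-integrality bound, then intersect over $m$, using the slack between polyradii $1/m'$ and $1/m$ to absorb both the finitely many exceptional coefficients coming from inverting $p$ and the convergence condition. The only caveat is that your Step 2 claims an exact description of $R_{1,m}\htimes R[1/p]$ at each fixed level, which is only true up to this same slack (the bounded power of $\varpi$ from inverting $p$ shifts the exponent by a constant), but your Step 3 sandwiching already accounts for this, exactly as the paper's choice of $\underline\rho=(1/2s,\ldots,1/2s)$ and the estimate $\varpi^{2n/s}r_{\underline n}\in R$ for $n\geq ks$ do.
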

\begin{proof}
Both rings are subrings of the ring of functions on $\Spf(R[\![\underline X]\!])^{\rig}$; every such function $f$ has a unique representative of the form $f=\sum_{\underline{n}\in \Z_{\geq 0}^{\oplus k}}r_{\underline{n}}\underline{X}^{\underline{n}}$, where $r_{\underline{n}}\in R^{\rig}$ and for every $k$-tuple of positive rational numbers $\underline\rho$, $\varpi^{\underline{n}\underline{\rho}}r_{\underline{n}}\rightarrow 0$ in $R^{\rig}$.

Suppose that $f=\sum_{\underline{n}\in \Z_{\geq 0}^{\oplus k}}r_{\underline{n}}\underline{X}^{\underline{n}}$, where $r_{\underline{n}}\in R[1/p]$ and for every $k$-tuple of positive rational numbers $\underline\rho$, $\varpi^{\underline{n}\underline{\rho}}r_{\underline{n}}\in R$ for sufficiently large $\underline{n}$.  Taking $\underline\rho=(1/2s,\ldots,1/2s)$ for $s\in \Z_{>0}$, our assumption implies that $f\in (\cO\langle \underline X,\frac{\underline{X}^s}{\varpi}\rangle\widehat\otimes R)[1/p]$.  Since this holds for every such $s$, we see that $f\in \cO[\![\underline X]\!]^{\rig}\widehat\otimes R[1/p]$.

On the other hand, if $f\in \left(\cO\langle \underline{X},\frac{\underline{X}^s}{\varpi}\rangle\widehat\otimes R\right)[1/p]$, there is some $k\in\Z$ such that $p^kf\in \cO\langle \underline{X},\frac{\underline{X}^s}{\varpi}\rangle\widehat\otimes R$.  This implies that $\varpi^{k+n/s}r_{\underline{n}}\in R$ for all $n$, which implies that for $n\geq ks$, $\varpi^{2n/s}r_{\underline{n}}\in R$.  Thus, if $f\in \cO[\![\underline{X}]\!]^{\rig}\widehat\otimes R[1/p]$, the coefficients satisfy the desired growth condition.
\end{proof}

\begin{remark}
This alternate characterization of $\cO[\![X]\!]^{\rig}\widehat\otimes R[1/p]$ makes it clear that if $\mathfrak{m}\subset R[1/p]$ is a maximal ideal, then $\cO[\![X]\!]^{\rig}\widehat\otimes (R[1/p]/\mathfrak{m})\cong (\cO[\![X]\!]^{\rig}\widehat\otimes R[1/p])/\mathfrak{m}$.
\end{remark}

\begin{proposition}\label{partial-localize-units}
Suppose that $R$ is normal and $R[1/p]$ is reduced.  If $f,f'\in R[1/p]\htimes\cO[\![\underline X]\!]^{\rig}$ satisfy $ff'=g\in R[1/p]$ and $g$ is not identically zero on any irreducible component of $\Spec R[1/p]$, then $f,f'\in (R\htimes\cO[\![\underline X]\!])[1/p]$.
\end{proposition}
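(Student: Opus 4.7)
The strategy mirrors Proposition~\ref{localize-units}: we specialize at closed points of $\Spec R[1/p]$ to obtain pointwise $\rho$-Gauss-norm bounds on the coefficients of $f$, and then apply the normality of $R$ coefficient-by-coefficient to convert these into a uniform global $p$-adic bound, in the spirit of the argument in Lemma~\ref{lemma:descent-affine-hiw2-0}.

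First observe that, because $R$ is a normal noetherian local ring, it is a domain (normality forces the nilradical to vanish and locality yields a unique minimal prime), so $R[1/p]$ is a domain and the hypothesis on $g$ simplifies to $g\neq 0$. By Proposition~\ref{prop:partial-gen-fiber-desc} write
$f=\sum_{\underline n}r_{\underline n}\underline{X}^{\underline n}$, $f'=\sum_{\underline n}r'_{\underline n}\underline{X}^{\underline n}$ with $r_{\underline n},r'_{\underline n}\in R[1/p]$; comparing constant terms in $ff'=g$ gives $r_0r'_0=g$. For every closed point $x\colon R[1/p]\to L$ with $g(x)\neq 0$ (a Zariski-dense open condition), specializing yields $f(x)f'(x)=g(x)$ in $\cO_L[\![\underline X]\!]^{\rig}[1/p]$, and Proposition~\ref{localize-units} applied with $\cO$ replaced by $\cO_L$ forces $f(x),f'(x)\in\cO_L[\![\underline X]\!][1/p]$. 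Multiplicativity of the $\rho$-Gauss norm then gives
\[
\lVert f(x)\rVert_\rho\cdot\lVert f'(x)\rVert_\rho=\lvert g(x)\rvert \qquad(\rho\in(0,1)\cap\QQ).
\]
Both factors are non-decreasing in $\rho$ while the product is constant; hence both are constant in $\rho$ and equal their values $\lvert r_0(x)\rvert$ and $\lvert r'_0(x)\rvert$ at $\underline X=0$. Consequently $\lvert r_{\underline n}(x)\rvert\le\lvert r_0(x)\rvert$ for every $\underline n$ and every such $x$.

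Choose $k_0$ with $p^{k_0}r_0\in R$, which is possible since $r_0\in R[1/p]$. Then
\[
\lvert (p^{k_0}r_{\underline n})(x)\rvert\le \lvert(p^{k_0}r_0)(x)\rvert\le 1
\]
at every closed point $x$ of $\Spec R[1/p]$ with $g(x)\neq 0$. Since $\{g\neq 0\}$ is the complement of a codimension-one closed subset (Krull's Hauptidealsatz), it is Zariski-dense, and its preimage in each reduced affinoid $\Sp(R_m[1/p])$ exhausting the quasi-Stein space $\Spf(R)^{\rig}$ is likewise dense. The maximum modulus principle on each such affinoid therefore extends the bound $\lvert p^{k_0}r_{\underline n}\rvert\le 1$ to all closed points of $\Spf(R)^{\rig}$. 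Because $R$ is normal, \cite[Proposition 7.3.6]{dejong} (invoked in the same way as in Lemma~\ref{lemma:descent-affine-hiw2-0}) now implies $p^{k_0}r_{\underline n}\in R$, uniformly in $\underline n$. Hence $p^{k_0}f\in R\htimes\cO[\![\underline X]\!]$, so $f\in(R\htimes\cO[\![\underline X]\!])[1/p]$; the same argument applied to $f'$ completes the proof.

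The hardest point is the globalization in the last step. The pointwise Gauss-norm bound on the dense open locus $\{g\neq 0\}$ is a direct consequence of multiplicativity, but passing from it to a uniform bound on all of $\Spf(R)^{\rig}$ requires both a careful maximum-modulus argument on the affinoid exhaustion and the normality principle that identifies power-bounded analytic functions on $\Spf(R)^{\rig}$ with elements of $R$. Both ingredients fail if $R$ is not normal, which is precisely why that hypothesis is built into the statement.
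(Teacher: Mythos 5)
Your proof is correct and follows essentially the same route as the paper's: bound $f$ via multiplicativity of the Gauss norm on the dense locus where $g\neq 0$, extend the bound across the zero locus of $g$ by approximation, and conclude integrality from normality of $R$. Your coefficient-wise formulation, with the explicit uniform bound $\lvert r_{\underline n}(x)\rvert\le\lvert r_0(x)\rvert$ and the resulting single denominator $p^{k_0}$, just makes explicit the final descent step that the paper leaves implicit.
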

\begin{proof}
We may assume that $g\in R$, so that $\lvert g(x)\rvert\leq 1$ for every closed point $x\in\Spec R[1/p]$.  For a closed point $x\in\Spec R[1/p]$ and a rational number $0<\rho<1$, let $\lVert f\rVert_{x,\rho}$ denote the supremum of $\lvert f\rvert$ on the subset $\{x\}\times B[0,\rho]\subset\MaxSpec R[1/p]\times (\Spf \cO[\![\underline X]\!])^{\rig}$.  Let $\lVert f\rVert_{\rho}$ denote the supremum of $\lvert f\rvert$ over all of $\MaxSpec R[1/p]\times B[0,\rho]$.

If $x\in\Spec R[1/p]$ is a closed point such that $g(x)\neq 0$, then the proof of Proposition~\ref{units} shows that for every pair of rational numbers $0<\rho<\rho'<1$,
\[	\lVert f\rVert_{x,\rho'}\leq \lVert f\rVert_{x,\rho}\leq \lVert f\rVert_{\rho}	\]
Thus, $f$ is bounded on the complement of the zero locus of $g$; suppose $\lvert f(x)\rvert\leq C$ for all $x\in \Spec R[1/p,1/g]\times (\Spf \cO[\![\underline X]\!])^{\rig}$.

Now let $x\in\Spec R[1/p]$ be a closed point with $g(x)=0$, and let $\{x_i\}_i$ be a sequence of closed points of $\Spec R[1/p,1/g]$ converging to $x$.  Then for any $x'\in \Spf \cO[\![\underline X]\!]^{\rig}$, we have shown that $\lvert f(x_i,x')\rvert \leq C$.  But this implies that $\lvert f(x,x')\rvert\leq C$, so $\lVert f\rVert_{\rho}\leq C$ for all $\rho<1$ and we are done.
\end{proof}

\section{Determinant functors}\label{determinants} In this appendix we recall some details of the formalism of determinant functors
 introduced by Fukaya and Kato in \cite{fukayakato06}, see also \cite{deligne87}.

We fix an associative unital noetherian ring $R$. We write ${ B}(R)$ for the category of bounded
complexes of (left) $R$-modules, ${ C}(R)$ for the  category of bounded complexes of finitely
generated (left) $R$-modules, $P(R)$ for the category of finitely generated projective (left)
$R$-modules, $ {C}^{\rm p}(R)$ for the category of bounded (cohomological) complexes of finitely
generated projective (left) $R$-modules. By $D^{\rm p}(R)$ we denote the category of perfect
complexes as full triangulated subcategory of the derived category $D^{\rm b}(R)$ of $B(R).$ We
write $(C^{\rm p}(R),{\rm quasi})$ and $(D^{\rm p}(R),{\rm is})$ for the subcategory of
quasi-isomorphisms of $C^{\rm p}(R)$ and isomorphisms of $D^{\rm p}(R),$ respectively.

For each complex $C = (C^\bullet,d_C^\bullet)$ and each integer $r$ we define the $r$-fold shift
$C[r]$ of $C$ by setting $C[r]^i= C^{i+r}$ and $d^i_{C[r]}=(-1)^rd^{i+r}_C$ for each integer $i$.

We first recall that there exists a Picard category $\C_R$ and a determinant functor $\,\d_R:(
{C}^{\rm p}(R),{\rm quasi})\to \C_R$ with the following properties (for objects $C,C'$ and $C''$ of
$\mathrm{C}^{\rm p}(R)$)

\begin{enumerate}[label=B.\alph*),align=left, leftmargin=4.5em]
\item $\C_R$ has an associative and commutative product
structure $(M,N) \mapsto M\cdot N$ (which we often write more simply as $MN$) with canonical unit
object ${\bf 1}_R = \d_R(0)$. If $P$ is any object of $P(R)$, then in $\C_R$ the object $\d_R(P)$
has a canonical inverse $\d_R(P)^{-1}$. Every object of $\C_R$ is of the form $\d_R(P)\cdot
\d_R(Q)^{-1}$ for suitable objects $P$ and $Q$ of $P(R)$.

\item  All morphisms in $\C_R$ are isomorphisms and elements of
the form $\d_R(P)$ and $\d_R(Q)$ are isomorphic in $\C_R$ if and only if $P$ and $Q$ correspond to
the same element of the Grothendieck group $K_0(R)$. There is a natural identification ${\rm
Aut}_{\C_R}({\bf 1}_R) \cong K_1(R)$ and if ${\rm Mor}_{\C_R}(M,N)$ is non-empty, then it is a
$K_1(R)$-torsor where each element $\alpha$ of $K_1(R)\cong {\rm Aut}_{\C_R}({\bf 1}_R)$
 acts on $\phi \in {\rm Mor}_{\C_R}(M,N)$ to give $\alpha\phi: M =
{\bf 1}_R\cdot M \xrightarrow{\alpha\cdot \phi}{\bf 1}_R\cdot N = N$.

\item  $\d_R$ preserves the product structure: specifically,
 for each $P$ and $Q$ in $P(R)$ one has $\d_R(P\oplus Q) =
\d_R(P)\cdot\d_R(Q)$.
\item 
If $C'\to C\to C''$ is a short exact sequence of complexes,
then there is a canonical isomorphism $\,\d_R(C)\cong \d_R(C')\d_R(C'')$ in $\C_R$ (which we
usually take to be an identification). 
\item  If $C$ is acyclic, then the quasi-isomorphism $0\to C$
induces a canonical isomorphism $\,\u_R\to\d_R(C).$
\item  For any integer $r$ one has
$\d_R(C[r])=\d_R(C)^{(-1)^r}$.
\item  the functor $\d_R$ factorises over the image of
$C^{\rm p}(R)$ in $D^{\rm p}(R)$ and extends (uniquely up to unique isomorphisms) to $(D^{\rm
p}(R),{\rm is}).$ Moreover, if $R$ is regular, also property B.d) extends to all distinguished triangles.
\item \label{B.h)} For each $C$ in $D^{\rm b}(R)$ we write $\H(C)$ for the
 complex which has $\H(C)^i = H^i(C)$ in each degree $i$ and in which all differentials are $0$. If
 $\H(C)$ belongs to $D^{\rm p}(R)$ (in which case one says that $C$ is {\em cohomologically perfect}), then $C$ belongs to $D^{\rm p}(R)$ and
 there are canonical
 isomorphisms
\[\d_R(C) \cong \d_R(\H(C)) \cong \prod_{i\in \Z} \d_R(H^i(C))^{(-1)^i}.\]
(For an explicit description of the first isomorphism see \cite[\S 3]{km}.)
\item  If $R'$ is another (associative unital noetherian) ring and $Y$ an $(R',R)$-bimodule
 that is both finitely generated and projective as an $R'$-module,
 then the functor $Y\otimes_R-: P(R)\to P(R')$ extends to a
commutative diagram
\[\xymatrix{
  (\mathrm{D}^p(R), is) \ar[d]_{Y\otimes_R^\mathbb{L}-} \ar[rr]^{\d_R} & & {\C_R} \ar[d]^{Y\otimes_R-} \\
  (\mathrm{D}^p(R'), is) \ar[rr]^{\d_{R'}} & & {\C_{R'}}   }\]
In particular, if $R\to R'$ is a ring homomorphism and $C$ is in $\mathrm{D}^{\rm p}(R),$ then we
often simply write $\d_R(C)_{R'}$ in place of $R'\otimes_R\d_R(C).$

\item \label{Aj} Let $R^\circ$ be the opposite ring of $R.$ Then the functor $\mathrm{Hom}_R(-,R)$ induces an
anti-equivalence between $\C_R$ and $\C_{R^\circ}$ with quasi-inverse induced  by
$\mathrm{Hom}_{R^\circ}(-,R^\circ);$ both functors will be denoted by $-^*.$ This extends to give a
diagram
 \[\xymatrix{
   (\mathrm{D}^p(R), is) \ar[d]_{\mathrm{RHom}_R(-,R)} \ar[rr]^{\d_R}& & {\C_R }\ar[d]^{-^*} \\
   (\mathrm{D}^p(R^\circ),is) \ar[rr]^{\d_{R^\circ}} & & {\C_{R^\circ} } } \]
   which commutes (up to unique isomorphism);   similarly  we have such a commutative diagram for $\mathrm{RHom}_{R^\circ}(-,{R^\circ}).$
\end{enumerate}

If $R$ denotes any commutative ring, then by \cite[\S 1.2.4 (3)]{fukayakato06}  the Fukaya-Kato/Deligne determinant functor is closely related to Knudsen's and Mumford's which takes values in the category of line bundles plus ranks over $\Spec(R).$ In case $SK_1(R)=1$ both determinants can be naturally identified. Since in this article we often use geometric (specialisation) arguments we thus take  frequently this point of view without indicating this in the notation.

\begin{remark}
Recall from \cite[III.Lem.\ 1.4]{weibel} that for a commutative semi-local ring $R$ we have $SK_1(R)=1$ and $K_1(R)\cong R^\times.$ This applies in particular to $\Lambda_R(G)$ and $\Lambda_{\Rt}(G)$ as defined in \eqref{f:tilde}.
\end{remark}

We end this section by considering the example where $R=K$ is a field and $V$ a finite dimensional
vector space over $K.$ Then, according to \cite[1.2.4]{fukayakato06}, $\d_K(V)$ can be identified
with the highest exterior product $\bigwedge^{\rm{top}}V$ of $V$ and for an automorphism $\phi: V\to V$
the determinant $\d_K(\phi)\in K^\times=K_1(K)$ can be identified with the usual determinant
$\det_K(\phi).$ In particular, we identify $\d_K=K$ with canonical basis $1.$ Then a map
$\u_K \xrightarrow{\psi}\u_K$ corresponds uniquely to the value $\psi(1)\in
K^\times.$

\begin{example}\label{finitemodules}
Note that every {\em finite} $\Zp$-module $A$ possesses a free resolution $C$, i.e.\ $\d_{\Zp}(A)\cong\d_{\Zp}(C)^{-1}=\u_{\Zp}.$ Then modulo $\Zp^\times$
the composite $\u_{\Qp}\xrightarrow{\rm{acyc}}\d_{\Zp}(C)_{\Qp} \xrightarrow{\sim}\u_{\Qp}$ corresponds to the cardinality $|A|^{-1}\in\Qp^\times.$
\end{example}

\bibliographystyle{amsalpha}
\bibliography{references}

\providecommand{\bysame}{\leavevmode\hbox to3em{\hrulefill}\thinspace}
\providecommand{\MR}{\relax\ifhmode\unskip\space\fi MR }
\providecommand{\MRhref}[2]{%
  \href{http://www.ams.org/mathscinet-getitem?mr=#1}{#2}
}
\providecommand{\href}[2]{#2}
\begin{thebibliography}{{Wan}15}

\bibitem[BB08]{benoisberger08}
Denis Benois and Laurent Berger, \emph{Th\'eorie d'{I}wasawa des
  repr\'esentations cristallines. {II}}, Comment. Math. Helv. \textbf{83}
  (2008), no.~3, 603--677. \MR{2410782}

\bibitem[BB10]{berger-breuil}
Laurent Berger and Christophe Breuil, \emph{Sur quelques repr\'esentations
  potentiellement cristallines de {${\rm GL}_2(\bold Q_p)$}}, Ast\'erisque
  (2010), no.~330, 155--211. \MR{2642406}

\bibitem[BC08]{berger-colmez}
Laurent Berger and Pierre Colmez, \emph{Familles de repr\'esentations de de
  {R}ham et monodromie {$p$}-adique}, Ast\'erisque (2008), no.~319, 303--337,
  Repr{\'e}sentations $p$-adiques de groupes $p$-adiques. I.
  Repr{\'e}sentations galoisiennes et $(\phi,\Gamma)$-modules.

\bibitem[Bel15]{bellovin13}
Rebecca Bellovin, \emph{$p$-adic {H}odge theory in rigid-analytic families},
  Algebra \& Number Theory \textbf{9} (2015), 371--433.

\bibitem[Ber02]{berger02}
Laurent Berger, \emph{Repr\'esentations $p$-adiques et \'equations
  diff\'erentielles}, Invent. Math. \textbf{148} (2002), 219--284.

\bibitem[Ber04]{berger04}
\bysame, \emph{Limites de repr{\'e}sentations cristallines}, Compos. Math.
  \textbf{140} (2004), no.~6, 1473--1498. \MR{2098398}

\bibitem[CC98]{cherbonnier-colmez}
Frederic Cherbonnier and Pierre Colmez, \emph{Repr\'esentations $p$-adiques
  surconvergentes}, Invent. Math. \textbf{133} (1998), 581--611.

\bibitem[CHT08]{CHT}
Laurent Clozel, Michael Harris, and Richard Taylor, \emph{Automorphy for some
  {$l$}-adic lifts of automorphic mod {$l$} {G}alois representations}, Publ.
  Math. Inst. Hautes \'Etudes Sci. (2008), no.~108, 1--181, With Appendix A,
  summarizing unpublished work of Russ Mann, and Appendix B by Marie-France
  Vign{\'e}ras.

\bibitem[Col08]{colmez08}
Pierre Colmez, \emph{Espaces vectoriels de dimension finie et repr\'esentations
  de de {R}ham}, Ast\'erisque (2008), no.~319, 117--186, Repr{\'e}sentations
  $p$-adiques de groupes $p$-adiques. I. Repr{\'e}sentations galoisiennes et
  $(\phi,\Gamma)$-modules. \MR{2493217}

\bibitem[Dee01]{dee01}
Jonathan Dee, \emph{{}{}{}{}{}{}{}{}{}{}{}{$\Phi$}-{${\Gamma}$} modules for
  families of {G}alois representations}, J. Algebra \textbf{235} (2001), no.~2,
  636--664. \MR{1805474}

\bibitem[Del87]{deligne87}
Pierre Deligne, \emph{Le d\'eterminant de la cohomologie}, Current trends in
  arithmetical algebraic geometry (Arcata, Calif., 1985), Contemp. Math.,
  vol.~67, Amer. Math. Soc., Providence, RI, 1987, pp.~93--177.

\bibitem[dJ95]{dejong}
A.J. de~Jong, \emph{Crystalline dieudonn\'e theory via formal and rigid
  geometry}, Inst. Hautes \'Etudes Sci. Publ. Math. (1995), no.~82, 5--96
  (1996). \MR{1383213 (97f:14047)}

\bibitem[FK06]{fukayakato06}
Takako Fukaya and Kazuya Kato, \emph{A formulation of conjectures on {$p$}-adic
  zeta functions in noncommutative {I}wasawa theory}, Proceedings of the {S}t.
  {P}etersburg {M}athematical {S}ociety. {V}ol. {XII} (Providence, RI), Amer.
  Math. Soc. Transl. Ser. 2, vol. 219, Amer. Math. Soc., 2006, pp.~1--85.

\bibitem[Fon90]{fontaine90}
Jean-Marc Fontaine, \emph{Repr\'esentations {$p$}-adiques des corps locaux.
  {I}}, The {G}rothendieck {F}estschrift, {V}ol.\ {II}, Progr. Math., vol.~87,
  Birkh\"auser Boston, Boston, MA, 1990, pp.~249--309. \MR{1106901 (92i:11125)}

\bibitem[HP17]{hu-paskunas}
Yongquan Hu and Vytautas Paskunas, \emph{On crystabelline deformation rings of
  $\mathrm{Gal}({\overline{\Qp}}/\mathbb{Q}_p)$ (appendix by {J}ack
  {S}hotton)}, preprint, 2017.

\bibitem[Kie67]{kiehl}
Reinhardt Kiehl, \emph{Theorem {A} und {T}heorem {B} in der nichtarchimedischen
  {F}unktionentheorie}, Invent. Math. \textbf{2} (1967), 256--273. \MR{0210949
  (35 \#1834)}

\bibitem[Kis08]{kisin-pst-def-rings}
Mark Kisin, \emph{Potentially semi-stable deformation rings}, J. Amer. Math.
  Soc. \textbf{21} (2008), no.~2, 513--546. \MR{2373358}

\bibitem[Kis09]{kisin-bt}
\bysame, \emph{Moduli of finite flat group schemes, and modularity}, Ann. of
  Math. (2) \textbf{170} (2009), no.~3, 1085--1180. \MR{2600871}

\bibitem[KL10]{kedlaya-liu08}
Kiran Kedlaya and Ruochuan Liu, \emph{On families of {$\phi$},
  {$\Gamma$}-modules}, Algebra Number Theory \textbf{4} (2010), no.~7,
  943--967. \MR{2776879}

\bibitem[KM76]{km}
F.~F. Knudsen and D.~Mumford, \emph{The projectivity of the moduli space of
  stable curves. {I}. {P}reliminaries on ``det'' and ``{D}iv''}, Math. Scand.
  \textbf{39} (1976), no.~1, 19--55.

\bibitem[LVZ15]{loeffler-venjakob-zerbes}
David Loeffler, Otmar Venjakob, and Sarah~Livia Zerbes, \emph{Local epsilon
  isomorphisms}, Kyoto J. Math. \textbf{55} (2015), no.~1, 63--127.
  \MR{3323528}

\bibitem[LZ14]{loefflerzerbes11}
David Loeffler and Sarah~Livia Zerbes, \emph{Iwasawa theory and {$p$}-adic
  {$L$}-functions over {$\Bbb{Z}\sb p\sp 2$}-extensions}, Int. J. Number Theory
  \textbf{10} (2014), no.~8, 2045--2095. \MR{3273476}

\bibitem[Mat89]{matsumura}
Hideyuki Matsumura, \emph{Commutative ring theory}, second ed., Cambridge
  Studies in Advanced Mathematics, vol.~8, Cambridge University Press,
  Cambridge, 1989, Translated from the Japanese by M. Reid. \MR{1011461
  (90i:13001)}

\bibitem[{Nak}13]{nak13}
K.~{Nakamura}, \emph{{A generalization of {K}ato's local epsilon-conjecture for
  ($\varphi,{\Gamma}$)-modules over the {R}obba ring}}, ArXiv e-prints (2013).

\bibitem[Nak14]{nak15}
K.~Nakamura, \emph{Iwasawa theory of de {R}ham {$(\varphi,\Gamma)$}-modules
  over the {R}obba ring}, J. Inst. Math. Jussieu \textbf{13} (2014), no.~1,
  65--118.

\bibitem[Nek06]{nekovar06}
Jan Nekov{\'a}{\v{r}}, \emph{Selmer complexes}, Ast{\'e}risque (2006), no.~310,
  viii+559. \MR{2333680}

\bibitem[Sch02]{schneider}
Peter Schneider, \emph{Nonarchimedean functional analysis}, Springer monographs
  in mathematics, Springer, Berlin ; Heidelberg [u.a.], 2002 (eng).

\bibitem[Ser95]{serre95}
Jean-Pierre Serre, \emph{Classes des corps cyclotomiques (d'apr\`es {K}.
  {I}wasawa)}, S\'eminaire {B}ourbaki, {V}ol.\ 5, Soc. Math. France, Paris,
  1995, pp.~Exp.\ No.\ 174, 83--93. \MR{1603459}

\bibitem[SRV13]{SSV}
Peter Schneider, Sujatha Ramdorai, and Otmar Venjakob,
  \emph{\textbf{Discussions among Otmar, Peter, and Sujatha} on the paper
  ``wach modules and iwasawa theory for modular forms'' by lei/loeffler/zerbes
  (february 2013, ubc)}, 2013.

\bibitem[ST03]{sch-teit}
Peter Schneider and Jeremy Teitelbaum, \emph{Algebras of $p$-adic distributions
  and admissible representations}, Invent. Math. \textbf{153} (2003), no.~1,
  145--196. \MR{1990669 (2004g:22015)}

\bibitem[SU14]{skinner-urban}
C.~Skinner and E.~Urban, \emph{The {I}wasawa main conjectures for {$\rm GL\sb
  2$}}, Invent. Math. \textbf{195} (2014), no.~1, 1--277.

\bibitem[Ven07]{ven-BSD}
Otmar Venjakob, \emph{From the {B}irch and {S}winnerton-{D}yer conjecture to
  non-commutative {I}wasawa theory via the equivariant {T}amagawa number
  conjecture---a survey}, {$L$}-functions and {G}alois representations, London
  Math. Soc. Lecture Note Ser., vol. 320, Cambridge Univ. Press, Cambridge,
  2007, pp.~333--380.

\bibitem[Ven13]{venjakob11}
\bysame, \emph{On {K}ato's local {$\epsilon$}-isomorphism conjecture for
  rank-one {I}wasawa modules}, Algebra Number Theory \textbf{7} (2013), no.~10,
  2369--2416. \MR{3194646}

\bibitem[{Wan}15]{wang-erickson15}
Carl {Wang Erickson}, \emph{Algebraic families of {G}alois representations and
  potentially semi-stable pseudodeformation rings}, preprint, 2015.

\bibitem[Wei13]{weibel}
C.~A. Weibel, \emph{The {$K$}-book}, Graduate Studies in Mathematics, vol. 145,
  American Mathematical Society, Providence, RI, 2013, An introduction to
  algebraic $K$-theory.

\end{thebibliography}

\end{document}